\RequirePackage{iftex}
\ifpTeX
  \documentclass[a4paper,dvipdfmx,11pt]{amsart}
\else
  \documentclass[a4paper,11pt]{amsart}
\fi
\usepackage{amsmath,amsthm,amssymb,mathrsfs,stmaryrd,mathtools}
\allowdisplaybreaks
\usepackage{bm}
\usepackage{url}
\usepackage{color}
\usepackage{eucal}
\usepackage{slashed}
\usepackage{physics}
\usepackage{graphicx}
\usepackage{tikz}
\usetikzlibrary{nfold,arrows}
\usepackage{tikz-cd}
\usetikzlibrary{intersections, calc, arrows, arrows.meta, shadows}
\usepackage[utf8]{inputenc} 
\usepackage[T1]{fontenc}
\usepackage[style=alphabetic,sorting=nty]{biblatex}
\usepackage{geometry} 
\usepackage[inline]{enumitem} 
\setlist[enumerate]{itemsep=2pt,parsep=2pt,before={\parskip=2pt}}
\setlist[description]{style=standard}
\usepackage[colorlinks=true, hyperindex, linkcolor=magenta, pagebackref=false, citecolor=cyan, pdfpagelabels]{hyperref} 
\usepackage{aliascnt}
\usepackage{cleveref}
\usepackage{quiver}
\mathtoolsset{showonlyrefs=true}

\newcommand{\calA}{\mathcal{A}}
\newcommand{\calC}{\mathcal{C}}
\newcommand{\calD}{\mathcal{D}}
\newcommand{\calE}{\mathcal{E}}
\newcommand{\calF}{\mathcal{F}}
\newcommand{\calH}{\mathcal{H}}
\newcommand{\calO}{\mathcal{O}}
\newcommand{\calP}{\mathcal{P}}

\newcommand{\frakC}{\mathfrak{C}}

\newcommand{\D}{\mathcal{D}}

\newcommand{\bbA}{\mathbb{A}}
\newcommand{\bbB}{\mathbb{B}}
\newcommand{\bbG}{\mathbb{G}}
\newcommand{\bbH}{\mathbb{H}}
\newcommand{\bbI}{\mathbb{I}}
\newcommand{\bbJ}{\mathbb{J}}
\newcommand{\bbN}{\mathbb{N}} 

\newcommand{\rmD}{\mathrm{D}} 

\newcommand{\rmob}{\mathrm{ob}}
\newcommand{\rmord}{\mathrm{ord}}

\newcommand{\bfL}{\mathbf{L}}
\newcommand{\bfR}{\mathbf{R}}
\newcommand{\bfz}{\mathbf{z}}
\newcommand{\bfone}{\mathbf{1}}

\newcommand{\FdagSSet}{F_{\dag}^{\sSet}}
\newcommand{\UdagSSet}{U_{\dag}^{\sSet}}
\newcommand{\FdagSCat}{F_{\dag}^{\sCat}}
\newcommand{\UdagSCat}{U_{\dag}^{\sCat}}

\newcommand{\Bergner}{\mathrm{Bergner}}

\newcommand{\Cat}{\mathrm{Cat}}

\newcommand{\Ex}{\mathrm{Ex}}

\newcommand{\Fun}{\mathrm{Fun}}

\newcommand{\RFib}{\mathrm{RFib}}

\newcommand{\sCat}{\mathrm{sCat}}

\newcommand{\Set}{\mathrm{Set}}

\newcommand{\sSpace}{\mathrm{sSpace}}

\newcommand{\sSet}{\mathrm{sSet}}

\newcommand{\Barconstruction}{\mathrm{Bar}}

\newcommand{\CSS}{\mathrm{CSS}}

\DeclareMathOperator{\colim}{colim}
\DeclareMathOperator*{\colimstar}{colim}

\renewcommand{\ev}{\mathrm{ev}} 
\newcommand{\fib}{\mathrm{fib}}

\newcommand{\Fill}{\mathrm{Fill}}
\DeclareMathOperator{\Fillop}{Fill}

\newcommand{\h}{\mathrm{h}} 
\newcommand{\Ho}{\mathrm{Ho}}
\newcommand{\hofib}{\mathrm{hofib}}

\DeclareMathOperator{\Hom}{Hom}

\newcommand{\inj}{\mathrm{inj}}
\DeclareMathOperator{\id}{id}

\newcommand{\Joyal}{\mathrm{Joyal}}

\DeclareMathOperator{\Map}{Map}
\DeclareMathOperator{\Mor}{Mor}

\newcommand{\N}{\mathrm{N}} 

\DeclareMathOperator{\Ob}{Ob}
\DeclareMathOperator{\Orb}{Orb}
\DeclareMathOperator{\myop}{op}

\newcommand{\Path}{\mathrm{Path}}

\newcommand{\proj}{\mathrm{proj}}
\newcommand{\res}{\mathrm{res}}

\newcommand{\rev}{\mathrm{rev}} 
\newcommand{\Reedy}{\mathrm{Reedy}}
\newcommand{\RMap}{\mathrm{RMap}}

\newcommand{\sk}{\mathrm{sk}} 
\newcommand{\Sp}{\mathrm{Sp}} 
\newcommand{\triv}{\mathrm{triv}} 

\newcommand{\UEquiv}{\mathrm{UEquiv}}

\usepackage{relsize}
\usepackage[bbgreekl]{mathbbol}
\usepackage{amsfonts}
\DeclareSymbolFontAlphabet{\mathbb}{AMSb} 
\DeclareSymbolFontAlphabet{\mathbbl}{bbold}
\newcommand{\prism}{{\mathlarger{\mathbbl{\Delta}}}}

\DeclareFontFamily{U}{dmjhira}{}
\DeclareFontShape{U}{dmjhira}{m}{n}{
  <-> dmjhira
}{}
\DeclareFontSubstitution{U}{dmjhira}{m}{n}
\newcommand{\yo}{\text{{\usefont{U}{dmjhira}{m}{n}\symbol{"48}}}}

\makeatletter
\renewcommand{\lim}{\mathop{\operator@font lim}\nolimits}
\makeatother

\theoremstyle{definition}
\newtheorem{theorem}{Theorem}[subsection]

\newaliascnt{conjecture}{theorem}

\aliascntresetthe{conjecture}
\crefname{conjecture}{conjecture}{conjectures}
\Crefname{conjecture}{Conjecture}{Conjectures}

\newaliascnt{construction}{theorem}
\newtheorem{construction}[construction]{Construction}
\aliascntresetthe{construction}
\crefname{construction}{construction}{constructions}
\Crefname{construction}{Construction}{Constructions}

\newaliascnt{corollary}{theorem}
\newtheorem{corollary}[corollary]{Corollary}
\aliascntresetthe{corollary}
\crefname{corollary}{corollary}{corollaries}
\Crefname{corollary}{Corollary}{Corollaries}

\newaliascnt{definition}{theorem}
\newtheorem{definition}[definition]{Definition}
\aliascntresetthe{definition}
\crefname{definition}{definition}{definitions}
\Crefname{definition}{Definition}{Definitions}

\newaliascnt{example}{theorem}

\aliascntresetthe{example}
\crefname{example}{example}{examples}
\Crefname{example}{Example}{Examples}

\newaliascnt{lemma}{theorem}
\newtheorem{lemma}[lemma]{Lemma}
\aliascntresetthe{lemma}
\crefname{lemma}{lemma}{lemmas}
\Crefname{lemma}{Lemma}{Lemmas}

\newaliascnt{notation}{theorem}
\newtheorem{notation}[notation]{Notation}
\aliascntresetthe{notation}
\crefname{notation}{notation}{notations}
\Crefname{notation}{Notation}{Notations}

\newaliascnt{observation}{theorem}

\aliascntresetthe{observation}
\crefname{observation}{observation}{observations}
\Crefname{observation}{Observation}{Observations}

\newaliascnt{proposition}{theorem}
\newtheorem{proposition}[proposition]{Proposition}
\aliascntresetthe{proposition}
\crefname{proposition}{proposition}{propositions}
\Crefname{proposition}{Proposition}{Propositions}

\newaliascnt{remark}{theorem}
\newtheorem{remark}[remark]{Remark}
\aliascntresetthe{remark}
\crefname{remark}{remark}{remarks}
\Crefname{remark}{Remark}{Remarks}

\newaliascnt{fact}{theorem}

\aliascntresetthe{fact}
\crefname{fact}{fact}{facts}
\Crefname{fact}{Fact}{Facts}

\newaliascnt{terminology}{theorem}

\aliascntresetthe{terminology}
\crefname{terminology}{terminology}{terminologies}
\Crefname{terminology}{Terminology}{Terminologies}

\theoremstyle{definition}
\newtheorem*{theorem*}{Theorem}
\newtheorem*{conjecture*}{Conjecture}
\newtheorem*{construction*}{Construction}
\newtheorem*{corollary*}{Corollary}
\newtheorem*{definition*}{Definition}
\newtheorem*{example*}{Example}
\newtheorem*{lemma*}{Lemma}
\newtheorem*{notation*}{Notation}
\newtheorem*{observation*}{Observation}
\newtheorem*{proposition*}{Proposition}
\newtheorem*{remark*}{Remark}
\newtheorem*{terminology*}{Terminology}

\newtheorem*{maintheorema}{Theorem A}
\newtheorem*{maintheoremb}{Theorem B}
\newtheorem*{maintheoremc}{Theorem C}

\title[Models for dagger $(\infty,1)$-categories II]{Models for dagger $(\infty,1)$-categories II:\\Dagger complete Segal spaces and the Joyal--Tierney equivalence}

\author{Keima Akasaka} 

\thanks{Graduate School of Science and Engineering, Chiba University.
  Email: \href{mailto:quasi.cosmoi@gmail.com}{\texttt{quasi.cosmoi@gmail.com}}.
}

\date{\today}

\begin{document}

\begin{abstract}
  Building on the dagger Bergner and dagger Joyal models constructed in Part I \cite{Aka26}, we develop simplicial-space models for dagger $(\infty,1)$-categories.
  We first construct a homotopically constant resolution model and a model structure for unitarily complete dagger Segal spaces.
  A change-of-index adjunction compares the two presentations of dagger simplicial spaces.
  We then prove a dagger analogue of the Joyal--Tierney Quillen equivalence.
  Together with Part I, these results give a chain of Quillen equivalences between the dagger Bergner, dagger Joyal, and dagger Rezk models.
\end{abstract}

\maketitle
\setcounter{tocdepth}{1}
\tableofcontents

\section{Introduction}

In Part I \cite{Aka26}, we constructed the dagger Bergner and dagger Joyal model structures and proved that the dagger rigidification--nerve adjunction is a Quillen equivalence.
The purpose of this second part is to construct simplicial-space presentations of the same homotopy theory and to compare them by Quillen equivalences.
These constructions are dagger analogues of Rezk's complete Segal spaces and the Joyal--Tierney comparison \cite{Rez01,JT06}.

The dagger structure can be recorded in a simplicial-space model in two related ways.
One may form simplicial objects in dagger simplicial sets, or one may enlarge the simplex category by order-reversing maps and consider simplicial-set-valued presheaves on $\prism_{\rev}$.
In the latter presentation, the Segal condition encodes composition, while the completeness map is determined by the walking unitary isomorphism and encodes unitary completeness.
We also compare $\prism_{\rev}$ with the category $\prism_{\dag}$ of free dagger simplices.

\subsection*{The main theorems}

Our first main result is a homotopically constant resolution model on $\sSpace^{\dag\vee}=\Fun(\prism^{\myop},\sSet^{\dag})$.

\begin{maintheorema}[\cref{cs.def.Sh,cs.thm.resolution,cs.thm.equivalence}]
  There exists a left proper combinatorial model structure $\sSpace^{\dag\vee}_{\CSS}:=L_{S_h}\sSpace^{\dag\vee}_{\Reedy}$.
  Its fibrant objects are precisely the Reedy fibrant homotopically constant objects.
  
  Moreover, the constant--evaluation adjunction induces a Quillen equivalence
  \begin{align}
    c:
    \sSet^{\dag}_{\Joyal}
    \rightleftarrows
    \sSpace^{\dag\vee}_{\CSS}
    :\ev_0.
  \end{align}
\end{maintheorema}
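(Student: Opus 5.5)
The plan is to follow the classical argument that the constant/evaluation adjunction between $\sSet_{\Joyal}$ and the homotopically constant localisation of Reedy simplicial objects in $\sSet_{\Joyal}$ is a Quillen equivalence. This is essentially Dugger's resolution-model argument, as used by Joyal--Tierney, transported to $\sSet^{\dag}_{\Joyal}$ from Part I \cite{Aka26}.

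\textbf{Step 1: existence of the model structure.} Part I shows that $\sSet^{\dag}_{\Joyal}$ is combinatorial and left proper, and that its cofibrations are monomorphisms. Hence the Reedy model structure on $\sSpace^{\dag\vee}=\Fun(\prism^{\myop},\sSet^{\dag})$ exists and is again combinatorial and left proper; left properness is checked levelwise, since Reedy cofibrations are in particular levelwise cofibrations. I take $S_h$ to be the set of maps $\prism^n\otimes A\to \prism^0\otimes A$ (or $\prism^n\boxtimes A\to A$), with $A$ ranging over a set of homotopy generators of $\sSet^{\dag}_{\Joyal}$ and $n\ge 0$. Left Bousfield localisation of a left proper combinatorial model category at a set then exists by Smith's theorem and remains left proper and combinatorial.

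\textbf{Step 2: fibrant objects.} The $S_h$-local Reedy fibrant objects $X$ are those for which each $\Map(A,X_n)\to\Map(A,X_0)$, induced by a degeneracy, is a weak equivalence. Since the $A$ detect weak equivalences, this says every structure map $X_n\to X_0$ is a Joyal equivalence. By 2-out-of-3 along the degeneracies, this is equivalent to $X$ being homotopically constant.

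\textbf{Step 3: the Quillen equivalence.} The constant functor $c$ preserves cofibrations and trivial cofibrations into the Reedy structure, because latching maps of $cK$ are isomorphisms in degree $0$ and identities of the relevant pieces. So $c\dashv\ev_0$ is Quillen for the Reedy structure, and therefore also for its localisation. For the derived equivalence I check two things. First, the derived unit is an equivalence: for cofibrant $K$, the map $K\to \ev_0 R(cK)$ is an equivalence, because a fibrant replacement $cK\to R(cK)$ in $\CSS$ can be chosen as a Reedy fibrant replacement of the already homotopically constant object $cK$, hence is a levelwise equivalence. Second, $\ev_0$ reflects weak equivalences between fibrant objects: a map of homotopically constant Reedy fibrant objects that is an equivalence at level $0$ is a levelwise equivalence by 2-out-of-3 along the maps $X_n\to X_0$, hence a Reedy equivalence, hence a local equivalence.

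The main obstacle is Step 1's choice of $S_h$ together with Step 2's identification. The localisation must use a \emph{set} of maps, which requires a set of homotopy generators, or the cofibrant objects of bounded size, in the dagger Joyal structure. Identifying the local objects also needs derived mapping spaces in $\sSet^{\dag}_{\Joyal}$, which is not simplicial in the naive way. I would first try to use the framing or cosimplicial resolutions available in any combinatorial model category (Dugger), and take $S_h=\{\partial$-free maps $\prism^n\boxtimes A\to A\}$ with $A$ the domains and codomains of the generating cofibrations $\FdagSSet(\partial\Delta^m\to\Delta^m)$ from Part I. Locality against these reduces, via the Reedy matching-object adjunction, to the homotopy-constancy condition.
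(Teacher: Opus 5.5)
Your proposal is correct. It covers existence the same way the paper does, but it proves the fibrant-object characterization and the Quillen equivalence by a more hands-on route.

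\textbf{How the paper does it.} The paper never computes $S_h$-local objects directly. It enlarges $S_h$ to Dugger's set $S_{\rmD}=\{A\boxtimes\alpha\mid A\in G_{\dag},\ \alpha \text{ a morphism of } \prism\}$ and shows by two-out-of-three that both sets have the same local objects. It concludes that $L_{S_h}\sSpace^{\dag\vee}_{\Reedy}$ coincides with Dugger's Reedy hocolim model structure. It then cites Dugger's Theorem~5.7(c) for the fibrant objects and Theorem~6.1 for the Quillen equivalence.

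\textbf{How you do it.} You work directly in two steps.
\begin{itemize}
  \item[(i)] You translate $S_h$-locality, via the derived adjunctions $(-)\boxtimes\Delta^k\dashv\ev_k$, into the condition that $X(\sigma_k)$ induces equivalences $\RMap(A,X_0)\to\RMap(A,X_k)$ for every $A\in G_{\dag}$. You then use that $G_{\dag}$ detects weak equivalences between fibrant objects. This is the same input the paper takes from Dugger's Proposition~A.5, and it also follows from a short lifting argument with cosimplicial frames.
  \item[(ii)] You check Hovey's criterion by hand. A Reedy fibrant replacement of $cK$ is levelwise equivalent to $cK$, so it is homotopically constant and hence already fibrant in $\sSpace^{\dag\vee}_{\CSS}$. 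The derived unit is then its level-$0$ component. Reflection along $\ev_0$ is two-out-of-three along the maps $X(\sigma_k)$.
\end{itemize}

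\textbf{Comparison.} Your version is self-contained and shows the mechanism. The paper's version is shorter because it hands everything to Dugger, and as a by-product it identifies the localization with Dugger's resolution structure.

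\textbf{Two points to fix when you write it up.}
\begin{itemize}
  \item The direction is reversed in your Step~2. The map induced by $A\boxtimes\sigma_k$ is $\RMap(A,X_0)\to\RMap(A,X_k)$, coming from the degeneracy $X_0\to X_k$. The maps $X_k\to X_0$ induced by vertex inclusions are then equivalences because they are retractions of it.
  \item Step (i) needs every $(-)\boxtimes\Delta^k$ to be left Quillen into the Reedy structure, not only $c$; otherwise you do not get $\RMap(A\boxtimes\Delta^k,X)\simeq\RMap(A,X_k)$. This is \cref{cs.lem.evaluation-quillen} in the paper. It also follows from your latching computation: the relative latching map of $i\boxtimes\Delta^k$ in degree $m$ is a coproduct of copies of $i$, one for each nondegenerate $m$-simplex of $\Delta^k$, together with an identity.
\end{itemize}
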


We next construct a dagger complete-Segal-space presentation on $\sSpace^{\dag}_{\rev}=\Fun(\prism_{\rev}^{\myop},\sSet)$.
Between fibrant objects, its weak equivalences admit a dagger Dwyer--Kan characterization.
Restriction along $u:\prism_{\rev}\to\prism_{\dag}$ then relates this model to $\sSpace^{\dag}=\Fun(\prism_{\dag}^{\myop},\sSet)$.

\begin{maintheoremb}[\cref{h1.lem.rev-models,h1.prop.fibrant,h1.thm.main}]
  The localization $L_{S_{\rev}}\sSpace^{\dag}_{\rev,\inj}$ is a left proper combinatorial simplicial model category.
  Its fibrant objects are precisely the injectively fibrant unitarily complete dagger Segal spaces.
  
  On the other hand, the fibrant objects of $L_{\widetilde S_{\rev}}\sSpace^{\dag}_{\rev,\proj}$ are precisely the levelwise fibrant unitarily complete dagger Segal spaces in the derived sense.
  
  Moreover, the change-of-index adjunction is a Quillen equivalence
  \begin{align}
    u_{!}:
    L_{\widetilde S_{\rev}}\sSpace^{\dag}_{\rev,\proj}
    \rightleftarrows
    L_{u_{!}(\widetilde S_{\rev})}\sSpace^{\dag}_{\proj}
    :u^{*}.
  \end{align}
  A fibrant object $W$ of $L_{u_{!}(\widetilde S_{\rev})}\sSpace^{\dag}_{\proj}$ is precisely a levelwise fibrant object whose restriction $u^{*}W$ is a unitarily complete dagger Segal space in the derived sense.
\end{maintheoremb}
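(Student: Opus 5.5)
We prove the four assertions of Theorem B in turn, following the same pattern as Theorem A.

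\emph{Existence and formal properties of the localization.} We first observe that $\sSpace^{\dag}_{\rev}=\Fun(\prism_{\rev}^{\myop},\sSet)$ is a presheaf category on the small category $\prism_{\rev}\times\prism$. Hence the injective and projective model structures exist, and both are combinatorial, simplicial and left proper (for the injective one, cofibrations are monomorphisms and every object is cofibrant). We then take $S_{\rev}$ to be the set of Segal spine inclusions $\yo(\Sp[n])\to\yo[n]$, transported to representables of $\prism_{\rev}$, together with the completeness map from the representable on $[0]$ into the walking unitary isomorphism $\bbJ^{\dag}$, viewed as a presheaf on $\prism_{\rev}$. Since $S_{\rev}$ is a set, Smith/Hirschhorn left Bousfield localization yields a left proper combinatorial simplicial model category.

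\emph{Fibrant objects.} By the general theory, the fibrant objects of the localization are the injectively fibrant $S_{\rev}$-local objects. For injectively fibrant $W$, the derived mapping space $\Map(\yo[n],W)$ is $W_n$, while $\Map$ out of the spine or out of $\bbJ^{\dag}$ is the corresponding homotopy limit. This identifies locality with two conditions:
\begin{itemize}
\item the Segal condition $W_n\simeq W_1\times_{W_0}\cdots\times_{W_0}W_1$;
\item $W_0\to\Map(\bbJ^{\dag},W)$ being an equivalence, which is unitary completeness once we identify $\Map(\bbJ^{\dag},W)$ with the space of unitary equivalences.
\end{itemize}
For the projective version we use $\widetilde S_{\rev}$, which replaces the sources of the maps in $S_{\rev}$ by projective cofibrant replacements. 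The same computation then holds for levelwise fibrant $W$, but only with homotopy pullbacks. This is what the phrase ``in the derived sense'' records.

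\emph{The Quillen equivalence.} The plan is as follows.
\begin{enumerate}
\item $u^{*}$ preserves levelwise fibrations and weak equivalences, so $u_!\dashv u^*$ is projectively Quillen.
\item By Hirschhorn 3.3.20, it descends to $L_{\widetilde S_{\rev}}\rightleftarrows L_{u_!(\widetilde S_{\rev})}$.
\item The description of fibrant objects downstairs is then formal. $W$ is local for $u_!\widetilde S_{\rev}$ if and only if $u^*W$ is $\widetilde S_{\rev}$-local, by adjunction of derived mapping spaces, since $u_!$ preserves projective cofibrant objects. Combined with the previous step, this gives the stated characterization.
\item For the equivalence, it suffices to show two things:
\begin{itemize}
\item $u^*$ reflects weak equivalences between fibrant objects;
\item the derived unit $X\to u^*\mathbf{R}(u_!X)$ is a weak equivalence for cofibrant $X$.
\end{itemize}
\end{enumerate}

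Reflection is formal. Local equivalences between local objects are levelwise equivalences, and $W\to W'$ is local in the target model if and only if $u^*W\to u^*W'$ is local, by the mapping-space adjunction. Here we use that the localized weak equivalences are detected by mapping into fibrant $W$, and that these are exactly the $W$ with $u^*W$ local.

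\emph{Main obstacle.} The hard step is the unit. Projective cofibrant objects are built from representables $\yo_{\rev}[n]\times\partial\Delta^k\to\yo_{\rev}[n]\times\Delta^k$. Since both functors preserve homotopy colimits (left Quillen, and $u^*$ preserves colimits levelwise), we reduce to the case $X=\yo_{\rev}[n]$.

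Then $u^*u_!\yo_{\rev}[n]=u^*\yo_{\dag}[n]$ is the presheaf of maps $[m]\to[n]$ in $\prism_{\dag}$, that is, zigzags of free dagger composites. We must show that $\yo_{\rev}[n]\to u^*\yo_{\dag}[n]$ is an $\widetilde S_{\rev}$-local equivalence. The plan is:
\begin{enumerate}
\item Filter $\prism_{\dag}([m],[n])$ by word length in the free dagger category $\prism_{\dag}$.
\item Exhibit each stage as obtained from $\yo_{\rev}[n]$ by pushouts along Segal spine inclusions and $\bbJ^{\dag}$-type maps.
\end{enumerate}
Concretely, a free dagger composite $f^{\dag}g$ is glued along spine-like composition, and the unit and counit identifications use the completeness map. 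I expect this combinatorial analysis of $\prism_{\dag}$ versus $\prism_{\rev}$ to be the real content of the proof. I would try it first for $n=1$, where $\yo_{\dag}[1]$ should be the localization of $\yo_{\rev}[1]$ at $\bbJ^{\dag}$, and then induct via the Segal decomposition $[n]=[1]\vee\cdots\vee[1]$.
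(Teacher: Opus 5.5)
Your formal steps match the paper: existence of the localizations, fibrant objects via $\underline{\Map}(-,W)$, descent of $u_{!}\dashv u^{*}$, fibrant objects downstairs via the derived adjunction, and reflection. Reflection really rests on two facts: local equivalences between local objects are levelwise, and $u^{*}$ reflects levelwise equivalences because $u$ is bijective on objects. One small correction: $\widetilde S_{\rev}$ must resolve targets as well as sources, since for instance $E_{\rev}$ is not projectively cofibrant.

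The gap is the step you flag yourself, the unit $\yo_{\rev}[m]\to u^{*}u_{!}\yo_{\rev}[m]=\delta\N_{\dag}([m]_{\dag})$, and your sketch points the wrong way there. The completeness map plays no role. $[m]_{\dag}$ is the \emph{free} dagger category on the chain, so there are no relations $a^{\dag}a=\id$ and no unit/counit identifications. Since letter length is additive, every dagger functor $\bbI_{\dag}\to[m]_{\dag}$ is constant, so $E_{\rev}$ has no nonconstant map into $\delta\N_{\dag}([m]_{\dag})$ and cannot occur as a cell. In particular, $u^{*}\yo_{\dag}[1]$ is the nerve of the free dagger arrow; this is the paper's $\bbJ_{\dag}$, whereas your $\bbJ^{\dag}$ is the paper's $\bbI_{\dag}$. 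If the unit at $[1]$ made the arrow unitary, $\yo_{\rev}[1]$ would be locally equivalent to $E_{\rev}$, hence to $\Delta^0_{\rev}$, forcing $W_1\simeq W_0$ for every local $W$.

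The statement actually needed is purely Segal-local. The paper filters $\delta\N_{\dag}([m]_{\dag})$ by total letter length, starting at $X_1=\delta\FdagSSet(\Sp[m])$ rather than at $\yo_{\rev}[m]$. For each unordered pair $\{p,p^{\dag}\}$ of length-$\ell$ paths it attaches a cell along $A^{\ell}_{\rev}\to\yo_{\rev}[\ell]$, where $A^{\ell}$ is the union of the two \emph{outer} faces; inner faces of the letter chain keep total length $\ell$. It shows this map is a local trivial cofibration from the spine inclusions, by right cancellation and the argument of \cite[Lemma~3.5]{JT06}. It then finishes by two-out-of-three with the Segal map $\Sp[m]_{\rev}\to\yo_{\rev}[m]$. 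Inducting on $n$ via $[n]=[1]\vee\cdots\vee[1]$ does not shortcut this, since the nerve does not preserve that pushout.

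The essential missing idea is how to handle dagger-fixed cells. For a palindromic path $p=p^{\dag}$, e.g.\ $(a_1,a_1^{\dag})$, the letter chain $c_p$ is a nondegenerate positive-dimensional simplex fixed by the dagger. So $\FdagSSet(\Sp[m])\to\N_{\dag}([m]_{\dag})$ is not a free cofibration. It therefore cannot be a relative cell complex of maps $\FdagSSet(K\to L)$, such as spine inclusions or $A^{\ell}\to\Delta^{\ell}$. Concretely, $\chi_p$ identifies $\varphi$ with $\rho_{\ell}\circ\varphi$, so the cell actually attached is $\yo_{\rev}[\ell]/C_2$ along $A^{\ell}_{\rev}/C_2$. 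Strict orbits do not preserve local equivalences in general, so an equivariant input is required.

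The paper supplies it in \cref{h1.lem.borel}. If $A\subseteq X$ is a $C_2$-invariant levelwise discrete subobject containing all fixed elements, and $A\to X$ is a local equivalence, then $A/C_2\to X/C_2$ is a local trivial cofibration. The proof goes through the Borel construction $(-\times EC_2)/C_2$ in the projective structure on $\Fun(BC_2,L_{S_{\rev}}\sSpace^{\dag}_{\rev,\inj})$. The lemma applies because the fixed elements of the flip are the constants at $\ell/2$, which lie in $A^{\ell}_{\rev}$. Without this, your filtration breaks down at even lengths.

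Finally, passing from the strict unit to the derived unit $X\to u^{*}R_{\dag}u_{!}X$ needs $u^{*}$ to carry the downstairs local equivalence $u_{!}X\to R_{\dag}u_{!}X$ to a local equivalence. This is not automatic for a right Quillen functor, and your appeal to $u^{*}$ preserving homotopy colimits presupposes it in the localized setting. The paper obtains it by showing that $u^{*}:L_{u_{!}(\widetilde S_{\rev})}\sSpace^{\dag}_{\inj}\to L_{S_{\rev}}\sSpace^{\dag}_{\rev,\inj}$ is left Quillen for $u^{*}\dashv u_{*}$, which itself uses the unit computation.
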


The final comparison is the dagger analogue of the Joyal--Tierney theorem.
Its proof uses reversal-compatible cosimplicial frames, a comparison with the classical Joyal--Tierney realization, and a strictification theorem for local objects.

\begin{maintheoremc}[\cref{jt.thm.main,jt.cor.goal}]
  The constant--vertex adjunction induces a Quillen equivalence
  \begin{align}
    \delta:
    \sSet^{\dag}_{\Joyal}
    \rightleftarrows
    L_{S_{\rev}}\sSpace^{\dag}_{\rev,\inj}
    :i_0^{*}.
  \end{align}
  Together with the dagger Joyal--Bergner equivalence of Part I \cite{Aka26}, it gives a zigzag of Quillen equivalences whose two parts meet at the injective localization:
  \begin{align}
    \sCat^{\dag}_{\Bergner}
    \underset{\frakC_{\dag}}{\overset{\N_{\dag}}{\rightleftarrows}}
    \sSet^{\dag}_{\Joyal}
    \underset{i_0^{*}}{\overset{\delta}{\rightleftarrows}}
    L_{S_{\rev}}\sSpace^{\dag}_{\rev,\inj}
    \underset{\id}{\overset{\id}{\rightleftarrows}}
    L_{\widetilde S_{\rev}}\sSpace^{\dag}_{\rev,\proj}
    \underset{u^{*}}{\overset{u_{!}}{\rightleftarrows}}
    L_{u_{!}(\widetilde S_{\rev})}\sSpace^{\dag}_{\proj}.
  \end{align}
\end{maintheoremc}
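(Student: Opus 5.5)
The plan is to follow the classical Joyal--Tierney strategy, adapted to the reversal structure. Write $\delta X$ for the simplicial space $([n],[m])\mapsto X_n$, constant in the space direction and carrying the $\prism_{\rev}$-action induced by the dagger structure of $X$, and $i_0^{*}W=W_{\bullet,0}$ for its row of vertices. The first step is to check that $(\delta,i_0^{*})$ is a Quillen pair.

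\textbf{Quillen pair.} The functor $\delta$ preserves monomorphisms, and these are the cofibrations of $\sSet^{\dag}_{\Joyal}$ by Part I \cite{Aka26}. They are also the cofibrations of $\sSpace^{\dag}_{\rev,\inj}$ and of its localization. So it suffices to show that $\delta$ sends a generating set of trivial cofibrations to weak equivalences; by the recognition criteria of Part I, we may instead check that $i_0^{*}$ preserves fibrations between fibrant objects.
\begin{itemize}
\item For the dagger spine inclusions, $\delta$ lands in the localizing set $S_{\rev}$, up to the Reedy/injective replacement.
\item For the inclusion of a point into the walking unitary isomorphism, $\delta$ gives exactly the completeness map in $S_{\rev}$.
\end{itemize}
Alternatively, we check directly that for $W$ an injectively fibrant unitarily complete dagger Segal space, $i_0^{*}W$ is dagger quasi-categorical. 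Inner horn filling follows from the Segal condition, and lifting against the unitary-isomorphism inclusions follows from completeness. Both use \cref{h1.prop.fibrant}.

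\textbf{Quillen equivalence, via a right adjoint.} Next we produce a second adjunction
\begin{align}
  p_1^{*}:\sSet^{\dag}_{\Joyal}\rightleftarrows L_{S_{\rev}}\sSpace^{\dag}_{\rev,\inj}:i_1^{*}.
\end{align}
It is built from a reversal-compatible cosimplicial frame $J^{\bullet}$: a cosimplicial object of dagger simplicial sets resolving the point by walking unitary isomorphisms, with $\rev$ acting compatibly. Here $i_1^{*}(X)_{n,m}=\Hom(\Delta^n_{\dag}\times J^m,X)$ for dagger quasi-categories $X$. As in Joyal--Tierney, $i_1^{*}$ is right Quillen, and its value on a dagger quasi-category is a unitarily complete dagger Segal space. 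There is also a natural map comparing $\delta$ with the left adjoint of $i_1^{*}$, which is a levelwise equivalence because each $J^m$ is contractible in the dagger Joyal structure.

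It then suffices to show two things:
\begin{itemize}
\item the derived counit $\delta\, i_0^{*}W\to W$ is a weak equivalence for fibrant $W$;
\item the derived unit of the $i_1^{*}$ adjunction is an equivalence.
\end{itemize}
This is equivalent to $i_0^{*}$ reflecting weak equivalences between fibrant objects, together with the unit being an equivalence.

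\textbf{Reduction to the classical theorem.} To prove these, we forget along the maps $\prism\to\prism_{\rev}$ and $\sSet^{\dag}\to\sSet$. By \cref{h1.thm.main} and the Dwyer--Kan characterization of weak equivalences between fibrant objects, the underlying simplicial space of a unitarily complete dagger Segal space is a complete Segal space. Unitary completeness and Rezk completeness agree there, since in a dagger $\infty$-category every equivalence is homotopic to a unitary one; this is presumably recorded in Part I. Weak equivalences between fibrant objects are detected after forgetting on both sides:
\begin{itemize}
\item on the Joyal side, by Part I's characterization of dagger Joyal equivalences between fibrant objects;
\item on the Rezk side, by the Dwyer--Kan characterization.
\end{itemize}
The classical Joyal--Tierney theorem \cite{JT06} then gives the two derived maps on underlying objects, and hence the dagger statement.

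\textbf{Expected main obstacle.} The hardest step is making this reduction honest. The forgetful functors must commute with the derived functors $\delta$, $i_0^{*}$ and $i_1^{*}$, which requires showing that the reversal-compatible frame forgets to a genuine Joyal--Tierney frame. Fibrant replacement in $L_{S_{\rev}}$ must also be compared with replacement in Rezk's model. I would handle the second point with the strictification theorem for local objects: every homotopy-coherent local object is weakly equivalent to a strict $\prism_{\rev}$-diagram, so the comparison can be checked on strictly fibrant objects.

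\textbf{The zigzag.} The final chain then follows formally:
\begin{itemize}
\item the dagger Joyal--Bergner equivalence is Part I;
\item the identity adjunction between the injective and projective localizations is a Quillen equivalence because the two have the same weak equivalences (the injective and projective localizations of the same local equivalences);
\item the last step is \cref{h1.thm.main}.
\end{itemize}
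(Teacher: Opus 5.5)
The final assembly of the zigzag is fine, but the core of your argument fails. The reduction to the classical Joyal--Tierney theorem by forgetting the dagger rests on the claim that ``in a dagger $\infty$-category every equivalence is homotopic to a unitary one'', and this claim is false. Unitaries are a proper subclass of isomorphisms, and this is the whole content of the dagger theory. In the discrete dagger category of finite-dimensional Hilbert spaces, $\mathbb{C}$ has automorphisms $\mathbb{C}^{\times}$ but unitary automorphisms only $U(1)$. For rational inner product spaces, the forms $xy$ and $2xy$ on $\mathbb{Q}$ give objects that are isomorphic but not unitarily isomorphic.

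Two consequences follow. First, the underlying simplicial space of a fibrant object of $L_{S_{\rev}}\sSpace^{\dag}_{\rev,\inj}$ is a Segal space that is in general \emph{not} Rezk complete, because its level $0$ records only unitary equivalences. Second, dagger Joyal equivalences are \emph{not} detected after applying $\UdagSSet$, since a dagger DK-equivalence must be essentially \emph{coherently unitarily} surjective. Forgetting can at most give you the mapping-space halves of the derived unit and of the reflection statement. The paper does use the classical theorem, but only for ordered endpoint fibers, in \cref{jt.lem.strict-ordered-frame-mate}. The essential-surjectivity and completeness halves are left unproved in your proposal.

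The paper supplies those halves with dagger-specific input:
\begin{itemize}
\item invariance of $\UEquiv_Z$ under maps inducing equivalences on mapping spaces (\cref{jt.lem.uequiv-invariance}), proved by a skeletal induction over $E_{\rev}$ in which the dagger-fixed cells contribute strict $C_2$-fixed points of filler spaces;
\item the resulting recognition criterion \cref{jt.lem.rezk-criterion};
\item strictification of local objects to frame nerves (\cref{jt.thm.strict});
\item reflection via the rows formula $Z_m\simeq\RMap(\FdagSSet(\Delta^m),i_0^{*}Z)$ of \cref{jt.lem.rows};
\item the derived unit via transport to the dagger Joyal--Bergner equivalence (\cref{jt.lem.unit}).
\end{itemize}

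Your Quillen-pair step has a separate gap. The maps $\delta\FdagSSet(\Sp[n]\to\Delta^n)$ and $\delta(\Delta^0_{\dag}\to E_{\dag})$ are literally elements of $S_{\rev}$. That only suffices if these maps control the trivial cofibrations of $\sSet^{\dag}_{\Joyal}$, or if fibrations between fibrant objects there are characterized by lifting against them. Such a characterization would be a dagger analogue of Joyal's theorem on fibrations between quasi-categories, and it is not available. The model structure is defined through $\frakC_{\dag}$, and there is no established ``dagger quasi-category'' description of its fibrant objects.

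The paper instead proves that $\delta$ sends \emph{every} weak equivalence between cofibrant objects to a local equivalence (\cref{jt.lem.delta-local}). It does this by detecting local equivalences through maps into frame nerves (\cref{jt.cor.frame-detection}). So even the Quillen pair already depends on the strictification theorem.

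Finally, the comparison between $\delta X$ and your $i_1^{*}X$ induced by $J^m\to\Delta^0$ cannot be a levelwise weak equivalence. Each level of $\delta X$ is discrete, whereas the levels of $i_1^{*}X$ (e.g.\ the space of objects and unitary equivalences) are not homotopically discrete in general. At best it is a local equivalence, and that would itself need proof.
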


\subsection*{Organization}

This second part is organized as follows:

\begin{itemize}
  \item \cref{review.section} recalls the dagger Bergner and dagger Joyal model structures constructed in Part I.
  It also reviews coherent unitarity and the dagger rigidification--nerve Quillen equivalence used below;
  \item \cref{cs.section} studies simplicial objects in dagger simplicial sets.
  It constructs the resolution model structure, characterizes its fibrant objects as the Reedy fibrant homotopically constant objects, and proves the constant--evaluation Quillen equivalence of Theorem~A;
  \item \cref{h1.section} constructs the dagger complete-Segal-space models.
  It characterizes their fibrant objects, compares their projective and injective presentations, and proves the change-of-index Quillen equivalence between the models over $\prism_{\rev}$ and $\prism_{\dag}$ stated in Theorem~B;
  \item \cref{jt.frames-section} constructs the reversal-compatible frame realization and computes the derived realization of the walking unitary;
  \item \cref{jt.section} proves the dagger Joyal--Tierney comparison.
  It establishes the dagger Rezk recognition criterion, strictifies local objects to frame nerves, and proves the Quillen equivalence of Theorem~C.
\end{itemize}

In Part~III \cite{Aka26-3}, we compare the model-categorical presentations constructed in Parts~I and~II with univalent flagged dagger $(\infty,1)$-categories.
Consequently, the model categories of Parts~I and~II present the $\infty$-category of univalent flagged dagger $(\infty,1)$-categories.

\subsection*{Notation}

Unless stated otherwise, weak equivalences and fibrations of simplicial sets refer to the Kan--Quillen model structure.

We use the following notational conventions throughout this paper:
\begin{itemize}
  \item we write $\prism$ for the simplex category, whose objects are denoted by $[n]$;
  \item we write $\prism_{\rev}$ for the category with the same objects as $\prism$ whose morphisms are the order-preserving and order-reversing maps;
  \item we let $C_2:=\{1,\omega\}$, with $\omega^2=1$, denote the cyclic group of order two;
  \item for $n\geq0$, we write $\rho_n:[n]\to[n]$ for the order-reversing automorphism in $\prism_{\rev}$ given by $i\mapsto n-i$;
  \item we write $\Set$, $\Cat$, $\sCat$, and $\sSet$ for the categories of sets, small categories, small simplicial categories, and simplicial sets, respectively;
  \item we write $\Cat^{\dag}$, $\sCat^{\dag}$, and $\sSet^{\dag}$ for the categories of small dagger $1$-categories, dagger simplicial categories, and dagger simplicial sets, respectively.
  The forgetful functors are denoted by 
  \begin{align}
    U_{\dag}^{\Cat}:\Cat^{\dag}\to\Cat, 
    \quad
    \UdagSCat:\sCat^{\dag}\to\sCat,
    \quad \text{and} \quad
    \UdagSSet:\sSet^{\dag}\to\sSet;
  \end{align}
  \item the free--forgetful adjunctions in the simplicial settings are denoted by 
  \begin{align}
    \FdagSCat:\sCat \rightleftarrows \sCat^{\dag}:\UdagSCat
    \quad \text{and} \quad
    \FdagSSet:\sSet \rightleftarrows \sSet^{\dag}:\UdagSSet.
  \end{align}
  For $A\in\sSet$, we have $\FdagSSet(A)=A\amalg_{\sk_0A}A^{\myop}$ with the involution exchanging the two factors;
  \item for a set $O$, we write $\sCat^{\dag}_O$ for the category of dagger simplicial categories with object set $O$ and dagger simplicial functors which are the identity on $O$.
  \item we write $\Delta^0_{\dag}:=\FdagSSet(\Delta^0)$ and write $\bfone_{\dag}$ for the one-object dagger simplicial category with one morphism;
  \item we write $\yo_{\rev}[n]$ for the representable presheaf of $[n]$ in $\Fun(\prism_{\rev}^{\myop},\Set)$.
  Its $C_2$-action is induced by $\rho_n$;
  \item for a simplicial category $\calC$, we write $\h\calC$ for its homotopy category.
  It has the same objects as $\calC$ and $\Hom_{\h\calC}(x,y)=\pi_0\calC(x,y)$;
  \item we write $\frakC:\sSet\rightleftarrows\sCat:N$ for the ordinary rigidification--nerve adjunction;
  \item for $K\in\sSet$, the symbol $\bbA_{\dag}(K)$ denotes the free dagger arrow on $K$;
  \item we write $\bbJ_{\dag}$ for the walking dagger arrow generated by $a:0\to1$ and its formal dagger.
  We write $\bbI_{\dag}$ for the walking unitary isomorphism generated by $u:0\to1$, subject to $u^{\dag}u=\id_0$ and $uu^{\dag}=\id_1$.
  The quotient which sends $a$ to $u$ is denoted by $q:\bbJ_{\dag}\to\bbI_{\dag}$;
  \item we fix the functorial object-preserving replacement $R_{\rmob}:\sCat^{\dag}\to\sCat^{\dag}$ defined by $(R_{\rmob}\calC)(x,y) := \Ex^{\infty}\calC(x,y)$.
\end{itemize}

\subsection*{AI declaration}

In preparing this paper, the author used GPT-5.6 Sol to improve the English and identify linguistic errors. 
It was also used to identify relevant references, allowing the author to shorten proofs developed independently by the author by citing existing results. 
The author also used it to assist in developing and verifying several proofs in Section~6.1.
The author assumes full responsibility for all mathematical content.

\subsection*{Acknowledgements}

This work was supported by JST SPRING, Grant Number JPMJSP2109.

\section{Review of the dagger Bergner and dagger Joyal models}\label{review.section}

We recall here all notation, definitions, and results from \cite{Aka26} which are used below.

\subsection{Dagger simplicial categories and dagger simplicial sets}

\begin{definition}
  We recall some notions of dagger simplicial categories and dagger simplicial sets:
  \begin{itemize}
    \item a \emph{dagger simplicial category} is a simplicial category $\calC$ equipped with an identity-on-objects involutive simplicial functor $\calC\to\calC^{\myop}$;
    \item a \emph{dagger simplicial functor} is a local weak equivalence (resp. a local fibration) if it induces a weak homotopy equivalence (resp. a Kan fibration) on every mapping space;
    \item a \emph{dagger simplicial set} is a simplicial set $X$ equipped with an involutive natural isomorphism $X\to X^{\myop}$ which is the identity on vertices;
    \item a \emph{dagger morphism} of dagger simplicial sets is a map of simplicial sets which is compatible with dagger structures.
  \end{itemize}
\end{definition}

\begin{proposition}
  We have:
  \begin{itemize}
    \item restriction to order-preserving maps gives an equivalence 
    \begin{align}
      \sSet^{\dag} \simeq \Fun(\prism_{\rev}^{\myop},\Set);
    \end{align}
    \item under this equivalence, there exists an isomorphism 
    \begin{align}
      \yo_{\rev}[n] \cong \FdagSSet(\Delta^n);
    \end{align}
    \item the ordinary rigidification--nerve adjunction lifts to an adjunction 
    \begin{align}
      \frakC_{\dag} : \sSet^{\dag} \rightleftarrows \sCat^{\dag} : \N_{\dag}.
    \end{align}
    It satisfies the natural identities 
    \begin{align}
      \UdagSCat\frakC_{\dag}=\frakC\UdagSSet, 
      \quad 
      \UdagSSet \N_{\dag}=N\UdagSCat, 
      \quad \text{and} \quad 
      \frakC_{\dag}\FdagSSet \cong \FdagSCat\frakC.
    \end{align}
  \end{itemize}
\end{proposition}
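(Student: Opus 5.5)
We prove the three items in turn; each is a direct unwinding of definitions.

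For the first item, note that $\prism_{\rev}$ is generated by $\prism$ together with the automorphisms $\rho_n$, subject to $\rho_n^2=\id$ and $\rho_m\circ f=\bar f\circ\rho_n$. Here, for $f:[n]\to[m]$ order-preserving, $\bar f(i):=m-f(n-i)$ is again order-preserving. Every morphism of $\prism_{\rev}$ factors uniquely as $f$ or $f\circ\rho_n$ with $f$ order-preserving. Hence a presheaf $X$ on $\prism_{\rev}$ amounts to a simplicial set together with involutions $\rho_n^{*}:X_n\to X_n$ satisfying $\rho_n^{*}f^{*}=\bar f^{*}\rho_m^{*}$.

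The assignment $f\mapsto\bar f$ is exactly the automorphism of $\prism$ defining $(-)^{\myop}$. So this data is precisely an involutive natural isomorphism $X\to X^{\myop}$. Since $\rho_0=\id_{[0]}$, it is automatically the identity on vertices. Conversely, a dagger structure defines $\rho_n^{*}$. Morphisms correspond on both sides, because naturality with respect to the $\rho_n$ is compatibility with the dagger. Restriction to order-preserving maps is therefore fully faithful and essentially surjective.

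For the second item, we use the factorization above: $\yo_{\rev}[n]_k=\prism_{\rev}([k],[n])$ is the disjoint union of the order-preserving maps $[k]\to[n]$ and the order-reversing ones. The order-preserving maps form $\Delta^n$. The order-reversing ones form $(\Delta^n)^{\myop}$, via $g\mapsto\rho_n g$.

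These two pieces overlap exactly in the constant maps $[k]\to[n]$, which are both order-preserving and order-reversing. The constant maps form $\sk_0\Delta^n$, viewed degreewise as a discrete simplicial set. Thus $\yo_{\rev}[n]\cong\Delta^n\amalg_{\sk_0\Delta^n}(\Delta^n)^{\myop}=\FdagSSet(\Delta^n)$. Under this isomorphism, the action of $\rho_n$ swaps the two factors, which is the stated involution.

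Alternatively, one can argue by Yoneda. Both sides corepresent $X\mapsto X_n$: for the left side this is Yoneda, and for the right side it is the free--forgetful adjunction. This alternative still needs the involution check above.

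For the third item, we define $\N_{\dag}\calC:=N\UdagSCat\calC$, with dagger structure induced by $N(\calC)\cong N(\calC^{\myop})^{\myop}$ composed with the nerve of the dagger functor $\calC\to\calC^{\myop}$. This is involutive and the identity on objects. By definition it gives $\UdagSSet\N_{\dag}=N\UdagSCat$.

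Similarly, we define $\frakC_{\dag}X:=\frakC\UdagSSet X$, with dagger $\frakC X\to\frakC(X^{\myop})\cong(\frakC X)^{\myop}$. The one real input is a natural isomorphism $\frakC(A^{\myop})\cong(\frakC A)^{\myop}$. We obtain it on representables from the evident isomorphism $\frakC[\Delta^n]\cong\frakC[\Delta^n]^{\myop}$ induced by $i\mapsto n-i$ on the cubes $\frakC[\Delta^n](i,j)$. We then extend by colimits, since $\frakC$ and $(-)^{\myop}$ preserve colimits. Involutivity and the identity-on-objects property follow from naturality.

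To get the adjunction, we check that under the ordinary bijection $\sCat(\frakC X,\calC)\cong\sSet(X,N\calC)$, the dagger-compatible maps correspond. This holds because the bijection is natural and intertwines the two $(-)^{\myop}$ isomorphisms. The compatibility of $\frakC(A^{\myop})\cong(\frakC A)^{\myop}$ with the counit is the main point to verify; we check it on representables.

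Finally, for $\frakC_{\dag}\FdagSSet\cong\FdagSCat\frakC$, both sides are left adjoints. Their right adjoints are $\UdagSSet\N_{\dag}$ and $N\UdagSCat$, which are equal. The isomorphism follows by uniqueness of adjoints.
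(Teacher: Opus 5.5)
The paper gives no proof of this proposition; it is recalled from Part~I, so there is no in-paper argument to compare with. Your direct unwinding is the natural argument and is correct in substance, but two steps need repair.

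\textbf{Item 1.} Your presentation (generators $\prism$ and $\rho_n$, relations $\rho_n^2=\id$ and $\rho_m f=\bar f\rho_n$, unique normal form $f$ or $f\rho_n$) presents the semidirect product $\prism\rtimes C_2$, not $\prism_{\rev}$.
\begin{itemize}
\item In $\prism_{\rev}$ a constant map $f:[k]\to[n]$ is both order-preserving and order-reversing. Hence $f\rho_k=f$, and the normal form is not unique.
\item All these identifications come from the one missing relation $\rho_0=\id_{[0]}$: write $f=v\sigma_k$ with $v:[0]\to[n]$ and use $\sigma_k\rho_k=\rho_0\sigma_k$.
\item Presheaves on $\prism\rtimes C_2$ are simplicial sets with an involutive isomorphism $X\cong X^{\myop}$ that need not fix vertices; $\Delta^1$ with its reversal is an example. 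So your sentence ``a presheaf on $\prism_{\rev}$ amounts to \dots'' is false as stated. The data must also include $\rho_0^*=\id$.
\end{itemize}
You invoke $\rho_0=\id_{[0]}$ only in the direction presheaf $\Rightarrow$ dagger. The converse direction also needs a check. For constant $g:[k]\to[n]$, the two candidate values $g^*$ and $(\rho_ng)^*\rho_n^*=\rho_k^*g^*$ must coincide, i.e.\ $\rho_k^*$ must fix totally degenerate simplices. This follows from $\rho_k^*\sigma_k^*=\sigma_k^*\rho_0^*$ together with the identity-on-vertices axiom. That axiom is exactly what lets the dagger descend from $\prism\rtimes C_2$ to $\prism_{\rev}$.

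\textbf{Item 2.} The map $g\mapsto\rho_ng$ does not identify the order-reversing maps with $(\Delta^n)^{\myop}$. Postcomposition commutes with precomposition by order-preserving $\alpha$, so it identifies them with $\Delta^n$. It also sends the constant map at $c$ to the constant map at $n-c$, so the gluing along $\sk_0\Delta^n$ would be twisted.

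Use $g\mapsto g\rho_k$ in degree $k$ instead. Since $\alpha\rho_j=\rho_k\bar\alpha$, this map is simplicial into $(\Delta^n)^{\myop}$ and fixes constant maps, so the overlap is literally $\sk_0\Delta^n$. The dagger of $\yo_{\rev}[n]$ is precomposition with $\rho_k$, and this is what swaps the two copies. Do not conflate it with the $C_2$-action ``induced by $\rho_n$'' in the paper's notation: that is postcomposition, an automorphism of $\yo_{\rev}[n]$ in $\sSet^{\dag}$ used later for the flip.

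Your Yoneda alternative is actually cleaner and needs no separate involution check. Once item 1 identifies $\UdagSSet$ with restriction along $\prism\subset\prism_{\rev}$, both objects corepresent $X\mapsto X_n$ on $\sSet^{\dag}$. Yoneda then gives an isomorphism in $\sSet^{\dag}$ directly.

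\textbf{Item 3.} This item, including the uniqueness-of-adjoints argument for $\frakC_{\dag}\FdagSSet\cong\FdagSCat\frakC$, is fine. Two small remarks:
\begin{itemize}
\item If you define $N(\calC^{\myop})\cong N(\calC)^{\myop}$ through $\frakC[\Delta^n]\cong\frakC[\Delta^n]^{\myop}$, it is by construction the mate of $\frakC(A^{\myop})\cong(\frakC A)^{\myop}$. Compatibility with the adjunction is then automatic rather than something to check on representables.
\item The identity-on-objects property of $\frakC(A^{\myop})\cong(\frakC A)^{\myop}$ does not follow from naturality alone. It holds because the vertex reversal in $(\Delta^n)^{\myop}\cong\Delta^n$ and the reversal $i\mapsto n-i$ cancel.
\end{itemize}
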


\subsection{Natural dagger intervals and the dagger Bergner model structure}

\begin{theorem}[Fixed-object model structure]
  For every object set $O$, the category $\sCat^{\dag}_O$ has a right proper simplicial model structure whose weak equivalences and fibrations are the local weak equivalences and local fibrations, respectively.
\end{theorem}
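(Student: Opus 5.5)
The plan is to transfer the classical fixed-object model structure on $\sCat_O$ along the free--forgetful adjunction, rather than build a model structure from scratch. Concretely, I would identify $\sCat^{\dag}_O$ with the category of simplicial objects in $\Cat^{\dag}_O$, the dagger categories with object set $O$ and identity-on-objects dagger functors. Equivalently, it is the category of algebras for a finitary monad on $\sSet^{O\times O}$: the monad sends a graph to the free dagger category on it. This monad is built by taking the free category on the graph with an added formal reverse edge for each edge, then imposing $(fg)^{\dag}=g^{\dag}f^{\dag}$ and $f^{\dag\dag}=f$. It is obtained levelwise from a finitary monad on $\Set^{O\times O}$. Hence $\sCat^{\dag}_O$ is locally presentable, simplicially enriched, tensored and cotensored, with limits and filtered colimits computed in $\sSet^{O\times O}$.

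Next, I would right-transfer the projective model structure on $\sSet^{O\times O}$ (Kan--Quillen in each hom) along the forgetful functor $\sCat^{\dag}_O\to\sSet^{O\times O}$. By definition, the weak equivalences and fibrations are then exactly the local weak equivalences and local fibrations. The generating cofibrations and trivial cofibrations are the free dagger categories on $\partial\Delta^n\to\Delta^n$ and $\Lambda^n_k\to\Delta^n$, placed in a single hom $(x,y)$. Smallness is automatic by local presentability. The substantive condition is the acyclicity condition, and I would verify it via the Quillen path-object argument, which needs two ingredients. First, every object is fibrant after applying $\Ex^{\infty}$, and the functorial fibrant replacement $R_{\rmob}$ from the notation section stays in $\sCat^{\dag}_O$. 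This works because $\Ex^{\infty}$ preserves finite products, so composition and the involution pass through it. Second, each fibrant $\calC$ needs a functorial path object: I would take $\calC^{\Delta^1}$, the cotensor computed homwise as $\calC(x,y)^{\Delta^1}$. Its dagger is induced homwise, which is possible because the involution is a simplicial map and cotensoring is functorial. The factorization $\calC\to\calC^{\Delta^1}\to\calC\times\calC$ is then a local weak equivalence followed by a local fibration, since this holds in each hom. Alternatively, I would simply check that the transferred class of local weak equivalences is closed under pushouts along the generating trivial cofibrations. This follows from the classical argument (Dwyer--Kan's proof for $\sCat_O$), because a dagger pushout along a free dagger cell is the classical pushout along the free cell on both the edge and its reverse.

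The remaining properties are then routine. Simpliciality comes from the homwise cotensor: the pullback-power axiom (SM7) for fibrations is checked in $\sSet^{O\times O}$, where it holds. Right properness holds because pullbacks and the classes of fibrations and weak equivalences are all detected homwise, and Kan--Quillen is right proper.

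I expect the main obstacle to be the acyclicity step, since that is where the dagger structure genuinely interacts with colimits. My first approach would be the fibrant-replacement plus path-object argument, because it avoids any computation of pushouts: it needs only that $\Ex^{\infty}$ and $(-)^{\Delta^1}$ commute with finite products and with the involution.
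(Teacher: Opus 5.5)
Your argument is correct. There is no proof of this statement in the present paper to compare it with, since the theorem is only quoted in \cref{review.section} from Part I \cite{Aka26}. Taken on its own terms, your route works.
\begin{itemize}
  \item $\sCat^{\dag}_O$ is the category of simplicial objects in the algebraic category $\Cat^{\dag}_O$. It is therefore monadic over $\sSet^{O\times O}$ for a finitary monad, and right transfer gives exactly the local weak equivalences and local fibrations.
  \item Acyclicity follows from Quillen's path-object argument. The homwise functors $\Ex^{\infty}$ (this is $R_{\rmob}$) and $(-)^{\Delta^1}$ preserve finite products, so they carry composition, identities and the involution along.
  \item SM7 in its cotensor form, and right properness, reduce to homwise statements about $\sSet$.
\end{itemize}

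Two details need adjusting.
\begin{itemize}
  \item The free dagger category on a graph needs no relations imposed. It is the free category on the doubled graph, with the involution given by reversing paths and exchanging each edge with its formal reverse, exactly as the paper describes $[m]_{\dag}$.
  \item The tensor $\calC\otimes K$ is not computed homwise, so \emph{tensored} needs its own argument. One way is the adjoint functor theorem applied to the accessible, limit-preserving functors $(-)^{\Delta^n}$, followed by $\calC\otimes K=\colim_{\Delta^n\to K}\calC\otimes\Delta^n$.
\end{itemize}

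Your two acyclicity arguments trade off as follows. The path-object argument is the more economical, since it needs no control over colimits. The pushout argument uses the fact, recalled next to the theorem for $\bbA_{\dag}(K)\to\bbA_{\dag}(L)$, that after forgetting the dagger, attaching a free dagger cell at $(x,y)$ is an ordinary free attachment at $(x,y)$ followed by its conjugate at $(y,x)$. It reduces everything to Dwyer--Kan's structure on $\sCat_O$, and it yields in addition that $\UdagSCat$ sends cofibrations of $\sCat^{\dag}_O$ to cofibrations of $\sCat_O$.
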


\begin{definition}
  We recall some notions of coherently unitary morphisms:
  \begin{itemize}
    \item a \emph{natural dagger interval} is a factorization $\bbJ_{\dag} \xrightarrow{j_{\bbH}} \bbH \xrightarrow{p_{\bbH}} \bbI_{\dag}$ of $q$ in $\sCat^{\dag}_{\{0,1\}}$ such that $j_{\bbH}$ is a fixed-object cofibration, $p_{\bbH}$ is a local weak equivalence, and every mapping space of $\bbH$ is countable.
    We write $h_{\bbH}:=j_{\bbH}(a)$;
    \item for $x,y\in\Ob(\calC)$, let $\calC\langle x,y\rangle$ denote the dagger simplicial category with two formal objects $0$ and $1$, whose mapping spaces and composition are inherited from $\calC$ by labelling them by $x$ and $y$;
    \item a class $[v]:x\to y$ in $\h\calC$ is \emph{coherently unitary} if there are a natural dagger interval $\bbH$ and a dagger functor $\Phi:\bbH \to R_{\rmob}(\calC\langle x,y\rangle)$ such that $[\Phi(h_{\bbH})]=[r_{x,y}(v)]$ in $\pi_0R_{\rmob}(\calC\langle x,y\rangle)(0,1)$.
    We call $x$ and $y$ coherently unitarily equivalent if there exists a coherently unitary class from $x$ to $y$;
    \item a dagger simplicial functor $f:\calC\to\calD$ is a \emph{dagger DK-equivalence} if it is a locally weak homotopy equivalence and essentially coherently unitary surjective.
  \end{itemize}
\end{definition}

\begin{theorem}[Dagger Bergner model structure]
  There is a left proper combinatorial model structure $\sCat^{\dag}_{\Bergner}$ whose weak equivalences are the dagger DK-equivalences.
\end{theorem}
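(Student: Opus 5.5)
The plan is to obtain $\sCat^{\dag}_{\Bergner}$ by transferring along the object set, in the style of Bergner's model structure, via Kan's recognition theorem for cofibrantly generated model categories. The generating cofibrations will be
\begin{align}
  I=\{\FdagSCat\Sigma(\partial\Delta^n\to\Delta^n)\}_{n\ge0}\cup\{\varnothing\to\bfone_{\dag}\},
\end{align}
where $\Sigma K$ denotes the simplicial category with two objects and $K$ as its only nontrivial mapping space. The generating trivial cofibrations will be
\begin{align}
  J=\{\FdagSCat\Sigma(\Lambda^n_k\to\Delta^n)\}\cup\{\bfone_{\dag}\xrightarrow{0}\bbH\},
\end{align}
with $\bbH$ ranging over a set of representatives of isomorphism classes of natural dagger intervals. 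Countability of the mapping spaces in the definition of a natural dagger interval is exactly what makes this a set.

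By adjunction, the maps with the right lifting property against $I$ are the functors that are surjective on objects and local trivial fibrations. These are dagger DK-equivalences, since surjectivity on objects trivially gives essential coherently unitary surjectivity. The maps with the right lifting property against $J$ are the local fibrations $f:\calC\to\calD$ with the following lifting property: for every dagger functor $\bbH\to\calD$ and every lift of the object $0$, there is an extension to $\calC$. Local presentability of $\sCat^{\dag}$ gives the smallness conditions, and combinatoriality follows.

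The substance of the proof is in three steps.

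\begin{enumerate}
  \item \textbf{Weak equivalences form a good class.} I will show that dagger DK-equivalences satisfy 2-out-of-3 and are closed under retracts. Working in $\h\calC$, I will check that coherent unitarity is closed under composition and inverses, and is detected after a local weak equivalence. Composition is done by gluing natural dagger intervals, using the fixed-object model structure on $\sCat^{\dag}_{\{0,1,2\}}$.
  \item \textbf{$J$-cell complexes are weak equivalences.} The horn part is local, so pushouts of it are local trivial cofibrations between fixed objects. For a pushout along $\bfone_{\dag}\to\bbH$, I will argue as Bergner does: the new object is coherently unitarily equivalent to the old one via $h_{\bbH}$. Because $p_{\bbH}$ is a local weak equivalence onto $\bbI_{\dag}$, the inclusion of the old object is fully faithful up to homotopy. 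Filtered colimits are then handled by compactness.
  \item \textbf{Compatibility of fibrations and trivial fibrations.} I must show that a $J$-injective dagger DK-equivalence is $I$-injective. Essential surjectivity gives, for each $d\in\calD$, an object $c$ and a coherently unitary class from $f(c)$ to $d$. I then need to realize this class as an honest functor $\bbH\to\calD$ and lift it, which yields an object over $d$ exactly.
\end{enumerate}

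I expect step 3, together with the fully-faithfulness part of step 2, to be the main obstacle. The difficulty is that coherent unitarity is defined through $R_{\rmob}(\calC\langle x,y\rangle)$ rather than $\calC$ itself. To move the functor $\Phi:\bbH\to R_{\rmob}(\calD\langle x,y\rangle)$ into $\calD$, I will first factor the composite $\bbJ_{\dag}\to\bbH$ within the fixed-object model structure. This produces a natural dagger interval $\bbH'$ mapping into $\calD\langle x,y\rangle$, after using that $\calD$ is locally fibrant on the relevant mapping spaces; this holds because $f$ is a local fibration and we may reduce to fibrant targets. Right properness of the fixed-object model structure is what lets this replacement preserve the homotopy class of $h$.

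Left properness of the resulting model structure then follows from the dagger analogue of Bergner's argument. Pushouts along cofibrations preserve local weak equivalences, because cofibrations are, up to retract, free on mapping spaces. Essential surjectivity is manifestly preserved under pushout along a dagger DK-equivalence.
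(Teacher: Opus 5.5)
This paper does not prove the statement; it only recalls it from Part~I \cite{Aka26}, so there is no in-text proof to compare against. The recalled ingredients fit your Kan-recognition outline with $I$ and $J$ as you chose them. These are the free dagger arrows $\bbA_{\dag}(K)=\FdagSCat\Sigma K$, natural dagger intervals with countable mapping spaces, and the facts that $\UdagSCat$ creates colimits and preserves cofibrations. The architecture is therefore reasonable. The genuine gap is in step~3, where you yourself located the difficulty.

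You claim that $\calD$ is locally fibrant on the relevant mapping spaces ``because $f$ is a local fibration and we may reduce to fibrant targets.'' This is false. A local fibration gives no information about any mapping space of $\calD$; for instance, $\id_{\calD}$ is a local fibration for every $\calD$. In particular it says nothing about $\calD(f(c),d)$ when $d$ is not in the image of $f$. Kan's condition $\mathrm{RLP}(J)\cap W\subseteq\mathrm{RLP}(I)$ must hold for arbitrary targets. There is no evident reduction to fibrant $\calD$, since $J$-injectivity is not inherited by comparison maps into a replacement of $\calD$.

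Your mechanism also fails on its own terms, for two reasons.
\begin{itemize}
  \item Factoring $\bbJ_{\dag}\to R_{\rmob}(\calD\langle x,y\rangle)$ only produces an interval mapping into $R_{\rmob}(\calD\langle x,y\rangle)$, not into $\calD$, because $\calD\langle x,y\rangle\to R_{\rmob}(\calD\langle x,y\rangle)$ is not a fibration.
  \item A small-object factorization into such a large target has no reason to have countable mapping spaces. The resulting interval then need not be isomorphic to a member of $J$, so $J$-injectivity cannot be applied to it.
\end{itemize}

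The repair uses right properness to land in $\calD$, not merely to preserve the class of $h$.
\begin{enumerate}
  \item In $\sCat^{\dag}_{\{0,1\}}$, factor $\Phi$ as a trivial cofibration $\bbH\to\bbH''$ followed by a local fibration $\bbH''\to R_{\rmob}(\calD\langle x,y\rangle)$.
  \item Pull back along the local weak equivalence $\calD\langle x,y\rangle\to R_{\rmob}(\calD\langle x,y\rangle)$ to get $P$. Right properness makes $P\to\bbH''$ a local weak equivalence.
  \item Since $\bbI_{\dag}$ is terminal in $\sCat^{\dag}_{\{0,1\}}$, the map $\bbH''\to\bbI_{\dag}$ is a local weak equivalence. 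Hence $P$ has weakly contractible mapping spaces and a dagger functor $P\to\calD\langle x,y\rangle\to\calD$. Every vertex of $P(0,1)$ maps into the class $[v]$.
  \item Pass to a countable dagger subcategory $P_0\subseteq P$ containing such a vertex whose mapping spaces remain weakly contractible. To do this, repeatedly add finite subcomplexes carrying null-homotopies of finite subcomplexes, and close under composition and dagger, countably many times.
  \item Factor $\bbJ_{\dag}\to P_0$ by a countable small object argument into a fixed-object cofibration followed by a local trivial fibration.
\end{enumerate}
The middle object is a countable natural dagger interval with a dagger functor to $\calD$ sending $0\mapsto f(c)$ and $1\mapsto d$. $J$-injectivity then yields an object of $\calC$ over $d$.

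Step~2 also needs a better argument. That the old objects stay fully faithful does not follow just from $p_{\bbH}$ being a local weak equivalence, since the new mapping spaces are built from words passing through the glued object. The economical route is to forget the dagger. $\UdagSCat(\bfone_{\dag}\to\bbH)$ is an ordinary Bergner trivial cofibration, and $\UdagSCat$ creates pushouts. The ordinary Bergner model structure then supplies the local weak equivalence, and the same reduction gives the local-equivalence part of left properness.
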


\begin{proposition}
  We shall also use the following facts:
  \begin{itemize}
    \item the functor $\UdagSCat$ creates all small colimits;
    \item for every simplicial set $K$, the free dagger arrow $\bbA_{\dag}(K)$ is characterized by
    \begin{align}
      \Hom_{\sCat^{\dag}}(\bbA_{\dag}(K),\calC)
      \cong
      \coprod_{x,y\in\Ob(\calC)}
      \Hom_{\sSet}(K,\calC(x,y));
    \end{align}
    \item after forgetting the dagger, a pushout along $\bbA_{\dag}(K)\to\bbA_{\dag}(L)$ is the composite of the ordinary free attachment of $K\to L$ at $(x,y)$ and its dagger-conjugate attachment at $(y,x)$;
    \item the functor $\UdagSCat$ sends dagger Bergner cofibrations to ordinary Bergner cofibrations.
  \end{itemize}
\end{proposition}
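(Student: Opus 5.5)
These facts are recalled from Part I, but I would verify each one directly from the explicit description of $\sCat^{\dag}$ as simplicial categories $\calC$ equipped with an identity-on-objects involution $\dag:\calC\to\calC^{\myop}$. Throughout I would use the monadic description: $\UdagSCat$ has the left adjoint $\FdagSCat$, and $\FdagSCat(\calA)$ is obtained by freely adjoining formal daggers of the morphisms of $\calA$.

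\emph{First claim: $\UdagSCat$ creates colimits.} Let $D:I\to\sCat^{\dag}$ be a small diagram, and form the colimit $L=\colim\UdagSCat D$ in $\sCat$. Taking opposites is an isomorphism of categories $\sCat\to\sCat$, so $L^{\myop}\cong\colim(\UdagSCat D)^{\myop}$. The involutions $\dag_i:D_i\to D_i^{\myop}$ are natural in $i$, so they assemble into a map $\dag:L\to L^{\myop}$. This map is the identity on objects, because objects of a colimit are the colimit of the object sets and each $\dag_i$ is the identity there. It is involutive by uniqueness of maps out of a colimit. The same uniqueness shows that this dagger structure is the unique one making the colimit maps dagger functors, and that the resulting cocone is universal in $\sCat^{\dag}$. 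This is exactly creation of colimits.

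\emph{Second claim: the free dagger arrow.} I would set $\bbA_{\dag}(K):=\FdagSCat(\Sigma K)$, where $\Sigma K$ is the simplicial category with objects $\{0,1\}$, $\Sigma K(0,1)=K$, identities at $0$ and $1$, and no other morphisms. Adjunction gives
\begin{align}
\Hom_{\sCat^{\dag}}(\bbA_{\dag}(K),\calC)\cong\Hom_{\sCat}(\Sigma K,\UdagSCat\calC).
\end{align}
A simplicial functor $\Sigma K\to\calC$ consists of a choice of objects $x,y$ (the images of $0,1$) together with a map $K\to\calC(x,y)$, which gives the displayed coproduct. In the third claim I need the explicit form of $\bbA_{\dag}(K)$, which I would compute as follows:
\begin{itemize}
\item its hom-objects are $K$ from $0$ to $1$ and $K$ from $1$ to $0$, the latter consisting of the formal daggers;
\item there are identities at $0$ and $1$, and no nontrivial composites;
\item words of length at least two never arise, because they would require composing $0\to1$ with $1\to0$, which the free construction does not impose.
\end{itemize}
This last point needs care: the free dagger category on a graph in fact contains composites such as $a^{\dag}a$. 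So I would instead define $\bbA_{\dag}(K)$ by the universal property. Its underlying simplicial category is then the free simplicial category on the simplicial graph with edges $K:0\to1$ and $K:1\to0$, equipped with the swap involution. This involution is well defined because free categories on graphs have opposites equal to free categories on the reversed graph.

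\emph{Third claim: pushouts along $\bbA_{\dag}(K)\to\bbA_{\dag}(L)$.} By the first claim, I would compute the pushout in $\sCat$. By the description just given, $\UdagSCat\bbA_{\dag}(K)$ is the free simplicial category on two parallel copies of $K$. Free constructions preserve coproducts of graphs, so this is the pushout of the two ordinary free arrows $\Sigma K$ at $(0,1)$ and $(1,0)$ over $\{0,1\}$. Moreover, $\bbA_{\dag}(K)\to\bbA_{\dag}(L)$ is the pushout of the two ordinary maps $\Sigma K\to\Sigma L$. Pasting pushouts then expresses the attachment as two successive ordinary attachments: one at $(x,y)$, and one at $(y,x)$ that is the dagger-conjugate of the first.

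\emph{Fourth claim: cofibrations.} Dagger Bergner cofibrations are retracts of transfinite composites of pushouts of generating cofibrations. By Part I these generators are of the form $\bbA_{\dag}(\partial\Delta^n)\to\bbA_{\dag}(\Delta^n)$ together with $\varnothing\to\bfone_{\dag}$. I would check the following steps in turn:
\begin{itemize}
\item $\UdagSCat$ preserves colimits by the first claim, and it preserves retracts;
\item by the third claim, it sends each pushout of an arrow generator to a composite of two pushouts of ordinary Bergner generators;
\item it sends $\varnothing\to\bfone_{\dag}$ to $\varnothing\to\bfone$, which is an ordinary generator.
\end{itemize}
Hence $\UdagSCat$ lands in ordinary cofibrations.

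The main obstacle is the description of $\UdagSCat\bbA_{\dag}(K)$ as the ordinary free category on two arrows. This needs a check that the free dagger construction on a simplicial graph imposes no relations beyond the involution. Concretely, I must verify that the swap-involution on the free category on the doubled graph satisfies the universal property, and I would check this levelwise in each simplicial degree.
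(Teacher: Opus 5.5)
There is no argument in the paper to compare yours with: this proposition sits in the review section, and all four facts are imported from Part I without proof. Your verification is correct and amounts to a self-contained reconstruction. It assembles the involutions on the colimit in $\sCat$, takes $\bbA_{\dag}(K):=\FdagSCat(\Sigma K)$, pastes pushouts, and finishes with a cell induction. It depends on Part I at only one point. The fourth bullet needs the generating dagger Bergner cofibrations to be $\bbA_{\dag}(\partial\Delta^n)\to\bbA_{\dag}(\Delta^n)$ and $\varnothing\to\bfone_{\dag}$; equivalently, the trivial fibrations are the surjective-on-objects local trivial fibrations. This paper never states that, so you should cite it explicitly from Part I.

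The obstacle you flag at the end is not a real one, and the detour in your second claim is unnecessary. Your first definition $\FdagSCat(\Sigma K)$ already has the required universal property. Only your first explicit description, with no nontrivial composites, was wrong, and it should simply be deleted.

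In general $\FdagSCat(\calA)\cong\calA\amalg_{\Ob\calA}\calA^{\myop}$ with the swap involution, which is the $\sCat$ analogue of the recalled formula for $\FdagSSet$. Its universal property is immediate:
\begin{itemize}
\item a dagger functor out of it is determined by its restriction to $\calA$;
\item any $F:\calA\to\UdagSCat\calC$ extends to the second factor by $a^{\myop}\mapsto F(a)^{\dag}$.
\end{itemize}
Hence $\UdagSCat\bbA_{\dag}(K)\cong\Sigma K\amalg_{\{0,1\}}(\Sigma K)^{\myop}$. These are two antiparallel copies of $K$, not parallel ones. This is exactly the input your third claim needs, and no levelwise check on free graphs is required.
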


\subsection{The dagger Joyal model structure and the dagger Joyal--Bergner equivalence}

\begin{definition}
  We recall some notions of dagger Joyal weak equivalences and cofibrations:
  \begin{itemize}
    \item a morphism of dagger simplicial sets is a \emph{dagger Joyal equivalence} if its image under $\frakC_{\dag}$ is a dagger DK-equivalence;
    \item a monomorphism $X\to Y$ of dagger simplicial sets is a \emph{free cofibration} if no new nondegenerate simplex of positive dimension is fixed by the dagger;
    \item for later use, we fix the following generating cofibration set for the dagger Joyal model structure:
    \begin{align}
      I^{\sSet}_{\dag}
      :=\{\FdagSSet(\partial\Delta^n) \to \FdagSSet(\Delta^n) ~|~ n\geq0\}.
    \end{align}
  \end{itemize}
\end{definition}

\begin{theorem}[Dagger Joyal--Bergner equivalence]
  We have:
  \begin{itemize}
    \item there is a left proper combinatorial model structure $\sSet^{\dag}_{\Joyal}$  whose cofibrations are the free cofibrations and whose weak equivalences are the dagger Joyal equivalences;
    \item the lifted adjunction induces a Quillen equivalence
    \begin{align}
      \frakC_{\dag}:
      \sSet^{\dag}_{\Joyal}
      \rightleftarrows
      \sCat^{\dag}_{\Bergner}
      :\N_{\dag};
    \end{align}
    \item if $\calC$ is dagger Bergner fibrant, then the counit $\frakC_{\dag}\N_{\dag}\calC\to\calC$ is a dagger DK-equivalence;
    \item if $X$ is dagger Joyal cofibrant and $\frakC_{\dag}X\to R_{\rmob}\frakC_{\dag}X$ is an object-preserving fibrant replacement, then its adjoint $X\to \N_{\dag}R_{\rmob}\frakC_{\dag}X$ is a dagger Joyal equivalence.
  \end{itemize}
\end{theorem}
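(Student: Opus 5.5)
The statement bundles four assertions. I would prove them in the order (i), (iii), (iv), (ii), and reduce to the classical Joyal--Bergner theory wherever possible by means of the identities $\UdagSCat\frakC_{\dag}=\frakC\UdagSSet$ and $\UdagSSet\N_{\dag}=N\UdagSCat$.

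\textbf{Existence of $\sSet^{\dag}_{\Joyal}$.} I would apply Jeff Smith's recognition theorem to the presheaf topos $\sSet^{\dag}\simeq\Fun(\prism_{\rev}^{\myop},\Set)$. The set of generating cofibrations is $I^{\sSet}_{\dag}$, and the weak equivalences $W$ are the preimage under $\frakC_{\dag}$ of the dagger DK-equivalences. The checks are as follows.
\begin{itemize}
  \item $I^{\sSet}_{\dag}$ generates exactly the free cofibrations. This follows by attaching nondegenerate non-self-dual simplices in dagger pairs.
  \item $W$ is accessible, has two-out-of-three, and is closed under retracts. These properties are inherited from the dagger Bergner structure, because $\frakC_{\dag}$ is an accessible left adjoint.
  \item Every $I^{\sSet}_{\dag}$-injective map is in $W$. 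By adjunction, lifting against $\FdagSSet(\partial\Delta^n)\to\FdagSSet(\Delta^n)$ is the same as lifting against $\partial\Delta^n\to\Delta^n$ after applying $\UdagSSet$, so such a map is an underlying trivial fibration. Hence $\frakC$ of it is an ordinary DK-equivalence that is surjective on objects. It is therefore a local weak equivalence, and it is essentially coherently unitary surjective because it is literally surjective on objects.
  \item $\mathrm{cof}(I^{\sSet}_{\dag})\cap W$ is closed under pushouts and transfinite composites. Since $\frakC_{\dag}$ preserves colimits and sends generating free cofibrations to dagger Bergner cofibrations ($\frakC_{\dag}\FdagSSet\cong\FdagSCat\frakC$), this follows from left properness of $\sCat^{\dag}_{\Bergner}$.
\end{itemize}
The delicate point in the last check is that the objects involved need not be cofibrant, so left properness cannot be quoted blindly. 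I would either verify directly that $\frakC_{\dag}$ of a free cofibration is a Bergner cofibration with cofibrant enough source, or reduce to the underlying simplicial categories, where left properness of the ordinary Bergner structure applies. Left properness of $\sSet^{\dag}_{\Joyal}$ then follows by the same argument.

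\textbf{Counit, item (iii).} Let $\calC$ be dagger Bergner fibrant, so in particular locally Kan. The underlying map of the counit is the ordinary counit $\frakC N(\UdagSCat\calC)\to\UdagSCat\calC$, which is an ordinary DK-equivalence by Lurie's theorem. This ordinary counit is bijective on objects. A dagger functor that is a local weak equivalence and bijective on objects is a dagger DK-equivalence, because essential coherently unitary surjectivity is trivial when the functor is surjective on objects. So the dagger counit is a dagger DK-equivalence.

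\textbf{Unit, item (iv).} Let $X$ be dagger Joyal cofibrant and $\eta\colon X\to\N_{\dag}R_{\rmob}\frakC_{\dag}X$ the adjoint of the replacement map. By the triangle identity, $\frakC_{\dag}(\eta)$ followed by the counit at $R_{\rmob}\frakC_{\dag}X$ equals $\frakC_{\dag}X\to R_{\rmob}\frakC_{\dag}X$. This composite is a local weak equivalence ($\Ex^\infty$) and the identity on objects, hence a dagger DK-equivalence. The counit at $R_{\rmob}\frakC_{\dag}X$ is a dagger DK-equivalence by the argument for item (iii). That argument only used that the mapping spaces are Kan complexes, which holds for $R_{\rmob}\frakC_{\dag}X$. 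Two-out-of-three then gives that $\frakC_{\dag}(\eta)$ is a dagger DK-equivalence, which by definition means $\eta$ is a dagger Joyal equivalence.

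\textbf{Quillen equivalence, item (ii).} $\frakC_{\dag}$ preserves cofibrations, as shown in the existence step. It preserves weak equivalences by the definition of $W$, so it is left Quillen. For the Quillen equivalence I would use the standard criterion:
\begin{itemize}
  \item $\frakC_{\dag}$ reflects weak equivalences between cofibrant objects, which holds by the definition of $W$;
  \item the derived counit is a weak equivalence at fibrant objects, which is item (iii) applied to the cofibrant object $\N_{\dag}\calC$, whose cofibrancy I would check.
\end{itemize}

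\textbf{Main obstacle.} I expect the main obstacle to be the pushout-stability of trivial cofibrations in the existence step. It requires controlling $\frakC_{\dag}$ on non-cofibrant dagger simplicial sets, namely those with self-dual nondegenerate simplices, and checking that coherent-unitary essential surjectivity survives pushouts.
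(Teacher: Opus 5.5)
The paper does not prove this theorem here; it is recalled from Part~I \cite{Aka26}, so I can only assess your outline on its own terms. The overall route is sound: Smith's theorem with $W:=\frakC_{\dag}^{-1}(\text{dagger DK-equivalences})$, the counit via the ordinary counit plus bijectivity on objects, the unit via the triangle identity, and Hovey's criterion. Your arguments for items (iii) and (iv) are complete, and the latter does not even use cofibrancy of $X$.

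\textbf{The step that fails.} You plan to check that $\N_{\dag}\calC$ is cofibrant, but it essentially never is. Take any non-identity morphism $f:x\to y$ of $\calC$. The $2$-simplex $(f,f^{\dag})$, with long edge $f^{\dag}f$ and degenerate homotopy, is nondegenerate and fixed by the dagger. For instance, $E_{\dag}=\N_{\dag}(\bbI_{\dag})$ has fixed nondegenerate cells $\sigma^i_n$ for even $n$; these appear in the proof of \cref{jt.lem.uequiv-invariance}. So you cannot identify the derived counit with the strict one.

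The repair is one line. Let $q:Q\N_{\dag}\calC\to\N_{\dag}\calC$ be a cofibrant replacement. Then $\frakC_{\dag}(q)$ is a dagger DK-equivalence by the very definition of $W$. Hence the derived counit $\frakC_{\dag}(Q\N_{\dag}\calC)\to\calC$ is a dagger DK-equivalence by item (iii) and two-out-of-three.

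\textbf{The obstacle you identify does not exist.} Note that $\frakC_{\dag}$ preserves colimits and sends $I^{\sSet}_{\dag}$ to dagger Bergner cofibrations, via $\frakC_{\dag}\FdagSSet\cong\FdagSCat\frakC$ and \cite[Lemma~3.2.7 (1)]{Aka26}. It therefore sends $\mathrm{cof}(I^{\sSet}_{\dag})\cap W$ into the dagger Bergner \emph{trivial} cofibrations. In any model category these are stable under arbitrary cobase change and transfinite composition. So Smith's closure condition holds with no appeal to left properness and no control of non-cofibrant objects. Coherent-unitary essential surjectivity is preserved automatically.

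Left properness of $\sCat^{\dag}_{\Bergner}$ is needed only to deduce left properness of $\sSet^{\dag}_{\Joyal}$. There it applies directly, since left properness carries no cofibrancy hypothesis on objects.

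Finally, make explicit the two Part~I inputs you rely on:
\begin{itemize}
\item $\FdagSCat$ preserves cofibrations;
\item dagger Bergner fibrant objects are locally Kan, which is needed in item (iii).
\end{itemize}
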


\section{The model structures on \texorpdfstring{$\Fun(\prism^{\myop},\sSet^{\dag})$}{Fun(Δ,sSet†)}}\label{cs.section}

In this section, we construct a resolution presentation using simplicial objects in dagger simplicial sets (\cref{cs.thm.equivalence}).

We first equip $\sSpace^{\dag\vee}=\Fun(\prism^{\myop},\sSet^{\dag})$ with its Reedy model structure and study level-evaluation adjunctions (\cref{cs.lem.evaluation-quillen}).
We then localize at maps forcing simplicial diagrams to be homotopically constant;
the resulting resolution model structure is Quillen equivalent to $\sSet^{\dag}_{\Joyal}$ through the constant--evaluation adjunction (\cref{cs.thm.equivalence}).

\subsection{The Reedy structure}

We introduce the external tensor $A\boxtimes K$ and the associated adjunctions $(-)\boxtimes\Delta^k\dashv\ev_k$ for simplicial objects in dagger simplicial sets (\cref{cs.cor.evk}).
After constructing the Reedy model structure, we prove that these adjunctions are Quillen  (\cref{cs.lem.evaluation-quillen}).

\begin{notation}\label{cs.not.SS}
  We let $\sSpace^{\dag\vee}:=\Fun(\prism^{\myop},\sSet^{\dag})$ denote the category of simplicial objects in dagger simplicial sets;
  via $\sSet^{\dag}\simeq\Fun(\prism_{\rev}^{\myop},\Set)$ it is equivalent to $\Fun((\prism_{\rev}\times\prism)^{\myop},\Set)$,
  i.e.\ to bisimplicial sets equipped with levelwise reversal involutions in the first coordinate.
\end{notation}

\begin{notation}
  We fix some notational conventions:
  \begin{itemize}
    \item for $W\in\sSpace^{\dag\vee}$ and $\alpha:[k]\to[l]$ in $\prism$, we let $W_k\in\sSet^{\dag}$ denote the $k$-th level of $W$, and $W(\alpha):W_l\to W_k$ denote the structure map induced by $\alpha$;
    \item for $A\in\sSet^{\dag}$ and $K\in\sSet$, we define
    \begin{align}
      (A\boxtimes K)_k:=A\cdot K_k=\textstyle\coprod_{K_k}A,
    \end{align}
    with structure maps relabelling the summands along the maps $K(\alpha):K_l\to K_k$;
    \item for the constant and the evaluation, we write
    \begin{align}
      c:=(-)\boxtimes\Delta^0:\sSet^{\dag}\to\sSpace^{\dag\vee}
      \quad \text{and} \quad 
      \ev_0(W):=W_0;
    \end{align}
    \item for $A\in\sSet^{\dag}$ and $W\in\sSpace^{\dag\vee}$, we define a simplicial set
    \begin{align}
      \Map(A,W):=[k\mapsto\Hom_{\sSet^{\dag}}(A,W_k)]\in\sSet,
    \end{align}
    with simplicial operators given by postcomposition with the structure maps of $W$.
  \end{itemize}
\end{notation}

\begin{lemma}\label{cs.lem.adjunction}
  For every $A\in\sSet^{\dag}$, $K\in\sSet$ and $W\in\sSpace^{\dag\vee}$, there exists a natural bijection
  \begin{align}\label{cs.eq.boxtimes}
    \Hom_{\sSpace^{\dag\vee}}(A\boxtimes K,W)
    \simeq
    \Hom_{\sSet}(K,\Map(A,W)).
  \end{align}
\end{lemma}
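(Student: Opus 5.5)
We prove this by writing down both sides levelwise and matching them through the universal property of coproducts; no earlier results are needed beyond the definitions in the preceding notation.

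\textbf{From left to right.} A morphism $f:A\boxtimes K\to W$ consists of dagger morphisms $f_k:\coprod_{K_k}A\to W_k$ that are natural in $[k]\in\prism$. By the universal property of the coproduct in $\sSet^{\dag}$, each $f_k$ is the same as a family $(f_{k,\sigma})_{\sigma\in K_k}$ of dagger morphisms $f_{k,\sigma}:A\to W_k$. Such a family is exactly a function $\tilde f_k:K_k\to\Hom_{\sSet^{\dag}}(A,W_k)=\Map(A,W)_k$.

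\textbf{Simplicial compatibility.} Let $\alpha:[k]\to[l]$. The structure map $(A\boxtimes K)(\alpha)$ sends the summand indexed by $\tau\in K_l$ identically to the summand indexed by $K(\alpha)\tau$. Hence the naturality square $W(\alpha)\circ f_l=f_k\circ(A\boxtimes K)(\alpha)$, restricted to the summand $\tau$, reads
\begin{align}
  W(\alpha)\circ f_{l,\tau}=f_{k,K(\alpha)\tau}.
\end{align}
The left side is $\Map(A,W)(\alpha)(\tilde f_l(\tau))$, since the simplicial operators of $\Map(A,W)$ are postcomposition with $W(\alpha)$. The right side is $\tilde f_k(K(\alpha)\tau)$. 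So the square commutes for all $\tau$ if and only if $\tilde f=(\tilde f_k)_k$ is a map of simplicial sets $K\to\Map(A,W)$.

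\textbf{Inverse and naturality.} The inverse sends $g:K\to\Map(A,W)$ to the map whose component at level $k$ on the summand $\sigma$ is $g_k(\sigma):A\to W_k$. The two assignments are mutually inverse because the coproduct decomposition is bijective. Naturality follows by checking summand-wise:
\begin{itemize}
  \item in $W$, both sides act by postcomposition;
  \item in $K$, both sides act by reindexing summands, respectively precomposition;
  \item in $A$, both sides act by precomposition on each copy of $A$.
\end{itemize}

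The only point needing care is that coproducts in $\sSet^{\dag}$ are computed underlying, with the summand-wise dagger. This holds because $\sSet^{\dag}\simeq\Fun(\prism_{\rev}^{\myop},\Set)$ is a presheaf category, so colimits are computed pointwise. It guarantees that a dagger morphism out of $\coprod_{K_k}A$ is the same as a family of dagger morphisms out of $A$.
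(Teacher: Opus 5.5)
Your proposal is correct and follows essentially the same route as the paper: you decompose a morphism out of $\coprod_{K_k}A$ into a family $(f_{k,\sigma})_{\sigma\in K_k}$, identify the naturality squares with compatibility of $\sigma\mapsto f_{k,\sigma}$ with the simplicial operators of $\Map(A,W)$, and conclude bijectivity and naturality. Your closing remark about coproducts in $\sSet^{\dag}$ is harmless but not needed. Since $A\boxtimes K$ is defined levelwise as a coproduct in $\sSet^{\dag}$, its universal property already gives the family decomposition.
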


\begin{proof}
  A morphism $f:A\boxtimes K\to W$ consists of morphisms $f_n:\coprod_{K_n}A\to W_n$ in $\sSet^{\dag}$, i.e.\ of families $(f_{n,x}:A\to W_n)_{x\in K_n}$.
  For $\alpha:[m]\to[n]$ the naturality square for $\alpha$ commutes if and only if $W(\alpha)\circ f_{n,x}=f_{m,K(\alpha)(x)}$ for all $x\in K_n$.

  Define a morphism of simplicial sets $\varphi:K\to\Map(A,W)$ by 
  \begin{align}
    \varphi_n(x):=f_{n,x}\in\Hom_{\sSet^{\dag}}(A,W_n)=\Map(A,W)_n.
  \end{align}
  By the definition of the simplicial operators of $\Map(A,W)$, the compatibility 
  \begin{align}
    \varphi_m(K(\alpha)(x))= \Map(A,W)(\alpha)(\varphi_n(x)) =W(\alpha)\circ f_{n,x}
  \end{align}
  of $\varphi$ with $\alpha$ is exactly the naturality.
  Thus $f\mapsto\varphi$ is a bijection onto the set of simplicial maps, clearly natural in $A$, $K$ and $W$.
\end{proof}

\begin{corollary}\label{cs.cor.evk}
  For every $k\geq0$ there exists an adjunction $(-)\boxtimes\Delta^k\dashv\ev_k$:
  \begin{align}
    \Hom_{\sSpace^{\dag\vee}}(A\boxtimes\Delta^k,W)
    \simeq
    \Map(A,W)_k
    =
    \Hom_{\sSet^{\dag}}(A,W_k).
  \end{align}
\end{corollary}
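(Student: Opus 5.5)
The plan is to specialize \cref{cs.lem.adjunction} to $K=\Delta^k$ and then apply the Yoneda lemma in $\sSet$.

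For fixed $A\in\sSet^{\dag}$, $W\in\sSpace^{\dag\vee}$ and $k\geq0$, \cref{cs.lem.adjunction} gives a bijection
\begin{align}
  \Hom_{\sSpace^{\dag\vee}}(A\boxtimes\Delta^k,W)\simeq\Hom_{\sSet}(\Delta^k,\Map(A,W)),
\end{align}
natural in $A$ and $W$. The Yoneda lemma identifies $\Hom_{\sSet}(\Delta^k,\Map(A,W))$ with $\Map(A,W)_k$ by evaluating at $\id_{[k]}$. By the definition of $\Map(A,W)$, we have $\Map(A,W)_k=\Hom_{\sSet^{\dag}}(A,W_k)=\Hom_{\sSet^{\dag}}(A,\ev_k W)$.

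The composite bijection is natural in $A$ and $W$, because both steps are. Explicitly, it sends $f$ to the component $f_{k,\id_{[k]}}:A\to W_k$.

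It remains to check that $\ev_k$ is a functor $\sSpace^{\dag\vee}\to\sSet^{\dag}$ and that $(-)\boxtimes\Delta^k$ is a functor $\sSet^{\dag}\to\sSpace^{\dag\vee}$. Both facts are immediate from the definitions. A natural bijection of hom-sets between these two functors then exhibits the adjunction $(-)\boxtimes\Delta^k\dashv\ev_k$.

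There is no real obstacle here. The only point needing care is naturality in $W$: the Yoneda identification commutes with postcomposition by maps $W\to W'$, since those act levelwise on $\Map(A,-)$.
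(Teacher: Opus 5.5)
Your proposal is correct and follows the paper's proof: specialize \cref{cs.lem.adjunction} to $K=\Delta^k$ and apply the Yoneda identification $\Hom_{\sSet}(\Delta^k,-)\cong(-)_k$. Your explicit description $f\mapsto f_{k,\id_{[k]}}$ and your naturality check are accurate, but naturality already comes with the lemma and the Yoneda lemma.
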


\begin{proof}
  Take $K=\Delta^k$ in \cref{cs.lem.adjunction} and apply the Yoneda lemma to $\Hom_{\sSet}(\Delta^k,-) \simeq (-)_k$.
\end{proof}

\begin{lemma}\label{cs.lem.reedy}
  There exists a Reedy model structure $\sSpace^{\dag\vee}_{\Reedy}$ on $\sSpace^{\dag\vee}$, which is combinatorial and left proper;
  its weak equivalences are the levelwise weak equivalences.
\end{lemma}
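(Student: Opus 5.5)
We obtain the Reedy structure by the general existence theorem for Reedy model structures on diagrams indexed by the Reedy category $\prism^{\myop}$, with values in the combinatorial model category $\sSet^{\dag}_{\Joyal}$. Hirschhorn's Reedy theorem, or Lurie's HTT A.2.9, applies verbatim. It gives a model structure whose weak equivalences are the levelwise dagger Joyal equivalences and whose cofibrations are the maps $W\to V$ with every relative latching map
\begin{align}
  W_k\amalg_{L_kW}L_kV\to V_k
\end{align}
a free cofibration. Its fibrations are defined by the dual matching condition. Here ``levelwise weak equivalence'' is read in $\sSet^{\dag}_{\Joyal}$, the model on the target category. Since $\prism$ has no nontrivial automorphisms, the index category is an honest (non-generalized) Reedy category. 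The latching and matching objects are computed in $\sSet^{\dag}$, which is complete and cocomplete because it is a presheaf category $\Fun(\prism_{\rev}^{\myop},\Set)$.

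For combinatoriality, local presentability is immediate: $\sSpace^{\dag\vee}\simeq\Fun((\prism_{\rev}\times\prism)^{\myop},\Set)$ by \cref{cs.not.SS}. For cofibrant generation we use the standard generating sets. The generating cofibrations are the pushout-products
\begin{align}
  (A\to B)\,\square\,(\partial\Delta^k\to\Delta^k),
  \qquad (A\to B)\in I^{\sSet}_{\dag},
\end{align}
formed with the external tensor $\boxtimes$. The generating trivial cofibrations are the analogous pushout-products with a generating set $J$ of trivial cofibrations of $\sSet^{\dag}_{\Joyal}$. To justify these, we combine \cref{cs.lem.adjunction} and \cref{cs.cor.evk} with the identification $\Hom(A\boxtimes\partial\Delta^k,W)\cong\Hom_{\sSet^{\dag}}(A,M_kW)$. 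A lifting problem against such a pushout-product then translates into a lifting problem of $A\to B$ against the relative matching map $W_k\to M_kW\times_{M_kV}V_k$. This is the usual argument of Hirschhorn 15.6.27 and Lurie's HTT A.2.9.

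Left properness is the step needing some care. Reedy cofibrations are in particular levelwise cofibrations: the $k$-th component is a transfinite composite of pushouts of relative latching maps, the Reedy cofibration condition holding for the maps between latching objects. A pushout in $\sSpace^{\dag\vee}$ is computed levelwise. Hence a pushout of a levelwise weak equivalence along a Reedy cofibration is, in each level, a pushout along a cofibration in $\sSet^{\dag}_{\Joyal}$. That model is left proper by the dagger Joyal--Bergner equivalence theorem recalled above, so the pushout is again a levelwise weak equivalence.

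The only point I expect to need explicit verification is that Reedy cofibrations are levelwise free cofibrations. This is the standard lemma that, for a Reedy cofibration $W\to V$, the map $L_kW\to L_kV$ is a cofibration, proved by induction on $k$. Since the argument is formal in any model category, I would simply cite it rather than reprove it.
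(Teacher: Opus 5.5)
Your proposal is correct and takes the same route as the paper. The paper's proof is a one-line citation: the Reedy model structure exists on diagrams indexed by the Reedy category $\prism^{\myop}$ with values in the left proper combinatorial model category $\sSet^{\dag}_{\Joyal}$. Your extra details are exactly what that citation packages, namely the generating sets obtained from $\Hom(A\boxtimes\partial\Delta^k,W)\cong\Hom_{\sSet^{\dag}}(A,M_kW)$ and left properness via levelwise cofibrancy of Reedy cofibrations and levelwise pushouts.
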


\begin{proof}
  The category $\prism^{\myop}$ is a small Reedy category, while $\sSet^{\dag}_{\Joyal}$ is left proper and combinatorial by \cite[Theorem~3.5.5]{Aka26}.
  Therefore the Reedy model structure exists and its weak equivalences are the levelwise weak equivalences.
  It is left proper and combinatorial.
\end{proof}

\begin{proposition}\label{cs.lem.evaluation-quillen}
  For every $k\geq0$, the adjunction $(-)\boxtimes\Delta^k\dashv\ev_k$ of \cref{cs.cor.evk} induces a Quillen adjunction
  \begin{align}
    (-)\boxtimes\Delta^k : \sSet^{\dag}_{\Joyal} \rightleftarrows \sSpace^{\dag\vee}_{\Reedy} : \ev_k.
  \end{align}
\end{proposition}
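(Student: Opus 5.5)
To prove that $(-)\boxtimes\Delta^k\dashv\ev_k$ is a Quillen adjunction, I will verify that the right adjoint $\ev_k$ preserves fibrations and trivial fibrations. This is the cleanest route, because Reedy fibrations and Reedy trivial fibrations are defined by matching-object conditions, and these are directly adapted to evaluation.

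Let $p:W\to V$ be a Reedy fibration (resp. Reedy trivial fibration) in $\sSpace^{\dag\vee}_{\Reedy}$. By definition, for every $n$ the relative matching map
\begin{align}
  W_n\to V_n\times_{M_nV}M_nW
\end{align}
is a fibration (resp. trivial fibration) in $\sSet^{\dag}_{\Joyal}$. Here $M_n$ denotes the matching object, computed as a limit over the matching category of $\prism^{\myop}$ at $[n]$ in $\sSet^{\dag}$. The map $W_k\to V_k$ factors as
\begin{align}
  W_k\to V_k\times_{M_kV}M_kW\to V_k,
\end{align}
and the second map is a base change of $M_kW\to M_kV$. So it suffices to show that $M_kp:M_kW\to M_kV$ is a fibration (resp. trivial fibration) whenever $p$ is Reedy (trivially) fibrant. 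This is the standard fact that matching objects of Reedy fibrations are fibrations: $M_k$ is the right Quillen functor $\lim_{\partial(\prism^{\myop}\downarrow[k])}$ on the Reedy structure over the matching category. Concretely, I would cite Hirschhorn's treatment of Reedy categories, or prove it by induction on the skeletal filtration of the boundary $\partial\Delta^k$, expressing $M_kW=\Map(\partial\Delta^k,W)$ as an iterated pullback of lower matching data. The only input is that $\sSet^{\dag}_{\Joyal}$ is a model category; no specific dagger features enter.

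Alternatively, on the left side I would check directly that $(-)\boxtimes\Delta^k$ preserves cofibrations and trivial cofibrations. The latching map at level $n$ of $A\boxtimes\Delta^k\to B\boxtimes\Delta^k$ is
\begin{align}
  \textstyle\coprod_{\Delta^k_n\setminus L}B\ \amalg\ \coprod_{L}A\to\coprod_{\Delta^k_n}B,
\end{align}
where $L$ denotes the degenerate $n$-simplices, i.e.\ those lying in the latching object. This map is a coproduct of copies of $A\to B$ and identities, and it is a (trivial) cofibration because these classes are closed under coproducts. This requires identifying the latching object of $A\boxtimes\Delta^k$ with $\coprod$ over degenerate simplices, which holds since the latching object of $\Delta^k$ in $\Set$ is its set of degenerate simplices (Eilenberg--Zilber) and $A\cdot(-)$ preserves colimits.

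The expected obstacle is this Eilenberg--Zilber identification, which needs some care because latching colimits involve the indexing over surjections. I would take the right-adjoint argument as primary and note the left-adjoint computation as a check.
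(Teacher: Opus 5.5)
Your proof is correct, but its primary argument runs on the opposite side of the adjunction from the paper's. You show that $\ev_k$ is right Quillen. You factor $W_k\to V_k$ through the relative matching map followed by a base change of $M_kW\to M_kV$. Everything then reduces to the standard fact that Reedy (trivial) fibrations have (trivial) fibrations on matching objects, hence are levelwise (trivial) fibrations (cf.\ \cite[\S15.3]{Hir02}). This uses nothing about $\sSet^{\dag}_{\Joyal}$ beyond its being a model category.

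The paper instead checks the left adjoint. It identifies $i\boxtimes\Delta^k$ with the box product $(\varnothing\to\Delta^k)\mathbin{\Box'}i$ and cites \cite[Proposition~7.36]{JT06}: the box product of a monomorphism of simplicial sets with a (trivial) cofibration is a Reedy (trivial) cofibration. That is a one-line citation, and it gives more at no extra cost. It shows that $(-)\boxtimes K$ is left Quillen for every simplicial set $K$, and that $u\mathbin{\Box'}i$ is a Reedy (trivial) cofibration for every monomorphism $u$. Your argument is more elementary but specific to evaluation at a single level.

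Your ``alternative check'' is exactly a hands-on proof of the paper's route in the case $K=\Delta^k$. However, the displayed relative latching map has $A$ and $B$ swapped. The pushout $(A\boxtimes\Delta^k)_n\amalg_{L_n(A\boxtimes\Delta^k)}L_n(B\boxtimes\Delta^k)$ is $\coprod_{\Delta^k_n\setminus L}A\amalg\coprod_{L}B$. So the copies of $A\to B$ sit over the nondegenerate simplices, and the identities of $B$ sit over the degenerate ones. At $n=0$, your formula would give an identity map instead of $\coprod_{\Delta^k_0}A\to\coprod_{\Delta^k_0}B$. The conclusion, a coproduct of copies of $A\to B$ and identities, is unaffected once this is corrected.
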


\begin{proof}
  If $i$ is a cofibration (resp. a trivial cofibration) in $\sSet^{\dag}_{\Joyal}$, then the identification $i\boxtimes\Delta^k \cong (\varnothing\to\Delta^k)\mathbin{\Box'}i$ and \cite[Proposition~7.36]{JT06} show that $i\boxtimes\Delta^k$ is a Reedy cofibration (resp. a Reedy trivial cofibration).
  Hence it is left Quillen.
\end{proof}

\subsection{The resolution model structure}

We localize the Reedy structure at the maps $A\boxtimes\Delta^k\to A\boxtimes\Delta^0$, which impose homotopical constancy on fibrant objects (\cref{cs.thm.resolution}).
By identifying this localization with Dugger's Reedy hocolim model structure, we characterize its fibrant objects and prove that $c\dashv\ev_0$ is a Quillen equivalence with the dagger Joyal model structure (\cref{cs.thm.equivalence}).

\begin{notation}\label{cs.not.Gh}
  We set
  \begin{align}
    G_{\dag}
    &:=\{\FdagSSet(\partial\Delta^n),\FdagSSet(\Delta^n)\}_{n\geq0},\\
    S_{h}
    &:=\{A\boxtimes\sigma_k : A\boxtimes\Delta^k\to A\boxtimes\Delta^0 \mid A\in G_{\dag},\ k\geq1\}.
  \end{align}
\end{notation}

\begin{remark}
  Every $A\in G_{\dag}$ is free, hence cofibrant in $\sSet^{\dag}_{\Joyal}$ by the recalled free-cofibration statement.
  Since $(-)\boxtimes\Delta^k$ and $(-)\boxtimes\Delta^0$ are left Quillen (by \cref{cs.lem.evaluation-quillen}), the domains and codomains of the maps of $S_h$ are Reedy cofibrant.
\end{remark}

\begin{lemma}[The resolution model structure]\label{cs.def.Sh}
  There exists a left proper combinatorial model structure $\sSpace^{\dag\vee}_{\CSS}:=L_{S_h}\sSpace^{\dag\vee}_{\Reedy}$ on $\sSpace^{\dag\vee}$: 
  it has the same cofibrations as the Reedy structure, its weak equivalences are the $S_h$-local equivalences, and its fibrant objects are the Reedy fibrant $S_h$-local objects.
\end{lemma}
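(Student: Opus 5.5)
The plan is to apply Hirschhorn's existence theorem for left Bousfield localizations of left proper cellular model categories, or Barwick's and Lurie's version for left proper combinatorial model categories (Smith's theorem). By \cref{cs.lem.reedy}, $\sSpace^{\dag\vee}_{\Reedy}$ is left proper and combinatorial. The set $S_h$ is indexed by $G_{\dag}\times\{k\geq1\}$, so it is a small set of maps. For a left proper combinatorial model category $\mathcal{M}$ and a set $S$ of maps, the left Bousfield localization $L_S\mathcal{M}$ exists. It is again left proper and combinatorial, its cofibrations are those of $\mathcal{M}$, and its weak equivalences are the $S$-local equivalences. Its fibrant objects are the $S$-local fibrant objects, i.e.\ those $W$ for which $\Map^h(B,W)\to\Map^h(A,W)$ is a weak equivalence of Kan complexes for every $A\to B$ in $S$.

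First I will check the hypotheses and the form of the localizing set. Combinatoriality is inherited: the Reedy structure on presheaves indexed by the Reedy category $\prism^{\myop}$, valued in a combinatorial model category, is combinatorial. Its generating cofibrations are the latching-type maps $\partial\Delta^k\,\Box'\,i$ for $i\in I^{\sSet}_{\dag}$, and similarly for the generating trivial cofibrations. Left properness is part of \cref{cs.lem.reedy}. By the remark preceding the statement, the domains and codomains of the maps in $S_h$ are Reedy cofibrant. This is not strictly needed for Smith's theorem, but it lets the derived mapping spaces in the locality condition be computed with cofibrant sources and Reedy fibrant targets. Homotopy function complexes can be taken from the simplicial frames, or from the Dugger presentation of combinatorial model categories.

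Next I will unwind the conclusion. The localization has the same cofibrations as $\sSpace^{\dag\vee}_{\Reedy}$, and its weak equivalences are by definition the $S_h$-local equivalences. The fibrant objects need one more ingredient. For a left proper combinatorial model category, the fibrant objects of $L_S\mathcal{M}$ are exactly the fibrant objects of $\mathcal{M}$ that are $S$-local; this is Hirschhorn's Theorem~3.4.1 together with its combinatorial analogue due to Barwick and Lurie (HTT~A.3.7.3). Applied here, this says the fibrant objects are the Reedy fibrant $S_h$-local objects.

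The only point that needs care is combinatoriality of the Reedy structure when the target $\sSet^{\dag}_{\Joyal}$ is merely combinatorial rather than simplicial. Lurie's A.3.7.3 requires a left proper combinatorial \emph{simplicial} model category, which is an obstacle here. To get around it, I will cite the version for arbitrary left proper combinatorial model categories (Barwick, \emph{On left and right model categories and left and right Bousfield localizations}, Theorem~4.7), which uses homotopy function complexes and needs no simplicial enrichment. With that version every remaining step is formal.
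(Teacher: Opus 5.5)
Your proposal is correct and essentially matches the paper's proof. Both apply Barwick's Theorem~4.7 to the left proper combinatorial Reedy structure of \cref{cs.lem.reedy}, which avoids the simplicial hypothesis of HTT~A.3.7.3, and read off the cofibrations, weak equivalences and fibrant objects; the only cosmetic difference is that the paper applies the theorem to the set $H_h$ of homotopy classes represented by $S_h$ and notes that locality depends only on these classes.
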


\begin{proof}
  Let $H_h:=\{[s]\mid s\in S_h\} \subseteq \Mor\Ho(\sSpace^{\dag\vee}_{\Reedy})$ be the set of homotopy classes represented by $S_h$.
  This is a small set because $S_h$ is a set.
  The Reedy structure is left proper and combinatorial (by \cref{cs.lem.reedy}), so its left Bousfield localization at $H_h$ exists and is left proper and combinatorial by \cite[Theorem~4.7]{Bar10}.
  
  The $H_h$-local objects and equivalences are precisely the $S_h$-local objects and equivalences, since they depend only on the represented homotopy classes.
  Thus this localization is the stated model structure $L_{S_h}\sSpace^{\dag\vee}_{\Reedy}$.
\end{proof}

\begin{proposition}\label{cs.thm.resolution}
  An object $W\in\sSpace^{\dag\vee}_{\CSS}$ is fibrant if and only if it is Reedy fibrant and \emph{homotopically constant}:
  for every $k$, the morphism $W(\sigma_k):W_0\to W_k$ induced by the unique map $\sigma_k:[k]\to[0]$ is a weak equivalence in $\sSet^{\dag}_{\Joyal}$.
  
  In that case, the canonical morphism $\ell_W:cW_0\to W$ with $(\ell_W)_k=W(\sigma_k)$ is a levelwise weak equivalence, and $W_0$ is fibrant.
\end{proposition}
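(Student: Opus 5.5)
By \cref{cs.def.Sh}, the fibrant objects of $\sSpace^{\dag\vee}_{\CSS}$ are the Reedy fibrant $S_h$-local objects. So the plan is to show that, for Reedy fibrant $W$, $S_h$-locality is equivalent to homotopical constancy. The tool is the derived mapping space. For cofibrant $A\in G_{\dag}$ and Reedy fibrant $W$, I would use \cref{cs.lem.adjunction} and \cref{cs.lem.evaluation-quillen} to identify the homotopy function complex $\Map^h(A\boxtimes\Delta^k,W)$ with $\Map^h_{\sSet^{\dag}_{\Joyal}}(A,W_k)$. To model it concretely, take a cosimplicial resolution $A^\bullet$ of $A$ in $\sSet^{\dag}_{\Joyal}$. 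Then $A^\bullet\boxtimes\Delta^k$ is a cosimplicial resolution of $A\boxtimes\Delta^k$, because $(-)\boxtimes\Delta^k$ is left Quillen. Its mapping complex is $[m\mapsto\Hom(A^m,W_k)]$, and $W_k=\ev_kW$ is fibrant because $\ev_k$ is right Quillen. Under this identification, precomposition with $A\boxtimes\sigma_k$ becomes postcomposition with $W(\sigma_k):W_0\to W_k$.

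It follows that $W$ is $S_h$-local if and only if $W(\sigma_k)_*:\Map^h(A,W_0)\to\Map^h(A,W_k)$ is a weak equivalence for every $A\in G_{\dag}$ and every $k\geq1$. If $W(\sigma_k)$ is a dagger Joyal equivalence between fibrant objects, this holds for all cofibrant $A$, which gives one direction. For the converse, I would show that a map $f:X\to Y$ between fibrant objects of $\sSet^{\dag}_{\Joyal}$ inducing equivalences on $\Map^h(A,-)$ for all $A\in G_{\dag}$ is a weak equivalence. This is the main obstacle.

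The intended argument is as follows. The domains and codomains of $I^{\sSet}_{\dag}$ lie in $G_{\dag}$, and the boundary $\partial\Delta^n$ maps are cofibrations. The equivalences on $\Map^h(\FdagSSet(\Delta^n),-)$ and $\Map^h(\FdagSSet(\partial\Delta^n),-)$ therefore give equivalences on the fibres of the restriction maps. By the usual argument this yields right lifting, up to homotopy, against $I^{\sSet}_{\dag}$. Concretely: factor $f$ as a trivial cofibration followed by a fibration $p$. Then $p$ still has the mapping-space property, and the fibres of $\Map(\FdagSSet(\Delta^n),p)\to\Map(\FdagSSet(\partial\Delta^n),p)$ pulled back along $\Map(\FdagSSet(\partial\Delta^n),-)$ are nonempty. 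Homotopy lifting plus the fibration property converts this into strict lifts against $I^{\sSet}_{\dag}$, so $p$ is a trivial fibration. This uses only that $I^{\sSet}_{\dag}$ generates the cofibrations (recalled in Part I) and that $\sSet^{\dag}_{\Joyal}$ is simplicial or at least has functorial cosimplicial frames. Alternatively, one can cite the general fact that, in a combinatorial model category, maps between fibrant objects detected by $\Map^h$ out of the sources and targets of generating cofibrations are weak equivalences. An example is Dugger's presentation result, which the introduction to this subsection already alludes to when it identifies the localization with the Reedy hocolim structure.

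For the final assertions, apply $\ev_0$, which is right Quillen, to get that $W_0$ is fibrant. Each component $(\ell_W)_k=W(\sigma_k)$ is a weak equivalence by homotopical constancy (for $k=0$ it is the identity). Since Reedy weak equivalences are levelwise by \cref{cs.lem.reedy}, $\ell_W$ is a levelwise weak equivalence.
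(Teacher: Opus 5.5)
Your proof is correct, but it is organized differently from the paper's.

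\emph{The paper's route.} The paper never computes the $S_h$-local objects directly. It enlarges $S_h$ to Dugger's set $S_{\rmD}=\{A\boxtimes\alpha\}$, indexed by all $\alpha$ in $\prism$. Using $\sigma_j=\sigma_i\circ\alpha$ and two-out-of-three, it shows that $S_h$ and $S_{\rmD}$ have the same local objects, and so identifies $\sSpace^{\dag\vee}_{\CSS}$ with Dugger's Reedy hocolim model structure. It then reads off the fibrant objects from \cite[Theorem~5.7 (c)]{Dug01R}: Reedy fibrant objects with \emph{all} structure maps weak equivalences. A second two-out-of-three step reduces this to the maps $W(\sigma_k)$.

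\emph{Your route.} You use the derived adjunction $(-)\boxtimes\Delta^k\dashv\ev_k$, modelled by the resolutions $A^\bullet\boxtimes\Delta^k$. This turns $S_h$-locality into the condition that $W(\sigma_k)_*\colon\RMap(A,W_0)\to\RMap(A,W_k)$ is a weak equivalence for all $A\in G_{\dag}$. You then apply the fact that the sources and targets of $I^{\sSet}_{\dag}$ detect weak equivalences between fibrant objects. That is exactly the input the paper imports from the proof of \cite[Proposition~A.5]{Dug01R}, so both proofs rest on the same key fact.

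\emph{What each buys.} Your route is more self-contained and lands directly on the $\sigma_k$ condition. The paper's route pays for the $S_h$ versus $S_{\rmD}$ comparison, but gains the identification with Dugger's model structure. It reuses that identification immediately in \cref{cs.thm.equivalence}, obtaining the Quillen equivalence $c\dashv\ev_0$ from \cite[Theorem~6.1]{Dug01R}. With your argument alone, that theorem would need a separate proof.

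\emph{A phrasing fix.} Your lifting sketch for the detection step is stated loosely; it is cleanest as follows. For $i\colon C\to D$ in $I^{\sSet}_{\dag}$, take a map of cosimplicial resolutions $i^\bullet\colon C^\bullet\to D^\bullet$ that is a Reedy cofibration with $i^0=i$. The corner map $\Hom(D^\bullet,X')\to\Hom(C^\bullet,X')\times_{\Hom(C^\bullet,Y)}\Hom(D^\bullet,Y)$ is then a Kan fibration and a weak equivalence, by two-out-of-three using your mapping-space hypothesis at both $C$ and $D$. Hence it is surjective on vertices, and its vertices are precisely the lifting problems of $i$ against $p$.
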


\begin{proof}
  By \cite[the proof of Proposition~A.5]{Dug01R}, the detecting set used in Dugger's construction can be chosen to be $G_{\dag}$: 
  its objects are the already-cofibrant domains and codomains of the generating cofibrations $I^{\sSet}_{\dag}$.
  Let
  \begin{align}
    S_{\D}
    :=
    \{A\boxtimes\alpha:
      A\boxtimes\Delta^j\to A\boxtimes\Delta^i
      \mid A\in G_{\dag},\ \alpha:[j]\to[i]\text{ in }\prism\}.
  \end{align}
  Thus $S_h$ is the subfamily of $S_{\rmD}$ corresponding to the nonidentity arrows $[0]\to[k]$ of $\prism^{\myop}$.

  We first show that $S_h$ and $S_{\rmD}$ define the same local objects.
  Let $W$ be Reedy fibrant and $S_h$-local.
  Fix $A\in G_{\dag}$ and $\alpha:[j]\to[i]$ in $\prism$.
  Since $\sigma_j=\sigma_i\circ\alpha$, we have $s_j=A\boxtimes\sigma_{i}\circ A\boxtimes\alpha$.
  Consequently, in $\Ho(\sSet)$ the morphisms induced on homotopy function complexes satisfy
  \begin{align}
    [A\boxtimes\sigma_{j}]^*=[A\boxtimes\alpha]^*\circ[A\boxtimes\sigma_{i}]^*.
  \end{align}
  By $S_h$-locality, the morphisms $[A\boxtimes\sigma_{i}]^*$ and $[A\boxtimes\sigma_{j}]^*$ are isomorphisms in $\Ho(\sSet)$.
  Hence $[A\boxtimes\alpha]^*$ is an isomorphism by two-out-of-three.
  Equivalently, the induced morphism of homotopy function complexes is a weak equivalence, so $W$ is $S_{D}$-local.

  The converse follows because $S_h\subseteq S_{D}$.
  Thus the two localizations have the same cofibrations and the same local objects, and therefore the same local equivalences.
  Consequently $L_{S_h}\sSpace^{\dag\vee}_{\Reedy}$ is the Reedy hocolim model structure of \cite[Theorem~5.7]{Dug01R}.

  By \cite[Theorem~5.7 (c)]{Dug01R}, its fibrant objects are precisely the Reedy fibrant objects for which every structure map is a weak equivalence in $\sSet^{\dag}_{\Joyal}$.
  This is equivalent to the condition in the statement.
  Indeed, one implication is immediate, while for $\alpha:[m]\to[n]$ in $\prism$ the identity $\sigma_m=\sigma_n\circ\alpha$ gives $W(\sigma_m)=W(\alpha)\circ W(\sigma_n)$, so the converse follows from two-out-of-three.

  Finally, the maps $(\ell_W)_k:=W(\sigma_k)$ define a morphism $\ell_W:cW_0\to W$ and make it a levelwise weak equivalence.
  Since $\ev_0$ is right Quillen (by \cref{cs.lem.evaluation-quillen}), $W_0$ is fibrant.
\end{proof}

\begin{theorem}\label{cs.thm.equivalence}
  The adjunction $c\dashv\ev_0$ induces a Quillen equivalence
  \begin{align}
    c:
    \sSet^{\dag}_{\Joyal}
    \rightleftarrows
    \sSpace^{\dag\vee}_{\CSS}
    :\ev_0 .
  \end{align}
\end{theorem}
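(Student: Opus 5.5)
First I would check that $c\dashv\ev_0$ is a Quillen adjunction for the localized structure. By \cref{cs.lem.evaluation-quillen} with $k=0$, $c=(-)\boxtimes\Delta^0$ is left Quillen into $\sSpace^{\dag\vee}_{\Reedy}$. The identity $\sSpace^{\dag\vee}_{\Reedy}\to\sSpace^{\dag\vee}_{\CSS}$ is left Quillen, since \cref{cs.def.Sh} says the localization keeps the cofibrations and enlarges the weak equivalences. Composing the two, $c$ is left Quillen into $\sSpace^{\dag\vee}_{\CSS}$.

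For the Quillen equivalence I would use the standard criterion. It has two parts:
\begin{itemize}
\item[(a)] $\ev_0$ reflects weak equivalences between fibrant objects;
\item[(b)] for every cofibrant $A\in\sSet^{\dag}_{\Joyal}$, the composite $A\to \ev_0(cA)\to\ev_0(R(cA))$ is a weak equivalence, where $R$ denotes fibrant replacement in $\sSpace^{\dag\vee}_{\CSS}$.
\end{itemize}

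For (a), let $f:W\to W'$ be a map between fibrant objects of $\sSpace^{\dag\vee}_{\CSS}$ with $f_0$ a dagger Joyal equivalence. By \cref{cs.thm.resolution}, both $W$ and $W'$ are Reedy fibrant and homotopically constant. Naturality of $\ell$ gives $f\circ\ell_W=\ell_{W'}\circ cf_0$. At level $k$ this reads $f_k\circ W(\sigma_k)=W'(\sigma_k)\circ f_0$, and both $W(\sigma_k)$ and $W'(\sigma_k)$ are weak equivalences. Two-out-of-three then shows that each $f_k$ is a weak equivalence. So $f$ is a Reedy (levelwise) equivalence, and in particular an $S_h$-local equivalence.

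For (b), I would avoid an abstract fibrant replacement and build an explicit one. Take a fibrant replacement $A\to A^f$ in $\sSet^{\dag}_{\Joyal}$, which is a trivial cofibration. Then take a Reedy fibrant replacement $cA^f\to V$ that is a Reedy trivial cofibration. Since $cA^f$ is constant, every structure map of $cA^f$ is an identity, so $cA^f$ is homotopically constant. Homotopical constancy is invariant under levelwise equivalence, again by two-out-of-three on the structure maps $\sigma_k$. Hence $V$ is Reedy fibrant and homotopically constant, and so it is fibrant in $\sSpace^{\dag\vee}_{\CSS}$ by \cref{cs.thm.resolution}.

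The composite $cA\to cA^f\to V$ is a weak equivalence in $\sSpace^{\dag\vee}_{\CSS}$. Indeed, $c$ is left Quillen, so it sends the trivial cofibration $A\to A^f$ to a trivial cofibration, and $cA^f\to V$ is a Reedy trivial cofibration. It is therefore a valid fibrant replacement of $cA$. Its $0$-th level is $A\to A^f\to V_0$. Both maps are dagger Joyal equivalences, because Reedy equivalences are levelwise. The derived unit is independent of the chosen fibrant replacement, so (b) follows.

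The main obstacle is justifying that the criterion may use this particular replacement and that no derived subtlety arises. Both points are handled by $\ev_0$ preserving weak equivalences between fibrant objects, which is (a) together with Ken Brown's lemma.
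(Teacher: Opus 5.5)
Your proof is correct, but it takes a different route from the paper. The paper's proof is one line. The proof of \cref{cs.thm.resolution} has already identified $\sSpace^{\dag\vee}_{\CSS}$ with Dugger's Reedy hocolim model structure on simplicial objects in $\sSet^{\dag}_{\Joyal}$, and the Quillen equivalence is then quoted from \cite[Theorem~6.1]{Dug01R}.

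You use only the output of \cref{cs.thm.resolution}, namely that the fibrant objects are the Reedy fibrant homotopically constant ones, and you check Hovey's criterion by hand. The \emph{only if} direction of that characterization gives reflection along $\ev_0$, via naturality of $\ell$ and two-out-of-three. The \emph{if} direction shows that a Reedy fibrant replacement of the constant object $cA$ is already fibrant in the localization, so the derived unit can be computed levelwise.

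Your route makes visible why the equivalence holds: fibrant objects are determined up to levelwise equivalence by their zeroth level. It also checks the Quillen adjunction explicitly, which the paper leaves to the citation. The paper's route is shorter but relies on Dugger's theorem applying, i.e.\ on the identification of the two localizations in the previous proof.

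Two small points:
\begin{itemize}
\item The preliminary replacement $A\to A^f$ is unnecessary. Since $cA$ is itself constant, a Reedy trivial cofibration $cA\to V$ with $V$ Reedy fibrant is already a trivial cofibration into a fibrant object of $\sSpace^{\dag\vee}_{\CSS}$. So no independence-of-replacement argument is needed.
\item That $\ev_0$ preserves weak equivalences between fibrant objects comes from Ken Brown's lemma applied to the right Quillen functor $\ev_0$. It does not come from the reflection property (a).
\end{itemize}
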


\begin{proof}
  The proof of \cref{cs.thm.resolution} identifies $\sSpace^{\dag\vee}_{\CSS}$ with Dugger's Reedy hocolim model structure on simplicial objects in $\sSet^{\dag}_{\Joyal}$.
  Therefore the constant--evaluation adjunction is a Quillen equivalence by \cite[Theorem~6.1]{Dug01R}.
\end{proof}

\section{The model structures on \texorpdfstring{$\Fun(\prism_{\rev}^{\myop},\sSet)$}{Fun(Δ,sSet)} and \texorpdfstring{$\Fun(\prism_{\dag}^{\myop},\sSet)$}{Fun(Δ,sSet)}}\label{h1.section}

In this section, we construct two model-categorical presentations of unitarily complete dagger Segal spaces, indexed respectively by the reversal-extended simplex category $\prism_{\rev}$ and by the category $\prism_{\dag}$ of free dagger simplices (\cref{h1.thm.main}).

We first localize the projective and injective diagram model structures on $\sSpace^{\dag}_{\rev}$ at the dagger Segal and unitary completeness maps (\cref{h1.lem.rev-models}).
We then compare the two indexing categories through the change-of-index adjunction $u_{!}\dashv u^{*}$ (\cref{h1.thm.adjunction}).
The main result is that this adjunction is a Quillen equivalence (\cref{h1.thm.main}).

\subsection{The model structures on \texorpdfstring{$\Fun(\prism_{\rev}^{\myop},\sSet)$}{Fun(Δ,sSet)}}

We equip $\sSpace^{\dag}_{\rev}$ with its projective and injective model structures and localize them at the dagger Segal maps and the unitary completeness map.
We characterize the injectively fibrant local objects and the projectively fibrant local objects (\cref{h1.lem.rev-models}).

\begin{notation}\label{h1.not.discrete}
  We write 
  \begin{itemize}
    \item $\sSpace^{\dag}_{\rev}:=\Fun(\prism_{\rev}^{\myop},\sSet)$;
    \item $\delta:\sSet^{\dag}\hookrightarrow\sSpace^{\dag}_{\rev}$ for the postcomposition with the discrete embedding $\Set\hookrightarrow\sSet$.
    \item $\Delta^n_{\rev}:=\delta(\FdagSSet(\Delta^n))=\yo_{\rev}[n]$ for the representable dagger simplicial space;
    \item $\underline{\Map}(X,W):=[k\mapsto\Hom_{\sSpace^{\dag}_{\rev}}(X\times\Delta^k,W)]$ denotes the simplicial mapping space for the levelwise simplicial structure, $(X\times K)_{[n]}:=X_{[n]}\times K$.
  \end{itemize}
\end{notation}

\begin{remark}
  By definition, $\delta X_{[n]}$ is the set $X_n$ regarded as a discrete simplicial set.
  The functor $\delta$ is fully faithful and preserves limits and colimits (the discrete embedding has both adjoints, $\pi_0$ and the $0$-th level functor);
\end{remark}

\begin{lemma}\label{h1.lem.proj-rev}
  We have:
  \begin{enumerate}
    \item the projective model structure $\sSpace^{\dag}_{\rev,\proj}$ on $\sSpace^{\dag}_{\rev}$, with levelwise weak homotopy equivalences and levelwise Kan fibrations, exist and are combinatorial and left proper; 
    their generating (trivial) cofibrations are the maps $\yo(c)\times(\partial\Delta^k\to\Delta^k)$ (resp.\ $\yo(c)\times(\Lambda^k_i\to\Delta^k)$) for $c \in \prism_{\rev}^{\myop}$;
    \item the injective model structure $\sSpace^{\dag}_{\rev,\inj}$ on $\sSpace^{\dag}_{\rev}$, with levelwise weak equivalences and monomorphisms, exist and are combinatorial and left proper, with all objects cofibrant;
    \item both model structures are simplicial for the levelwise tensor, cotensor and mapping space.
  \end{enumerate}
\end{lemma}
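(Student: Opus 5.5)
This lemma is the standard existence theorem for diagram model structures, applied to the small category $\prism_{\rev}^{\myop}$ with values in the Kan--Quillen model structure $\sSet_{\mathrm{KQ}}$, which is combinatorial, left proper, simplicial, and has all objects cofibrant. The plan is to verify the hypotheses of the general existence results and then read off the three assertions.

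For (1), apply the projective existence theorem \cite[Theorem~11.6.1]{Hir03} (equivalently \cite[Proposition~A.2.8.2]{Lur09}). For any small category $D$ and cofibrantly generated $\calM$, it yields the projective structure on $\Fun(D,\calM)$ with levelwise weak equivalences and fibrations. Its generating (trivial) cofibrations are $\yo(c)\times i$ for $c\in D$ and $i$ a generating (trivial) cofibration of $\calM$, which here gives $\yo(c)\times(\partial\Delta^k\to\Delta^k)$ and $\yo(c)\times(\Lambda^k_i\to\Delta^k)$. The adjunctions $\yo(c)\times(-)\dashv\ev_c$ show that these maps detect levelwise trivial fibrations and fibrations. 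The domains are small because $\sSet$ is locally presentable, so the projective structure is combinatorial.

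For left properness, I would argue as follows. Projective cofibrations are in particular levelwise monomorphisms: the generators are levelwise coproducts of copies of boundary or horn inclusions, and monomorphisms are closed under pushout, transfinite composition and retract. Pushouts are computed levelwise, and $\sSet_{\mathrm{KQ}}$ is left proper. So a pushout of a levelwise weak equivalence along a projective cofibration is a levelwise weak equivalence.

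For (2), the injective structure exists and is combinatorial by \cite[Proposition~A.2.8.2]{Lur09}, since $\sSet_{\mathrm{KQ}}$ is combinatorial. Its cofibrations are the levelwise cofibrations, which are the levelwise monomorphisms. Every object is cofibrant because $\varnothing\to X$ is levelwise a monomorphism. Left properness follows from the same levelwise argument as in (1), and is in fact immediate once all objects are cofibrant.

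For (3), use the simplicial-model-category criterion \cite[Remark~A.2.8.6]{Lur09}: if $\calM$ is a simplicial combinatorial model category, then both diagram structures are simplicial for the levelwise tensor $(X\times K)_{[n]}=X_{[n]}\times K$, the levelwise cotensor, and the mapping space $\underline{\Map}$. To check SM7 directly, it is enough to verify the pushout-product axiom against $\partial\Delta^m\to\Delta^m$ and $\Lambda^m_j\to\Delta^m$.

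In the injective case this verification is levelwise. There it reduces to the pushout-product axiom in $\sSet_{\mathrm{KQ}}$, because monomorphisms and levelwise weak equivalences are detected levelwise.

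In the projective case, check the axiom on the generators. One has
\begin{align}
  (\yo(c)\times i)\mathbin{\Box}j\cong\yo(c)\times(i\mathbin{\Box}j),
\end{align}
and $i\mathbin{\Box}j$ is a (trivial) cofibration of simplicial sets. The functor $\yo(c)\times(-)$ is left Quillen, so the result is a projective (trivial) cofibration. The general case then follows by the usual saturation argument.

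The main obstacle is bookkeeping rather than mathematics: one must confirm that $\prism_{\rev}^{\myop}$ needs no Reedy-type hypothesis, which it does not, since both theorems hold for arbitrary small indexing categories. One must also keep track of which weak equivalences are used. Unlike \cref{cs.section}, the coefficient category here is $\sSet_{\mathrm{KQ}}$ rather than $\sSet^{\dag}_{\Joyal}$.
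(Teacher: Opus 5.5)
Your proposal is correct and follows essentially the same route as the paper. Both arguments apply the general existence results for projective and injective diagram model structures on $\Fun(\prism_{\rev}^{\myop},\sSet)$ with Kan--Quillen coefficients, read off the generators $\yo(c)\times(\partial\Delta^k\to\Delta^k)$ and $\yo(c)\times(\Lambda^k_i\to\Delta^k)$, and use that projective cofibrations are levelwise monomorphisms. The only difference is that you verify left properness and the pushout--product axiom by hand, whereas the paper takes both directly from \cite[Proposition~A.3.3.2, Remarks~A.3.3.4 and~A.2.8.4]{HTT}, using that the Kan--Quillen model structure is excellent.
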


\begin{proof}
  Regard $\prism_{\rev}^{\myop}$ as a simplicial category with discrete mapping spaces.
  The Kan model structure on $\sSet$ is excellent by \cite[Example~A.3.2.18]{HTT}.
  Existence, combinatoriality and simpliciality of the projective and injective diagram model structures follow from \cite[Proposition~A.3.3.2 and Remark~A.3.3.4]{HTT}.
  Their left properness follows from \cite[Remark~A.2.8.4]{HTT}.
  
  (1)
  By \cite[Remark~A.2.8.5]{HTT}, the projective cofibrations are generated by the maps
  $\yo(c)\times(\partial\Delta^k\to\Delta^k)$.
  Since these maps are levelwise monomorphisms, every projective cofibration is an injective cofibration.
  The same construction gives the displayed generating projective trivial cofibrations.
  
  (2)
  Every object is injectively cofibrant because $\varnothing\to X$ is levelwise a monomorphism, while
  $\varnothing\to\yo(c)=\yo(c)\times(\varnothing\to\Delta^0)$ is a generating projective cofibration;
  hence every representable is projectively cofibrant.
\end{proof}

\begin{definition}
  We write 
  \begin{align}
    \Sp[n]_{\rev}:=\delta(\FdagSSet(\Sp[n])),
    \quad \text{where} \quad 
    \Sp[n]:=\Delta^1\amalg_{\Delta^0}\cdots\amalg_{\Delta^0}\Delta^1\subseteq\Delta^n,
  \end{align}
  with the conventions $\Sp[0]=\Delta^0$ and $\Sp[1]=\Delta^1$.

  The \emph{Segal maps} are the monomorphisms $\Sp[n]_{\rev}\to\Delta^n_{\rev}$ ($n\geq2$) obtained by applying $\delta\circ \FdagSSet$ to the spine inclusions;
\end{definition}

\begin{definition}
  We write 
  \begin{align}
    E_{\rev}:=\delta(E_{\dag}),
    \quad \text{where} \quad 
    E_{\dag}:=\N_{\dag}(\bbI_{\dag})\in\sSet^{\dag}.
  \end{align}

  The \emph{completeness map} is $\delta(\epsilon_0):\Delta^0_{\rev}\to E_{\rev}$, where $\epsilon_0:\Delta^0_{\dag}\to E_{\dag}$ classifies, via the adjunction $\FdagSSet\dashv \UdagSSet$, the vertex $0$ of $\UdagSSet E_{\dag}=N(\UdagSCat\bbI_{\dag})$;
\end{definition}

\begin{notation}\label{h1.not.S}
  We let $S_{\rev}$ denote the set consisting of the Segal maps $\Sp[n]_{\rev}\to\Delta^n_{\rev}$ ($n\geq2$) and the completeness map $\Delta^0_{\rev}\to E_{\rev}$;
  
  We let $\widetilde S_{\rev}$ denote a set of \emph{projective cofibrant resolutions}: 
  for each $s:A\to B$ in $S_{\rev}$, a projective cofibration $\widetilde s:\widetilde A\to\widetilde B$ between projectively cofibrant objects together with levelwise weak equivalences $\widetilde A\to A$, $\widetilde B\to B$ commuting with $s$.
  Such resolutions are obtained by functorial cofibrant replacement of $s$ in the projective arrow model structure;
\end{notation}

\begin{remark}\label{h1.rem.S-comparison}
  An object is $\widetilde S_{\rev}$-local if and only if it is $S_{\rev}$-local, and the $\widetilde S_{\rev}$- and $S_{\rev}$-local equivalences coincide.  
  Indeed, for each $s:A\to B$ and its projective cofibrant resolution $\widetilde s:\widetilde A\to\widetilde B$, the defining commutative square has vertical levelwise weak equivalences.  
  
  After replacing the target by a levelwise fibrant object, invariance of homotopy function complexes in the source identifies the two induced maps.  
  The same identification, now used in the target variable, shows that the two classes of local equivalences agree (see \cite[\S17.4]{Hir02}).
\end{remark}

\begin{definition}\label{h1.def.segal-complete}
  Let $W\in\sSpace^{\dag}_{\rev}$ be injectively fibrant.
  We will say that it is 
  \begin{itemize}
    \item \emph{dagger Segal space} if for every $n\geq2$ the Segal map induces a weak equivalence
    \begin{align}
      W_n
      =
      \underline{\Map}(\Delta^n_{\rev},W)
      \to
      \underline{\Map}(\Sp[n]_{\rev},W)
      \simeq
      W_1\times_{W_0}\cdots\times_{W_0}W_1,
    \end{align}
    \item \emph{unitarily complete} if the completeness map induces a weak equivalence
    \begin{align}
      \underline{\Map}(E_{\rev},W)\to\underline{\Map}(\Delta^0_{\rev},W)=W_0 .
    \end{align}
  \end{itemize}
\end{definition}

\begin{definition}
  A levelwise fibrant object of $\sSpace^{\dag}_{\rev}$ is called a \emph{unitarily complete dagger Segal space in the derived sense} if some (equivalently, any) injectively fibrant replacement of it along a levelwise weak equivalence is one.
\end{definition}

\begin{lemma}\label{h1.lem.local-objects}
  We have:
  \begin{enumerate}
    \item for injectively fibrant $W \in \sSpace^{\dag}_{\rev,\inj}$ and any $X$, the simplicial set $\underline{\Map}(X,W)$ is a Kan complex computing the homotopy function complex, and a monomorphism $X\to Y$ induces a Kan fibration $\underline{\Map}(Y,W)\to\underline{\Map}(X,W)$;
    \item we have 
    \begin{align}
      \underline{\Map}(\Delta^n_{\rev},W)\simeq W_n 
      \quad \text{and} \quad 
      \underline{\Map}(\Sp[n]_{\rev},W)\simeq W_1\times_{W_0}\cdots\times_{W_0}W_1,
    \end{align}
    and the strict fiber products are homotopy fiber products when $W$ is injectively fibrant;
    \item an injectively fibrant $W$ is $S_{\rev}$-local if and only if it is a unitarily complete dagger Segal space;
    a levelwise fibrant $W$ is $S_{\rev}$-local with respect to the projective model structure, equivalently $\widetilde S_{\rev}$-local in that structure, if and only if it is one in the derived sense.
  \end{enumerate}
\end{lemma}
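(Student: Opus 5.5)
We will prove the three items in order, using the simplicial model structure of \cref{h1.lem.proj-rev} throughout.

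\textbf{Item (1).} This is the standard SM7 argument for $\sSpace^{\dag}_{\rev,\inj}$, whose cofibrations are the monomorphisms and whose objects are all cofibrant.
\begin{itemize}
  \item For a monomorphism $X\to Y$ and any $k$, the pushout-product $X\times\Delta^k\cup Y\times\partial\Delta^k\to Y\times\Delta^k$ (resp.\ with $\Lambda^k_i$) is a levelwise monomorphism, resp.\ a levelwise trivial cofibration.
  \item By adjunction, $\underline{\Map}(Y,W)\to\underline{\Map}(X,W)$ has the right lifting property against $\Lambda^k_i\to\Delta^k$ when $W$ is injectively fibrant, so it is a Kan fibration.
  \item Taking $X=\varnothing$ shows that $\underline{\Map}(Y,W)$ is a Kan complex.
  \item Since every object is cofibrant and $W$ is fibrant in a simplicial model category, $\underline{\Map}(X,W)$ computes the homotopy function complex (\cite[\S17]{Hir02}).
\end{itemize}

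\textbf{Item (2).} We use the Yoneda lemma levelwise: $\Hom(\yo_{\rev}[n]\times\Delta^k,W)\cong (W_{[n]})_k$, which gives $\underline{\Map}(\Delta^n_{\rev},W)\cong W_n$. The functor $\underline{\Map}(-,W)$ sends colimits to limits. Since $\delta$ and $\FdagSSet$ preserve colimits, $\Sp[n]_{\rev}$ is the iterated pushout $\Delta^1_{\rev}\amalg_{\Delta^0_{\rev}}\cdots\amalg_{\Delta^0_{\rev}}\Delta^1_{\rev}$ along the monomorphisms induced by the vertex inclusions. Hence $\underline{\Map}(\Sp[n]_{\rev},W)\cong W_1\times_{W_0}\cdots\times_{W_0}W_1$.

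By (1), each map $W_1\to W_0$ induced by a vertex inclusion is a Kan fibration between Kan complexes. The strict iterated pullback along fibrations is therefore a homotopy pullback, and we induct on $n$.

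\textbf{Item (3).} By definition, $W$ is $S_{\rev}$-local iff for each $s:A\to B$ in $S_{\rev}$ the induced map on homotopy function complexes is a weak equivalence.

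For injectively fibrant $W$, (1) and (2) identify these maps with the following:
\begin{itemize}
  \item the Segal maps $W_n\to W_1\times_{W_0}\cdots\times_{W_0}W_1$, for the Segal maps in $S_{\rev}$;
  \item the map $\underline{\Map}(E_{\rev},W)\to W_0$, for the completeness map.
\end{itemize}
This is verbatim \cref{h1.def.segal-complete}.

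For the projective statement, let $W$ be levelwise fibrant.
\begin{itemize}
  \item Locality is homotopy-invariant, and homotopy function complexes depend only on the underlying relative category. The projective and injective structures share the levelwise weak equivalences, so they give the same homotopy function complexes up to weak equivalence.
  \item Hence $W$ is $S_{\rev}$-local in the projective structure iff some injectively fibrant replacement $W\to W'$ (a levelwise weak equivalence) is $S_{\rev}$-local in the injective structure.
  \item By the first part, this holds iff $W'$ is a unitarily complete dagger Segal space, i.e.\ iff $W$ is one in the derived sense.
\end{itemize}
The equivalence with $\widetilde S_{\rev}$-locality is \cref{h1.rem.S-comparison}. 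The independence of the choice of replacement also follows, since local objects are closed under levelwise weak equivalence.

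\textbf{Main obstacle.} The delicate step is this comparison between the projective and injective homotopy function complexes. We would justify it with \cite[Theorem~17.7.7 / Prop.~17.4.?]{Hir02}: homotopy function complexes are invariant under a Quillen equivalence, here $\id:\sSpace^{\dag}_{\rev,\proj}\rightleftarrows\sSpace^{\dag}_{\rev,\inj}:\id$.

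Concretely, for $s:A\to B$ we take the projective cofibrant replacement $\widetilde s$ and compute $\underline{\Map}(\widetilde A,W')$ with $W'$ injectively fibrant. This object models both function complexes, because $\widetilde A$ is cofibrant in both structures and $W'$ is fibrant in both.
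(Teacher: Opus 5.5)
Your proposal is correct and takes essentially the same route as the paper: (1) by the simplicial-model-category (SM7) argument that the paper cites from Hirschhorn; (2) via Yoneda, the fact that $\underline{\Map}(-,W)$ carries colimits to limits, and the observation that the vertex evaluations are Kan fibrations of Kan complexes; and (3) by identifying the induced maps with the defining Segal and completeness maps, then transporting along an injectively fibrant replacement for the projective case. Your closing observation already settles the projective/injective comparison, since a projectively cofibrant $\widetilde A$ and an injectively fibrant $W'$ compute the homotopy function complex in both structures, so the placeholder citation can simply be dropped.
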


\begin{proof}
  (1)
  It follow from \cite[\S9.1, \S17.2]{Hir02}.

  (2)
  By the adjunction \eqref{cs.eq.boxtimes} for the levelwise simplicial structure, we have 
  \begin{align}
    \Hom(\Delta^n_{\rev}\times\Delta^k,W)\simeq(W_n)_k,
  \end{align}
  giving the first identification.
  The functor $\delta\circ \FdagSSet$ preserves colimits, and $\Sp[n]=\Delta^1\amalg_{\Delta^0}\cdots\amalg_{\Delta^0}\Delta^1$; 
  since $\underline{\Map}(-,W)$ carries colimits to limits, we have 
  \begin{align}
    \underline{\Map}(\Sp[n]_{\rev},W)\simeq W_1\times_{W_0}\cdots\times_{W_0}W_1.
  \end{align}
  When $W$ is injectively fibrant, the vertex evaluations $W_1\to W_0$ are Kan fibrations by (1), the vertex inclusions $\Delta^0_{\rev}\to\Delta^1_{\rev}$ being monomorphisms; a strict fiber product along Kan fibrations of Kan complexes is a homotopy fiber product.

  (3)
  For injectively fibrant $W$, locality with respect to a monomorphism between injectively cofibrant objects is tested by $\underline{\Map}(-,W)$ by (1).
  Locality with respect to the Segal maps is then exactly the dagger Segal condition, and locality with respect to the completeness map is unitary completeness by (2).
  
  For merely levelwise fibrant $W$, choose an injectively fibrant replacement $r:W\to W'$ along a levelwise weak equivalence; 
  homotopy function complexes out of any fixed object are invariant under $r$, so $W$ is $S_{\rev}$-local if and only if $W'$ is.
\end{proof}

\begin{proposition}\label{h1.lem.rev-models}
  We have:
  \begin{enumerate}
    \item the identity functors induce a Quillen equivalence
    \begin{align}
      \id:
      L_{\widetilde S_{\rev}}\sSpace^{\dag}_{\rev,\proj}
      \rightleftarrows
      L_{S_{\rev}}\sSpace^{\dag}_{\rev,\inj}
      :\id;
    \end{align}
    \item the fibrant objects of $L_{\widetilde S_{\rev}}\sSpace^{\dag}_{\rev,\proj}$ are the levelwise fibrant objects which are unitarily complete dagger Segal spaces in the derived sense;
    \item those of $L_{S_{\rev}}\sSpace^{\dag}_{\rev,\inj}$ are the injectively fibrant unitarily complete dagger Segal spaces.
  \end{enumerate}
\end{proposition}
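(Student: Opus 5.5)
First I will make sure both localizations exist. By \cref{h1.lem.proj-rev}, $\sSpace^{\dag}_{\rev,\proj}$ and $\sSpace^{\dag}_{\rev,\inj}$ are left proper, combinatorial and simplicial. Since $S_{\rev}$ and $\widetilde S_{\rev}$ are sets, \cite[Theorem~4.7]{Bar10} gives left proper combinatorial localizations $L_{\widetilde S_{\rev}}\sSpace^{\dag}_{\rev,\proj}$ and $L_{S_{\rev}}\sSpace^{\dag}_{\rev,\inj}$. Their cofibrations are the projective and injective cofibrations respectively. By \cite[Theorem~3.3.19 and Proposition~3.4.1]{Hir02}, their fibrant objects are the fibrant objects of the underlying structure that are local.

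With this, (2) and (3) follow directly from \cref{h1.lem.local-objects}(3). For (3), an injectively fibrant object is $S_{\rev}$-local exactly when it is a unitarily complete dagger Segal space. For (2), a projectively fibrant object, that is, a levelwise fibrant one, is $\widetilde S_{\rev}$-local exactly when it is a unitarily complete dagger Segal space in the derived sense. Here I use \cref{h1.rem.S-comparison} to pass between $S_{\rev}$- and $\widetilde S_{\rev}$-locality. Locality is tested with homotopy function complexes, which depend only on the underlying relative category. Because the projective and injective structures have the same weak equivalences, "$S_{\rev}$-local" means the same thing in both.

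For (1), the first step is to check that $\id$ is left Quillen. Every projective cofibration is an injective cofibration by \cref{h1.lem.proj-rev}(1). Next I show that the two localized structures have the same weak equivalences. A map $f$ is an $S$-local equivalence when $\map^h(f,W)$ is a weak equivalence for every $S$-local $W$. Homotopy function complexes agree for the two underlying structures, since those structures share weak equivalences. The local objects agree by the previous paragraph together with \cref{h1.rem.S-comparison}. Hence the $\widetilde S_{\rev}$-local equivalences of the projective structure coincide with the $S_{\rev}$-local equivalences of the injective structure.

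Once this is settled, $\id$ preserves cofibrations and weak equivalences, so it preserves trivial cofibrations and is left Quillen. An identity Quillen adjunction whose two sides have the same weak equivalences is automatically a Quillen equivalence. The main point needing care is that "local" and "local equivalence" do not depend on which underlying structure computes the homotopy function complexes. I would settle this by citing invariance of homotopy function complexes under Quillen equivalence \cite[\S17.4]{Hir02}, applied to the identity Quillen equivalence $\sSpace^{\dag}_{\rev,\proj}\rightleftarrows\sSpace^{\dag}_{\rev,\inj}$.
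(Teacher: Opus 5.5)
Your proof is correct. Parts (2) and (3) are exactly as in the paper: fibrant objects of the localization are the fibrant local objects, and then \cref{h1.lem.local-objects}~(3) applies.

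For (1) your route differs from the paper's. The paper builds both localizations with \cite[Proposition~A.3.7.3]{HTT} and starts from the unlocalized identity Quillen equivalence $\sSpace^{\dag}_{\rev,\proj}\rightleftarrows\sSpace^{\dag}_{\rev,\inj}$. It notes that $\widetilde S_{\rev}$ has projectively cofibrant sources and targets, so its derived image is $\widetilde S_{\rev}$ itself, which represents the same homotopy classes as $S_{\rev}$. It then invokes the transfer theorem \cite[Theorem~3.3.20~(1)(b)]{Hir02}.

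You instead show directly that the two localized structures have the same weak equivalences, after which the Quillen equivalence is formal. This gives a stronger conclusion, that the weak equivalences literally coincide, and the paper relies on this later (e.g.\ in the proof of \cref{h1.thm.main}). It is the same argument the paper itself gives over $\prism_{\dag}$ in \cref{h1.thm.projection}~(4). It also avoids a separate step identifying $L_{\widetilde S_{\rev}}\sSpace^{\dag}_{\rev,\inj}$, the target produced by Hirschhorn's theorem, with $L_{S_{\rev}}\sSpace^{\dag}_{\rev,\inj}$.

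What the paper's choice buys is simpliciality. Constructing the localizations via \cite[Proposition~A.3.7.3]{HTT} rather than \cite[Theorem~4.7]{Bar10} makes them simplicial model categories. This is used immediately in \cref{h1.lem.localized-simplicial} and asserted in Theorem~B. Barwick's theorem alone does not give it, so on your route you would need to add that citation.

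One small precision: in Hirschhorn's convention a local object must be fibrant in the ambient structure. So the projective and injective local objects are not literally the same class. What you actually need, and what holds, is this: every projectively fibrant local object is weakly equivalent to an injectively fibrant one, and locality is invariant under weak equivalence. Hence the two classes of local equivalences agree.
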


\begin{proof}
  (1)
  By \cref{h1.lem.proj-rev}, the two underlying model structures are left proper, combinatorial and simplicial.
  Every map in $\widetilde S_{\rev}$ is a projective cofibration by construction, while every map in $S_{\rev}$ is a monomorphism and hence an injective cofibration.
  Therefore \cite[Proposition~A.3.7.3]{HTT} gives both localizations as left proper combinatorial simplicial model categories.

  The identity functor from the projective to the injective model structure is a Quillen equivalence by \cite[Remark~A.2.8.6]{HTT}.
  Every map $\widetilde s\in\widetilde S_{\rev}$ has projectively cofibrant source and target, so its left-derived image under the identity is represented by $\widetilde s$ itself.
  The defining square from $\widetilde s$ to the corresponding $s\in S_{\rev}$ has levelwise weak equivalences;
  hence the derived image of $\widetilde s$ and $s$ represent the same morphism in $\Ho(\sSpace^{\dag}_{\rev,\inj})$, compatibly with \cref{h1.rem.S-comparison}.
  Therefore \cite[Theorem~3.3.20 (1)(b)]{Hir02} induces the displayed Quillen equivalence between the localizations.

  (2) and (3)
  By \cite[Proposition~A.3.7.3 (3)]{HTT}, the fibrant objects of the localizations are the fibrant local objects.
  Their stated descriptions follow from \cref{h1.lem.local-objects} (3).
\end{proof}

\begin{corollary}\label{h1.lem.localized-simplicial}
  We have:
  \begin{enumerate}
    \item the localized injective model category $L_{S_{\rev}}\sSpace^{\dag}_{\rev,\inj}$ is a simplicial model category for the levelwise tensor, cotensor and mapping space of \cref{h1.not.discrete};
    \item every object is cofibrant in $L_{S_{\rev}}\sSpace^{\dag}_{\rev,\inj}$, and for every localized fibrant $Z$ and every $X$, the Kan complex $\underline{\Map}(X,Z)$ computes the localized homotopy function complex $\RMap(X,Z)$.
  \end{enumerate}
\end{corollary}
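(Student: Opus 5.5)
For part (1), the plan is to read simpliciality off the construction of the localization in the proof of \cref{h1.lem.rev-models}. That localization was obtained from \cite[Proposition~A.3.7.3]{HTT}, applied to the left proper combinatorial simplicial model category $\sSpace^{\dag}_{\rev,\inj}$ of \cref{h1.lem.proj-rev}. That result produces the localization as a simplicial model category with the same simplicial enrichment as the original structure, and this enrichment is the levelwise one by \cref{h1.lem.proj-rev}(3).

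If one prefers a direct check, I would verify the pushout-product axiom SM7 for the localized structure.
\begin{itemize}
  \item The cofibrations are unchanged, namely the monomorphisms. Hence the pushout-product of a monomorphism $i:X\to Y$ with a monomorphism $K\to L$ of simplicial sets is again a monomorphism, levelwise.
  \item It remains to show that $i\,\square\,(K\to L)$ is a local equivalence whenever either factor is trivial.
  \begin{itemize}
    \item If $K\to L$ is a Kan trivial cofibration, the pushout-product is already a levelwise trivial cofibration.
    \item If $i$ is a local trivial cofibration, test against a local fibrant $Z$. The map $\underline{\Map}(L,\underline{\Map}(Y,Z))\to\cdots$ is a fibration whose fibres are built from $\underline{\Map}(Y,Z)\to\underline{\Map}(X,Z)$. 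By \cref{h1.lem.local-objects}(1), this is a Kan fibration, and it is a weak equivalence by locality. The claim then follows by adjunction.
  \end{itemize}
\end{itemize}
The only care needed is that the local fibrant objects are exactly the injectively fibrant $S_{\rev}$-local objects. This is part (3) of \cref{h1.lem.rev-models}.

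For part (2), every object is cofibrant because the cofibrations are still the monomorphisms, and $\varnothing\to X$ is one. Let $Z$ be localized fibrant. Then it is in particular injectively fibrant, so \cref{h1.lem.local-objects}(1) shows that $\underline{\Map}(X,Z)$ is a Kan complex.

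In a simplicial model category, for cofibrant $X$ and fibrant $Z$, the simplicial mapping space computes the homotopy function complex \cite[\S17]{Hir02}. Applying this to the simplicial model category of part (1), with $X$ cofibrant and $Z$ fibrant in the localized structure, identifies $\underline{\Map}(X,Z)$ with $\RMap(X,Z)$ in the localized sense. Alternatively, the homotopy function complexes of a left Bousfield localization agree with those of the original structure whenever the target is local fibrant.

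The main point to watch is in part (1): one must make sure that \cite[Proposition~A.3.7.3]{HTT} indeed retains the original simplicial enrichment rather than some modified one. I expect this to be immediate from the statement cited there.
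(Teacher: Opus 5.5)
Your proposal is correct and follows the paper's proof. Part (1) is read off from the application of \cite[Proposition~A.3.7.3]{HTT} in the proof of \cref{h1.lem.rev-models}, which keeps the levelwise simplicial enrichment, and part (2) uses the unchanged cofibrations together with the fact that, in a simplicial model category, mapping spaces from cofibrant to fibrant objects compute homotopy function complexes. Your optional direct check of the pushout--product axiom is also valid, since $\underline{\Map}(i\mathbin{\square}(K\to L),Z)$ is a trivial Kan fibration when $i$ is a local trivial cofibration and $Z$ is local fibrant, but the paper does not need it.
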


\begin{proof}
  (1)
  In the proof of \cref{h1.lem.rev-models}, $L_{S_{\rev}}\sSpace^{\dag}_{\rev,\inj}$ is obtained by applying \cite[Proposition~A.3.7.3]{HTT} to the levelwise simplicial model structure.
  It is therefore a simplicial model category for the same levelwise tensor, cotensor and mapping space.

  (2)
  Since the cofibrations are unchanged, every object remains cofibrant.
  Consequently, for localized fibrant $Z$, the simplicial mapping space $\underline{\Map}(X,Z)$ is a Kan complex and computes $\RMap(X,Z)$.
\end{proof}

\subsection{The model structures on \texorpdfstring{$\Fun(\prism_{\dag}^{\myop},\sSet)$}{Fun(Δ,sSet)}}

We introduce the category $\prism_{\dag}$ of free dagger simplices and the associated diagram category $\sSpace^{\dag}$.

\begin{notation}
  We fix some notational conventions:
  \begin{itemize}
    \item for $n\geq0$, let $[n]_{\dag}:=\FdagSCat([n])$ where $[n]$ is regarded as a discrete simplicial category.
    Thus $[n]_{\dag}$ is the free dagger category on the chain $0\to1\to\cdots\to n$;
    \item we let $\prism_{\dag}$ denote the full subcategory of $\sCat^{\dag}$ whose objects are the dagger categories $[n]_{\dag}$ for $n\geq0$.
    In particular, the morphisms of $\prism_{\dag}$ are all dagger functors between free dagger simplices;
    \item we write $\yo_{\dag}:\prism_{\dag} \to \Fun(\prism_{\dag}^{\myop},\Set)$ for its Yoneda embedding;
    \item we write $\sSpace^{\dag}:=\Fun(\prism_{\dag}^{\myop},\sSet)$;
  \end{itemize}
\end{notation}

\begin{lemma}\label{h1.lem.proj-dag}
  We have:
  \begin{enumerate}
    \item the projective model structure $\sSpace^{\dag}_{\proj}$ on $\sSpace^{\dag}$, with levelwise weak homotopy equivalences and levelwise Kan fibrations, exist and are combinatorial and left proper; 
    their generating (trivial) cofibrations are the maps $\yo(c)\times(\partial\Delta^k\to\Delta^k)$ (resp.\ $\yo(c)\times(\Lambda^k_i\to\Delta^k)$) for $c \in \prism_{\dag}$;
    \item the injective model structure $\sSpace^{\dag}_{\inj}$ on $\sSpace^{\dag}$, levelwise weak equivalences and monomorphisms, exist and are combinatorial and left proper, with all objects cofibrant;
    \item both model structures are simplicial for the levelwise tensor, cotensor and mapping space.
  \end{enumerate}
\end{lemma}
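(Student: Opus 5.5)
The plan follows the proof of \cref{h1.lem.proj-rev} verbatim, with the indexing category $\prism_{\rev}$ replaced by $\prism_{\dag}$. The one input that needs checking is that $\prism_{\dag}$ is small. Its objects are indexed by $n\geq0$. Each hom-set $\Hom_{\sCat^{\dag}}([n]_{\dag},[m]_{\dag})$ is a set, since a dagger functor out of $[n]_{\dag}=\FdagSCat([n])$ is determined by a simplicial functor out of the small discrete category $[n]$. So $\prism_{\dag}$ is small, and we regard $\prism_{\dag}^{\myop}$ as a simplicial category with discrete mapping spaces.

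Since the Kan--Quillen model structure on $\sSet$ is excellent \cite[Example~A.3.2.18]{HTT}, \cite[Proposition~A.3.3.2 and Remark~A.3.3.4]{HTT} give the following for the diagram category $\Fun(\prism_{\dag}^{\myop},\sSet)$:
\begin{itemize}
  \item the projective and injective model structures both exist;
  \item both are combinatorial;
  \item both are simplicial for the levelwise tensor $(X\times K)_c=X_c\times K$, the levelwise cotensor, and the induced mapping space.
\end{itemize}
This proves (3) together with the existence parts of (1) and (2). Left properness of both structures follows from \cite[Remark~A.2.8.4]{HTT}, because every object of $\sSet$ is cofibrant.

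For (1), \cite[Remark~A.2.8.5]{HTT} describes the generators. The projective cofibrations are generated by $\yo(c)\times(\partial\Delta^k\to\Delta^k)$ and the projective trivial cofibrations by $\yo(c)\times(\Lambda^k_i\to\Delta^k)$, for $c\in\prism_{\dag}$. Equivalently, these are the left adjoints $\ev_c^{!}=\yo(c)\times(-)$ applied to the Kan--Quillen generators. The weak equivalences are the levelwise weak homotopy equivalences and the fibrations are the levelwise Kan fibrations, by definition of the projective structure. The generating cofibrations are levelwise monomorphisms, so every projective cofibration is an injective cofibration.

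For (2), the injective cofibrations are by definition the levelwise monomorphisms. Since $\varnothing\to X$ is always a levelwise monomorphism, every object is injectively cofibrant.

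There is no real obstacle here. The only points beyond citation are that $\prism_{\dag}$ is small and that discrete mapping spaces make $\prism_{\dag}^{\myop}$ a legitimate small simplicial category for Lurie's results. Both are immediate from the definition of $\prism_{\dag}$ as a full subcategory of $\sCat^{\dag}$ on the countably many objects $[n]_{\dag}$.
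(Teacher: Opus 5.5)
Your proposal is correct and takes the paper's approach: the paper proves this lemma simply by saying it is proved as \cref{h1.lem.proj-rev}, which is the same argument from Lurie's diagram model structures with $\prism_{\dag}$ in place of $\prism_{\rev}$. Your explicit check that $\prism_{\dag}$ is small is a harmless addition that the paper leaves implicit.
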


\begin{proof}
  We can prove it as \cref{h1.lem.proj-rev}.
\end{proof}

\subsection{The change-of-index functor}

We next compare the reversal category $\prism_{\rev}$ with the category $\prism_{\dag}$ of free dagger simplices.
We construct the localized adjunction $u_{!}\dashv u^{*}$ (\cref{h1.thm.adjunction}) and prove that its right derived functor is conservative (\cref{h1.thm.conservative}).

\begin{definition}\label{h1.not.u}
  We define a functor $u:\prism_{\rev}\to\prism_{\dag}$ as follows:
  \begin{itemize}
    \item on objects, $u([n])=[n]_{\dag}$;
    \item for an order-preserving $\alpha:[m]\to[n]$, the dagger functor $u(\alpha):[m]_{\dag}\to[n]_{\dag}$ acts as $\alpha$ on objects and sends the generator $a_i:i-1\to i$ to the forward word $a_{\alpha(i)}\cdots a_{\alpha(i-1)+1}$ (the identity if $\alpha(i-1)=\alpha(i)$);
    \item for an order-reversing $\gamma:[m]\to[n]$, the dagger functor $u(\gamma)$ acts as $\gamma$ on objects and sends $a_i$ to the dagger $(a_{\gamma(i-1)}\cdots a_{\gamma(i)+1})^{\dag}$ of the corresponding forward word.
  \end{itemize}
\end{definition}

\begin{remark}\label{h1.lem.u}
  The functor $u:\prism_{\rev}\to\prism_{\dag}$ is bijective on objects and faithful:
  its action on objects recovers the underlying map of finite ordinals, so faithfulness follows, and bijectivity on objects is immediate from the definition of $\prism_{\dag}$.
\end{remark}

\begin{lemma}\label{h1.lem.adjoints}
  We have:
  \begin{enumerate}
    \item the restriction $u^{*}:\sSpace^{\dag}\to\sSpace^{\dag}_{\rev}$ admits both adjoints, $u_{!}\dashv u^{*}\dashv u_{*}$.
    Moreover, $(u^{*}W)_{[n]}=W_{[n]_{\dag}}$; since $u$ is bijective on objects, $u^{*}$ preserves and reflects all levelwise notions (weak equivalences, Kan fibrations, monomorphisms):
    \item $u_{!}\circ\yo_{\rev}\simeq\yo_{\dag}\circ u$; in particular $u_{!}(\yo_{\rev}[n])\simeq\Delta^\N_{\dag}:=\yo_{\dag}([n]_{\dag})$.
  \end{enumerate}
\end{lemma}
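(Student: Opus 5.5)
We prove the two parts separately; both are formal consequences of the definitions of $u$ and of presheaf categories.

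For (1), we use that $\sSpace^{\dag}$ and $\sSpace^{\dag}_{\rev}$ are categories of $\sSet$-valued presheaves on small categories, and $\sSet$ is complete and cocomplete. So the restriction $u^{*}=(-)\circ u^{\myop}$ has a left adjoint $u_{!}$ and a right adjoint $u_{*}$, given pointwise by left and right Kan extension along $u^{\myop}$. Explicitly,
\begin{align}
  (u_{!}X)_{[n]_{\dag}}=\colim_{([m],\,[n]_{\dag}\to u[m])}X_{[m]}
\end{align}
is a coend over $\prism_{\rev}$, and $u_{*}$ is the corresponding end. The formula $(u^{*}W)_{[n]}=W_{u[n]}=W_{[n]_{\dag}}$ is the definition of restriction together with $u([n])=[n]_{\dag}$. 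By \cref{h1.lem.u}, $u$ is bijective on objects. Hence a map $f$ in $\sSpace^{\dag}$ has exactly the same family of components as $u^{*}f$, only indexed by $[n]$ instead of $[n]_{\dag}$. So $f$ is levelwise a weak equivalence, Kan fibration, or monomorphism if and only if $u^{*}f$ is. This gives preservation and reflection of all levelwise notions.

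For (2), we compute $u_{!}$ on representables via Yoneda. For $W\in\sSpace^{\dag}$ we have
\begin{align}
  \Hom(u_{!}\yo_{\rev}[n],W)\cong\Hom(\yo_{\rev}[n],u^{*}W)\cong (u^{*}W)_{[n]}=W_{[n]_{\dag}}\cong\Hom(\yo_{\dag}([n]_{\dag}),W).
\end{align}
These bijections are natural in $W$, and also natural in $[n]$ with respect to morphisms of $\prism_{\rev}$, the latter acting through $u$ on the right-hand side. By the Yoneda lemma this gives a natural isomorphism $u_{!}\circ\yo_{\rev}\cong\yo_{\dag}\circ u$. In the simplicial setting, $\yo$ denotes the discrete representable, and the same argument applies to $\Hom(\yo\times\Delta^k,-)$ levelwise. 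Evaluating at $[n]$ yields $u_{!}(\yo_{\rev}[n])\cong\yo_{\dag}([n]_{\dag})$.

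The only point needing care is the naturality in $[n]$ for order-reversing maps. Here one uses that the functoriality of $\yo_{\dag}\circ u$ is defined through $u(\gamma)$, exactly matching the action on $u^{*}W$, so no compatibility beyond functoriality of $u$ is required. I expect no real obstacle.
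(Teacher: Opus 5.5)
Your proposal is correct and follows the paper's proof essentially step for step. For (1), you obtain the adjoints as pointwise Kan extensions along $u^{\myop}$ and use that every object of $\prism_{\dag}$ is some $u([n])$ to get preservation and reflection of levelwise notions. For (2), you use the same Yoneda chain $\Hom(u_{!}\yo_{\rev}[n],W)\cong\Hom(\yo_{\rev}[n],u^{*}W)\cong\cdots\cong\Hom(\yo_{\dag}([n]_{\dag}),W)$.

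One small point to tidy: since the representables are levelwise discrete, the middle terms of that chain should be the vertex sets $((u^{*}W)_{[n]})_0=(W_{[n]_{\dag}})_0$, as in the paper, not the simplicial sets themselves. Your remark about discrete representables already covers this.
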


\begin{proof}
  (1)
  The adjoints are the pointwise left and right Kan extensions along $u^{\myop}$.
  The formula $(u^{*}W)_{[n]}=W_{[n]_{\dag}}$ is the definition of restriction; 
  since every object of $\prism_{\dag}$ is of the form $u([n])$, a morphism $f \in \sSpace^{\dag}$ is a levelwise weak equivalence (fibration, monomorphism) if and only if $u^{*}f$ is.

  (2)
  For every $W\in\sSpace^{\dag}$, there is a bijection
  \begin{align}
    \Hom_{\sSpace^{\dag}}(u_{!}\yo_{\rev}[n],W)
    &\cong
    \Hom_{\sSpace^{\dag}_{\rev}}(\yo_{\rev}[n],u^{*}W) \\
    &\cong
    ((u^{*}W)_{[n]})_0 \\
    &=
    (W_{[n]_{\dag}})_0 \\
    &\cong
    \Hom_{\sSpace^{\dag}}(\yo_{\dag}([n]_{\dag}),W).
  \end{align}
  This bijection is natural in $W$.
  Hence the Yoneda lemma gives a canonical isomorphism $u_{!}\yo_{\rev}[n]\cong\yo_{\dag}([n]_{\dag})$.
\end{proof}

\begin{lemma}\label{h1.lem.quillen-proj}
  The adjunction $u_{!}\dashv u^{*}$ induces a Quillen adjunction
  \begin{align}
    u_{!}:\sSpace^{\dag}_{\rev,\proj}\rightleftarrows\sSpace^{\dag}_{\proj}:u^{*}.
  \end{align}
\end{lemma}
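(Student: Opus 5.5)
It suffices to check that the right adjoint $u^{*}:\sSpace^{\dag}_{\proj}\to\sSpace^{\dag}_{\rev,\proj}$ preserves fibrations and trivial fibrations; this is the cleanest route because both projective structures have their fibrations and weak equivalences defined levelwise.

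By \cref{h1.lem.proj-rev}(1) and \cref{h1.lem.proj-dag}(1), a map in either $\sSpace^{\dag}_{\rev,\proj}$ or $\sSpace^{\dag}_{\proj}$ is a fibration (resp.\ trivial fibration) exactly when it is levelwise a Kan fibration (resp.\ levelwise a trivial Kan fibration). By \cref{h1.lem.adjoints}(1), we have $(u^{*}W)_{[n]}=W_{[n]_{\dag}}$. So if $f:W\to W'$ is levelwise a Kan fibration (resp.\ trivial Kan fibration) in $\sSpace^{\dag}$, then each component $(u^{*}f)_{[n]}=f_{[n]_{\dag}}$ is one as well. Hence $u^{*}$ is right Quillen and $u_{!}\dashv u^{*}$ is a Quillen adjunction.

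As a cross-check from the left-adjoint side, $u_{!}$ commutes with $-\times K$ for $K\in\sSet$, being a pointwise left Kan extension computed levelwise in the simplicial variable. Combined with \cref{h1.lem.adjoints}(2), it sends the generating cofibrations $\yo_{\rev}[n]\times(\partial\Delta^k\to\Delta^k)$ to $\yo_{\dag}([n]_{\dag})\times(\partial\Delta^k\to\Delta^k)$, which are generating projective cofibrations of $\sSpace^{\dag}_{\proj}$. The same holds for the horn inclusions, which give the generating trivial cofibrations.

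There is no real obstacle here. The only point needing care is that the projective fibrations are detected levelwise on \emph{every} object, so that restriction along a functor which is bijective on objects, namely $u$ by \cref{h1.lem.u}, preserves them.
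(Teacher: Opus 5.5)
Your proof is correct and is essentially the paper's argument: since $(u^{*}W)_{[n]}=W_{[n]_{\dag}}$, the functor $u^{*}$ preserves levelwise Kan fibrations and levelwise trivial Kan fibrations, and these are exactly the projective fibrations and trivial fibrations. One small correction: preservation holds for restriction along any functor, so bijectivity of $u$ on objects is not needed here; it matters only for reflecting levelwise notions, as used later in \cref{h1.thm.conservative}.
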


\begin{proof}
  By \cref{h1.lem.adjoints} (1), the functor $u^{*}$ preserves levelwise Kan fibrations and levelwise trivial Kan fibrations, 
  i.e. the fibrations and trivial fibrations of the projective structures.
\end{proof}

\begin{lemma}\label{h1.def.model}
  There exist left proper combinatorial simplicial model structures $L_{u_{!}(\widetilde S_{\rev})}\sSpace^{\dag}_{\proj}$ and $L_{u_{!}(\widetilde S_{\rev})}\sSpace^{\dag}_{\inj}$ on $\sSpace^{\dag}$.
\end{lemma}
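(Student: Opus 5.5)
We will obtain both localizations from the left Bousfield localization machinery for simplicial combinatorial left proper model categories, exactly as in the proof of \cref{h1.lem.rev-models}. By \cref{h1.lem.proj-dag}, the projective and injective model structures $\sSpace^{\dag}_{\proj}$ and $\sSpace^{\dag}_{\inj}$ are left proper, combinatorial and simplicial for the levelwise tensor, cotensor and mapping space. Hence it suffices to check that $u_{!}(\widetilde S_{\rev})$ is a small set of cofibrations in each structure. Then \cite[Proposition~A.3.7.3]{HTT} yields the localizations $L_{u_{!}(\widetilde S_{\rev})}\sSpace^{\dag}_{\proj}$ and $L_{u_{!}(\widetilde S_{\rev})}\sSpace^{\dag}_{\inj}$ as left proper combinatorial simplicial model categories. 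Their fibrant objects are the fibrant $u_{!}(\widetilde S_{\rev})$-local objects, and their cofibrations are unchanged.

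Smallness is immediate, since $\widetilde S_{\rev}$ is a set indexed by $S_{\rev}$ and $u_{!}$ is a functor. For the cofibration condition, \cref{h1.lem.quillen-proj} shows that $u_{!}:\sSpace^{\dag}_{\rev,\proj}\to\sSpace^{\dag}_{\proj}$ is left Quillen. It therefore sends each projective cofibration $\widetilde s:\widetilde A\to\widetilde B$ between projectively cofibrant objects to a projective cofibration $u_{!}\widetilde s$ between projectively cofibrant objects of $\sSpace^{\dag}$. This settles the projective case.

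For the injective case we need one further observation. Every projective cofibration in $\sSpace^{\dag}$ is a levelwise monomorphism, because the generating maps $\yo(c)\times(\partial\Delta^k\to\Delta^k)$ of \cref{h1.lem.proj-dag} are monomorphisms, and monomorphisms are closed under pushouts, transfinite composites and retracts. Hence $u_{!}\widetilde s$ is also an injective cofibration, and its source and target are cofibrant there since every object is injectively cofibrant. The same HTT result then applies to $\sSpace^{\dag}_{\inj}$.

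The only point requiring care is that \cite[Proposition~A.3.7.3]{HTT} needs the localizing set to consist of cofibrations in the relevant structure; the argument above supplies exactly this. Beyond that I expect no real obstacle. Alternatively, one could invoke \cite[Theorem~4.7]{Bar10} with the homotopy classes of $u_{!}(\widetilde S_{\rev})$, as in \cref{cs.def.Sh}, and then appeal to simpliciality separately; I would use HTT because it delivers simpliciality directly.
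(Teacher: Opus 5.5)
Your proposal is correct and follows essentially the same route as the paper. Both arguments take the left proper combinatorial simplicial structures of \cref{h1.lem.proj-dag}, observe that $u_{!}(\widetilde S_{\rev})$ consists of projective cofibrations (because $u_{!}$ is left Quillen by \cref{h1.lem.quillen-proj}) and hence of injective cofibrations, and then apply \cite[Proposition~A.3.7.3]{HTT} to both structures.
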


\begin{proof}
  The projective and injective model structures $\sSpace^{\dag}_{\proj}$ and $\sSpace^{\dag}_{\inj}$ are left proper, combinatorial and simplicial by \cref{h1.lem.proj-dag}.
  Every map in $u_{!}(\widetilde S_{\rev})$ is a projective cofibration because the maps of $\widetilde S_{\rev}$ are projective cofibrations and $u_{!}$ is left Quillen (by \cref{h1.lem.quillen-proj}).
  It is therefore also an injective cofibration by \cref{h1.lem.proj-dag}.
  Hence \cite[Proposition~A.3.7.3]{HTT} gives both localizations as left proper combinatorial simplicial model categories.

  They have the same cofibrations as their respective underlying model structures.
  Their local objects and local equivalences are those defined using the displayed set of representatives.
  In particular, every object of $L_{u_{!}(\widetilde S_{\rev})}\sSpace^{\dag}_{\inj}$ is cofibrant.

  We note for later use that the domains and codomains of the maps of $u_{!}(\widetilde S_{\rev})$ are projectively cofibrant: those of $\widetilde S_{\rev}$ are, and $u_{!}$ is left Quillen (by \cref{h1.lem.quillen-proj}), hence preserves cofibrant objects.
\end{proof}

\begin{proposition}\label{h1.prop.fibrant}
  An object $W\in\sSpace^{\dag}$ is fibrant in $L_{u_{!}(\widetilde S_{\rev})}\sSpace^{\dag}_{\proj}$ if and only if it is levelwise fibrant and $u^{*}W$ is fibrant in $L_{\widetilde S_{\rev}}\sSpace^{\dag}_{\rev,\proj}$, 
  i.e.\ if and only if $W$ is levelwise fibrant and its $\prism_{\rev}$-restriction is a unitarily complete dagger Segal space in the derived sense. 
\end{proposition}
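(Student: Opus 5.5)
The plan is to reduce everything to the adjunction $u_{!}\dashv u^{*}$ on homotopy function complexes. By \cite[Proposition~A.3.7.3]{HTT} (as in the proof of \cref{h1.def.model}), the fibrant objects of $L_{u_{!}(\widetilde S_{\rev})}\sSpace^{\dag}_{\proj}$ are exactly the projectively fibrant, i.e.\ levelwise fibrant, objects $W$ that are $u_{!}(\widetilde S_{\rev})$-local. Likewise, by \cref{h1.lem.rev-models}~(2), an object $V\in\sSpace^{\dag}_{\rev}$ is fibrant in $L_{\widetilde S_{\rev}}\sSpace^{\dag}_{\rev,\proj}$ if and only if it is levelwise fibrant and $\widetilde S_{\rev}$-local. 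By \cref{h1.lem.adjoints}~(1), $W$ is levelwise fibrant if and only if $u^{*}W$ is. It therefore remains to show, for levelwise fibrant $W$, that $W$ is $u_{!}(\widetilde S_{\rev})$-local if and only if $u^{*}W$ is $\widetilde S_{\rev}$-local. The final reformulation in terms of the derived sense then follows from \cref{h1.lem.local-objects}~(3).

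For the key step, use the derived adjunction. Since $u_{!}\dashv u^{*}$ is a Quillen adjunction of projective structures (\cref{h1.lem.quillen-proj}), for projectively cofibrant $X$ and levelwise fibrant $W$ there is a natural weak equivalence of homotopy function complexes
\begin{align}
  \RMap_{\sSpace^{\dag}}(u_{!}X,W)\simeq\RMap_{\sSpace^{\dag}_{\rev}}(X,u^{*}W).
\end{align}
This can be made concrete with simplicial mapping spaces, because both structures are simplicial (\cref{h1.lem.proj-rev}, \cref{h1.lem.proj-dag}). The functor $u_{!}$ commutes with the levelwise tensor $-\times\Delta^k$: it is a left Kan extension, computed levelwise in the $\sSet$ variable, and the product with a constant simplicial set commutes with these colimits. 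This gives an isomorphism $\underline{\Map}(u_{!}X,W)\cong\underline{\Map}(X,u^{*}W)$. In a simplicial model category these spaces compute the homotopy function complexes when $X$ is cofibrant and $W$ is fibrant.

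Now apply this to each $\widetilde s:\widetilde A\to\widetilde B$ in $\widetilde S_{\rev}$. Its source and target are projectively cofibrant, and so are their images under $u_{!}$ (\cref{h1.def.model}). The isomorphism is natural in $X$, so the map $\underline{\Map}(u_{!}\widetilde B,W)\to\underline{\Map}(u_{!}\widetilde A,W)$ is identified with $\underline{\Map}(\widetilde B,u^{*}W)\to\underline{\Map}(\widetilde A,u^{*}W)$. One is a weak equivalence if and only if the other is, which proves the equivalence of the two locality conditions.

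The main obstacle I expect is justifying the enriched adjunction isomorphism, namely checking that $u_{!}(X\times\Delta^k)\cong u_{!}X\times\Delta^k$ naturally. I would verify this from the coend formula $u_{!}X=\int^{c}\yo_{\dag}(u c)\times X_c$ and the fact that $-\times\Delta^k$ preserves colimits in $\sSet$. Alternatively, I would avoid it by citing the general fact that a Quillen adjunction induces derived adjunction equivalences of homotopy function complexes (\cite[\S17.4]{Hir02}).
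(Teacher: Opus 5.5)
Your proposal is correct and follows essentially the same route as the paper: fibrancy in the localization is levelwise fibrancy plus locality, $u^{*}$ preserves and reflects levelwise fibrancy, and the derived adjunction for homotopy function complexes \cite[Proposition~17.4.16]{Hir02}, natural in the projectively cofibrant sources and targets of $\widetilde S_{\rev}$, identifies $u_{!}(\widetilde S_{\rev})$-locality of $W$ with $\widetilde S_{\rev}$-locality of $u^{*}W$. Your explicit simplicial-mapping-space version is a valid concrete instance of that citation. The tensor compatibility $u_{!}(X\times\Delta^k)\cong u_{!}X\times\Delta^k$ that you flag as the main obstacle is exactly \cref{h1.lem.tensor}, and your levelwise-colimit argument for it is correct.
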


\begin{proof}
  Fibrancy in $L_{u_{!}(\widetilde S_{\rev})}\sSpace^{\dag}_{\proj}$ means being projectively fibrant, hence levelwise fibrant, and $u_{!}(\widetilde S_{\rev})$-local.
  For levelwise fibrant $W$, $u^{*}W$ is levelwise fibrant.
  
  Since $u_{!}\dashv u^{*}$ is a Quillen adjunction (by \cref{h1.lem.quillen-proj}) and every source and target of a map in $\widetilde S_{\rev}$ is projectively cofibrant, the derived adjunction for homotopy function complexes \cite[Proposition~17.4.16]{Hir02} gives natural weak equivalences
  \begin{align}
    \RMap_{\sSpace^{\dag}_{\proj}}(u_{!}\widetilde A,W)
    \simeq
    \RMap_{\sSpace^{\dag}_{\rev,\proj}}(\widetilde A,u^{*}W)
  \end{align}
  for every source or target $\widetilde A$ of a map in $\widetilde S_{\rev}$.
  These equivalences are natural in $\widetilde A$, so $W$ is $u_{!}(\widetilde S_{\rev})$-local if and only if $u^{*}W$ is $\widetilde S_{\rev}$-local.
  The claimed description now follows from \cref{h1.lem.local-objects} (3) and \cref{h1.lem.rev-models}.
\end{proof}

\begin{lemma}\label{h1.thm.adjunction}
  The adjunction $u_{!}\dashv u^{*}$ induces a Quillen adjunction
  \begin{align}
    u_{!}:
    L_{\widetilde S_{\rev}}\sSpace^{\dag}_{\rev,\proj}
    \rightleftarrows
    L_{u_{!}(\widetilde S_{\rev})}\sSpace^{\dag}_{\proj}
    :u^{*}.
  \end{align}
\end{lemma}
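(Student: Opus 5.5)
The plan is to use the standard criterion for Quillen adjunctions between left Bousfield localizations: a Quillen adjunction $F:\calC\rightleftarrows\calD:G$ descends to $L_S\calC\rightleftarrows L_T\calD$ as soon as the left derived functor of $F$ sends the maps of $S$ to $T$-local equivalences. Equivalently, by \cite[Proposition~8.5.4]{Hir02}, it suffices that $F$ preserves cofibrations and that $G$ preserves fibrations between fibrant objects. We run the argument both ways for robustness.

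\emph{First route, left adjoint.} The cofibrations of $L_{\widetilde S_{\rev}}\sSpace^{\dag}_{\rev,\proj}$ and $L_{u_{!}(\widetilde S_{\rev})}\sSpace^{\dag}_{\proj}$ are the projective cofibrations. Hence $u_{!}$ preserves cofibrations by \cref{h1.lem.quillen-proj}. Every map $\widetilde s\in\widetilde S_{\rev}$ is a projective cofibration between projectively cofibrant objects, so its derived image is $u_{!}(\widetilde s)$ itself. By definition this lies in $u_{!}(\widetilde S_{\rev})$ and is therefore a local equivalence in the target. By \cite[Theorem~3.3.20 (1)(a)]{Hir02}, the composite Quillen pair $\sSpace^{\dag}_{\rev,\proj}\to\sSpace^{\dag}_{\proj}\to L_{u_{!}(\widetilde S_{\rev})}\sSpace^{\dag}_{\proj}$ then factors through the localization $L_{\widetilde S_{\rev}}\sSpace^{\dag}_{\rev,\proj}$ as a Quillen pair.

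\emph{Second route, right adjoint.} Alternatively, we check that $u^{*}$ preserves fibrant objects and fibrations between fibrant objects. Fibrant objects are preserved by \cref{h1.prop.fibrant}: if $W$ is fibrant in the localized target, then $u^{*}W$ is fibrant in $L_{\widetilde S_{\rev}}\sSpace^{\dag}_{\rev,\proj}$. A fibration $p:W\to V$ between localized fibrant objects is in particular a projective fibration, so $u^{*}p$ is a projective fibration by \cref{h1.lem.quillen-proj}. Its source and target are local by \cref{h1.prop.fibrant}. In a left Bousfield localization of a left proper model category, a fibration between local fibrant objects is a local fibration \cite[Proposition~3.3.16]{Hir02}. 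Together with preservation of cofibrations, this gives the Quillen adjunction.

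The only delicate point is ensuring that the derived image of $\widetilde s$ is computed without further cofibrant replacement. This holds because the sources and targets of $\widetilde S_{\rev}$ are projectively cofibrant (\cref{h1.not.S}), which was also noted in the proof of \cref{h1.def.model}. So no real obstacle is expected. The main thing to verify carefully is that the cited criterion applies with the projective, rather than injective, underlying structures, which is exactly the situation set up above.
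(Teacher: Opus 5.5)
Your first route is exactly the paper's proof: $u_{!}\dashv u^{*}$ is Quillen on the projective structures; the maps of $\widetilde S_{\rev}$ have projectively cofibrant sources and targets, so their derived images are the actual maps $u_{!}(\widetilde s)$; these maps are local equivalences by definition; and \cite[Theorem~3.3.20 (1)(a)]{Hir02} concludes. Your second route, via the criterion ``$u_{!}$ preserves cofibrations and $u^{*}$ preserves fibrations between fibrant objects'' together with \cref{h1.prop.fibrant} and \cite[Proposition~3.3.16]{Hir02}, is also valid: it uses left properness of the projective structure, and it is not circular because \cref{h1.prop.fibrant} does not depend on this lemma.
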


\begin{proof}
  The adjunction $u_{!}\dashv u^{*}$ is Quillen for $\sSpace^{\dag}_{\rev,\proj}$ and $\sSpace^{\dag}_{\proj}$ (by \cref{h1.lem.quillen-proj}).
  The sources and targets of the maps in $\widetilde S_{\rev}$ are projectively cofibrant, so their left-derived images are represented by the actual maps $u_{!}(\widetilde s)$.
  These maps are weak equivalences in $L_{u_{!}(\widetilde S_{\rev})}\sSpace^{\dag}_{\proj}$ by definition of the localization.
  Hence \cite[Theorem~3.3.20 (1)(a)]{Hir02} gives the displayed Quillen adjunction between the localized model structures.
\end{proof}

\begin{proposition}\label{h1.thm.conservative}
  The total derived right adjoint
  \begin{align}
    \bfR u^{*}:
    \Ho(L_{u_{!}(\widetilde S_{\rev})}\sSpace^{\dag}_{\proj})
    \to
    \Ho(L_{\widetilde S_{\rev}}\sSpace^{\dag}_{\rev,\proj})
  \end{align}
  is conservative.
\end{proposition}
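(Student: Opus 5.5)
The plan is to reduce conservativity of $\bfR u^{*}$ to the fact that $u^{*}$ reflects levelwise weak equivalences, using the explicit description of fibrant objects in \cref{h1.prop.fibrant}.

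First, since $u^{*}$ is right Quillen by \cref{h1.thm.adjunction}, $\bfR u^{*}$ is computed by applying $u^{*}$ to a fibrant replacement in $L_{u_{!}(\widetilde S_{\rev})}\sSpace^{\dag}_{\proj}$. Every isomorphism in the homotopy category of the source is represented by a zigzag. Using fibrant replacement, any morphism $\phi$ of $\Ho(L_{u_{!}(\widetilde S_{\rev})}\sSpace^{\dag}_{\proj})$ is isomorphic to the class of an actual map $f:W\to W'$ between fibrant objects. By functorial replacement we may take $f$ to be a fibrant map between fibrant and cofibrant objects, though cofibrancy will not be needed. Then $\bfR u^{*}(\phi)$ is represented by $u^{*}f:u^{*}W\to u^{*}W'$. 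By \cref{h1.prop.fibrant}, both $u^{*}W$ and $u^{*}W'$ are fibrant in $L_{\widetilde S_{\rev}}\sSpace^{\dag}_{\rev,\proj}$.

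Suppose $\bfR u^{*}(\phi)$ is an isomorphism. Then $u^{*}f$ is a weak equivalence in $L_{\widetilde S_{\rev}}\sSpace^{\dag}_{\rev,\proj}$. It is a map between local (fibrant) objects. By the local Whitehead theorem for left Bousfield localizations \cite[Theorem~3.2.13]{Hir02}, a local equivalence between local objects is a weak equivalence in the underlying model structure. Hence $u^{*}f$ is a levelwise weak homotopy equivalence. By \cref{h1.lem.adjoints}~(1), $u^{*}$ reflects levelwise weak equivalences because $u$ is bijective on objects. So $f$ is a levelwise weak equivalence, hence a weak equivalence in $\sSpace^{\dag}_{\proj}$ and a fortiori in the localization $L_{u_{!}(\widetilde S_{\rev})}\sSpace^{\dag}_{\proj}$. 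Therefore $\phi$ is an isomorphism.

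The main point requiring care is the reduction step. We must check that $\bfR u^{*}$ on a general morphism agrees, up to the isomorphism given by fibrant replacement, with $u^{*}$ applied to a map between localized-fibrant objects. This is standard: the total right derived functor is $u^{*}\circ R$ for a fibrant replacement functor $R$, and conservativity is invariant under composition with the equivalence $\Ho\simeq\Ho(\text{fibrant objects})$.

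The only genuinely nonformal input is that fibrancy of $W$ forces $u^{*}W$ to be local. That is exactly \cref{h1.prop.fibrant}, which we take as given. So I expect no serious obstacle beyond correctly citing the Whitehead-type statement that local equivalences between local objects are underlying equivalences.
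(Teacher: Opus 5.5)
Your proof is correct and follows the paper's argument: you pass to bifibrant representatives, note that $u^{*}f$ is a local equivalence between fibrant local objects, so the local Whitehead theorem makes it a levelwise weak equivalence, and then reflect this along the bijective-on-objects functor $u$. One correction: cofibrancy of the source \emph{is} needed, because it is what allows the homotopy-category morphism to be represented by an actual map $f:W\to W'$; this is exactly why the paper chooses bifibrant representatives.
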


\begin{proof}
  Let $\alpha:W\to W'$ be a morphism in $\Ho(L_{u_{!}(\widetilde S_{\rev})}\sSpace^{\dag}_{\proj})$ such that $\bfR u^{*}(\alpha)$ is invertible.
  Choose bifibrant representatives $\widehat W$ and $\widehat W'$ of $W$ and $W'$, and transport $\alpha$ along the resulting isomorphisms in the homotopy category.
  Since $\widehat W$ is cofibrant and $\widehat W'$ is fibrant, the transported morphism is represented by an actual map $f:\widehat W\to\widehat W'$.

  By \cref{h1.thm.adjunction}, the functor $u^{*}$ is right Quillen for the localized model structures.
  Hence $u^{*}\widehat W$ and $u^{*}\widehat W'$ are fibrant in $L_{\widetilde S_{\rev}}\sSpace^{\dag}_{\rev,\proj}$, and $u^{*}f$ represents $\bfR u^{*}(\alpha)$.
  Since this derived morphism is invertible, $u^{*}f$ is a weak equivalence in $L_{\widetilde S_{\rev}}\sSpace^{\dag}_{\rev,\proj}$.

  By \cref{h1.prop.fibrant}, the source and target of $u^{*}f$ are $\widetilde S_{\rev}$-local and fibrant in $\sSpace^{\dag}_{\rev,\proj}$.
  \Cite[Theorem~3.2.13 (1)]{Hir02} applied in $\sSpace^{\dag}_{\rev,\proj}$, therefore implies that $u^{*}f$ is a weak equivalence in that underlying model structure.
  Thus $u^{*}f$ is a levelwise weak equivalence.

  Since $u$ is bijective on objects, restriction along $u$ reflects levelwise weak equivalences by \cref{h1.lem.adjoints} (1).
  It follows that $f$ is a weak equivalence in $\sSpace^{\dag}_{\proj}$.
  Every weak equivalence of the underlying model structure remains a weak equivalence after left Bousfield localization, so $f$ is a weak equivalence in $L_{u_{!}(\widetilde S_{\rev})}\sSpace^{\dag}_{\proj}$.
  Hence $\alpha$ is invertible, and $\bfR u^{*}$ is conservative.
\end{proof}

\subsection{The comparison is a Quillen equivalence}

We have shown that the total derived right adjoint $\bfR u^{*}$ is conservative (\cref{h1.thm.conservative}).
To prove that $u_{!}\dashv u^{*}$ is a Quillen equivalence (\cref{h1.thm.main}), it remains to show that the derived unit is a local equivalence (\cref{h1.prop.derived-unit}).

The proof first analyzes the unit on representables (\cref{h1.prop.classifying}).
Tensor compatibility and a projective-cell induction then extend this result to every projectively cofibrant object (\cref{h1.lem.unit-cofibrant}).
Finally, the projective--injective comparison passes from the strict unit to the derived unit (\cref{h1.prop.derived-unit}).

\begin{lemma}\label{h1.lem.borel}
  Let $X$ be a levelwise discrete object of $\sSpace^{\dag}_{\rev}$ with a $C_2$-action, and let $A\subseteq X$ be an invariant subobject containing all levelwise fixed elements of the action.
  If $A\to X$ is a local equivalence, then $A/C_2\to X/C_2$ is a trivial cofibration of $L_{S_{\rev}}\sSpace^{\dag}_{\rev,\inj}$, where the quotients are the levelwise strict orbit objects.
\end{lemma}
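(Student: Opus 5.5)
The plan is to compare the strict orbit objects with homotopy orbits computed through a free contractible $C_2$-space. The hypothesis that $A$ contains all levelwise fixed elements is what makes the two agree away from $A$.

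\emph{Cofibration.} Since $A$ is an invariant subobject, levelwise $A_n/C_2\to X_n/C_2$ is injective: an orbit meeting $A_n$ lies entirely in $A_n$. So $A/C_2\to X/C_2$ is a monomorphism, hence a cofibration of $L_{S_{\rev}}\sSpace^{\dag}_{\rev,\inj}$. It remains to show that it is a local equivalence.

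\emph{Free replacement.} Let $EC_2$ be the nerve of the chaotic groupoid on $C_2$. It is a contractible Kan complex with a free $C_2$-action. Give $X\times EC_2$ and $A\times EC_2$ (levelwise product of \cref{h1.not.discrete}) the diagonal action. The projections $X\times EC_2\to X$ and $A\times EC_2\to A$ are levelwise weak equivalences. By two-out-of-three, $A\times EC_2\to X\times EC_2$ is therefore a local equivalence.

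Consider the category of $C_2$-objects $\Fun(BC_2,L_{S_{\rev}}\sSpace^{\dag}_{\rev,\inj})$ with its projective model structure. Weak equivalences and fibrations there are underlying ones, so the orbit functor $(-)/C_2=\colim_{BC_2}$ is left Quillen. I would check that $A\times EC_2\to X\times EC_2$ is a projective cofibration there. It is a monomorphism on which $C_2$ acts freely away from the image, because $EC_2$ is free. Such a map is obtained by attaching cells of the form $C_2\times i$ with $i$ a monomorphism, adding orbits of nondegenerate simplices in each level. It is then a projective trivial cofibration. Hence
\begin{align}
  (A\times EC_2)/C_2\to(X\times EC_2)/C_2
\end{align}
is a trivial cofibration of $L_{S_{\rev}}\sSpace^{\dag}_{\rev,\inj}$. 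I expect this cell-attachment check to be the main technical point. If it is awkward, I would instead verify directly the left lifting property against underlying trivial fibrations and fibrations, using freeness to lift orbit by orbit.

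\emph{Passing to strict orbits.} Form the pushout
\begin{align}
  P:=A/C_2\amalg_{(A\times EC_2)/C_2}(X\times EC_2)/C_2 .
\end{align}
Then $A/C_2\to P$ is a trivial cofibration, being a pushout of one, and there is a canonical map $P\to X/C_2$. Pushouts are computed levelwise. In level $[n]$ we have $X_n=A_n\amalg F_n$, where $C_2$ acts freely on $F_n$ because every fixed element lies in $A_n$. Hence
\begin{align}
  P_{[n]}\cong A_n/C_2\amalg (F_n\times EC_2)/C_2
  \quad\text{and}\quad
  (X/C_2)_{[n]}=A_n/C_2\amalg F_n/C_2 .
\end{align}
For a free orbit $C_2\cdot x$, the space $(C_2\times EC_2)/C_2\cong EC_2$ is contractible. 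So $P\to X/C_2$ is a levelwise weak equivalence, and in particular a local equivalence.

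By two-out-of-three, the composite $A/C_2\to P\to X/C_2$ is a local equivalence. Combined with the first step, it is a trivial cofibration.
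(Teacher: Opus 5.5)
Your proposal is correct and follows essentially the paper's route: free replacement by $EC_2$, the projective model structure on $C_2$-objects with the left Quillen orbit functor, the pushout $P$, and the levelwise comparison $P\to X/C_2$ using that the orbits outside $A$ are free. The one difference is that you obtain the weak-equivalence half of the projective trivial cofibration $A\times EC_2\to X\times EC_2$ by two-out-of-three with the projections, and you need the cell decomposition only for cofibrancy; the paper gets both halves at once by filtering along $Z_n=(A\times EC_2)\amalg_{A\times\sk_nEC_2}(X\times\sk_nEC_2)$, whose attaching maps are free $C_2$-objects on $i\mathbin{\square}(\partial\Delta^n\to\Delta^n)$, and applying the pushout--product axiom (this filtration along the skeleta of $EC_2$ is precisely the decomposition you flagged as the technical point).
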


\begin{proof}
  Equip $\Fun(BC_{2},L_{S_{\rev}}\sSpace^{\dag}_{\rev,\inj})$ with the projective model structure.
  This model structure exists because $L_{S_{\rev}}\sSpace^{\dag}_{\rev,\inj}$ is combinatorial and $BC_{2}$ is small \cite[Proposition~A.2.8.2]{HTT}.
  Its fibrations and weak equivalences are detected in $L_{S_{\rev}}\sSpace^{\dag}_{\rev,\inj}$.
  The strict-orbit functor
  \begin{align}
    (-)/C_2:
    \Fun(BC_{2},L_{S_{\rev}}\sSpace^{\dag}_{\rev,\inj})
    \to
    L_{S_{\rev}}\sSpace^{\dag}_{\rev,\inj}
  \end{align}
  is left Quillen, because its right adjoint is the trivial-action functor and projective fibrations and trivial fibrations are objectwise.

  Let $EC_2$ be the nerve of the indiscrete groupoid on the set $C_2$, with its free action, and put $E^{(-1)}:=\varnothing$ and $E^{(n)}:=\sk_nEC_2$.
  If $\calO_n$ is a set of representatives of the $C_2$-orbits of nondegenerate $n$-simplices, we obtain the following pushout square:
  \begin{align}
    &\begin{tikzpicture}[auto]
      \node (boundary) at (0,1.5) {$\displaystyle
        \coprod_{\sigma\in\calO_n}
        C_2\mathbin{\cdot}\partial\Delta^n$};
      \node (previous-skeleton) at (4,1.5) {$E^{(n-1)}$};
      \node (simplex) at (0,0) {$\displaystyle
        \coprod_{\sigma\in\calO_n}
        C_2\mathbin{\cdot}\Delta^n$};
      \node (skeleton) at (4,0) {$E^{(n)}$};
      \draw[->]
        (boundary) -- (previous-skeleton);
      \draw[->]
        (boundary) -- (simplex);
      \draw[->]
        (previous-skeleton) -- (skeleton);
      \draw[->]
        (simplex) -- (skeleton);
    \end{tikzpicture}
  \end{align}
  Here $C_2\mathbin{\cdot}(-)$ denotes the free $C_2$-object.
  The free $C_2$-object functor is left Quillen for the projective model structure, because its right adjoint is evaluation at the unique object of $BC_{2}$.
  
  For a $C_2$-object $V\in\Fun(BC_{2},L_{S_{\rev}}\sSpace^{\dag}_{\rev,\inj})$,
  tensoring the equivariant skeleton with $V$ gives relative attaching maps which are coproducts of $C_2\mathbin{\cdot}(V\times\partial\Delta^n) \to C_2\mathbin{\cdot}(V\times\Delta^n)$.
  Every object of $L_{S_{\rev}}\sSpace^{\dag}_{\rev,\inj}$ is cofibrant.
  Consequently, the underlying object of $V$ is cofibrant, and the map $V\times\partial\Delta^n \to V\times\Delta^n$ identifies with the pushout--product $(\varnothing\to V) \mathbin{\square} (\partial\Delta^n\to\Delta^n)$.
  It is therefore a cofibration in $L_{S_{\rev}}\sSpace^{\dag}_{\rev,\inj}$.
  Since the free $C_2$-object functor is left Quillen, every attaching map $C_2\mathbin{\cdot}(V\times\partial\Delta^n) \to  C_2\mathbin{\cdot}(V\times\Delta^n)$ is a projective cofibration.
  Starting from $V\times E^{(-1)}=\varnothing$ and using the displayed equivariant skeletal pushout at every stage shows that $V\times E^{(n)}$ is projectively cofibrant for every $n$.
  Passing to the sequential colimit gives that $V\times EC_2$ is projectively cofibrant.

  Let $i:A\to X$ denote the inclusion.
  Since $A\subseteq X$ is a levelwise monomorphism, $i$ is a cofibration in $\sSpace^{\dag}_{\rev,\inj}$.
  Left Bousfield localization does not change cofibrations, so $i$ is also a cofibration in $L_{S_{\rev}}\sSpace^{\dag}_{\rev,\inj}$.
  By hypothesis, $i$ is an $S_{\rev}$-local equivalence, hence a weak equivalence in $L_{S_{\rev}}\sSpace^{\dag}_{\rev,\inj}$.
  Therefore $i$ is a trivial cofibration in it.

  We next show that $A \times EC_{2} \to X \times EC_{2}$ is a projective trivial cofibration.
  Define
  \begin{align}
    Z_n
    :=
    (A\times EC_2)
    \amalg_{A\times E^{(n)}}
    (X\times E^{(n)})
    \quad \text{and} \quad
    D_n(i)
    :=
    (A\times\Delta^n)
      \amalg_{A\times\partial\Delta^n}
    (X\times\partial\Delta^n).
  \end{align}
  Then $Z_{-1}=A\times EC_2$ and $\colimstar_nZ_n=X\times EC_2$.
  The equivariant skeletal decomposition gives a pushout square
  \begin{align}
    &\begin{tikzpicture}[auto]
      \node (attaching-domain) at (0,1.5) {$\displaystyle
        \coprod_{\sigma\in\calO_n}
        C_2\mathbin{\cdot}D_n(i)$};
      \node (previous-stage) at (4,1.5) {$Z_{n-1}$};
      \node (attaching-codomain) at (0,0) {$\displaystyle
        \coprod_{\sigma\in\calO_n}
        C_2\mathbin{\cdot}(X\times\Delta^n)$};
      \node (stage) at (4,0) {$Z_n$};
      \draw[->]
        (attaching-domain) -- (previous-stage);
      \draw[->]
        (attaching-domain) -- (attaching-codomain);
      \draw[->]
        (previous-stage) -- (stage);
      \draw[->]
        (attaching-codomain) -- (stage);
    \end{tikzpicture}
  \end{align}
  Thus the left vertical map is a coproduct of the free $C_2$-object on $i\mathbin{\square}(\partial\Delta^n\to\Delta^n):D_n(i) \to X\times\Delta^n$.
  This is a trivial cofibration in $L_{S_{\rev}}\sSpace^{\dag}_{\rev,\inj}$ by the pushout--product axiom.
  Hence the left vertical map is a projective trivial cofibration.
  Therefore every map $Z_{n-1}\to Z_n$ is a projective trivial cofibration.
  Since projective trivial cofibrations are closed under sequential composition, $A\times EC_2 \to X\times EC_2$ is a projective trivial cofibration.
  Since the functor $(-)/C_{2}$ is left Quillen, 
  \begin{align}
    B(A) := (A\times EC_2)/C_{2} \to (X\times EC_2)/C_{2} =: B(X)
  \end{align}
  is a trivial cofibration in $L_{S_{\rev}}\sSpace^{\dag}_{\rev,\inj}$.

  Form the pushout $P:=B(X)\amalg_{B(A)}A/C_2$.
  We claim that the canonical collapse $P \to X/C_2$ is a levelwise weak equivalence.
  Fix $[m]\in\prism_{\rev}$, and let $\Orb_m(X)$ denote the set of $C_2$-orbits in the discrete set $X([m])$.
  Since $A([m])\subseteq X([m])$ is invariant, every orbit is either contained in $A([m])$ or disjoint from $A([m])$.
  We have a decomposition
  \begin{align}
    B(X)([m])
    \cong
    \coprod_{O\in\Orb_m(X)}
    (O\times EC_2)/C_2.
  \end{align}
  For an orbit $O\subseteq A([m])$, the corresponding component of the pushout defining $P([m])$ is
  \begin{align}
    (O\times EC_2)/C_2
    \amalg_{(O\times EC_2)/C_2}
    \Delta^0
    \cong
    \Delta^0.
  \end{align}
  For an orbit $O$ disjoint from $A([m])$, there is no contribution from $B(A)([m])$ or $(A/C_2)([m])$, so the corresponding component of $P([m])$ is $(O\times EC_2)/C_2$.
  Such an orbit is free, because $A([m])$ contains every fixed element of $X([m])$.
  After choosing one element of $O$, there is an equivariant isomorphism $O\cong C_2$, and hence an isomorphism
  \begin{align}
    (O\times EC_2)/C_2
    \cong
    EC_2.
  \end{align}
  Consequently,
  \begin{align}
    P([m])
    \cong
    \coprod_{\substack{O\in\Orb_m(X)\\ O\subseteq A([m])}}
    \Delta^0
    \amalg
    \coprod_{\substack{O\in\Orb_m(X)\\ O\cap A([m])=\varnothing}}
    EC_2
    \quad \text{and} \quad
    (X/C_2)([m])
    \cong
    \coprod_{O\in\Orb_m(X)}
    \Delta^0.
  \end{align}
  Under these identifications, $P([m])\to(X/C_2)([m])$ is the coproduct of identity maps $\Delta^0\to\Delta^0$ for the orbits contained in $A([m])$ and collapse maps $EC_2\to\Delta^0$ for the remaining orbits.
  Since $EC_2$ is contractible, each of these maps is a weak equivalence.
  Therefore $P([m])\to(X/C_2)([m])$ is a weak equivalence.
  Since $[m]$ was arbitrary, $P\to X/C_2$ is a levelwise weak equivalence.

  Finally, $A/C_2\to P$ is the cobase change of the trivial cofibration $B(A)\to B(X)$, and $P\to X/C_2$ is a levelwise weak equivalence.  
  Their composite $A/C_2\to X/C_2$ is therefore a local equivalence.  
  It is also a levelwise monomorphism.
  Indeed, if two elements of $A$ have the same orbit in $X$, one is carried to the other by an element of $C_2$;
  since $A$ is invariant, they already determine the same orbit in $A$.
  Hence it is a trivial cofibration of $L_{S_{\rev}}\sSpace^{\dag}_{\rev,\inj}$.
\end{proof}

\begin{lemma}\label{h1.lem.nerve}
  We have:
  \begin{enumerate}
    \item there exists a canonical bijection 
    \begin{align}
      \prism_{\dag}([m]_{\dag},[n]_{\dag}) \simeq \N_{\dag}([n]_{\dag})_m.
    \end{align}
    Consequently, $u^{*}\circ\yo_{\dag}\simeq\delta \N_{\dag}(-)$ on $\prism_{\dag}$; in particular $u^{*}(\Delta^\N_{\dag})=\delta \N_{\dag}([n]_{\dag})$.
    \item under the identification of (1), the unit 
    \begin{align}
      \eta:\yo_{\rev}[n]\to u^{*}u_{!}\yo_{\rev}[n]=\delta \N_{\dag}([n]_{\dag})
    \end{align}
    is the levelwise injection $\varphi\mapsto u(\varphi)$; 
    its image is the subobject $\delta \FdagSSet(\Delta^n)$ of chains all of whose entries are forward words (in the order-preserving levels) or all backward words (in the order-reversing levels).
  \end{enumerate}
\end{lemma}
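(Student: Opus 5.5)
For (1), I would identify both sides with dagger functors out of $[m]_{\dag}$. By definition $\prism_{\dag}([m]_{\dag},[n]_{\dag})=\Hom_{\sCat^{\dag}}(\FdagSCat([m]),[n]_{\dag})$. The free--forgetful adjunction identifies this with $\Hom_{\sCat}([m],\UdagSCat[n]_{\dag})$. Since $[n]_{\dag}$ is a discrete simplicial category, the latter is the set of $m$-simplices of $N(\UdagSCat[n]_{\dag})$. By the recalled identity $\UdagSSet\N_{\dag}=N\UdagSCat$, this is $\N_{\dag}([n]_{\dag})_m$.

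Next I would check naturality in $[m]\in\prism_{\rev}$, not only in $\prism$. For order-preserving $\alpha$, precomposition with $u(\alpha)$ corresponds to the simplicial operator $\alpha^{*}$ on the nerve, by the definition of $u(\alpha)$ on generators. For the reversal $\rho_m$, precomposition with $u(\rho_m)$ sends a functor $F$ to the chain $(F(a_m)^{\dag},\dots,F(a_1)^{\dag})$. This is exactly the dagger involution of $\N_{\dag}$, which reverses a simplex and applies the dagger of the category to its edges. Since every morphism of $\prism_{\rev}$ factors as an order-preserving map composed with some $\rho$, this gives $u^{*}\yo_{\dag}([n]_{\dag})\cong\delta\N_{\dag}([n]_{\dag})$. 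Naturality in $[n]_{\dag}$ is postcomposition on both sides, so the isomorphism holds on all of $\prism_{\dag}$.

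For (2), I would compute the unit by Yoneda. The unit $\yo_{\rev}[n]\to u^{*}u_{!}\yo_{\rev}[n]=u^{*}\yo_{\dag}([n]_{\dag})$ corresponds to the identity element under \cref{h1.lem.adjoints} (2). It therefore sends $\varphi\in\prism_{\rev}([m],[n])$ to $u(\varphi)\in\prism_{\dag}([m]_{\dag},[n]_{\dag})$. Injectivity follows from faithfulness of $u$ (\cref{h1.lem.u}).

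It remains to identify the image. Under (1), $u(\alpha)$ for order-preserving $\alpha$ is the chain of forward words $a_{\alpha(i)}\cdots a_{\alpha(i-1)+1}$. For order-reversing $\gamma$, $u(\gamma)$ is the chain of daggers of forward words, that is, backward words. Conversely, a chain all of whose entries are forward words has weakly increasing objects, so it equals $u(\alpha)$ for the map $\alpha$ recorded on objects; the all-backward case is symmetric. This needs the word structure of $[n]_{\dag}$: a nonidentity forward word from $i$ to $j$ exists only when $i<j$ and is then unique. I would verify this from the description of $\FdagSCat$ as free on the chain and its formal daggers, which has no relations beyond those of a free category.

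The image is thus the sub-presheaf of forward-only or backward-only chains. By the recalled isomorphism $\yo_{\rev}[n]\cong\FdagSSet(\Delta^n)$, it is $\delta\FdagSSet(\Delta^n)$. In level $m$ this consists of two copies of $\Delta^n_m$ glued along the constant chains, matching $\Delta^n\amalg_{\sk_0\Delta^n}(\Delta^n)^{\myop}$.

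I expect the main obstacle to be the naturality of (1) with respect to $\rho_m$. Matching the reversed-chain dagger structure on $\N_{\dag}$ with precomposition by $u(\rho_m)$ requires carefully tracking the dagger of a composite word. Everything else is bookkeeping in a free category.
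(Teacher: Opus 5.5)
Your proposal is correct and follows essentially the same argument as the paper. Both identify dagger functors out of the free $[m]_{\dag}$ with $m$-chains in $[n]_{\dag}$, check compatibility with order-preserving maps and reversals, and read off the unit as the injection $\varphi\mapsto u(\varphi)$ using faithfulness of $u$. Your explicit check that every forward-only or backward-only chain equals some $u(\varphi)$, via uniqueness of forward words, fills in a step the paper leaves implicit.
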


\begin{proof}
  (1)
  Since $[m]_{\dag}$ is free on the generators $a_i$, a dagger functor $F:[m]_{\dag}\to[n]_{\dag}$ is the same as an arbitrary assignment of objects $f(i)$ together with morphisms $g_i:=F(a_i)\in[n]_{\dag}(f(i-1),f(i))$; 
  this is exactly an $m$-chain $(g_1,\dots,g_m)$ of $\N_{\dag}[n]_{\dag}$.
  
  This correspondence commutes with precomposition by every morphism $\alpha$ of $\prism_{\rev}$ after applying $u$.
  For order-preserving $\alpha$, this is the usual naturality of the nerve, whereas for order-reversing $\alpha$ both sides reverse the chain and apply the dagger to its entries.
  It is therefore natural in $[m]\in\prism_{\rev}$, which justifies the asserted isomorphism of presheaves.

  (2)
  The unit of $u_{!}\dashv u^{*}$ at a representable is the map 
  \begin{align}
    \prism_{\rev}([m],[n])\to\prism_{\dag}([m]_{\dag},[n]_{\dag}) : \varphi\mapsto u(\varphi),
  \end{align}
  which is injective by faithfulness of $u$ (\cref{h1.lem.u}).
  By \cref{h1.not.u}, $u(\alpha)$ for order-preserving $\alpha$ is the chain of forward words determined by $\alpha$, and $u(\gamma)$ for order-reversing $\gamma$ the chain of daggered forward; 
  these are exactly the elements of $\delta \FdagSSet(\Delta^n)\subseteq\delta \N_{\dag}([n]_{\dag})$.
\end{proof}

\begin{lemma}\label{h1.lem.outer}
  For $\ell\geq2$, let 
  \begin{align}
    A^{\ell}:=d_0\Delta^{\ell} \amalg_{d_0\Delta^{\ell} \cap d_{\ell}\Delta^{\ell}} d_{\ell}\Delta^{\ell}\subseteq\Delta^{\ell}
    \quad \text{and} \quad 
    A^{\ell}_{\rev}:=\delta \FdagSSet(A^{\ell})\subseteq\yo_{\rev}[\ell].
  \end{align}
  Then the inclusion $A^{\ell}_{\rev}\to\yo_{\rev}[\ell]$ is a trivial cofibration of $L_{S_{\rev}}\sSpace^{\dag}_{\rev,\inj}$.
\end{lemma}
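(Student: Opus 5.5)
Since $\delta\circ\FdagSSet$ preserves colimits and monomorphisms, the inclusion $A^{\ell}_{\rev}\to\yo_{\rev}[\ell]$ is a monomorphism, hence a cofibration of $L_{S_{\rev}}\sSpace^{\dag}_{\rev,\inj}$. It therefore suffices to show that it is an $S_{\rev}$-local equivalence. The plan is to compare both objects with spines, using the Segal maps in $S_{\rev}$ and two-out-of-three.

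First, $A^{\ell}=d_0\Delta^{\ell}\amalg_{d_0\Delta^{\ell}\cap d_\ell\Delta^{\ell}}d_\ell\Delta^{\ell}$ contains the spine $\Sp[\ell]$. The intersection $d_0\Delta^{\ell}\cap d_\ell\Delta^{\ell}$ is the face on $\{1,\dots,\ell-1\}$, isomorphic to $\Delta^{\ell-2}$, and it contains the spine $\Sp[\ell-2]$ on those vertices. Applying $\delta\FdagSSet$, which preserves colimits, we obtain a commutative diagram of spans
\begin{align}
  \Sp[\ell-1]_{\rev}\leftarrow\Sp[\ell-2]_{\rev}\rightarrow\Sp[\ell-1]_{\rev}
  \quad\longrightarrow\quad
  \Delta^{\ell-1}_{\rev}\leftarrow\Delta^{\ell-2}_{\rev}\rightarrow\Delta^{\ell-1}_{\rev}.
\end{align}
The pushout of the left span is $\Sp[\ell]_{\rev}$, because the two spines overlap in the middle spine. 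The pushout of the right span is $A^{\ell}_{\rev}$.

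Each vertical map is a Segal map or an identity. For $\ell-2\le1$ the spine equals the simplex; in particular, for $\ell=2$ the intersection is $\Delta^0$. Hence each vertical map is an $S_{\rev}$-local equivalence. All maps in both spans are monomorphisms, and all objects are cofibrant in the injective localization, so both pushouts are homotopy pushouts. A map of homotopy pushout diagrams that is objectwise a weak equivalence in the left proper model category $L_{S_{\rev}}\sSpace^{\dag}_{\rev,\inj}$ induces a weak equivalence of pushouts. Equivalently, for a fibrant local $W$, the map $\underline{\Map}(-,W)$ turns the squares into homotopy pullbacks of Kan fibrations by \cref{h1.lem.local-objects}. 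Thus $\Sp[\ell]_{\rev}\to A^{\ell}_{\rev}$ is a local equivalence.

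Next, the composite $\Sp[\ell]_{\rev}\to A^{\ell}_{\rev}\to\yo_{\rev}[\ell]=\Delta^{\ell}_{\rev}$ is the Segal map, which lies in $S_{\rev}$ and is therefore a local equivalence. By two-out-of-three, $A^{\ell}_{\rev}\to\Delta^{\ell}_{\rev}$ is a local equivalence, hence a trivial cofibration.

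The main point to check is that $\delta\FdagSSet$ turns the pushout and intersection descriptions of subcomplexes of $\Delta^{\ell}$ into the corresponding ones in $\sSpace^{\dag}_{\rev}$. Colimits are preserved because $\FdagSSet$ is a left adjoint and $\delta$ preserves colimits. For monomorphisms: $\FdagSSet(K)=K\amalg_{\sk_0K}K^{\myop}$, so an injection $K\to L$ that is bijective on $0$-simplices of the relevant part induces an injection, and the general case also holds since the gluing is along vertices only. Once this is checked, the intersection of the faces behaves as expected. The identification of the spine-level pushout with $\Sp[\ell]$ is a routine combinatorial check.
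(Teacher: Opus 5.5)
Your proof is correct. Its skeleton is the same as the paper's: factor the spine inclusion as $\Sp[\ell]\subseteq A^{\ell}\subseteq\Delta^{\ell}$, show that the first inclusion becomes a local equivalence after applying $\delta\FdagSSet$, and conclude by two-out-of-three with the Segal map. In the paper this last step appears as right cancellation.

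The difference lies in how the intermediate map $\Sp[\ell]_{\rev}\to A^{\ell}_{\rev}$ is handled.

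\begin{itemize}
\item The paper introduces the class $\calA$ of monomorphisms $i$ of simplicial sets for which $\delta\FdagSSet(i)$ is a localized trivial cofibration. It notes that $\calA$ is weakly saturated, because $\delta\FdagSSet$ preserves colimits and monomorphisms, and that $\calA$ has right cancellation. It then imports the purely combinatorial first part of the proof of \cite[Lemma~3.5]{JT06} to place $\Sp[\ell]\to A^{\ell}$ in $\calA$.
\item You instead write both $\Sp[\ell]$ and $A^{\ell}$ as pushouts of spans over the middle face on $\{1,\dots,\ell-1\}$. You then apply the gluing (cube) lemma to the map of spans whose components are Segal maps or identities. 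This is legitimate: the legs are monomorphisms, every object is cofibrant in the injective localization, and that localization is left proper.
\end{itemize}

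What each route buys: yours is self-contained and makes the homotopical reason explicit, namely homotopy invariance of pushouts along cofibrations. The paper's route is purely formal once $\calA$ is set up, so any inclusion that Joyal--Tierney place in the cancellative saturation of the spine inclusions is handled at once, with no homotopy-pushout argument.

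One small point: your justification that $\delta\FdagSSet$ preserves monomorphisms is vague. The clean reason is that a monomorphism $K\to L$ reflects totally degenerate simplices. Hence gluing $K$ and $K^{\myop}$ along $\sk_0K$ creates no identifications beyond those already made in $L\amalg_{\sk_0L}L^{\myop}$.
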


\begin{proof}
  Let $\calA$ be the class of monomorphisms of simplicial sets $i:K\to L$ for which $\delta\FdagSSet(i)$ is a trivial cofibration of $L_{S_{\rev}}\sSpace^{\dag}_{\rev,\inj}$.
  The formula $\FdagSSet(K)=K\amalg_{K^{(0)}}K^{\myop}$ shows that $\delta\FdagSSet$ preserves monomorphisms and colimits.
  Hence $\calA$ is saturated.
  
  It also has the right cancellation property:
  if $i$ and $ji$ belong to $\calA$, then $\delta\FdagSSet(j)$ is a local equivalence by two-out-of-three and a monomorphism, hence a trivial cofibration.

  Every spine inclusion $\Sp[n]\to\Delta^n$ belongs to $\calA$ by the definition of $S_{\rev}$.
  The first part of the proof of \cite[Lemma~3.5]{JT06} therefore gives $\Sp[\ell]\to A^{\ell}$ in $\calA$.
  Its composite with $A^{\ell}\to\Delta^{\ell}$ is the spine inclusion, so right cancellation gives $A^{\ell}\to\Delta^{\ell}$ in $\calA$.
  This is precisely the asserted map $A^{\ell}_{\rev}\to\yo_{\rev}[\ell]$.
\end{proof}

\begin{remark}\label{h1.lem.fix}
  Let $C_2$ act on $\yo_{\rev}[\ell]$ through the Yoneda image of $\rho_{\ell}$, 
  i.e.\ by postcomposition $\varphi\mapsto \rho_{\ell}\circ\varphi$.
  The fixed elements of $\yo_{\rev}[\ell]([k])=\prism_{\rev}([k],[\ell])$ are: 
  none if $\ell$ is odd; 
  exactly the constant maps at $\ell/2$ if $\ell$ is even.
  In particular, for $\ell\geq2$ even, all fixed elements lie in $A^{\ell}_{\rev}$.
\end{remark}

\begin{remark}
  Recall that $[m]_{\dag}$ is the free dagger category on the chain $0\to1\to\cdots\to m$: 
  its underlying category is the free category on the doubled quiver $Q_m$, with vertices $0,\dots,m$ and edges $a_i:i-1\to i$ and $a_i^{\dag}:i\to i-1$, and its dagger reverses paths and exchanges $a_i\leftrightarrow a_i^{\dag}$.
\end{remark}

\begin{definition}\label{h1.not.paths}
  We define notions of letters and the length for $[m]_{\dag}$.
  \begin{itemize}
    \item we call the edges of $Q_m$ \emph{letters}; no letter is self-adjoint;
    \item a \emph{path of length $\ell$} is a composable sequence $p=(e_1,\dots,e_{\ell})$ of letters, with vertices $v_0,\dots,v_{\ell}$; the morphisms of $[m]_{\dag}$ are exactly the paths (including the empty paths, the identities), and every morphism has a \emph{unique} letter decomposition;
    \item the dagger path is $p^{\dag}=(e_{\ell}^{\dag},\dots,e_1^{\dag})$; we call $p$ \emph{palindromic} if $p^{\dag}=p$, i.e.\ $e_{\ell+1-j}=e_j^{\dag}$ for all $j$.
    Since no letter is self-adjoint, palindromic paths have \emph{even} length, and then $v_{\ell/2}$ is called the \emph{middle vertex};
  \end{itemize}
\end{definition}

\begin{proposition}\label{h1.prop.classifying}
  For every $m\geq0$, the inclusion $\delta \FdagSSet(\Sp[m])\to\delta \N_{\dag}([m]_{\dag})$ is a trivial cofibration of $L_{S_{\rev}}\sSpace^{\dag}_{\rev,\inj}$.
  Consequently the unit 
  \begin{align}
    \eta:\yo_{\rev}[m]\to u^{*}u_{!}\yo_{\rev}[m]=\delta \N_{\dag}([m]_{\dag})
  \end{align}
  is a trivial cofibration of $L_{S_{\rev}}\sSpace^{\dag}_{\rev,\inj}$; 
  in particular it is an $S_{\rev}$-local equivalence.
\end{proposition}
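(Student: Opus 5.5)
The plan is to filter $\delta\N_{\dag}([m]_{\dag})$ by the length of words, starting from $\delta\FdagSSet(\Sp[m])$. Each stage will be a pushout of one of the trivial cofibrations from \cref{h1.lem.outer} or \cref{h1.lem.borel}. The ``consequently'' part then follows formally. For $m\geq2$ the Segal map $\Sp[m]_{\rev}\to\yo_{\rev}[m]$ lies in $S_{\rev}$, and for $m\le1$ it is an identity. By \cref{h1.lem.nerve}~(2), the composite $\Sp[m]_{\rev}\to\yo_{\rev}[m]\xrightarrow{\eta}\delta\N_{\dag}([m]_{\dag})$ is the inclusion of the first statement. So $\eta$ is a local equivalence by two-out-of-three. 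It is also a levelwise monomorphism, hence a trivial cofibration of $L_{S_{\rev}}\sSpace^{\dag}_{\rev,\inj}$.

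For the first statement, use the path description of \cref{h1.not.paths}. A $k$-simplex of $\N_{\dag}([m]_{\dag})$ is a chain $(g_1,\dots,g_k)$ of paths. Its concatenation $p=g_k\cdots g_1$ has some length $\ell$, and the chain is the same as the path $p$ together with an order-preserving map $[k]\to[\ell]$ with $0\mapsto0$ and $k\mapsto\ell$; this uses unique letter decomposition. Hence each path $p$ of length $\ell$ gives a map $\chi_p:\Delta^{\ell}\to\UdagSSet\N_{\dag}([m]_{\dag})$. Its simplices hitting both $0$ and $\ell$ are exactly the simplices with total word $p$. A simplex of $\Delta^{\ell}$ missing $0$ or missing $\ell$ lies in $d_0\Delta^\ell\cup d_\ell\Delta^\ell=A^{\ell}$, and there $\chi_p$ lands on strictly shorter words. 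Let $N^{(\ell)}\subseteq\delta\N_{\dag}([m]_{\dag})$ be the subobject of chains whose total word has length $\le\ell$. Then $N^{(1)}=\delta\FdagSSet(\Sp[m])$, since a path of length $\le1$ is an identity or a single letter $a_i$ or $a_i^{\dag}$. Also $\delta\N_{\dag}([m]_{\dag})=\bigcup_\ell N^{(\ell)}$. It therefore suffices to show each $N^{(\ell-1)}\to N^{(\ell)}$, for $\ell\ge2$, is a trivial cofibration, and then pass to the sequential colimit.

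The dagger sends total word $p$ to $p^{\dag}$, so the paths of length $\ell$ fall into $C_2$-orbits. For a non-palindromic orbit $\{p,p^{\dag}\}$, the map $\chi_p$ extends to $\yo_{\rev}[\ell]\cong\delta\FdagSSet(\Delta^\ell)\to\delta\N_{\dag}$, with the reversed half mapping onto the simplices of $p^{\dag}$. For a palindromic $p$, which has even length, this extension factors through the quotient $\yo_{\rev}[\ell]/C_2$ by the action of \cref{h1.lem.fix}. I then claim that $N^{(\ell)}$ is the pushout of $N^{(\ell-1)}$ along the following maps, summed over orbit representatives:
\begin{align}
  \coprod_{\{p,p^{\dag}\}\text{ free}}A^{\ell}_{\rev}\amalg\coprod_{p\text{ palindromic}}A^{\ell}_{\rev}/C_2
  \to
  \coprod_{\{p,p^{\dag}\}\text{ free}}\yo_{\rev}[\ell]\amalg\coprod_{p\text{ palindromic}}\yo_{\rev}[\ell]/C_2 .
\end{align}
Granting this, the free summands are trivial cofibrations by \cref{h1.lem.outer}. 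The palindromic summands are trivial cofibrations by \cref{h1.lem.borel}, applied with $X=\yo_{\rev}[\ell]$ and $A=A^{\ell}_{\rev}$. Its hypotheses hold: \cref{h1.lem.fix} says every fixed element lies in $A^{\ell}_{\rev}$ for even $\ell$, and $A^{\ell}_{\rev}\to X$ is a local equivalence by \cref{h1.lem.outer}. Trivial cofibrations are stable under coproducts and cobase change, which finishes the induction.

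The main obstacle is verifying that the displayed square is a pushout, levelwise in $\prism_{\rev}$. This requires two things. First, the map from $\yo_{\rev}[\ell]$, respectively $\yo_{\rev}[\ell]/C_2$, to $N^{(\ell)}$ must be injective on elements outside $A^\ell_{\rev}$, with image exactly the chains with total word $p$ or $p^{\dag}$. Second, elements in $A^{\ell}_{\rev}$ must land in $N^{(\ell-1)}$. I would check this separately at order-preserving and order-reversing levels. Uniqueness of letter decompositions gives injectivity at both kinds of level. The palindromic case is the delicate point: an element $\varphi$ and its translate $\rho_\ell\circ\varphi$ must have the same image, and no other identifications may occur. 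This should follow from $p^{\dag}=p$ together with uniqueness of decompositions, but it must be checked carefully against the explicit description of the order-reversing levels in \cref{h1.lem.nerve}.
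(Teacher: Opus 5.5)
Your proposal is correct and follows essentially the same route as the paper: the total-length filtration starting at $\delta\FdagSSet(\Sp[m])$, with cells $A^{\ell}_{\rev}\to\yo_{\rev}[\ell]$ for non-palindromic pairs $\{p,p^{\dag}\}$ and $A^{\ell}_{\rev}/C_2\to\yo_{\rev}[\ell]/C_2$ for palindromic $p$ (via \cref{h1.lem.outer,h1.lem.borel}), followed by two-out-of-three for $\eta$. The paper carries out the pushout check you flag exactly as you anticipate: for order-reversing interior $\varphi$ it writes $\bar\varphi=\rho_{\ell}\circ\varphi$ and uses $\chi_p(\varphi)=\bar\varphi^{*}(c_{p^{\dag}})$, so unique letter decomposition shows $\chi_p$ is injective on interior elements except for the identification $\varphi\sim\rho_{\ell}\circ\varphi$ when $p=p^{\dag}$.
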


\begin{proof}
  The $k$-simplex of the nerve $\delta \N_{\dag}([m]_{\dag})$ is given by composable morphisms $c=(g_{1},\cdots,g_{k})$.
  we write $|c|:=\sum_j\ell(g_j)$ for the \emph{total length}, $\ell(g_{i})$ denoting the letter length of the path $g_{i}$.

  For $\ell\geq0$, let $X_{\ell}\subseteq \delta \N_{\dag}([m]_{\dag})$ be the subobject of chains $c$ with $|c|\leq\ell$.
  This is a subpresheaf: 
  the simplicial operators compose consecutive entries or delete outer entries, hence do not increase $|\mathrm{-}|$; degeneracies insert identities; the reversal preserves $|\mathrm{-}|$.

  We have $X_0=\delta\FdagSSet(\sk_0\Delta^m)$.
  A chain of total length at most one has at most one nonidentity entry, and that entry is a single letter $a_i$ or $a_i^{\dag}$.
  It therefore lies in the corresponding edge copy of $\FdagSSet(\Delta^1)$ inside $\FdagSSet(\Sp[m])$.
  Conversely, a composable chain in one of these edge copies contains at most one such letter.
  Hence $X_1=\delta\FdagSSet(\Sp[m])$.

  Finally $\delta \N_{\dag}([m]_{\dag})=\colimstar_{\ell}X_{\ell}$, every chain having finite total length.
  It therefore suffices to show that $X_{\ell-1}\to X_{\ell}$ is a trivial cofibration for every $\ell\geq2$.

  For a path $p=(e_1,\dots,e_{\ell})$ of length $\ell\geq2$, let 
  \begin{align}
    c_p:=(e_1,\dots,e_{\ell})\in \N_{\dag}([m]_{\dag})_{\ell}
  \end{align}
  be its letter chain, and let $\chi_p:\yo_{\rev}[\ell]\to \delta \N_{\dag}([m]_{\dag})$ be the classifying map of $c_p$; 
  explicitly $\chi_p(\varphi)=\varphi^{*}(c_p)$.
  Since the reversal acts on $c_p$ by $\rho_{\ell}^{*}(c_p)=c_{p^{\dag}}$, we have $\chi_{p^{\dag}}=\chi_p\circ(\rho_{\ell})_{*}$, where $(\rho_{\ell})_{*}$ is the flip of \cref{h1.lem.fix}.

  Call $\varphi:[k]\to[\ell]$ \emph{interior} if $\varphi(0)=0$ and $\varphi(k)=\ell$ in the order-preserving case, or if $\varphi(0)=\ell$ and $\varphi(k)=0$ in the order-reversing case.
  An element $\varphi$ belongs to $A^{\ell}_{\rev}$ if and only if its image omits $0$ or $\ell$.
  This is equivalent to saying that $\varphi$ is not interior.
  In that case $|\chi_p(\varphi)|<\ell$.
  Hence $\chi_p(A^{\ell}_{\rev})\subseteq X_{\ell-1}$.

  Let $\varphi$ be interior and write
  $\chi_p(\varphi)=(g_1,\ldots,g_k)$ and $d_i:=\ell(g_i)$.
  If $\varphi$ is order-preserving, the unique letter decompositions of the $g_i$ concatenate to $p$, and $\varphi(j)=\sum_{i=1}^{j}d_i$.
  For an order-reversing $\varphi$, put $\bar\varphi:=\rho_{\ell}\circ\varphi$.
  Then $\bar\varphi$ is order-preserving, and contravariance gives
  \begin{align}
    \chi_p(\varphi)
    =
    \bar\varphi^{*}(\rho_{\ell}^{*}c_p)
    =
    \bar\varphi^{*}(c_{p^{\dag}}).
  \end{align}
  Therefore its entries concatenate to $p^{\dag}$, and $ \varphi(j)=\ell-\sum_{i=1}^{j}d_i$.
  Consequently, for interior elements $\varphi$ and $\psi$,
  \begin{align}\label{h1.eq.interior-fibers}
    \chi_p(\varphi)=\chi_p(\psi)
    \quad\Longleftrightarrow\quad
    \varphi=\psi
    \quad\text{or}\quad
    (p=p^{\dag}\ \text{and}\ \psi=(\rho_{\ell})_{*}\varphi).
  \end{align}

  Thus the interior fibers of $\chi_p$ are singletons when $p\neq p^{\dag}$ and are exactly the $C_2$-orbits of the flip when $p=p^{\dag}$.
  The concatenation of an interior image is $p$ or $p^{\dag}$, so that image determines the unordered pair $\{p,p^{\dag}\}$.
  Therefore the interiors belonging to distinct unordered pairs are disjoint, and all are disjoint from $X_{\ell-1}$.
  If $p$ is palindromic, $\chi_p$ factors through the quotient $Q^{\ell}$.
  By \cref{h1.lem.fix}, a fixed element of the flip is constant at the middle vertex and is not interior;
  hence the flip acts freely on the interior, and \eqref{h1.eq.interior-fibers} shows that the induced map $\bar\chi_p$ is injective there.

  Fix $\ell\geq2$ and choose one representative $p$ from each unordered pair $\{p,p^{\dag}\}$ of paths of length $\ell$ in $[m]_{\dag}$.
  Then the squares
  \begin{align}
    &\begin{tikzpicture}[auto]
      \node (boundary-cells) at (0,1.5) {$\displaystyle
        \coprod_{p\neq p^{\dag}}A^{\ell}_{\rev}
        \amalg
        \coprod_{p=p^{\dag}}A^{\ell}_{\rev}/C_2$};
      \node (previous-length) at (4,1.5) {$X_{\ell-1}$};
      \node (cells) at (0,0) {$\displaystyle
        \coprod_{p\neq p^{\dag}}\yo_{\rev}[\ell]
        \amalg
        \coprod_{p=p^{\dag}}Q^{\ell}$};
      \node (length) at (4,0) {$X_{\ell}$};
      \draw[->]
        (boundary-cells) -- (previous-length);
      \draw[->]
        (boundary-cells) -- (cells);
      \draw[->]
        (previous-length) -- (length);
      \draw[->]
        (cells) -- (length);
    \end{tikzpicture}
  \end{align}
  are levelwise pushouts of sets: the attaching maps are the restrictions of $\chi_p$, $\bar\chi_p$ (which land in $X_{\ell-1}$, and factor through the quotient in the palindromic case), and $X_{\ell}\setminus X_{\ell-1}$ is, levelwise, the disjoint union of the interiors, onto which the cells map bijectively.

  For $p\neq p^{\dag}$, the map $A^{\ell}_{\rev}\to\yo_{\rev}[\ell]$ is a trivial cofibration (by \cref{h1.lem.outer}).
  For $p=p^{\dag}$, the path is palindromic, hence $\ell$ is even, the fixed elements of the flip are the middle-vertex constants and lie in $A^{\ell}_{\rev}$ (\cref{h1.lem.fix}); 
  by \cref{h1.lem.borel,h1.lem.outer}, $A^{\ell}_{\rev}/C_2\to Q^{\ell}$ is a trivial cofibration.

  Trivial cofibrations are closed under coproducts and cobase change, so $X_{\ell-1}\to X_{\ell}$ is a trivial cofibration for every $\ell\geq2$.
  They are also closed under transfinite composition, so $X_1\to\delta \N_{\dag}([m]_{\dag})$ is a trivial cofibration of $L_{S_{\rev}}\sSpace^{\dag}_{\rev,\inj}$.

  We can factor $X_1=\delta \FdagSSet(\Sp[m])\to \delta \N_{\dag}([m]_{\dag}) = \colimstar_{\ell}X_{\ell}$ as 
  \begin{align}
    \delta \FdagSSet(\Sp[m])
    \to
    \yo_{\rev}[m]
    = 
    \delta \FdagSSet(\Delta^{m})
    \xrightarrow{\eta} 
    \delta \N_{\dag}([m]_{\dag}).
  \end{align}
  By \cref{h1.lem.nerve} (2), the middle object is the subobject $\delta \FdagSSet(\Delta^m)\subseteq\delta \N_{\dag}([m]_{\dag})$, and the first map is $\delta \FdagSSet$ of the spine inclusion, 
  By two-out-of-three, $\eta$ is a local equivalence, and it is a monomorphism (by \cref{h1.lem.nerve} (2)), hence a trivial cofibration.
\end{proof}

\begin{lemma}\label{h1.lem.tensor}
  For every $V\in\sSpace^{\dag}_{\rev}$ and every $K\in\sSet$, there exists a natural equivalence 
  \begin{align}
    u_{!}(V\times K)\simeq u_{!}(V)\times K
    \quad \text{under which} \quad 
    \eta_{V\times K}=\eta_V\times K.
  \end{align}
\end{lemma}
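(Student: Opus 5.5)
The plan is to exploit the fact that left Kan extension $u_{!}$ along $u^{\myop}$ is computed levelwise in the simplicial direction. Concretely, $u_{!}$ is the pointwise left Kan extension of simplicial-set-valued presheaves, given by the coend formula
\begin{align}
  (u_{!}V)_{[n]_{\dag}}
  \cong
  \int^{[m]\in\prism_{\rev}}
  \prism_{\dag}([n]_{\dag},u[m])\cdot V_{[m]}.
\end{align}
The functor $(-)\times K$ on $\sSet$ is a left adjoint (its right adjoint is $(-)^K$), so it commutes with coends and coproducts. This gives
\begin{align}
  (u_{!}(V\times K))_{[n]_{\dag}}
  \cong
  \int^{[m]}\prism_{\dag}([n]_{\dag},u[m])\cdot(V_{[m]}\times K)
  \cong
  (u_{!}V)_{[n]_{\dag}}\times K,
\end{align}
naturally in $[n]_{\dag}$, $V$ and $K$.

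One could also argue by adjunction. The right adjoint of $u_{!}(-)\times K$ is $W\mapsto u^{*}(W^K)$, and the right adjoint of $u_{!}(-\times K)$ is $W\mapsto (u^{*}W)^K$. These agree because restriction commutes with levelwise cotensors: $(u^{*}(W^K))_{[n]}=(W_{[n]_{\dag}})^K=((u^{*}W)^K)_{[n]}$. Uniqueness of left adjoints then yields the natural isomorphism. I will use this version, since it makes naturality automatic.

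For the unit compatibility, I will compare adjuncts. Under the isomorphism, $\eta_{V\times K}$ is the adjunct of $\id_{u_{!}(V\times K)}$. Transporting along $u_{!}(V\times K)\cong u_{!}V\times K$, this adjunct corresponds to the map $V\times K\to u^{*}(u_{!}V\times K)=u^{*}u_{!}V\times K$. Levelwise this map is obtained from $\eta_V$ by tensoring with $K$, because the cotensor isomorphism used above is the identity levelwise. Equivalently, on coends the unit at level $[n]$ sends $x$ to the class of $(\id_{u[n]},x)$, and for $(x,k)\in V_{[n]}\times K$ this class is $((\id,x),k)$. The key step is therefore just checking that the mate of the identity cotensor comparison is the constructed isomorphism. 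That step is routine and is the only point needing care; I expect no real obstacle.
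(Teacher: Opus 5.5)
Your proposal is correct and is essentially the paper's argument. The paper invokes \cite[Proposition~A.3.3.7]{HTT} to see that $u_{!}\dashv u^{*}$ is a simplicial adjunction, so the canonical tensor comparison coming from $u^{*}$ commuting with the levelwise (co)tensors is an isomorphism compatible with the unit; your uniqueness-of-adjoints argument, together with the adjunct computation of $\eta_{V\times K}$ and the explicit coend check, verifies exactly this by hand.
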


\begin{proof}
  Regard $\prism_{\rev}^{\myop}$ and $\prism_{\dag}^{\myop}$ as simplicial categories with discrete mapping spaces.
  By \cite[Proposition~A.3.3.7]{HTT}, the adjunction $u_{!}\dashv u^{*}$ is a simplicial adjunction.
  Since $u^{*}$ preserves the levelwise tensors, the canonical tensor comparison of this simplicial adjunction is a natural isomorphism $ u_{!}(V\times K)\cong u_{!}(V)\times K$.
  Compatibility of the enriched adjunction unit with this canonical comparison gives $\eta_{V\times K}=\eta_V\times K$.
\end{proof}

\begin{proposition}\label{h1.lem.unit-cofibrant}
  For every projectively cofibrant $X\in\sSpace^{\dag}_{\rev}$, the unit $\eta_X:X\to u^{*}u_{!}X$ is an $S_{\rev}$-local equivalence.
\end{proposition}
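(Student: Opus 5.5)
We argue by a cellular induction over projective cell complexes. Let $\calE$ denote the class of projectively cofibrant $X$ for which $\eta_X$ is an $S_{\rev}$-local equivalence. Every projectively cofibrant object is a retract of a transfinite composite of pushouts of the generating projective cofibrations $\yo_{\rev}[m]\times\partial\Delta^k\to\yo_{\rev}[m]\times\Delta^k$ from \cref{h1.lem.proj-rev}. It therefore suffices to show three things: $\calE$ contains the cell objects $\yo_{\rev}[m]\times K$ for $K\in\{\partial\Delta^k,\Delta^k\}$; $\calE$ is closed under retracts (local equivalences are closed under retracts); and $\calE$ is closed under the pushouts and transfinite composites that build cell complexes.

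\textbf{Cells.} By \cref{h1.prop.classifying}, $\eta_{\yo_{\rev}[m]}$ is a trivial cofibration of $L_{S_{\rev}}\sSpace^{\dag}_{\rev,\inj}$. By \cref{h1.lem.tensor}, $\eta_{\yo_{\rev}[m]\times K}=\eta_{\yo_{\rev}[m]}\times K$. This is the pushout--product of $\eta_{\yo_{\rev}[m]}$ with $\varnothing\to K$, and every object is injectively cofibrant, so the simplicial model structure of \cref{h1.lem.localized-simplicial} (1) shows it is again a trivial cofibration, hence a local equivalence.

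\textbf{Gluing.} Suppose $X'=X\amalg_{A}B$, where $A\to B$ is $\yo_{\rev}[m]\times(\partial\Delta^k\to\Delta^k)$ and $X\in\calE$. The functor $u_{!}$ preserves colimits, and $u^{*}$ preserves colimits because it is a left adjoint (to $u_{*}$) and colimits are levelwise. Hence $u^{*}u_{!}X'=u^{*}u_{!}X\amalg_{u^{*}u_{!}A}u^{*}u_{!}B$, and $\eta_{X'}$ is the induced map of pushout squares. The map $A\to B$ is a monomorphism. Its image $u^{*}u_{!}(A\to B)=u^{*}(u_{!}\yo_{\rev}[m]\times(\partial\Delta^k\to\Delta^k))$ is also a monomorphism, since $u^{*}$ preserves the levelwise product with $K$. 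Both pushouts are therefore homotopy pushouts in the left proper model category $L_{S_{\rev}}\sSpace^{\dag}_{\rev,\inj}$, where all objects are cofibrant and cofibrations are the monomorphisms. The cube lemma then gives that $\eta_{X'}$ is a local equivalence.

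For a transfinite composite $X=\colim_\beta X_\beta$, all transition maps $X_\beta\to X_{\beta+1}$ and $u^{*}u_{!}X_\beta\to u^{*}u_{!}X_{\beta+1}$ are monomorphisms: the former are pushouts of monomorphisms, and the latter are pushouts of the monomorphisms above. In the injective structure these sequences are Reedy-cofibrant, so the colimits are homotopy colimits. Since $u^{*}u_{!}$ preserves these colimits, a colimit of levelwise local equivalences between such sequences is a local equivalence, so $\eta_X$ is a local equivalence.

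\textbf{Main obstacle.} The delicate point is justifying that $u^{*}u_{!}$ carries the cell attachments to monomorphisms, so that strict pushouts and colimits compute homotopy colimits on both sides. The cells are taken care of by the tensor compatibility \cref{h1.lem.tensor}: $u^{*}u_{!}(\yo_{\rev}[m]\times\partial\Delta^k)\to u^{*}u_{!}(\yo_{\rev}[m]\times\Delta^k)$ is $\delta\N_{\dag}([m]_{\dag})\times(\partial\Delta^k\to\Delta^k)$, manifestly a monomorphism. Pushouts of monomorphisms in presheaf categories are monomorphisms, and so are transfinite composites of them, which completes the induction.
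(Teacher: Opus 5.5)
Your proposal is correct and follows essentially the same route as the paper. It handles cells via \cref{h1.prop.classifying}, \cref{h1.lem.tensor} and the pushout--product axiom in the localized simplicial structure; single-cell attachments via left properness (the cube lemma), using that $u^{*}u_{!}$ preserves colimits and sends cell maps to monomorphisms; transfinite composites via cofibrancy of the sequences; and finally retracts. The only cosmetic difference is how you get the monomorphism property of $u^{*}u_{!}$ on attaching maps: you read it off from the tensor compatibility, whereas the paper deduces it from $u_{!}$ being left Quillen for the projective structures.
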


\begin{proof}
  Since $u_{!}$ and $u^{*}$ are left adjoints, the composite $u^{*}u_{!}$ preserves colimits.
  The functor $u_{!}$ is left Quillen for the projective structures (by \cref{h1.lem.quillen-proj}), so it sends projective cofibrations to projective cofibrations, which are levelwise monomorphisms.
  The functor $u^{*}$ preserves levelwise monomorphisms;
  hence $u^{*}u_{!}$ sends projective cofibrations to levelwise monomorphisms.
  
  Let $C$ be the class of projectively cofibrant $X$ with $\eta_X$ a local equivalence.

  \emph{Cells:}
  The morphism $\eta_{\varnothing}$ is an isomorphism.
  For a cell $\yo_{\rev}[m]\times K$, \cref{h1.lem.tensor} gives $\eta_{\yo_{\rev}[m]\times K}=\eta_{\yo_{\rev}[m]}\times K$.
  By \cref{h1.prop.classifying}, $\eta_{\yo_{\rev}[m]}$ is a trivial cofibration.  
  Hence $\eta_{\yo_{\rev}[m]}\times K$ is its pushout--product with $\varnothing\to K$ and is a trivial cofibration (by \cref{h1.lem.localized-simplicial}).

  \emph{Cell attachments:}
  Let $X\in C$ and take the pushout 
  \begin{align}
    X'=X\amalg_{\yo_{\rev}[m]\times\partial\Delta^k}(\yo_{\rev}[m]\times\Delta^k).
  \end{align}
  Applying $u^{*}u_{!}$ gives the pushout with legs levelwise monomorphisms.
  The comparison of the two legs is a local equivalence on all three corners ($\eta_X$ by hypothesis; the other two by the previous argument).
  Since $L_{S_{\rev}}\sSpace^{\dag}_{\rev,\inj}$ is left proper, $\eta_{X'}$ is a local equivalence, so $X'\in C$.

  \emph{Transfinite composites:}
  Let $X_{\lambda}=\colimstar_{\beta<\lambda}X_{\beta}$ be the colimit of a transfinite sequence of such cell attachments, each $X_{\beta}\in C$.
  At the initial index the latching map $\emptyset \to X_{0}$ is a cofibration because every object is cofibrant in $L_{S_{\rev}}\sSpace^{\dag}_{\rev,\inj}$.
  In $L_{S_{\rev}}\sSpace^{\dag}_{\rev,\inj}$, the latching map of $(X_\beta)_\beta$ at a successor is the indicated monomorphism and hence a cofibration, while at a limit ordinal it is an isomorphism by continuity.
  The same is true for $(u^{*}u_{!}X_\beta)_\beta$, because $u^{*}u_{!}$ preserves colimits and sends the cell-attachment maps to levelwise monomorphisms.
  The maps $\eta_{X_\beta}$ are local equivalences by the induction hypothesis.
  Applying \cite[Corollary~A.2.9.25 (1)]{HTT} in $L_{S_{\rev}}\sSpace^{\dag}_{\rev,\inj}$ shows that $\eta_{X_\lambda}$ is a local equivalence.

  \emph{Retracts:}
  Since every projectively cofibrant object is a retract of a cell complex, $C$ contains all projectively cofibrant objects.
\end{proof}

\begin{lemma}\label{h1.thm.projection}
  We have:
  \begin{enumerate}
    \item the identity functors induce a Quillen equivalence 
    \begin{align}
      \id : L_{u_{!}(\widetilde S_{\rev})}\sSpace^{\dag}_{\proj} \rightleftarrows L_{u_{!}(\widetilde S_{\rev})}\sSpace^{\dag}_{\inj} : \id;
    \end{align}
    \item for every $\widetilde s : \widetilde A\to\widetilde B\in\widetilde S_{\rev}$, the map $u^{*}u_{!}(\widetilde s)$ is an $S_{\rev}$-local equivalence;
    \item the functor $u^{*}:L_{u_{!}(\widetilde S_{\rev})}\sSpace^{\dag}_{\inj}\to L_{S_{\rev}}\sSpace^{\dag}_{\rev,\inj}$ is a left Quillen functor;
    \item the functor $u^{*}$ sends weak equivalences in either of the model categories $L_{u_{!}(\widetilde S_{\rev})}\sSpace^{\dag}_{\inj}$ and $L_{u_{!}(\widetilde S_{\rev})}\sSpace^{\dag}_{\proj}$ to $S_{\rev}$-local equivalences.
  \end{enumerate}
\end{lemma}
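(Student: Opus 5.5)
We treat the four parts in order; each later part uses the earlier ones.

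For (1) we argue exactly as in \cref{h1.lem.rev-models} (1). By \cite[Remark~A.2.8.6]{HTT}, the identity $\sSpace^{\dag}_{\proj}\rightleftarrows\sSpace^{\dag}_{\inj}$ is a Quillen equivalence. Both localizations are taken at the same set $u_{!}(\widetilde S_{\rev})$. This set consists of projective cofibrations between projectively cofibrant objects (\cref{h1.def.model}), so its left-derived image under the identity is represented by itself. Hence \cite[Theorem~3.3.20 (1)(b)]{Hir02} gives the Quillen equivalence of the localizations.

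For (2) we use naturality of the unit. Write $\widetilde s:\widetilde A\to\widetilde B$, and consider the square formed by $\widetilde s$, $u^{*}u_{!}\widetilde s$ and the units $\eta_{\widetilde A}$, $\eta_{\widetilde B}$. The objects $\widetilde A$ and $\widetilde B$ are projectively cofibrant, so both units are $S_{\rev}$-local equivalences by \cref{h1.lem.unit-cofibrant}. The map $\widetilde s$ is an $S_{\rev}$-local equivalence by \cref{h1.rem.S-comparison}. Two-out-of-three then shows that $u^{*}u_{!}(\widetilde s)$ is an $S_{\rev}$-local equivalence.

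For (3) we first establish the unlocalized statement: $u^{*}:\sSpace^{\dag}_{\inj}\to\sSpace^{\dag}_{\rev,\inj}$ is left Quillen. Indeed, $u^{*}$ preserves levelwise monomorphisms and levelwise weak equivalences by \cref{h1.lem.adjoints} (1), and it has the right adjoint $u_{*}$. To pass to the localizations we apply \cite[Theorem~3.3.20 (1)(a)]{Hir02}. Every object is injectively cofibrant, so the left-derived image of $u_{!}(\widetilde s)$ under $u^{*}$ is $u^{*}u_{!}(\widetilde s)$ itself. By (2) this is an $S_{\rev}$-local equivalence, and the localized Quillen adjunction $u^{*}\dashv u_{*}$ follows. \textbf{The main obstacle} is that the cited theorem requires the derived images to be weak equivalences in the target localization, with cofibrancy of sources and targets ensured. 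Both conditions hold here because everything is injectively cofibrant. If the citation does not apply cleanly, we fall back on checking directly that $u_{*}$ preserves fibrant local objects, via the adjunction $\underline{\Map}(u^{*}X,Z)\cong\underline{\Map}(X,u_{*}Z)$, which is simplicial by \cite[Proposition~A.3.3.7]{HTT}.

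For (4) we use that a left Quillen functor whose sources are all cofibrant preserves all weak equivalences, by Ken Brown's lemma. In $L_{u_{!}(\widetilde S_{\rev})}\sSpace^{\dag}_{\inj}$ every object is cofibrant, so by (3) $u^{*}$ sends its weak equivalences to weak equivalences of $L_{S_{\rev}}\sSpace^{\dag}_{\rev,\inj}$, that is, to $S_{\rev}$-local equivalences. Now take a weak equivalence of $L_{u_{!}(\widetilde S_{\rev})}\sSpace^{\dag}_{\proj}$. By (1) the identity functor is left Quillen from the projective to the injective localization, and the projective localization has fewer cofibrant objects than all. We therefore argue that both localizations have the same weak equivalences. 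They share the underlying levelwise equivalences, and their local objects agree up to fibrant replacement by the invariance argument of \cref{h1.rem.S-comparison}. Hence the weak equivalences of the two localizations coincide, and the injective case applies.
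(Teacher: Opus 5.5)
Your proposal matches the paper's proof part by part: the same Hirschhorn localization theorem for (1), the same unit-naturality and two-out-of-three argument for (2), the same unlocalized left Quillen functor $u^{*}\dashv u_{*}$ for (3), and Ken Brown's lemma plus the coincidence of the projective and injective localized weak equivalences for (4). The only difference is the citation in (3): the paper composes $u^{*}$ with the identity into $L_{S_{\rev}}\sSpace^{\dag}_{\rev,\inj}$ and applies \cite[Proposition~3.3.18 (1)]{Hir02}, which is the criterion you actually describe, whereas \cite[Theorem~3.3.20 (1)(a)]{Hir02} would land in the localization at $u^{*}u_{!}(\widetilde S_{\rev})$ and would need one more comparison step, easy by (2).
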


\begin{proof}
  (1)
  Before localization, the identity functors form a Quillen equivalence from the projective to the injective model structure by \cite[Remark~A.2.8.6]{HTT}.
  The source and target of every map of $u_{!}(\widetilde S_{\rev})$ are projectively cofibrant.
  Hence the total left-derived image of the localizing set under the identity functor is represented by the same set $u_{!}(\widetilde S_{\rev})$.
  Since both localizations exist (by \cref{h1.def.model}), \cite[Theorem~3.3.20 (1)(b)]{Hir02} gives the asserted Quillen equivalence.

  (2)
  Naturality of the unit gives $u^{*}u_{!}(\widetilde s)\circ\eta_{\widetilde A}=\eta_{\widetilde B}\circ\widetilde s$.
  The maps $\eta_{\widetilde A}$ and $\eta_{\widetilde B}$ are local equivalences by \cref{h1.lem.unit-cofibrant}, and $\widetilde s$ is a local equivalence.
  By two-out-of-three, so is $u^{*}u_{!}(\widetilde s)$.

  (3)
  Here $u^{*}$ is the left adjoint in the adjunction $u^{*}\dashv u_{*}$ of \cref{h1.lem.adjoints} (1).
  Since $u^{*}$ preserves levelwise monomorphisms and levelwise weak equivalences, it is left Quillen in this adjunction.
  Composing with the identity left Quillen functor into $L_{S_{\rev}}\sSpace^{\dag}_{\rev,\inj}$ gives a left Quillen functor
  \begin{align}
    \sSpace^{\dag}_{\inj}
    \xrightarrow{u^{*}}
    \sSpace^{\dag}_{\rev,\inj}
    \xrightarrow{\id}
    L_{S_{\rev}}\sSpace^{\dag}_{\rev,\inj}.
  \end{align}
  Every object of $\sSpace^{\dag}_{\inj}$ is cofibrant, and the image $u^{*}u_{!}(\widetilde s)$ of every map of $u_{!}(\widetilde S_{\rev})$ is a weak equivalence in $L_{S_{\rev}}\sSpace^{\dag}_{\rev,\inj}$ by (2).
  Therefore \cite[Proposition~3.3.18 (1)]{Hir02} gives the asserted left Quillen functor from $L_{u_{!}(\widetilde S_{\rev})}\sSpace^{\dag}_{\inj}$.

  (4)
  Every object of $L_{u_{!}(\widetilde S_{\rev})}\sSpace^{\dag}_{\inj}$ is cofibrant, so by Ken Brown's lemma, $u^{*}$ sends all weak equivalences of $L_{u_{!}(\widetilde S_{\rev})}\sSpace^{\dag}_{\inj}$ to weak equivalences of $L_{S_{\rev}}\sSpace^{\dag}_{\rev,\inj}$, 
  i.e.\ to $S_{\rev}$-local equivalences (by \cref{h1.lem.rev-models}).

  Before localization, $\sSpace^{\dag}_{\proj}$ and $\sSpace^{\dag}_{\inj}$ have the same levelwise weak equivalences, and the identity Quillen equivalence identifies their homotopy categories and homotopy function complexes.
  Under this identification, the derived localizing morphisms are represented in both model structures by the same set $u_{!}(\widetilde S_{\rev})$.
  Hence the local objects correspond, the two notions of local equivalence coincide, and $L_{u_{!}(\widetilde S_{\rev})}\sSpace^{\dag}_{\proj}$ and $L_{u_{!}(\widetilde S_{\rev})}\sSpace^{\dag}_{\inj}$ have the same weak equivalences.
  Hence the same conclusion holds for every weak equivalence of $L_{u_{!}(\widetilde S_{\rev})}\sSpace^{\dag}_{\proj}$.
\end{proof}

\begin{notation}
  Fix a functorial fibrant-replacement
  \begin{align}
    R_{\dag}:
    L_{u_{!}(\widetilde S_{\rev})}\sSpace^{\dag}_{\proj}
    \to
    L_{u_{!}(\widetilde S_{\rev})}\sSpace^{\dag}_{\proj},
  \end{align}
  and denote its natural map by $r_V:V\to R_{\dag}V$.
\end{notation}

\begin{proposition}\label{h1.prop.derived-unit}
  For every projectively cofibrant $X\in\sSpace^{\dag}_{\rev}$, the derived unit $X\to u^{*}(R_{\dag}u_{!}X)$ is an $S_{\rev}$-local equivalence.
\end{proposition}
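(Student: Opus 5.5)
The derived unit factors as
\begin{align}
  X\xrightarrow{\eta_X}u^{*}u_{!}X\xrightarrow{u^{*}(r_{u_{!}X})}u^{*}R_{\dag}u_{!}X ,
\end{align}
by naturality of the unit and the definition of the derived unit as the adjoint of the composite $u_{!}X\to R_{\dag}u_{!}X$. The plan is to show that each factor is an $S_{\rev}$-local equivalence and then conclude by two-out-of-three for the weak equivalences of $L_{S_{\rev}}\sSpace^{\dag}_{\rev,\inj}$. By \cref{h1.lem.rev-models} (1), these weak equivalences agree with those of $L_{\widetilde S_{\rev}}\sSpace^{\dag}_{\rev,\proj}$, namely the $S_{\rev}$-local equivalences.

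\textbf{First factor.} Since $X$ is projectively cofibrant, \cref{h1.lem.unit-cofibrant} applies directly and shows that $\eta_X$ is an $S_{\rev}$-local equivalence.

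\textbf{Second factor.} The map $r_{u_{!}X}:u_{!}X\to R_{\dag}u_{!}X$ is a weak equivalence in $L_{u_{!}(\widetilde S_{\rev})}\sSpace^{\dag}_{\proj}$, since it is the fibrant-replacement map in that model category. By \cref{h1.thm.projection} (4), $u^{*}$ sends weak equivalences of $L_{u_{!}(\widetilde S_{\rev})}\sSpace^{\dag}_{\proj}$ to $S_{\rev}$-local equivalences. Hence $u^{*}(r_{u_{!}X})$ is an $S_{\rev}$-local equivalence.

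This step is where care is needed. The map $r_{u_{!}X}$ is only a local weak equivalence, not a levelwise one, so we cannot simply invoke the fact that $u^{*}$ preserves levelwise weak equivalences. The substantive input is therefore \cref{h1.thm.projection} (4). That statement rests on the left Quillen property of $u^{*}$ for the injective localizations (\cref{h1.thm.projection} (3)), which in turn rests on \cref{h1.thm.projection} (2) and the unit computation of \cref{h1.prop.classifying}. It also uses the identification of the projective and injective localized weak equivalences on $\sSpace^{\dag}$.

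With these facts available, the only remaining check is that the composite above really is the derived unit. This is the standard formula for the derived unit of a Quillen adjunction at a cofibrant object, after a fibrant replacement of $u_{!}X$. Two-out-of-three then finishes the proof.
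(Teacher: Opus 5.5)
Your proposal is correct and matches the paper's proof. The paper also factors the derived unit as $u^{*}(r)\circ\eta_X$, cites \cref{h1.lem.unit-cofibrant} for $\eta_X$ and \cref{h1.thm.projection}~(4) for $u^{*}(r)$, and concludes that the composite is an $S_{\rev}$-local equivalence.
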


\begin{proof}
  The derived unit $X\to u^{*}(R_{\dag}u_{!}X)$ factors as 
  \begin{align}
    X\xrightarrow{\eta_X}u^{*}u_{!}X\xrightarrow{u^{*}(r)}u^{*}(R_{\dag}u_{!}X),
  \end{align}
  where $r:u_{!}X\to R_{\dag}u_{!}X$ is the fibrant replacement in $L_{u_{!}(\widetilde S_{\rev})}\sSpace^{\dag}_{\proj}$.
  The first map is an $S_{\rev}$-local equivalence (by \cref{h1.lem.unit-cofibrant}), and the second (by \cref{h1.thm.projection} (4)).
  So the composite is an $S_{\rev}$-local equivalence.
\end{proof}

\begin{theorem} \label{h1.thm.main}
  The Quillen adjunction of \cref{h1.thm.adjunction}
  \begin{align}
    u_{!}:
    L_{\widetilde S_{\rev}}\sSpace^{\dag}_{\rev,\proj}
    \rightleftarrows
    L_{u_{!}(\widetilde S_{\rev})}\sSpace^{\dag}_{\proj}
    :u^{*}
  \end{align}
  is a Quillen equivalence.
\end{theorem}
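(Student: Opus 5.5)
We will use the standard criterion for Quillen equivalences (Hovey, Corollary~1.3.16): a Quillen adjunction $u_{!}\dashv u^{*}$ is a Quillen equivalence if and only if the total right derived functor $\bfR u^{*}$ reflects isomorphisms and, for every cofibrant $X$, the derived unit $X\to u^{*}(R_{\dag}u_{!}X)$ is a weak equivalence. By \cref{h1.thm.adjunction}, $u_{!}\dashv u^{*}$ is a Quillen adjunction between $L_{\widetilde S_{\rev}}\sSpace^{\dag}_{\rev,\proj}$ and $L_{u_{!}(\widetilde S_{\rev})}\sSpace^{\dag}_{\proj}$. It therefore suffices to check these two conditions.

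Conservativity of $\bfR u^{*}$ is exactly \cref{h1.thm.conservative}, so the first condition holds.

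For the derived unit, the cofibrant objects of $L_{\widetilde S_{\rev}}\sSpace^{\dag}_{\rev,\proj}$ are the projectively cofibrant ones, since left Bousfield localization leaves cofibrations unchanged. The fibrant replacement $R_{\dag}$ is taken in $L_{u_{!}(\widetilde S_{\rev})}\sSpace^{\dag}_{\proj}$, so the composite $X\to u^{*}u_{!}X\to u^{*}R_{\dag}u_{!}X$ is precisely the derived unit. By \cref{h1.prop.derived-unit}, this map is an $S_{\rev}$-local equivalence.

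The one remaining step, and the only place needing care, is to match the notions of weak equivalence. The weak equivalences of $L_{\widetilde S_{\rev}}\sSpace^{\dag}_{\rev,\proj}$ are the $\widetilde S_{\rev}$-local equivalences. These coincide with the $S_{\rev}$-local equivalences: this follows from \cref{h1.rem.S-comparison}, and equivalently from the identity Quillen equivalence of \cref{h1.lem.rev-models} (1), since the identity functor preserves and reflects weak equivalences between these model structures with the same underlying category. Hence the derived unit is a weak equivalence in the source model category. Both criteria are satisfied, and $u_{!}\dashv u^{*}$ is a Quillen equivalence. All the substantive work is contained in the preceding results; this theorem only assembles them.
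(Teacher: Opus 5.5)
Your proposal is correct and is essentially the paper's own proof. It uses Hovey's Corollary~1.3.16, with reflection supplied by \cref{h1.thm.conservative}, the derived unit by \cref{h1.prop.derived-unit}, and \cref{h1.rem.S-comparison} to identify $S_{\rev}$-local equivalences with the weak equivalences of $L_{\widetilde S_{\rev}}\sSpace^{\dag}_{\rev,\proj}$.

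One caveat on your parenthetical alternative: an identity Quillen equivalence is only guaranteed to identify weak equivalences between cofibrant objects of the source and between fibrant objects of the target, not all weak equivalences. The justification should therefore rest on \cref{h1.rem.S-comparison}, or on the fact that projective trivial fibrations are levelwise weak equivalences, rather than on that general principle.
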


\begin{proof}
  By \cite[Corollary 1.3.16]{Hov99}, it suffices to show that the right adjoint $u^{*}$ reflects weak equivalences between fibrant objects and that, for every cofibrant $X$, the derived unit $X\to u^{*}R_{\dag}u_{!}X$ is a weak equivalence.
  The reflection condition follows from \cref{h1.thm.conservative}, and the required derived unit is an $S_{\rev}$-local equivalence (by \cref{h1.prop.derived-unit}).  
  These are precisely the weak equivalences of $L_{\widetilde S_{\rev}}\sSpace^{\dag}_{\rev,\proj}$ (by \cref{h1.rem.S-comparison}).
\end{proof}

\section{Frame realization and the derived unitary interval}\label{jt.frames-section}

In this section, we construct a reversal-compatible frame realization (\cref{jt.not.frames}).
The resulting realization--nerve adjunction is Quillen.
We compare its derived behavior with the ordinary Joyal--Tierney realization (\cref{jt.lem.transport-realization}) and compute the derived realization of the walking unitary (\cref{jt.thm.interval}).
These results provide the frame-theoretic input for the strictification and Quillen-equivalence arguments of \cref{jt.section}.

\subsection{The frame nerve and the realization functors}

We construct reversal-compatible cosimplicial frames in $\sCat^{\dag}$ and use them to define a frame realization--nerve adjunction (\cref{jt.not.frames}).

\begin{definition}
  For every $W \in \sSpace^{\dag}_{\rev}$, we define a functor 
  \begin{align}
    i_{0}^{*}(W) : \prism^{\myop}_{\rev} \to \Set
    \quad \text{by} \quad
    (i_{0}^{*}W)_m:=W_{m,0}.
  \end{align}
\end{definition}

\begin{lemma}\label{jt.lem.c-delta}
  We have:
  \begin{enumerate}
    \item there exists an adjunction 
    \begin{align}
      \delta : \sSet^{\dag} \rightleftarrows \sSpace^{\dag}_{\rev} : i_{0}^{*};
    \end{align}
    \item under the identification $\sSpace^{\dag\vee}\simeq\sSpace^{\dag}_{\rev}$, we have $\delta=c$ and $i_{0}^{*}=\ev_0$;
    in particular, \cref{cs.thm.equivalence} states that $\delta\dashv i_{0}^{*}$ is a Quillen equivalence onto the resolution model structure $\sSpace^{\dag\vee}_{\CSS}$.
  \end{enumerate}
\end{lemma}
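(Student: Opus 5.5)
The plan is to obtain (1) from a direct Hom-set computation and (2) by transporting it along the identifications of \cref{cs.not.SS}.

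For (1), take $X\in\sSet^{\dag}$, regarded as a presheaf $\prism_{\rev}^{\myop}\to\Set$, and $W\in\sSpace^{\dag}_{\rev}$. A map $\delta X\to W$ is a family of simplicial maps $\delta X_{[m]}=X_m\to W_{[m]}$, natural in $[m]\in\prism_{\rev}$. Since $X_m$ is discrete, a simplicial map out of it is the same as a function $X_m\to W_{m,0}$. Under this correspondence naturality in $\prism_{\rev}$ is unchanged, because the structure maps of $W$ act on vertices through $i_0^{*}W$. This gives the bijection
\begin{align}
  \Hom_{\sSpace^{\dag}_{\rev}}(\delta X,W)\cong\Hom_{\Fun(\prism_{\rev}^{\myop},\Set)}(X,i_0^{*}W)\cong\Hom_{\sSet^{\dag}}(X,i_0^{*}W),
\end{align}
which is natural in both variables. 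Equivalently, $\delta$ is postcomposition with the discrete functor $\Set\to\sSet$, which is left adjoint to $(-)_0$, and postcomposition preserves adjunctions.

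For (2), make the identification explicit. An object $V\in\sSpace^{\dag\vee}=\Fun(\prism^{\myop},\sSet^{\dag})$ corresponds to the presheaf $([m],[k])\mapsto(V_k)_m$ on $\prism_{\rev}\times\prism$. Currying in the other order gives the object $[m]\mapsto\bigl([k]\mapsto(V_k)_m\bigr)$ of $\sSpace^{\dag}_{\rev}$, so that $W_{m,k}=(V_k)_m$. Under this correspondence we check the two functors separately.
\begin{itemize}
  \item For $A\in\sSet^{\dag}$ we have $(cA)_k=A\cdot\Delta^0_k=A$ for every $k$, with identity structure maps. The corresponding bisimplicial object is $W_{m,k}=A_m$, constant in $k$, which is exactly $\delta A$.
  \item The evaluation $\ev_0V=V_0$ has $m$-simplices $(V_0)_m=W_{m,0}=(i_0^{*}W)_m$, so $\ev_0$ corresponds to $i_0^{*}$.
\end{itemize}
Both identifications are compatible with the reversal action, since that action lives in the $\prism_{\rev}$-coordinate and is untouched by the regrouping. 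The adjunctions therefore agree up to this isomorphism of categories.

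With $\delta=c$ and $i_0^{*}=\ev_0$ established, \cref{cs.thm.equivalence} transfers verbatim, once the model structure of \cref{cs.def.Sh} is transported along the isomorphism. I expect no real obstacle. The only delicate point is bookkeeping: one must confirm that the curried reindexing does not swap which coordinate carries the $\prism_{\rev}$-structure.
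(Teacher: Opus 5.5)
Your proposal is correct and follows the paper's proof. Part (1) uses the same identification of maps $\delta X\to W$ with $\prism_{\rev}$-natural families $X_m\to W_{m,0}$, which works because maps out of a discrete simplicial set are determined by their vertices. Part (2), which the paper dismisses as clear, is exactly the currying bookkeeping you spell out, including the checks $cA\leftrightarrow\delta A$ and $\ev_0\leftrightarrow i_0^{*}$.
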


\begin{proof}
  (1)
  A morphism $\delta X\to W$ is determined by a family $f_m: X_m \to W_{m,0}$ ($[m]\in\prism_{\rev}$) which is natural in $[m]$.
  Thus, for every $\sigma:[m]\to[n]$ in $\prism_{\rev}$, it satisfies
  \begin{align}
    W(\sigma,\id_{[0]})
    \circ
    f_n
    =
    f_m
    \circ
    X(\sigma).
  \end{align}
  The component $X_m=(\delta X)_{m,k}\to W_{m,k}$ is then determined by $f_m$ and the unique map $[k]\to[0]$.
  Hence these natural families are precisely the morphisms $X \to i_0^*W$.

  (2) is clear.
\end{proof}

\begin{remark}\label{jt.rem.two-localizations}
  Under the identification $ \sSpace^{\dag\vee} \simeq \sSpace^{\dag}_{\rev}$, the resolution model structure $\sSpace^{\dag\vee}_{\CSS}$ and the localized injective model structure $L_{S_{\rev}}\sSpace^{\dag}_{\rev,\inj}$ arise from different constructions:
  the former imposes homotopical constancy in the simplicial-object direction, while the latter imposes the Segal and unitary-completeness conditions in the $\prism_{\rev}$-direction.

  The identity functors are \emph{not} automatically a Quillen comparison between them.
  The comparison used below is instead obtained by proving separately that the same underlying adjunction $\delta=c \dashv i_{0}^{*}=\ev_{0}$ is a Quillen equivalence for each model structure.
\end{remark}

\begin{notation}\label{jt.not.tdag}
  We fix some notational conventions:
  \begin{itemize}
    \item let $j:\prism\hookrightarrow\prism_{\rev}$ be the inclusion and write $\res W:=j^*W$ for the ordinary simplicial space obtained by forgetting the horizontal reversal;
    \item let $\bbG[k]$ be the groupoid completion of $[k]$ and put $J^k=N\bbG[k]$ ($J^k$ is the simplicial set denoted $\Delta'[k]$ in \cite[\S2]{JT06});
    \item for $\xi=(a_0\xrightarrow{g_1}\cdots\xrightarrow{g_n}a_n)\in J^k_n$, we write $\xi^\vee=(a_n\xrightarrow{g_n^{-1}}\cdots \xrightarrow{g_1^{-1}}a_0)$;
  \end{itemize}
\end{notation}

\begin{construction}
  Define a functor
  \begin{align}
    t:\prism\times\prism\to\sSet
    \quad \text{by} \quad
    t([m],[k])=\Delta^m\times J^k,
  \end{align}
  and let $t_!:\sSpace\to\sSet$ be the left Kan extension of $t$ along the Yoneda embedding $\prism\times\prism\to\sSpace$;
  equivalently,
  \begin{align}
    t_!(W)
    :=
    \int^{([m],[k])\in\prism\times\prism}(\Delta^m\times J^k)\cdot W_{m,k}.
  \end{align}
  For $W\in\sSpace^{\dag}_{\rev}$, an $n$-simplex of $t_!(\res W)$ has a representative $[\alpha,\xi,w]$, where $\alpha:[n]\to[m]$ is order-preserving, $\xi\in J^k_n$, and $w\in W_{m,k}$.
  Define
  \begin{align}\label{jt.eq.tdag-formula}
    [\alpha,\xi,w]^{\dag}
      :=[\rho_m\alpha\rho_n,\xi^\vee,W(\rho_m)(w)].
  \end{align}
  This is compatible with both coend relations and fixes vertices.
  Indeed, 
  \begin{itemize}
    \item for an order-preserving $\theta:[m']\to[m]$, putting $\theta^\vee=\rho_m\theta\rho_{m'}$ gives $\rho_m\theta=\theta^\vee\rho_{m'}$;
    \item inversion in $\bbG[-]$ is natural for every functor $\bbG[k']\to\bbG[k]$;
    \item the same identities show compatibility with the simplicial operators and show that \eqref{jt.eq.tdag-formula} squares to the identity;
    \item the class of $(i,j,w)$ is sent to that of $(m-i,j,W(\rho_m)w)$, and the horizontal coend relation identifies the latter with $W(i)w$ because $\rho_m(m-i)=i$; inversion fixes the objects of $\bbG[k]$;
  \end{itemize}

  Thus \eqref{jt.eq.tdag-formula} defines a dagger structure on $t_!(\res W)$.
  We denote the resulting dagger simplicial set by $t_{\dag,!}W$;
  its underlying simplicial set is $\UdagSSet(t_{\dag,!}W) = t_!(\res W)$.
\end{construction}

\begin{construction}\label{jt.not.frames}
  For $[m]\in\prism_{\rev}$, we put
  \begin{align}\label{jt.eq.compatible-frames}
    A_m:=\frakC_{\dag}\FdagSSet(\Delta^m)
    \cong \FdagSCat\frakC(\Delta^m)
    \quad \text{and} \quad
    \Xi^k_{\dag}[m]
    :=\frakC_{\dag}t_{\dag,!}(\yo_{\rev}[m]\times\Delta^k).
  \end{align}
  These constructions are natural in $[m]\in\prism_{\rev}$.
  By co-Yoneda and \cite[Lemma 2.11]{JT06},
  \begin{align}\label{jt.eq.Xi-underlying}
    \UdagSCat\Xi^k_{\dag}[m] \cong\frakC(\UdagSSet\FdagSSet(\Delta^m)\times J^k)
    \quad \text{and} \quad
    \Xi^0_{\dag}[m]&=A_m.
  \end{align}
  The projections $J^k\to\Delta^0$ give an augmentation $\Xi^\bullet_{\dag}[m]\to cA_m$.
  It is degreewise a dagger DK-equivalence:
  after forgetting the dagger this follows because $J^k$ is a contractible Kan complex, the Joyal model structure is cartesian \cite[Theorem 1.9]{JT06}, and $\frakC$ is left Quillen;
  projection is surjective on vertex objects (choose $(i,0)$ over $i$), and the resulting identities witness coherent-unitary essential surjectivity.
  
  Consequently, $\UdagSCat\Xi^\bullet_{\dag}[m]$ is the homotopically constant cosimplicial object induced by the Joyal--Tierney total-space construction.

  Apply the functorial relative Reedy replacement of \cite[Proposition 16.6.8 (1)]{Hir02} to the functor $\Xi^\bullet_{\dag}[-]:\prism_{\rev}\to(\sCat^{\dag})^{\prism}$.
  More precisely, that proposition gives functorially, for every cosimplicial object $Y^\bullet$, a Reedy trivial fibration $FY^\bullet\to Y^\bullet$ which is an isomorphism in degree zero and for which $FY^\bullet$ is Reedy cofibrant whenever $Y^0$ is cofibrant.
  Applying this functor objectwise to the $\prism_{\rev}$-diagram preserves its full reversal naturality.

  The object $\FdagSSet(\Delta^m)$ is cofibrant by \cite[Corollary~3.4.4]{Aka26}, and $\frakC_{\dag}$ is left Quillen by \cite[Theorem~3.5.6]{Aka26}.
  Hence $A_m=\frakC_{\dag}\FdagSSet(\Delta^m)$ is cofibrant.
  Since $\Xi^0_{\dag}[m]=A_m$, the relative Reedy replacement gives, after identifying its degree-zero isomorphism with the identity, a Reedy trivial fibration
  \begin{align}\label{jt.eq.lambda-frame}
    \lambda_m^\bullet:\Gamma^\bullet_{\dag}[m]\xrightarrow{\ \sim\ }\Xi^\bullet_{\dag}[m],
    \quad
    \Gamma^0_{\dag}[m]=A_m,
    \quad
    \lambda_m^0=\id_{A_m}.
  \end{align}
  Functoriality makes this map natural for both order-preserving and order-reversing morphisms.
  The composite 
  \begin{align}
    \Gamma^\bullet_{\dag}[m]\to\Xi^\bullet_{\dag}[m]\to cA_m
  \end{align}
  is a Reedy weak equivalence, so $\Gamma^\bullet_{\dag}[m]$ is a cosimplicial frame in $\sCat^{\dag}_{\Bergner}$.

  By the coend--Hom adjunction, the functor $([m],[k])\mapsto\Gamma^k_{\dag}[m]$ defines an adjunction
  \begin{align}
    \mathfrak{C}^{\prism}_{\dag} : \sSpace^{\dag}_{\rev} \rightleftarrows \sCat^{\dag} : \N^{\prism}_{\dag}, 
  \end{align}
  given by
  \begin{align}
    \mathfrak{C}^{\prism}_{\dag}(W)
    &= 
    \int^{([m],[k]) \in \prism_{\rev} \times \prism}\Gamma^{k}_{\dag}[m]\cdot W([m],[k]), \\
    \N^{\prism}_{\dag}(\calC)
    &=
    (([m],[k])\mapsto\Hom_{\sCat^{\dag}}(\Gamma^k_{\dag}[m],\calC)).
  \end{align}
\end{construction}

\begin{lemma}\label{jt.lem.quillen}
  The adjunction $\mathfrak{C}^{\prism}_{\dag} \dashv \N^{\prism}_{\dag}$ induces a Quillen adjunction
  \begin{align}
    \mathfrak{C}^{\prism}_{\dag}:\sSpace^{\dag}_{\rev,\proj}
    \rightleftarrows
    \sCat^{\dag}_{\Bergner}:\N^{\prism}_{\dag}.
  \end{align}
\end{lemma}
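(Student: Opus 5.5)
It suffices to show that the left adjoint $\mathfrak{C}^{\prism}_{\dag}$ preserves cofibrations and trivial cofibrations. Since $\sSpace^{\dag}_{\rev,\proj}$ is cofibrantly generated (\cref{h1.lem.proj-rev}(1)), and a left adjoint preserves pushouts and transfinite composites, it is enough to check the generating maps
\begin{align}
  \yo_{\rev}[m]\times(\partial\Delta^k\to\Delta^k)
  \quad\text{and}\quad
  \yo_{\rev}[m]\times(\Lambda^k_i\to\Delta^k).
\end{align}
By co-Yoneda, for $K\in\sSet$ we have
\begin{align}
  \mathfrak{C}^{\prism}_{\dag}(\yo_{\rev}[m]\times K)
  \cong
  \int^{[k]\in\prism}\Gamma^k_{\dag}[m]\cdot K_k,
\end{align}
the realization of $K$ against the cosimplicial object $\Gamma^\bullet_{\dag}[m]$. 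Hence the images of the generators are the maps
\begin{align}
  \Gamma^\bullet_{\dag}[m]\otimes\partial\Delta^k\to\Gamma^\bullet_{\dag}[m]\otimes\Delta^k
  \quad\text{and}\quad
  \Gamma^\bullet_{\dag}[m]\otimes\Lambda^k_i\to\Gamma^\bullet_{\dag}[m]\otimes\Delta^k.
\end{align}
The first of these is the latching map $L_k\Gamma^\bullet_{\dag}[m]\to\Gamma^k_{\dag}[m]$.

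The key input is \cref{jt.not.frames}: $\Gamma^\bullet_{\dag}[m]$ is a Reedy cofibrant cosimplicial frame on the cofibrant object $A_m$ in $\sCat^{\dag}_{\Bergner}$. I would then invoke the standard fact about cosimplicial frames, \cite[Corollary~16.4.5 / Proposition~16.5.?]{Hir02} (realization against a Reedy cofibrant, homotopically constant cosimplicial object is left Quillen $\sSet_{\mathrm{Kan}}\to\mathcal{M}$). Concretely, $\Gamma^\bullet_{\dag}[m]\otimes-:\sSet\to\sCat^{\dag}_{\Bergner}$ sends boundary inclusions to latching maps, which are cofibrations by Reedy cofibrancy. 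It sends horn inclusions to trivial cofibrations, because all structure maps of a frame are weak equivalences. Hirschhorn's argument shows that $\Gamma\otimes\Lambda^k_i\to\Gamma\otimes\Delta^k$ is a cofibration that is a weak equivalence: it reduces to $\Gamma\otimes\Delta^0\to\Gamma\otimes\Delta^k$ via a two-out-of-three and induction over horns, using that $\Gamma\otimes-$ preserves cofibrations and that $\Gamma\otimes\Delta^n\simeq A_m$.

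The main obstacle is applying Hirschhorn's frame results in $\sCat^{\dag}_{\Bergner}$. This model category is not simplicial in the needed sense, but the frame results only require a model category together with Reedy cofibrancy and homotopical constancy, both of which were established in \cref{jt.not.frames}. Naturality in $[m]\in\prism_{\rev}$ plays no role in the Quillen property: the generators are indexed by single objects $[m]$, so reversal-compatibility matters only for the existence of the adjunction, which is already constructed. Finally, I would check that the identification of the images of the generators with latching maps is compatible with the coend formula. This holds because $\partial\Delta^k$ is the colimit of its faces and $\Gamma\otimes-$ commutes with colimits.
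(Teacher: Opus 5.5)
Your proposal is correct and follows essentially the same route as the paper: co-Yoneda identifies $\mathfrak{C}^{\prism}_{\dag}(\yo_{\rev}[m]\times i)$ with $\Gamma^\bullet_{\dag}[m]\otimes i$ for each generating (trivial) cofibration $i$, and the left Quillen property of realization against a cosimplicial frame on the cofibrant object $A_m$ finishes the argument. The only difference is the citation for that frame fact: the paper uses \cite[Corollary~5.4.4]{Hov99}, whereas your Hirschhorn reference is left imprecise.
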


\begin{proof}
  For every $[m]\in\prism_{\rev}$, $\Gamma^\bullet_{\dag}[m]$ is a cosimplicial frame on the cofibrant object $A_m$.
  Hence the functor
  \begin{align}
    \Gamma^\bullet_{\dag}[m]\otimes (-):\sSet\to\sCat^\dag_{\Bergner}
  \end{align}
  is left Quillen by \cite[Corollary~5.4.4]{Hov99}.
  Fix $[m]\in\prism_{\rev}$ and $K\in\sSet$.
  We use $[p]\in\prism_{\rev}$ for the variable in the dagger-simplicial direction and $[k]\in\prism$ for the variable in the simplicial direction.
  Then we have 
  \begin{align}
    \frakC^\prism_{\dag}(\yo_{\rev}[m]\times K)
    &\cong
    \int^{([p],[k])\in\prism_{\rev}\times\prism} 
    \Gamma^k_{\dag}[p]
    \cdot
    (\Hom_{\prism_{\rev}}([p],[m])\times K_k) \\
    &\cong
    \int^{[k]}
    \int^{[p]}
    \Gamma^k_{\dag}[p]
    \cdot
    \Hom_{\prism_{\rev}}([p],[m])
    \cdot K_k \\
    &\cong
    \int^{[k]\in\prism}
    \Gamma^k_{\dag}[m]\cdot K_k \\
    &=
    \Gamma^\bullet_{\dag}[m]\otimes K.
  \end{align}
  If $i:K\to L$ is a generating cofibration or a generating trivial cofibration of $\sSet$, then the corresponding projective generating map is
  \begin{align}
    \yo_{\rev}[m]\times i:
    \yo_{\rev}[m]\times K
    \to
    \yo_{\rev}[m]\times L.
  \end{align}
  Under the preceding natural isomorphism, its image under $\frakC^\prism_{\dag}$ is
  \begin{align}
    \Gamma^\bullet_{\dag}[m]\otimes i:
    \Gamma^\bullet_{\dag}[m]\otimes K
    \to
    \Gamma^\bullet_{\dag}[m]\otimes L.
  \end{align}
  This is a cofibration or a trivial cofibration, respectively, because $\Gamma^\bullet_{\dag}[m]\otimes (-)$ is left Quillen.
  Thus $\frakC^\prism_{\dag}$ sends the generating projective cofibrations and generating projective trivial cofibrations to cofibrations and trivial cofibrations, respectively.
\end{proof}

\begin{lemma}\label{jt.lem.frame-comparison}
  The maps \eqref{jt.eq.lambda-frame} induce a natural dagger functor
  \begin{align}
    \Lambda_W: \frakC^{\prism}_{\dag}(W) \to \frakC_{\dag}(t_{\dag,!}W).
  \end{align}
  If $W$ is projectively cofibrant, it is a dagger DK-equivalence.
\end{lemma}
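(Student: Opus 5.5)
The plan is to build $\Lambda_W$ as a coend of the maps $\lambda_m^k$, and then to prove the equivalence statement by induction over projective cells. This mirrors the proof of \cref{h1.lem.unit-cofibrant}.

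\emph{Construction.} The functor $t_{\dag,!}$ preserves colimits. Its underlying simplicial set is the colimit-preserving functor $t_!\circ\res$, and the dagger \eqref{jt.eq.tdag-formula} is defined compatibly with the coend relations. The functor $\frakC_{\dag}$ is a left adjoint. By co-Yoneda, $W\cong\int^{([m],[k])}(\yo_{\rev}[m]\times\Delta^k)\cdot W_{m,k}$, and therefore
\begin{align}
  \frakC_{\dag}(t_{\dag,!}W)
  \cong
  \int^{([m],[k])\in\prism_{\rev}\times\prism}\Xi^k_{\dag}[m]\cdot W_{m,k}.
\end{align}
The maps $\lambda^k_m:\Gamma^k_{\dag}[m]\to\Xi^k_{\dag}[m]$ are natural in $[k]$, since they are maps of cosimplicial objects. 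They are also natural in $[m]\in\prism_{\rev}$, including reversals, because the relative Reedy replacement was applied functorially. Hence they assemble to a natural transformation of functors $\prism_{\rev}\times\prism\to\sCat^{\dag}$. Its coend against $W$ defines $\Lambda_W$, which is natural in $W$ and commutes with colimits in $W$.

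\emph{Generating cells.} Both the source and the target of $\Lambda$ preserve colimits. So I first treat $W=\yo_{\rev}[m]\times K$. The computation in the proof of \cref{jt.lem.quillen} gives $\frakC^{\prism}_{\dag}(\yo_{\rev}[m]\times K)\cong\Gamma^\bullet_{\dag}[m]\otimes K$. The same computation gives $\frakC_{\dag}t_{\dag,!}(\yo_{\rev}[m]\times K)\cong\Xi^\bullet_{\dag}[m]\otimes K$, and under these identifications $\Lambda$ becomes $\lambda^\bullet_m\otimes K$. I would show that $\Xi^\bullet_{\dag}[m]$ is Reedy cofibrant. Its latching maps are $\frakC_{\dag}$ applied to $t_{\dag,!}(\yo_{\rev}[m]\times(\partial\Delta^k\to\Delta^k))$, and it should suffice to check that these are free cofibrations in $\sSet^{\dag}_{\Joyal}$. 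Given that, $\lambda^\bullet_m$ is a Reedy weak equivalence between Reedy cofibrant cosimplicial objects. The standard result on $(-)\otimes K$ for Reedy cofibrant cosimplicial objects (Reedy cofibrant $X^\bullet\mapsto X^\bullet\otimes K$ preserves weak equivalences, cf.\ \cite[Corollary~5.4.4]{Hov99} and \cite[\S16.5]{Hir02}) then shows that $\lambda^\bullet_m\otimes K$ is a dagger DK-equivalence.

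\emph{Main obstacle.} I expect the hard step to be this Reedy cofibrancy. The dagger on $t_{\dag,!}$ combines the reversal $\rho_m$ with inversion $\xi\mapsto\xi^\vee$ in $J^k$, and a priori a new nondegenerate simplex could be fixed by it. My first attempt is to show directly that a fixed simplex $[\alpha,\xi,w]$ would force $\alpha=\rho\alpha\rho$ and $\xi=\xi^\vee$ simultaneously, and that this forces degeneracy or membership in the latching object. If that fails, I would avoid cofibrancy of $\Xi$ altogether. In that case I would compare both $\Gamma^\bullet_{\dag}[m]\otimes K$ and $\Xi^\bullet_{\dag}[m]\otimes K$ with $A_m$, using the augmentations and the fact that their underlying simplicial categories are the Joyal--Tierney objects $\frakC(\UdagSSet\FdagSSet(\Delta^m)\times k_!K)$ via \eqref{jt.eq.Xi-underlying}. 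Local weak equivalence would then be detected after forgetting the dagger. Coherent-unitary essential surjectivity would follow because $\Lambda$ is surjective on objects, via the degree-zero identification $\lambda^0_m=\id$.

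\emph{Cell induction.} Let $\mathcal{P}$ be the class of projectively cofibrant $W$ for which $\Lambda_W$ is a dagger DK-equivalence. This class contains $\varnothing$ and the cells treated above. It is closed under pushouts along generating cofibrations $\yo_{\rev}[m]\times(\partial\Delta^k\to\Delta^k)$. The source side is a pushout along a cofibration by \cref{jt.lem.quillen}. The target side is a pushout along $\frakC_{\dag}$ of a monomorphism, which I would check is again a cofibration. Left properness of $\sCat^{\dag}_{\Bergner}$ and the cube lemma then give closure. Closure under transfinite composites follows from the same latching-object argument as in \cref{h1.lem.unit-cofibrant}, using \cite[Corollary~A.2.9.25]{HTT}. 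Finally, $\mathcal{P}$ is closed under retracts, which covers all projectively cofibrant $W$.
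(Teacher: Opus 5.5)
Your main route fails at exactly the step you flag as the obstacle, and your first attempt there cannot work: the latching maps $t_{\dag,!}(\yo_{\rev}[m]\times\partial\Delta^k)\to t_{\dag,!}(\yo_{\rev}[m]\times\Delta^k)$ are \emph{not} free cofibrations.

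\begin{itemize}
\item Underlying, this map is $X_m\times L^kJ\to X_m\times J^k$ with $X_m=\UdagSSet\FdagSSet(\Delta^m)$. The dagger is the product of the dagger on $X_m$ with $\xi\mapsto\xi^\vee$.
\item The totally degenerate $2k$-simplex at a vertex $i$ is fixed in $X_m$. Also, $\xi=(0,1,\ldots,k,\ldots,1,0)$ is a nondegenerate palindromic $2k$-simplex of $J^k$ that uses every object. So the pair is a new nondegenerate fixed simplex of positive dimension lying outside $X_m\times L^kJ$. The condition $\xi=\xi^\vee$ forces neither degeneracy nor membership in the latching object.
\item Already $t_{\dag,!}(\yo_{\rev}[0]\times\Delta^1)\cong E_{\dag}$, so $\Xi^1_{\dag}[0]\cong\frakC_{\dag}E_{\dag}$, and $E_{\dag}$ is not dagger Joyal cofibrant.
\end{itemize}

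So there is no reason for $\Xi^\bullet_{\dag}[m]$ to be Reedy cofibrant in $\sCat^{\dag}_{\Bergner}$; the construction passes to the Reedy replacement $\Gamma^\bullet_{\dag}[m]$ precisely to obtain a Reedy cofibrant frame.

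The same defect breaks your induction step. That step needs the target-side attaching maps $\frakC_{\dag}t_{\dag,!}(\yo_{\rev}[m]\times(\partial\Delta^k\to\Delta^k))$ to be dagger Bergner cofibrations, so that left properness of $\sCat^{\dag}_{\Bergner}$ applies. Your fallback does not repair this as written. Comparing $\Gamma^\bullet_{\dag}[m]\otimes K$ and $\Xi^\bullet_{\dag}[m]\otimes K$ with $A_m$ only makes sense for contractible $K$: for $K=\partial\Delta^k$ with $k\geq1$, neither is DK-equivalent to $A_m$. The fallback also leaves the induction step untouched.

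The missing idea is to run the whole homotopical argument after forgetting the dagger, in ordinary $\sCat_{\Bergner}$; this is what the paper does.

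\begin{itemize}
\item Since $\UdagSCat$ creates colimits and $\UdagSCat\Xi^\bullet_{\dag}[m]\cong\frakC(X_m\times J^\bullet)$, the latching maps are $\frakC(X_m\times L^kJ)\to\frakC(X_m\times J^k)$. These are $\frakC$ of monomorphisms, hence ordinary Bergner cofibrations, because every monomorphism is a Joyal cofibration.
\item $\UdagSCat\Gamma^\bullet_{\dag}[m]$ is ordinary Reedy cofibrant because $\UdagSCat$ preserves cofibrations.
\item With this, your cell computation shows that $\UdagSCat(\lambda^\bullet_m\otimes K)$ is a DK-equivalence.
\item Your pushout, transfinite and retract steps then go through in the left proper model category $\sCat_{\Bergner}$, since the underlying attaching maps on both sides are ordinary cofibrations. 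This gives the local weak equivalence condition for projectively cofibrant $W$.
\item Your degree-zero argument supplies essential surjectivity for every $W$. Each object of $\frakC_{\dag}(t_{\dag,!}W)$ is represented in the $(0,0)$ summand, where $\lambda^0_0=\id$, so $\Lambda_W$ is surjective on objects.
\end{itemize}

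Repaired this way, your proof is a valid alternative route. The paper establishes the same underlying frame comparison but does not use a cell induction. Instead it invokes Dugger's result that two cosimplicial resolutions of one diagram, linked by an augmentation-compatible weak equivalence, induce Quillen-homotopic realizations. It also obtains essential surjectivity by pushing coherent unitaries for $\lambda^k_m$ through the coend coprojections, rather than by object surjectivity.
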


\begin{proof}
  Co-Yoneda and preservation of coends by the left adjoint $\frakC_{\dag}$ give a natural identification
  \begin{align}
    \frakC_{\dag}(t_{\dag,!}W)
    \cong
    \int^{([m],[k])}
    \Xi^k_{\dag}[m]\cdot W_{m,k}.
  \end{align}
  Under this identification, $\Lambda_W$ is the coend of the actual natural transformation $\lambda:\Gamma^\bullet_{\dag}[-]\to\Xi^\bullet_{\dag}[-]$.

  We first show that the underlying comparison is a comparison of ordinary frames.
  Since $\Gamma^\bullet_{\dag}[m]$ is Reedy cofibrant in $\sCat^{\dag}_{\Bergner}$, it follows that $\UdagSCat\Gamma^\bullet_{\dag}[m]$ is Reedy cofibrant in $\sCat_{\Bergner}$.
  
  Since $\lambda_m^\bullet$ is a Reedy trivial fibration, every $\UdagSCat\lambda_m^k$ is an ordinary DK-equivalence.
  Composing it with the degreewise ordinary DK-equivalence $\UdagSCat\Xi^k_{\dag}[m]\to\UdagSCat A_m$ shows that the augmentation of $\UdagSCat\Gamma^\bullet_{\dag}[m]$ is degreewise an ordinary DK-equivalence.

  Put $X_m:=\UdagSSet\FdagSSet(\Delta^m)$.
  By \eqref{jt.eq.Xi-underlying}, and because cartesian product with $X_m$ and the left adjoint $\frakC$ preserve colimits, the $k$-th latching map of $\UdagSCat\Xi^\bullet_{\dag}[m]$ is $\frakC(X_m\times L^kJ)\to\frakC(X_m\times J^k)$.
  The map $L^kJ\to J^k$ is a monomorphism.
  Indeed, $L^kJ$ is the union of the nerves of the full subgroupoids of $\bbG[k]$ on the proper subsets of $\{0,\ldots,k\}$, and is therefore a simplicial subset of $J^k$.
  Hence $X_m\times L^kJ\to X_m\times J^k$ is a Joyal cofibration.
  
  Since $\frakC$ is left Quillen, the displayed latching map is an ordinary Bergner cofibration.
  Moreover, each projection $J^k\to\Delta^0$ is a categorical equivalence.
  Since $\sSet_{\Joyal}$ is cartesian and every simplicial set is cofibrant, $X_m\times J^k\to X_m$ is a Joyal equivalence.
  The left Quillen functor $\frakC$ therefore sends it to an ordinary DK-equivalence.
  Thus $\UdagSCat\Xi^\bullet_{\dag}[m]$ is an ordinary cosimplicial frame on $\UdagSCat A_m$.
  Consequently, the source and target of $\UdagSCat\lambda_m^\bullet$ are ordinary cosimplicial frames on the same cofibrant object, and $\UdagSCat\lambda_m^0=\id$.

  We first verify the object condition, which does not require cofibrancy of $W$.
  Every target object has a representative $(b,w)$ with $b\in\Ob\Xi^k_{\dag}[m]$ and $w\in W_{m,k}$.
  Since $\lambda_m^k$ is a dagger DK-equivalence, $b$ is coherently unitarily equivalent to $\lambda_m^k(a)$ for some $a\in\Ob\Gamma^k_{\dag}[m]$.
  The coend coprojection indexed by $w$ carries this witness to a coherent unitary from $\Lambda_W(a,w)$ to $(b,w)$.
  Thus $\Lambda_W$ is coherently unitarily essentially surjective for every $W$.

  We now apply the universal description of realization functors associated to cosimplicial resolutions.
  Let $\gamma:\prism_{\rev}\to\sCat$ be the functor given by $\gamma([m]):=\UdagSCat A_m$.

  The preceding paragraphs show that $\UdagSCat\Gamma^\bullet_{\dag}[-]$ and $\UdagSCat\Xi^\bullet_{\dag}[-]$ are cosimplicial resolutions of the same cofibrant-valued diagram $\gamma$.
  Moreover,
  $\UdagSCat\lambda$ is an augmentation-compatible natural weak equivalence between these resolutions.
  By \cite[Proposition~3.4 and \S9.5]{DugUHT01}, the two resolutions correspond to the coend functors
  \begin{align}
    W \mapsto \UdagSCat\frakC^{\prism}_{\dag}(W)
    \quad \text{and} \quad 
    W \mapsto \UdagSCat\frakC_{\dag}(t_{\dag,!}W),
  \end{align}
  and the induced natural transformation is precisely $\UdagSCat\Lambda$.
  By \cite[the proof of Lemma~9.7]{DugUHT01}, this natural transformation is a Quillen homotopy.
  Consequently, $\UdagSCat\Lambda_W$ is an ordinary DK-equivalence for every projectively cofibrant $W$.
  Together with the coherent-unitary essential-surjectivity established above, this proves that $\Lambda_W$ is a dagger DK-equivalence.
\end{proof}

\begin{lemma}\label{jt.lem.transport}
  There exist natural isomorphisms of underived functors
  \begin{align}
    \Theta:\frakC^{\prism}_{\dag}\delta
    \xrightarrow{\cong}\frakC_{\dag}
    \quad\text{and}\quad
    \vartheta:
    i_0^*\N^{\prism}_{\dag}
    \xrightarrow{\cong}
    \N_{\dag}.
  \end{align}
  The inverse $\vartheta^{-1}$ is the mate of $\Theta$;
  equivalently, $\vartheta$ is the mate of $\Theta^{-1}$.
\end{lemma}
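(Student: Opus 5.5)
Both functors in the statement are left adjoints, or right adjoints, between the same categories, so we construct $\Theta$ on representables and extend by colimits. We then obtain $\vartheta$ by uniqueness of adjoints/mates.

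\emph{Construction of $\Theta$.} Both $\frakC^{\prism}_{\dag}\delta$ and $\frakC_{\dag}$ are left adjoints out of $\sSet^{\dag}\simeq\Fun(\prism_{\rev}^{\myop},\Set)$: the first as a composite of the left adjoints $\delta\dashv i_0^*$ (\cref{jt.lem.c-delta}) and $\frakC^{\prism}_{\dag}\dashv\N^{\prism}_{\dag}$. Hence both preserve colimits, and it suffices to give an isomorphism natural in $[m]\in\prism_{\rev}$ on the representables $\yo_{\rev}[m]\cong\FdagSSet(\Delta^m)$. Now $\delta\yo_{\rev}[m]=\yo_{\rev}[m]\times\Delta^0$ as an object of $\sSpace^{\dag}_{\rev}$, that is, the representable of $([m],[0])$. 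By the co-Yoneda computation in the proof of \cref{jt.lem.quillen}, we get
\begin{align}
  \frakC^{\prism}_{\dag}(\yo_{\rev}[m]\times\Delta^0)\cong\Gamma^\bullet_{\dag}[m]\otimes\Delta^0\cong\Gamma^0_{\dag}[m]=A_m=\frakC_{\dag}\FdagSSet(\Delta^m).
\end{align}
Naturality in $[m]$, including the order-reversing maps, holds because $\Gamma^0_{\dag}[-]=A_{-}$ as functors on $\prism_{\rev}$. This is exactly where we use that the relative Reedy replacement in \cref{jt.not.frames} is an isomorphism in degree zero, identified with the identity, and is applied functorially to the whole $\prism_{\rev}$-diagram, so that $\lambda^0_m=\id_{A_m}$ naturally. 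Left Kan extension along the Yoneda embedding, via the density formula $X\cong\int^{[m]}\yo_{\rev}[m]\cdot X_m$, then yields a natural isomorphism $\Theta:\frakC^{\prism}_{\dag}\delta\cong\frakC_{\dag}$.

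\emph{Construction of $\vartheta$.} The composite $i_0^*\N^{\prism}_{\dag}$ is right adjoint to $\frakC^{\prism}_{\dag}\delta$, and $\N_{\dag}$ is right adjoint to $\frakC_{\dag}$. We define $\vartheta^{-1}:\N_{\dag}\to i_0^*\N^{\prism}_{\dag}$ as the mate of $\Theta$. Since $\Theta$ is an isomorphism, its mate is an isomorphism, with inverse the mate of $\Theta^{-1}$; this is the standard functoriality of the mate correspondence with respect to composition of transformations. For a sanity check, compute levelwise: $(i_0^*\N^{\prism}_{\dag}\calC)_m=\Hom(\Gamma^0_{\dag}[m],\calC)=\Hom(A_m,\calC)=(\N_{\dag}\calC)_m$, so $\vartheta$ is literally the identity under $\Gamma^0_{\dag}[m]=A_m$. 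The dagger structure is respected because the $\prism_{\rev}$-functoriality on both sides comes from the same diagram $[m]\mapsto A_m$.

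The only step needing care is the $\prism_{\rev}$-naturality of the degree-zero identification, in particular that the functorial replacement of \cite[Proposition 16.6.8]{Hir02} has degree-zero component the identity compatibly with reversals. We handle this as noted in \cref{jt.not.frames}: the replacement is applied objectwise as a functor, so its degree-zero isomorphism is natural, and we transport along it. The rest is formal.
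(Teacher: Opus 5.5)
Your proposal is correct and follows the paper's argument. The paper computes $\frakC^{\prism}_{\dag}(\delta X)$ by the same coend/co-Yoneda formula using $\Gamma^\bullet_{\dag}[m]\otimes\Delta^0=\Gamma^0_{\dag}[m]=A_m$; your reduction to representables plus density is that computation. It defines $\vartheta$ degreewise via $\Hom_{\sCat^{\dag}}(A_m,\calC)\cong\Hom_{\sSet^{\dag}}(\FdagSSet\Delta^m,\N_{\dag}\calC)$, which matches your levelwise check. The only cosmetic difference is that you define $\vartheta^{-1}$ abstractly as the mate and get invertibility from functoriality of mates, whereas the paper writes $\vartheta$ explicitly and then observes that it is the mate.
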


\begin{proof}
  Since $\delta X$ is constant in its second simplicial direction, the coend formula and the identity $\Gamma^{\bullet}_{\dag}[m]\otimes\Delta^0=\Gamma^0_{\dag}[m]=A_m$ give
  \begin{align}
    \frakC^{\prism}_{\dag}(\delta X)
    &\cong \int^{[m]\in\prism_{\rev}} A_m\cdot X_m \\
    &= \int^{[m]}\frakC_{\dag}\FdagSSet(\Delta^m)\cdot X_m \\
    &\cong \frakC_{\dag}(\int^{[m]}\FdagSSet(\Delta^m)\cdot X_m) \\
    &\cong \frakC_{\dag}X.
  \end{align}
  The inverse of the mate of $\Theta$ is given degreewise by
  \begin{align}
    (i_0^*\N^{\prism}_{\dag}\calC)_m
    =\Hom_{\sCat^{\dag}}(A_m,\calC)
    \cong\Hom_{\sSet^{\dag}}(\FdagSSet\Delta^m,\N_{\dag}\calC)
    =(\N_{\dag}\calC)_m.
  \end{align}
  These degreewise isomorphisms are natural in $[m]\in\prism_{\rev}$ and in $\calC$, so they define $\vartheta$.
  The displayed adjunction calculation shows that $\Theta$ and $\vartheta^{-1}$ form a mate pair.
\end{proof}

\begin{definition}
  For $X\in\sSet^{\dag}$, define a simplicial object in $\sSpace^{\dag}_{\rev}$ by
  \begin{align}
    \Barconstruction_r(X)
    :=
    \coprod_{\substack{[m_0]\xrightarrow{u_1}[m_1]\to\cdots\to[m_r]}}\yo_{\rev}[m_0]\cdot X_{m_r}.
  \end{align}
  Here the $u_i$ are morphisms of $\prism_{\rev}$ and $\cdot$ denotes the copower by a set.
  The face maps compose adjacent morphisms, with the first face using the Yoneda action and the last face using the presheaf action on $X$, while the degeneracy maps insert identity morphisms.
\end{definition}

\begin{notation}
  For $X\in\sSet^{\dag}$, let
  \begin{align}
    \varepsilon_X:
    B_{\proj}(\delta X)
    :=
    |\Barconstruction_{\bullet}(X)|
    =
    \int^{[r]\in\prism}\Barconstruction_r(X)\times\Delta^r
    \to
    \delta X
  \end{align}
  be the canonical bar augmentation.
\end{notation}

\begin{proposition}\label{jt.lem.density}
  The frame augmentations induce a natural dagger DK-equivalence
  \begin{align}
    \beta_X:\frakC^{\prism}_{\dag}B_{\proj}(\delta X)\xrightarrow{\simeq}\frakC_{\dag}X.
  \end{align}
  The map $\varepsilon_X$ exhibits $B_{\proj}(\delta X)$ as a projective cofibrant replacement of $\delta X$.
  Moreover, $\beta$ is compatible with reversal and represents, at $\delta X$, the projectively derived comparison induced by the strict co-Yoneda isomorphism constructed below.
\end{proposition}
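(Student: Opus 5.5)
The proof splits into three parts: projective cofibrancy of the bar construction, the realization computation, and reversal compatibility.

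\emph{Cofibrant replacement.} Each $\Barconstruction_r(X)$ is a coproduct of representables $\yo_{\rev}[m_0]$, so it is projectively cofibrant. The simplicial object $\Barconstruction_\bullet(X)$ is Reedy cofibrant in $\sSpace^{\dag}_{\rev,\proj}$: its latching objects are the sub-coproducts indexed by degenerate chains, that is, chains containing an identity, and these inclusions are summand inclusions. Since the projective structure is simplicial (\cref{h1.lem.proj-rev}), the realization $|\Barconstruction_\bullet(X)|$ is projectively cofibrant. Levelwise at $[p]\in\prism_{\rev}$, the bar construction $\Barconstruction_\bullet(X)([p])$ is the nerve-type resolution $\coprod \prism_{\rev}([p],[m_0])\times X_{m_r}$. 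It carries the standard extra degeneracy given by prepending $\id_{[p]}$, which makes it simplicially homotopy equivalent to the constant object $X_p$. Hence $\varepsilon_X$ is a levelwise weak equivalence onto the discrete $\delta X$.

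\emph{The comparison $\beta_X$.} The left adjoint $\frakC^{\prism}_{\dag}$ commutes with colimits and with the simplicial tensor. The computation in the proof of \cref{jt.lem.quillen} gives $\frakC^{\prism}_{\dag}(\yo_{\rev}[m]\times\Delta^r)\cong\Gamma^r_{\dag}[m]$. Therefore $\frakC^{\prism}_{\dag}B_{\proj}(\delta X)$ is the coend $\int^{[r]}\coprod_{\text{chains}}\Gamma^r_{\dag}[m_0]\cdot X_{m_r}$. The frame augmentations $\Gamma^r_{\dag}[m_0]\to cA_{m_0}$ map this to $\int^{[r]}\coprod A_{m_0}\cdot X_{m_r}\times\Delta^r$ with $A$ constant in $r$, that is, to $|\Barconstruction_\bullet|$ of the simplicial dagger simplicial category $\coprod A_{m_0}\cdot X_{m_r}$. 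That object augments to the coend $\int^{[m]}A_m\cdot X_m\cong\frakC_{\dag}X$, the co-Yoneda isomorphism used in \cref{jt.lem.transport}. I define $\beta_X$ as this composite. Equivalently, $\beta_X=\Theta_X\circ\frakC^{\prism}_{\dag}(\varepsilon_X)$, and this identification gives the claim that $\beta$ represents the derived comparison at $\delta X$.

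\emph{The main obstacle: $\beta_X$ is a dagger DK-equivalence.} I would argue through the identity $\beta_X=\Theta_X\circ\frakC^{\prism}_{\dag}(\varepsilon_X)$ and avoid strictly computing the realization. By \cref{jt.lem.frame-comparison}, $\frakC^{\prism}_{\dag}(B)\simeq\frakC_{\dag}(t_{\dag,!}B)$ for projectively cofibrant $B$. I also expect that $t_{\dag,!}$ sends $\varepsilon_X$ to a dagger Joyal equivalence; this step is the crux. After forgetting the dagger, $t_!(\res\varepsilon_X)$ is a map of Joyal–Tierney total spaces, and $\res$ of a levelwise weak equivalence is a Reedy weak equivalence. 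The underlying map $t_!$ sends levelwise equivalences between Reedy cofibrant objects to Joyal equivalences by \cite{JT06}; the Reedy cofibrancy of $\res\delta X$ is automatic since it is injectively cofibrant. This gives the underlying DK-equivalence. For essential coherent-unitary surjectivity, objects of $\frakC_{\dag}X$ are vertices of $X$, and each vertex $x$ is hit on the nose by the image of the chain $[0]\xrightarrow{\id}[0]$ with $x\in X_0$, so essential surjectivity is immediate.

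\emph{Reversal compatibility.} Compatibility of $\beta$ with reversal follows from the full $\prism_{\rev}$-naturality of $\lambda$ and $\Gamma$ in \cref{jt.not.frames}, since every map used above is a coend of $\prism_{\rev}$-natural data.
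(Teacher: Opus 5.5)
Your proposal is correct and follows essentially the paper's route. The bar construction serves as the projective cofibrant replacement; you verify Reedy cofibrancy and the extra degeneracy by hand where the paper cites Dugger. The map $\beta_X=\Theta_X\circ\frakC^{\prism}_{\dag}(\varepsilon_X)$ is then shown to be a dagger DK-equivalence by combining \cref{jt.lem.frame-comparison} for $\Lambda_{B_{\proj}(\delta X)}$, Ken Brown's lemma for $t_!(\res\varepsilon_X)$ and for $\frakC$, and surjectivity of $\varepsilon_X$ in bidegree $(0,0)$.

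The one step you leave implicit is that $\Lambda_{\delta X}$ is an isomorphism, which holds because $\delta X$ is constant in the $k$-direction and $\lambda_m^0=\id_{A_m}$. You need this to pass through the naturality square of $\Lambda$ along $\varepsilon_X$, since \cref{jt.lem.frame-comparison} does not apply to the non-cofibrant $\delta X$. The paper packages this as $\frakC_{\dag}(\tau_X)\circ\Lambda_{\delta X}=\Theta_X$, using an explicit dagger isomorphism $\tau_X:t_{\dag,!}(\delta X)\cong X$.
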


\begin{proof}
  Under the equivalence $\sSet^{\dag}\simeq\Fun(\prism_{\rev}^{\myop},\Set)$, the realization $B_{\proj}(\delta X)$ is Dugger's simplicial resolution by representables of the presheaf $[m]\mapsto X_m$ on $\prism_{\rev}$.
  Therefore \cite[Lemma~2.7 and \S9.1]{DugUHT01} shows that $\varepsilon_X$ is a rowwise weak equivalence and that $B_{\proj}(\delta X)$ is projectively cofibrant.
  The length-zero summand $(\id_{[m]},x)$ shows in addition that $\varepsilon_X$ is surjective in every bidegree $(m,0)$, and in particular in bidegree $(0,0)$.

  There exists a natural strict dagger isomorphism
  \begin{align}
    \tau_X:t_{\dag,!}(\delta X)\xrightarrow{\cong}X.
  \end{align}
  After forgetting reversal, $\res(\delta X)$ is the simplicial space constant in its second direction at $\UdagSSet X$.
  Thus \cite[Lemma~2.11]{JT06}, applied with its second variable equal to $\Delta^0$, gives the underlying isomorphism 
  \begin{align}
    t_!(\res(\delta X))\cong\UdagSSet X.
  \end{align}
  In the coend description, it sends a representative $[\alpha,\xi,x]$ to $X(\alpha)(x)$.
  Formula \eqref{jt.eq.tdag-formula} gives
  \begin{align}
    \tau_X([\alpha,\xi,x]^{\dag})
    =
    X(\rho_m\alpha\rho_n)X(\rho_m)(x) 
    =
    X(\rho_n)X(\alpha)(x)
    =
    \tau_X([\alpha,\xi,x])^{\dag}.
  \end{align}
  Hence $\tau_{X}$ is an isomorphism in $\sSet^{\dag}$.

  Since the bar resolution is projectively cofibrant, the comparison of \cref{jt.lem.frame-comparison} gives a dagger DK-equivalence
  \begin{align}
    \Lambda_{B_{\proj}(\delta X)}:
    \frakC^\prism_{\dag}B_{\proj}(\delta X)
    \xrightarrow{\simeq}
    \frakC_{\dag}t_{\dag,!}B_{\proj}(\delta X).
  \end{align}
  The rowwise weak equivalence $\res\varepsilon_X$ is a Rezk weak equivalence.
  Every bisimplicial set is cofibrant in $\sSpace_{\CSS}$, and $t_!$ is left Quillen to $\sSet_{\Joyal}$ by \cite[Theorems~4.1 and~4.12]{JT06}.
  Hence Ken Brown's lemma shows that $t_!(\res\varepsilon_X)$ is a Joyal equivalence.
  By the rigidification identity recalled above, its rigidification is the underlying ordinary functor of
  \begin{align}
    \frakC_{\dag}t_{\dag,!}(\varepsilon_X):
    \frakC_{\dag}t_{\dag,!}B_{\proj}(\delta X)
    \to
    \frakC_{\dag}t_{\dag,!}(\delta X).
  \end{align}
  Since every simplicial set is Joyal cofibrant and $\frakC$ is left Quillen, Ken Brown's lemma shows that this functor is a local weak equivalence.
  Its object map is $\varepsilon_{X,0,0}$, which is surjective, so identities witness coherent-unitary essential surjectivity.
  Hence $\frakC_{\dag}t_{\dag,!}(\varepsilon_X)$ is a dagger DK-equivalence.

  Define $\beta_X$ to be the composite
  \begin{align}
    \frakC^{\prism}_{\dag}B_{\proj}(\delta X)
      &\xrightarrow[\Lambda]{\simeq}
    \frakC_{\dag}t_{\dag,!}B_{\proj}(\delta X)
      \xrightarrow{\ \frakC_{\dag}t_{\dag,!}(\varepsilon_X)\ }
    \frakC_{\dag}t_{\dag,!}(\delta X)
      \xrightarrow[\frakC_{\dag}(\tau_X)]{\cong}
    \frakC_{\dag}X.
  \end{align}
  All three maps are natural in morphisms $X\to Y$ of $\sSet^{\dag}$.
  The bar construction and $\lambda$ are defined using the full category $\prism_{\rev}$, and $\tau_X$ is dagger-compatible by the calculation above.
  Thus $\beta$ is natural and compatible with reversal.

  Let $\Theta_X:\frakC^\prism_{\dag}(\delta X)\xrightarrow{\cong}\frakC_{\dag}X$ be the strict co-Yoneda comparison of \cref{jt.lem.transport}.
  Naturality of $\Lambda$ and the identity $\lambda_m^0=\id_{A_m}$ give
  \begin{align}
    \frakC_{\dag}t_{\dag,!}(\varepsilon_X)
      \circ\Lambda_{B_{\proj}(\delta X)}
    =
    \Lambda_{\delta X} \circ\frakC^\prism_{\dag}(\varepsilon_X)
    \quad \text{and} \quad
    \frakC_{\dag}(\tau_X)\circ\Lambda_{\delta X}
    =
    \Theta_X.
  \end{align}
  Therefore, we have 
  \begin{align}
    \beta_X =\Theta_X\circ\frakC^\prism_{\dag}(\varepsilon_X)
    :\frakC^{\prism}_{\dag}B_{\proj}(\delta X) \to \frakC_{\dag}X
  \end{align}
  If $QX\to X$ is a dagger-Joyal cofibrant replacement, then $\frakC_{\dag}QX\to\frakC_{\dag}X$ is a dagger DK-equivalence by the definition of dagger-Joyal weak equivalences.
  Hence $\frakC_{\dag}X$ represents $\bfL\frakC_{\dag}(X)$, and $\beta_X$ is the projectively derived comparison induced by $\Theta$ at $\delta X$.
\end{proof}

\begin{corollary}\label{jt.lem.transport-realization}
  For every $X\in\sSet^{\dag}$, there exists a natural zigzag of dagger DK-equivalences
  \begin{align}\label{jt.eq.transport-realization}
    \bfL\frakC^{\prism}_{\dag}(\delta X)
    &\simeq
    \frakC^{\prism}_{\dag}B_{\proj}(\delta X)
    \xrightarrow[\beta_X]{\simeq}
    \frakC_{\dag}X.
  \end{align}
\end{corollary}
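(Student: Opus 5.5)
This corollary should follow by combining the projective cofibrant replacement of \cref{jt.lem.density} with the Quillen adjunction of \cref{jt.lem.quillen}. The plan is to identify the left-derived functor $\bfL\frakC^{\prism}_{\dag}$ on $\Ho(\sSpace^{\dag}_{\rev,\proj})$. By definition, it is computed by applying $\frakC^{\prism}_{\dag}$ to any projective cofibrant replacement.

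\cref{jt.lem.density} shows that $\varepsilon_X:B_{\proj}(\delta X)\to\delta X$ is a levelwise weak equivalence with projectively cofibrant source. Hence $\frakC^{\prism}_{\dag}B_{\proj}(\delta X)$ represents $\bfL\frakC^{\prism}_{\dag}(\delta X)$. This gives the first identification in \eqref{jt.eq.transport-realization}.

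To make this a natural zigzag of dagger DK-equivalences, rather than merely an isomorphism in the homotopy category, I would fix the functorial projective cofibrant replacement $Q$ used to define $\bfL\frakC^{\prism}_{\dag}$. Naturality of $\varepsilon$ then gives a functorial comparison between the two replacements. Concretely, lift $Q(\delta X)\to\delta X$ through the projective trivial fibration obtained by factoring $\varepsilon_X$. Alternatively, and more naturally, pass through $Q B_{\proj}(\delta X)$, which maps by levelwise weak equivalences both to $Q(\delta X)$ (via $Q\varepsilon_X$) and to $B_{\proj}(\delta X)$ (via the replacement map).

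Both of these maps are weak equivalences between projectively cofibrant objects. Since $\frakC^{\prism}_{\dag}$ is left Quillen by \cref{jt.lem.quillen}, Ken Brown's lemma shows that applying $\frakC^{\prism}_{\dag}$ turns them into dagger DK-equivalences. The result is a natural zigzag
\[
  \frakC^{\prism}_{\dag}Q(\delta X)\xleftarrow{\simeq}\frakC^{\prism}_{\dag}QB_{\proj}(\delta X)\xrightarrow{\simeq}\frakC^{\prism}_{\dag}B_{\proj}(\delta X).
\]
Finally, compose with $\beta_X$, which \cref{jt.lem.density} proves is a natural dagger DK-equivalence. Naturality in $X$ holds because $B_{\proj}$, $\varepsilon$, $Q$ and $\beta$ are all functorial in $X\in\sSet^{\dag}$.

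The only delicate point is that the weak equivalences of $\sCat^{\dag}_{\Bergner}$ must be exactly the dagger DK-equivalences. This is the content of the recalled dagger Bergner theorem, so Ken Brown's lemma yields dagger DK-equivalences directly and no further argument is needed.
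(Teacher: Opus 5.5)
Your proposal is correct and follows the same route as the paper. Both use \cref{jt.lem.density} to see that $\varepsilon_X$ makes $B_{\proj}(\delta X)$ a projective cofibrant replacement of $\delta X$, so that $\frakC^{\prism}_{\dag}B_{\proj}(\delta X)$ represents $\bfL\frakC^{\prism}_{\dag}(\delta X)$, and then compose with the dagger DK-equivalence $\beta_X$. Your explicit zigzag through $Q B_{\proj}(\delta X)$, using Ken Brown's lemma for the left Quillen functor of \cref{jt.lem.quillen}, simply spells out the first $\simeq$, which the paper leaves implicit.
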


\begin{proof}
  By \cref{jt.lem.density}, $B_{\proj}(\delta X)$ is a projective cofibrant replacement and its frame realization maps by the dagger DK-equivalence $\beta_X$ directly to $\frakC_{\dag}X$.
\end{proof}

\begin{corollary}\label{jt.cor.transport-mapping}
  For every cofibrant $X \in \sSet^{\dag}_{\Joyal}$ and every fibrant $\calC \in \sCat^{\dag}_{\Bergner}$, there exist natural weak equivalences
  \begin{align}
    \RMap_{\sSpace^{\dag}_{\rev,\proj}}(\delta X,\N^{\prism}_{\dag}(\calC))
    \simeq
    \RMap_{\sCat^{\dag}_{\Bergner}}(\frakC_{\dag}X,\calC)
    \simeq
    \RMap_{\sSet^{\dag}_{\Joyal}}(X,\N_{\dag}(\calC)).
  \end{align}
\end{corollary}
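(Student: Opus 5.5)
The plan is to derive both weak equivalences from derived adjunctions, using the realization comparison of \cref{jt.lem.transport-realization} for the first and the dagger Joyal--Bergner Quillen equivalence for the second.

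For the first equivalence, \cref{jt.lem.quillen} says that $\frakC^{\prism}_{\dag}\dashv\N^{\prism}_{\dag}$ is a Quillen adjunction $\sSpace^{\dag}_{\rev,\proj}\rightleftarrows\sCat^{\dag}_{\Bergner}$. The derived adjunction for homotopy function complexes \cite[Proposition~17.4.16]{Hir02} then gives a natural weak equivalence
\begin{align}
  \RMap_{\sSpace^{\dag}_{\rev,\proj}}(\delta X,\N^{\prism}_{\dag}\calC)
  \simeq
  \RMap_{\sCat^{\dag}_{\Bergner}}(\bfL\frakC^{\prism}_{\dag}(\delta X),\calC)
\end{align}
whenever $\calC$ is fibrant. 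I will compute the left side using the projective cofibrant replacement $\varepsilon_X:B_{\proj}(\delta X)\to\delta X$ of \cref{jt.lem.density}, so the right side is computed by $\frakC^{\prism}_{\dag}B_{\proj}(\delta X)$. The natural dagger DK-equivalence $\beta_X$ of \cref{jt.lem.transport-realization} relates this to $\frakC_{\dag}X$. Homotopy function complexes are invariant under weak equivalences in the source variable, so $\beta_X$ identifies the right side with $\RMap(\frakC_{\dag}X,\calC)$. This step does not use cofibrancy of $X$, and the result is natural in $X$ and $\calC$ because $\beta$ and $\varepsilon$ are natural.

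For the second equivalence, I apply the same derived adjunction statement to the Quillen adjunction $\frakC_{\dag}\dashv\N_{\dag}$ between $\sSet^{\dag}_{\Joyal}$ and $\sCat^{\dag}_{\Bergner}$, which is a Quillen adjunction by Part~I. Since $X$ is cofibrant, $\bfL\frakC_{\dag}X$ is represented by $\frakC_{\dag}X$. Since $\calC$ is fibrant, $\bfR\N_{\dag}\calC$ is represented by $\N_{\dag}\calC$. This gives $\RMap(\frakC_{\dag}X,\calC)\simeq\RMap(X,\N_{\dag}\calC)$.

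The main obstacle is naturality. Hirschhorn's equivalences are only natural in the homotopy category unless functorial framings are fixed, so I will fix functorial cosimplicial resolutions of the sources and simplicial resolutions of the targets throughout. The delicate point is checking that the mate compatibility of $\Theta$ and $\vartheta$ from \cref{jt.lem.transport} makes the composite agree with the map induced by $\vartheta$. This is not strictly required for the statement as written, so I would record it only as a remark and rely on the natural zigzags above.
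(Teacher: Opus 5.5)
Your proposal is correct and follows the paper's proof. The first equivalence is the derived adjunction of the Quillen pair $\frakC^{\prism}_{\dag}\dashv\N^{\prism}_{\dag}$ from \cref{jt.lem.quillen}, computed via the bar replacement $B_{\proj}(\delta X)$ and then composed with the comparison $\beta_X$ of \cref{jt.lem.transport-realization}; the second is the derived adjunction $\frakC_{\dag}\dashv\N_{\dag}$, using cofibrancy of $X$ and fibrancy of $\calC$ (your extra remarks on fixing functorial framings for naturality go beyond what the paper spells out but are harmless).
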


\begin{proof}
  The first equivalence is the derived adjunction of \cref{jt.lem.quillen} followed by \cref{jt.lem.transport-realization};
  the second is the derived adjunction $\frakC_{\dag}\dashv \N_{\dag}$.
\end{proof}

\subsection{The reduction principle and the derived interval}

This subsection provides the local computations used in the strictification argument.
The transport theorem computes the derived realization of the walking unitary (\cref{jt.thm.interval}).
It follows that frame nerves of fibrant dagger simplicial categories satisfy the Segal and unitary-completeness local conditions (\cref{jt.cor.local}).

\begin{lemma}\label{jt.lem.free-inner-trivial}
  If $0<k<n$, the free dagger inner-horn inclusion $\FdagSSet(\Lambda^n_k)\to\FdagSSet(\Delta^n)$ is a dagger Joyal trivial cofibration.
  Consequently, every relative cell complex generated by the free dagger inner-horn inclusions is a dagger Joyal trivial cofibration.
\end{lemma}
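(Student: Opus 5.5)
To show that $i:=\FdagSSet(\Lambda^n_k)\to\FdagSSet(\Delta^n)$ is a dagger Joyal trivial cofibration, I will prove separately that it is a cofibration and that it is a weak equivalence.

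\emph{Cofibration.} The functor $\FdagSSet$ is left adjoint to $\UdagSSet$, and $\FdagSSet(A)=A\amalg_{\sk_0A}A^{\myop}$ carries the free involution exchanging the two copies. Hence $i$ is a monomorphism. The new nondegenerate simplices of positive dimension are the interior simplex of $\Delta^n$, its face $d_k\Delta^n$, and their daggered copies in $(\Delta^n)^{\myop}$. None of these is fixed by the dagger, since the dagger moves each one to the other copy. So $i$ is a free cofibration, i.e.\ a cofibration of $\sSet^{\dag}_{\Joyal}$.

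\emph{Weak equivalence.} By definition I must show that $\frakC_{\dag}(i)$ is a dagger DK-equivalence. The recalled identity $\frakC_{\dag}\FdagSSet\cong\FdagSCat\frakC$ identifies $\frakC_{\dag}(i)$ with $\FdagSCat\frakC(\Lambda^n_k\to\Delta^n)$. The map $\frakC(\Lambda^n_k)\to\frakC(\Delta^n)$ is an ordinary Bergner trivial cofibration: the inner horn is a Joyal trivial cofibration and $\frakC$ is left Quillen. It is also bijective on objects.

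The main step, and the one I expect to be the obstacle, is checking that $\FdagSCat$ preserves this property. I would argue as follows. $\FdagSCat$ is left adjoint to $\UdagSCat$. In Part I, $\UdagSCat$ preserves local fibrations and local weak equivalences, and $\UdagSCat$ of a dagger Bergner fibration is a Bergner fibration. Hence $\FdagSCat\dashv\UdagSCat$ is a Quillen adjunction $\sCat_{\Bergner}\rightleftarrows\sCat^{\dag}_{\Bergner}$, and $\FdagSCat$ sends trivial cofibrations to trivial cofibrations.

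If that Part I statement is not available in the needed form, I would fall back on a direct computation. After forgetting the dagger, $\UdagSCat\FdagSCat(\calC)$ is the free amalgamation $\calC\ast_{\Ob}\calC^{\myop}$. I would show that this construction preserves object-bijective local trivial cofibrations, using the cell description: pushouts along $\bbA_{\dag}(K)\to\bbA_{\dag}(L)$ are ordinary and conjugate attachments. Essential surjectivity is automatic in this case, because $i$ is bijective on vertices and the objects of $\frakC_{\dag}$ are the vertices.

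\emph{Consequence.} Dagger Joyal trivial cofibrations form the left class of a weak factorization system. This class is closed under pushouts, coproducts and transfinite composition. Every relative cell complex built from the free dagger inner-horn inclusions is therefore a dagger Joyal trivial cofibration.
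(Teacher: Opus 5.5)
\textbf{There is a genuine gap in the weak-equivalence step.} Your cofibration argument and the closing saturation step are fine and agree with the paper. The problem is the assertion that $\FdagSCat\dashv\UdagSCat$ is a Quillen adjunction $\sCat_{\Bergner}\rightleftarrows\sCat^{\dag}_{\Bergner}$. That is false, so the claim that $\UdagSCat$ carries dagger Bergner fibrations to Bergner fibrations cannot hold either.

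The left adjoint already fails on a trivial cofibration that adds an object:
\begin{itemize}
\item Let $\calH$ be a cofibrant object of $\sCat_{\{0,1\}}$ weakly equivalent to the walking isomorphism, so $\{0\}\to\calH$ is a Bergner trivial cofibration.
\item $\h\calH$ is the walking isomorphism and homwise $\pi_0$ preserves colimits. Hence $\h\FdagSCat(\calH)$ is the free dagger category on an isomorphism $f:0\to1$.
\item In it, the endomorphisms of $0$ form an infinite cyclic group on the self-adjoint element $t=f^{\dag}f$. Every $u:0\to1$ has the form $ft^n$, and $u^{\dag}u=t^{2n+1}\neq\id$.
\end{itemize}
So $\bfone_{\dag}\to\FdagSCat(\calH)$ is not a local weak equivalence, since the endomorphism space of $0$ has $\pi_0\cong\mathbb{Z}$. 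It is also not essentially coherently unitarily surjective, since there is not even a unitary $0\to1$ in the homotopy category. Hence it is not a dagger DK-equivalence.

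\textbf{What you actually need.} The true statement, and the one the paper uses, is the object-bijective one \cite[Lemma~3.2.7~(2)]{Aka26}: $\FdagSCat$ sends object-bijective Bergner trivial cofibrations to dagger Bergner trivial cofibrations, and $\frakC(\Lambda^n_k)\to\frakC(\Delta^n)$ is one. Your fallback names this statement and correctly notes that essential surjectivity is automatic. However, the local weak equivalence is the whole content, and you only gesture at it.

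It can be supplied directly:
\begin{itemize}
\item For $0<k<n$, the functor $\frakC(\Lambda^n_k)\to\frakC(\Delta^n)$ is an isomorphism on every mapping space except the one from $0$ to $n$. There it is the anodyne open-box inclusion $K\subset(\Delta^1)^{n-1}$.
\item So it is a pushout of the single ordinary free attachment $\bbA(K)\to\bbA((\Delta^1)^{n-1})$.
\item Applying the left adjoint $\FdagSCat$ gives a pushout along $\bbA_{\dag}(K)\to\bbA_{\dag}((\Delta^1)^{n-1})$.
\item By the recalled attachment description, its underlying functor is this anodyne attachment at $(0,n)$ followed by its conjugate at $(n,0)$.
\item That functor is bijective on objects and a local weak equivalence, hence a dagger DK-equivalence.
\end{itemize}
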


\begin{proof}
  Put $i:\Lambda^n_k\to\Delta^n$.
  The map $i$ is a Joyal trivial cofibration.
  Since $\frakC$ is left Quillen, $\frakC(i)$ is a Bergner trivial cofibration.
  It is bijective on objects, since both its source and target have object set $\{0,\ldots,n\}$.

  The natural compatibility of free dagger completion with rigidification gives $\frakC_{\dag}\FdagSSet(i) \cong \FdagSCat\frakC(i)$.
  \Cite[Lemma~3.2.7 (2)]{Aka26} proves that $\FdagSCat$ sends an object-bijective Bergner trivial cofibration to a dagger Bergner trivial cofibration.
  Hence $\frakC_{\dag}\FdagSSet(i)$ is a dagger DK-equivalence, and therefore $\FdagSSet(i)$ is a dagger Joyal equivalence by definition.
  Since $i$ is a monomorphism, $\FdagSSet(i)$ is also a free cofibration by the dagger Joyal model theorem recalled in \cref{review.section}.
  Thus $\FdagSSet(i)$ is a dagger Joyal trivial cofibration.

  Finally, trivial cofibrations form the left class of a weak factorization system and are therefore closed under coproducts, pushouts, and transfinite compositions.
  Every relative cell complex generated by the free dagger inner-horn inclusions is consequently a dagger Joyal trivial cofibration.
\end{proof}

\begin{lemma}\label{jt.lem.free-inner-underlying}
  If $0<k<n$, the underlying map of the free dagger inner-horn inclusion $\FdagSSet(\Lambda^n_k) \to \FdagSSet(\Delta^n)$ is inner anodyne.
  Consequently, the underlying map of every relative cell complex generated by the free dagger inner-horn inclusions is inner anodyne.
\end{lemma}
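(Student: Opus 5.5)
The plan is to describe the underlying simplicial set of $\FdagSSet(K)$ explicitly and show that the underlying map of $\FdagSSet(\Lambda^n_k)\to\FdagSSet(\Delta^n)$ is obtained from two ordinary inner horn inclusions by pushouts. By the formula recalled in the notation section, $\UdagSSet\FdagSSet(K)=K\amalg_{\sk_0K}K^{\myop}$, and this is natural in $K$. Since $\Lambda^n_k$ and $\Delta^n$ have the same vertex set, the square
\begin{align}
  \begin{tikzcd}
    \Lambda^n_k\amalg_{\sk_0\Delta^n}(\Lambda^n_k)^{\myop} \arrow[r]\arrow[d] & \Delta^n\amalg_{\sk_0\Delta^n}(\Lambda^n_k)^{\myop}\arrow[d]\\
    \Lambda^n_k\amalg_{\sk_0\Delta^n}(\Delta^n)^{\myop} \arrow[r] & \Delta^n\amalg_{\sk_0\Delta^n}(\Delta^n)^{\myop}
  \end{tikzcd}
\end{align}
lets us factor the underlying map as a composite of two maps. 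The first is
\begin{align}
  \Lambda^n_k\amalg_{\sk_0}(\Lambda^n_k)^{\myop}\to\Delta^n\amalg_{\sk_0}(\Lambda^n_k)^{\myop}.
\end{align}
It is the cobase change of $\Lambda^n_k\to\Delta^n$ along the inclusion $\Lambda^n_k\to\Lambda^n_k\amalg_{\sk_0}(\Lambda^n_k)^{\myop}$. To see this, note that pushouts along $\sk_0\Delta^n$ commute with each other (a pasting of pushout squares). The second map is
\begin{align}
  \Delta^n\amalg_{\sk_0}(\Lambda^n_k)^{\myop}\to\Delta^n\amalg_{\sk_0}(\Delta^n)^{\myop}.
\end{align}
It is the cobase change of $(\Lambda^n_k)^{\myop}\to(\Delta^n)^{\myop}$. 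Under $(\Delta^n)^{\myop}\cong\Delta^n$ via $i\mapsto n-i$, this map is the inner horn inclusion $\Lambda^n_{n-k}\to\Delta^n$, and $0<n-k<n$. Inner anodyne maps are closed under cobase change and composition, so the underlying map is inner anodyne.

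The one point to verify carefully is that these pushouts are computed correctly as simplicial sets. Pushouts in $\sSet$ are levelwise, and gluing along the discrete $\sk_0\Delta^n$ only identifies vertices. The iterated pushout therefore agrees with $\Delta^n\amalg_{\sk_0}(\Lambda^n_k)^{\myop}$. I would check this directly at the level of simplices; I expect this to be the main (though mild) obstacle, since degenerate simplices on identified vertices must be matched.

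For the consequence, $\UdagSSet$ is a left adjoint, with right adjoint given by the cofree dagger construction; alternatively, colimits in $\sSet^{\dag}\simeq\Fun(\prism_{\rev}^{\myop},\Set)$ are computed levelwise. Either way, $\UdagSSet$ preserves coproducts, pushouts and transfinite composites. The underlying map of a relative cell complex generated by the free dagger inner horns is therefore a relative cell complex generated by maps which are inner anodyne by the first part. Since inner anodyne maps form a saturated class, this map is inner anodyne.
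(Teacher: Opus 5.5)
Your proposal is correct and matches the paper's proof. The paper also factors the underlying map through $\Delta^n\amalg_{\sk_0\Delta^n}(\Lambda^n_k)^{\myop}$. It identifies the two steps as pushouts of $\Lambda^n_k\to\Delta^n$ and, after order reversal, of $\Lambda^n_{n-k}\to\Delta^n$. It then gets the cell-complex statement because $\UdagSSet$, as restriction along $\prism\to\prism_{\rev}$, preserves colimits and inner anodyne maps are weakly saturated. The pushout check you flag is purely formal, since $\sk_0\Delta^n\subseteq\Lambda^n_k$ when $n\geq2$.
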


\begin{proof}
  Since an inner horn contains every vertex, the explicit formula for free dagger completion gives
  \begin{align}
    \UdagSSet\FdagSSet(\Lambda^n_k)
    =
    \Lambda^n_k\amalg_{\sk_0\Delta^n}(\Lambda^n_k)^{\myop}
    \quad \text{and} \quad
    \UdagSSet\FdagSSet(\Delta^n)
    =
    \Delta^n\amalg_{\sk_0\Delta^n}(\Delta^n)^{\myop}.
  \end{align}
  The map between them factors as
  \begin{align}
    \Lambda^n_k\amalg_{\sk_0\Delta^n}(\Lambda^n_k)^{\myop}
    \to 
    \Delta^n\amalg_{\sk_0\Delta^n}(\Lambda^n_k)^{\myop}
    \to 
    \Delta^n\amalg_{\sk_0\Delta^n}(\Delta^n)^{\myop}.
  \end{align}
  The first arrow is a pushout of $\Lambda^n_k\to\Delta^n$.
  After identifying opposite simplices by order reversal, the second is a pushout of
  $\Lambda^n_{n-k}\to\Delta^n$.
  Both arrows are inner anodyne, and so is their composite.

  Under $\sSet^{\dag} \cong \Fun(\prism_{\rev}^{\myop},\Set)$, the forgetful functor $\UdagSSet$ corresponds to restriction along
  $\prism\to\prism_{\rev}$ and therefore preserves colimits.
  It consequently carries pushouts and transfinite composites of free dagger inner-horn inclusions to pushouts and transfinite composites of the inner anodyne maps just considered.
  Since the inner anodyne maps are weakly saturated, the final assertion follows.
\end{proof}

\begin{remark}
  For an injectively fibrant $S_{\rev}$-local $W$, choose a functorial projective trivial fibration $p:Q_{\proj}W\to W$ with projectively cofibrant source;
  We factor $t_{\dag,!}Q_{\proj}W\to *$ by the small object argument with respect to the family $\{\FdagSSet(\Lambda^n_k) \to \FdagSSet(\Delta^n) ~|~ 0<k<n\}$.
  
  We denote the left factor by $t_{\dag,!}Q_{\proj}W \to T_W$.
  It is a relative free-dagger inner-horn cell complex, hence a dagger Joyal trivial cofibration by \cref{jt.lem.free-inner-trivial}.
  By \cref{jt.lem.free-inner-underlying}, its underlying map is inner anodyne.
  Since inner horns contain all vertices, the map is vertex-preserving.

  The right factor $T_W\to *$ has the right lifting property against all free dagger inner-horn inclusions.
  By the adjunction $\FdagSSet\dashv\UdagSSet$, this says exactly that
  $\UdagSSet T_W\to *$ has the right lifting property against all ordinary inner horns.
  Therefore $\UdagSSet T_W$ is a quasi-category;
\end{remark}

\begin{notation}\label{jt.not.Vred}
  We fix some notational conventions:
  \begin{itemize}
    \item for $W \in \sSpace^{\dag}_{\rev}$ and for vertices $a,b\in(W_0)_0$, put 
    \begin{align}
      \Map_W(a,b):=\hofib_{(a,b)}(W_1\to W_0\times W_0);
    \end{align}
    \item for a quasi-category $K$ and vertices $x,y\in K_0$, $\Map_K(x,y)$ denotes any standard homotopy function complex from $x$ to $y$;
    \item for a simplicial set $S$ with an ordered pair of vertices $(x,y)$, write $S_{x,y}$ for the corresponding object $\partial\Delta^1\to S$, and put 
    \begin{align}
      C_E^k
      &:=(\Delta^1\times J^k)\amalg_{\partial\Delta^1\times J^k}\partial\Delta^1,\\
      \Map_E(S;x,y)
      &:=[k \mapsto \Hom_{\partial\Delta^1\mathbin{\downarrow}\sSet}(C_E^k,S_{x,y})].
    \end{align}
  \end{itemize}
\end{notation}

\begin{remark}\label{jt.not.ordered-mapping}
  We have some remarks:
  \begin{itemize}
    \item the object $J^k$ is the nerve of the indiscrete groupoid on $\{0,\ldots,k\}$, so $C_E^\bullet$ is the cosimplicial resolution denoted by the same letter in \cite[Proposition~4.5 (d)]{DS11};
    \item if $S$ is a quasi-category, $\Map_E(S;x,y)$ computes the ordered fixed-endpoint homotopy function complex by \cite[Corollary~4.8]{DS11};
    \item the formula for the ordinary right adjoint $t^!$ gives a strict natural isomorphism
    \begin{align}
      \fib_{(x,y)}
      ((t^!S)_1\to(t^!S)_0\times(t^!S)_0)
      \cong
      \Map_E(S;x,y).
    \end{align}
    Indeed, in vertical degree $k$ the condition of lying over $(x,y)$ says precisely that a map $\Delta^1\times J^k\to S$ is constant with values $x$ and $y$ on the two components of $\partial\Delta^1\times J^k$.
  \end{itemize}
\end{remark}

\begin{proposition}\label{jt.thm.interval}
  Let $c:\delta E_{\dag}\to\Delta^0_{\rev}$ be the canonical collapse.
  Then there exists an isomorphism in $\Ho(\sCat^{\dag}_{\Bergner})$
  \begin{align}
    \bfL\frakC^{\prism}_{\dag}(c):
    \bfL\frakC^{\prism}_{\dag}(\delta E_{\dag})
    \xrightarrow{\cong}
    \bfL\frakC^{\prism}_{\dag}(\Delta^0_{\rev})
    \simeq 
    \bfone_{\dag}.
  \end{align}
\end{proposition}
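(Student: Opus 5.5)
We reduce the statement to the dagger Joyal model via the transport result \cref{jt.lem.transport-realization}. The collapse $c:\delta E_{\dag}\to\Delta^0_{\rev}$ is $\delta$ of the collapse $c_0:E_{\dag}\to\Delta^0_{\dag}$ in $\sSet^{\dag}$, where $\Delta^0_{\rev}=\delta(\Delta^0_{\dag})$. Applying the natural zigzag \eqref{jt.eq.transport-realization} to $X=E_{\dag}$ and $X=\Delta^0_{\dag}$, together with naturality of $\beta$ in $X$, identifies $\bfL\frakC^{\prism}_{\dag}(c)$ with $\frakC_{\dag}(c_0):\frakC_{\dag}E_{\dag}\to\frakC_{\dag}\Delta^0_{\dag}$ in $\Ho(\sCat^{\dag}_{\Bergner})$. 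For this, $\bfL\frakC^{\prism}_{\dag}(c)$ is represented by $\frakC^{\prism}_{\dag}B_{\proj}(\delta c_0)$, since $B_{\proj}$ is a functorial projective cofibrant replacement by \cref{jt.lem.density}. Moreover $\frakC_{\dag}\Delta^0_{\dag}=\frakC_{\dag}\FdagSSet(\Delta^0)\cong\FdagSCat\frakC(\Delta^0)=\bfone_{\dag}$, which gives the final identification. It therefore suffices to show that $\frakC_{\dag}(c_0)$ is a dagger DK-equivalence, i.e.\ that $c_0:E_{\dag}\to\Delta^0_{\dag}$ is a dagger Joyal equivalence.

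For this we use the counit and nerve facts recalled in \cref{review.section}. The walking unitary $\bbI_{\dag}$ has discrete mapping spaces, which are Kan complexes, so it is locally fibrant. I would first check that $\bbI_{\dag}$, and likewise $\bfone_{\dag}$, is dagger Bergner fibrant. If fibrancy turns out to need more than local fibrancy, for instance a condition on coherent unitaries, I would instead use the object-preserving replacement $R_{\rmob}$, which changes nothing here because the mapping spaces are already Kan.

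Granting this, the counit $\frakC_{\dag}\N_{\dag}\bbI_{\dag}\to\bbI_{\dag}$ is a dagger DK-equivalence. Hence $\frakC_{\dag}(c_0)$ is, up to these counits, the dagger functor $p:\bbI_{\dag}\to\bfone_{\dag}$, using $\N_{\dag}\bfone_{\dag}=\Delta^0_{\dag}$ and the counit $\frakC_{\dag}\Delta^0_{\dag}\cong\bfone_{\dag}$. By two-out-of-three it then suffices to see that $p$ is a dagger DK-equivalence. Here we check two conditions:
\begin{itemize}
  \item[(a)] \emph{Local equivalence.} Every mapping space of $\bbI_{\dag}$ is a point, since $\bbI_{\dag}$ is the indiscrete groupoid on $\{0,1\}$ with its dagger equal to inversion. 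So $p$ is a local weak equivalence.
  \item[(b)] \emph{Essential surjectivity.} The target has a single object, which is hit, so $p$ is coherently unitarily essentially surjective, witnessed by identities.
\end{itemize}

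The main obstacle I expect is bookkeeping rather than mathematics. We must make sure the naturality in \cref{jt.lem.transport-realization} really identifies the derived map $\bfL\frakC^{\prism}_{\dag}(c)$, and not merely its source and target, with $\frakC_{\dag}(c_0)$. This needs $\beta$ to be natural in morphisms of $\sSet^{\dag}$, as asserted in \cref{jt.lem.density}. It also needs $B_{\proj}(\delta-)$ to be functorial, so that the square formed by $\beta_{E_{\dag}}$, $\beta_{\Delta^0_{\dag}}$, $\frakC^{\prism}_{\dag}B_{\proj}(\delta c_0)$ and $\frakC_{\dag}(c_0)$ commutes strictly. A secondary point is confirming the fibrancy of $\bbI_{\dag}$ needed for the counit statement.
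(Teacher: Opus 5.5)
Your proposal is correct and follows essentially the same route as the paper. You use the natural comparison $\beta$ of \cref{jt.lem.density} to identify $\bfL\frakC^{\prism}_{\dag}(c)$ with $\frakC_{\dag}$ of the collapse $E_{\dag}=\N_{\dag}(\bbI_{\dag})\to\Delta^0_{\dag}$, then conclude by two-out-of-three from fibrancy of $\bbI_{\dag}$ (all mapping spaces are points), the counit $\frakC_{\dag}\N_{\dag}\bbI_{\dag}\to\bbI_{\dag}$ being a dagger DK-equivalence, naturality of the counit against $p:\bbI_{\dag}\to\bfone_{\dag}$, and $p$ being a dagger DK-equivalence. Your hedge on fibrancy is unnecessary: the paper asserts fibrancy of $\bbI_{\dag}$ directly from its point mapping spaces, and in any case $R_{\rmob}\bbI_{\dag}=\bbI_{\dag}$.
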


\begin{proof}
  By \cref{jt.lem.transport-realization} and $E_{\dag}=\N_{\dag}(\bbI_{\dag})$, there exists a natural zigzag
  \begin{align}
    \bfL\frakC^{\prism}_{\dag}(\delta E_{\dag})\simeq \frakC_{\dag}\N_{\dag}(\bbI_{\dag}) \xrightarrow{\varepsilon_{\bbI_{\dag}}} \bbI_{\dag}.
  \end{align}
  The walking unitary is fibrant in $\sCat^{\dag}_{\Bergner}$, since all its mapping spaces are points.
  The counit is a dagger DK-equivalence by the recalled dagger Joyal--Bergner equivalence;
  after forgetting the dagger, it is the ordinary Joyal--Bergner counit.
  The collapse $\bbI_{\dag}\to\bfone_{\dag}$ is homwise an isomorphism and coherently unitarily essentially surjective, since its two objects are joined by the generating unitary.
  It is therefore a dagger DK-equivalence.

  It remains to identify the resulting composite.
  Let $p:\bbI_{\dag}\to\bfone_{\dag}$ be the collapse and consider $\N_{\dag}(p):E_{\dag}\to\Delta^0_{\dag}$.
  Naturality of \cref{jt.lem.density} identifies $\bfL\frakC^{\prism}_{\dag}(\delta \N_{\dag}(p))$ with $\frakC_{\dag}\N_{\dag}(p)$.
  Naturality of the counit gives 
  \begin{align}
    p\circ\varepsilon_{\bbI_{\dag}}=\varepsilon_{\bfone_{\dag}}\circ\frakC_{\dag}\N_{\dag}(p).
  \end{align}
  Hence the composite above is exactly the morphism induced by the canonical collapse $\delta E_{\dag}\to\Delta^0_{\rev}$, under the natural identification $\bfL\frakC^{\prism}_{\dag}(\Delta^0_{\rev})\simeq\bfone_{\dag}$.
  Thus two-out-of-three proves the theorem.
\end{proof}

\begin{proposition}\label{jt.cor.local}
  For every fibrant $\calC\in\sCat^{\dag}_{\Bergner}$, the frame nerve $\N^{\prism}_{\dag}(\calC)$ is $S_{\rev}$-local in the derived sense, and every injectively fibrant replacement is an $S_{\rev}$-local fibrant object.
\end{proposition}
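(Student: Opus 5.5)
The plan is to test locality of $\N^{\prism}_{\dag}(\calC)$ on each map of $S_{\rev}$ by means of the derived adjunction of \cref{jt.lem.quillen}. By \cref{h1.lem.local-objects} (3) and \cref{h1.rem.S-comparison}, it suffices to treat the projective structure. There $\N^{\prism}_{\dag}(\calC)$ is projectively fibrant, because $\N^{\prism}_{\dag}$ is right Quillen and $\calC$ is fibrant. For each $s:A\to B$ in $S_{\rev}$ we therefore need the induced map
\begin{align}
  \RMap_{\sSpace^{\dag}_{\rev,\proj}}(B,\N^{\prism}_{\dag}\calC)\to\RMap_{\sSpace^{\dag}_{\rev,\proj}}(A,\N^{\prism}_{\dag}\calC)
\end{align}
to be a weak equivalence. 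The derived adjunction \cite[Proposition~17.4.16]{Hir02} identifies this map with
\begin{align}
  \RMap_{\sCat^{\dag}_{\Bergner}}(\bfL\frakC^{\prism}_{\dag}B,\calC)\to\RMap_{\sCat^{\dag}_{\Bergner}}(\bfL\frakC^{\prism}_{\dag}A,\calC).
\end{align}
Consequently, it is enough to show that $\bfL\frakC^{\prism}_{\dag}(s)$ is an isomorphism in $\Ho(\sCat^{\dag}_{\Bergner})$.

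For the completeness map $\delta(\epsilon_0):\Delta^0_{\rev}\to E_{\rev}$, compose it with the collapse $c:\delta E_{\dag}\to\Delta^0_{\rev}$. The composite is the identity, and $\bfL\frakC^{\prism}_{\dag}(c)$ is invertible by \cref{jt.thm.interval}. Two-out-of-three in the homotopy category then shows that $\bfL\frakC^{\prism}_{\dag}(\delta\epsilon_0)$ is invertible.

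For a Segal map, note that both $\Sp[n]_{\rev}$ and $\Delta^n_{\rev}$ lie in the image of $\delta$: they are $\delta\FdagSSet(\Sp[n])$ and $\delta\FdagSSet(\Delta^n)$. By \cref{jt.lem.transport-realization} together with the naturality of $\beta$ from \cref{jt.lem.density}, $\bfL\frakC^{\prism}_{\dag}(\delta i)$ is identified with $\frakC_{\dag}(i)$ for $i=\FdagSSet(\Sp[n]\to\Delta^n)$. It thus remains to show that $\FdagSSet(\Sp[n])\to\FdagSSet(\Delta^n)$ is a dagger Joyal trivial cofibration. I would use that the spine inclusion $\Sp[n]\to\Delta^n$ lies in the weakly saturated class generated by inner horn inclusions, by \cite[Lemma~3.5]{JT06} or the standard argument. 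The functor $\FdagSSet$ preserves colimits and monomorphisms, so $\FdagSSet$ of the spine inclusion lies in the weakly saturated class generated by the free dagger inner-horn inclusions. That class consists of dagger Joyal trivial cofibrations by \cref{jt.lem.free-inner-trivial}. Hence $\frakC_{\dag}$ of it is a dagger DK-equivalence, since the source is cofibrant and $\frakC_{\dag}$ is left Quillen.

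Finally, suppose $W'$ is an injectively fibrant replacement of $\N^{\prism}_{\dag}(\calC)$ along a levelwise weak equivalence. Homotopy function complexes are invariant under such a replacement, so $W'$ is $S_{\rev}$-local, and by \cref{h1.lem.rev-models} (3) it is a fibrant object of $L_{S_{\rev}}\sSpace^{\dag}_{\rev,\inj}$.

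I expect the main obstacle to be the precise identification of $\bfL\frakC^{\prism}_{\dag}(\delta i)$ with $\frakC_{\dag}(i)$ as morphisms rather than merely as objects. This requires using the naturality of the zigzag in \cref{jt.lem.transport-realization}, including the compatibility of $B_{\proj}(\delta-)$ with maps. I would also check the one-step hypothesis that the spine inclusion lies in the inner anodyne saturation, which is classical.
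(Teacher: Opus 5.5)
Your proposal is correct and follows essentially the same route as the paper. Both arguments reduce derived $S_{\rev}$-locality, via the Quillen adjunction of \cref{jt.lem.quillen}, to showing that $\bfL\frakC^{\prism}_{\dag}$ inverts each map of $S_{\rev}$. Both then handle the Segal maps by naturality of the transport comparison of \cref{jt.lem.transport-realization} together with the free-dagger inner-anodyne closure, handle completeness by \cref{jt.thm.interval} and the retraction $ce=\id$, and finish by invariance of locality under injectively fibrant replacement. The only cosmetic difference is that you invoke the derived adjunction on homotopy function complexes directly, where the paper cites \cite[Theorem~3.1.6]{Hir02}.
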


\begin{proof}
  By \cref{jt.lem.quillen}, $\frakC^{\prism}_{\dag}\dashv\N^{\prism}_{\dag}$ is a Quillen adjunction before localization.
  Hence \cite[Theorem~3.1.6 (1)(a)--(c)]{Hir02} reduces the derived $S_{\rev}$-locality of $\N^{\prism}_{\dag}(\calC)$ to proving that $\bfL\frakC^{\prism}_{\dag}$ sends every element of $S_{\rev}$ to a dagger DK-equivalence.

  First let $\sigma_m:\Sp[m]_{\rev}\to\Delta^m_{\rev}$ be a Segal generator, and let $s_m:\FdagSSet(\Sp[m])\to \FdagSSet(\Delta^m)$ be the corresponding free dagger spine inclusion.
  The ordinary spine inclusion is inner anodyne.
  Recall that free dagger completion sends the generating inner horns, and hence their weakly saturated closure to dagger Joyal trivial cofibrations; 
  so $s_m$ is a dagger Joyal trivial cofibration.
  Since $\frakC_{\dag}$ is left Quillen, $\frakC_{\dag}(s_m)$ is a dagger Bergner trivial cofibration.
  Naturality of \cref{jt.lem.transport-realization} identifies $\bfL\frakC^{\prism}_{\dag}(\sigma_m)$ with $\frakC_{\dag}(s_m)$ in $\Ho(\sCat^{\dag}_{\Bergner})$.

  For the completeness generator $e:\Delta^0_{\rev}\to E_{\rev}$, let $c:E_{\rev}\to\Delta^0_{\rev}$ be the canonical collapse.
  Under the natural identification $\bfL\frakC^{\prism}_{\dag}(\Delta^0_{\rev})\simeq\bfone_{\dag}$, the morphism $\bfL\frakC^{\prism}_{\dag}(c)$ is the dagger DK-equivalence of \cref{jt.thm.interval}.
  Since $ce=\id_{\Delta^0_{\rev}}$, $\bfL\frakC^{\prism}_{\dag}(e)$ is its inverse in $\Ho(\sCat^{\dag}_{\Bergner})$ and is also a dagger DK-equivalence.

  Thus \cite[Theorem~3.1.6 (1)]{Hir02} gives the asserted derived locality.
  Locality is invariant under the levelwise weak equivalence to an injectively fibrant replacement by \cref{h1.rem.S-comparison,h1.lem.local-objects} (3), so every such replacement is $S_{\rev}$-local fibrant.
\end{proof}

\begin{theorem}\label{jt.cor.localized-adjunction}
  The Quillen adjunction of \cref{jt.lem.quillen} descends to a Quillen adjunction
  \begin{align}
    \frakC^{\prism}_{\dag}:
    L_{\widetilde S_{\rev}}\sSpace^{\dag}_{\rev,\proj}
    \rightleftarrows
    \sCat^{\dag}_{\Bergner}:
    \N^{\prism}_{\dag}.
  \end{align}
\end{theorem}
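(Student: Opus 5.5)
We descend the Quillen adjunction of \cref{jt.lem.quillen} through the left Bousfield localization by the universal property of localization, \cite[Theorem~3.3.20 (1)(a)]{Hir02} (or equivalently \cite[Proposition~3.3.18 (1)]{Hir02}). The projective structure $\sSpace^{\dag}_{\rev,\proj}$ is left proper and combinatorial (\cref{h1.lem.proj-rev}), and $L_{\widetilde S_{\rev}}\sSpace^{\dag}_{\rev,\proj}$ exists with the same cofibrations (\cref{h1.lem.rev-models}). So it suffices to show that the total left derived functor $\bfL\frakC^{\prism}_{\dag}$ sends every map of $\widetilde S_{\rev}$ to an isomorphism in $\Ho(\sCat^{\dag}_{\Bergner})$.

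Every $\widetilde s\in\widetilde S_{\rev}$ is a projective cofibration between projectively cofibrant objects. Hence $\bfL\frakC^{\prism}_{\dag}(\widetilde s)$ is represented by $\frakC^{\prism}_{\dag}(\widetilde s)$ itself. Because the defining square of $\widetilde s$ over $s\in S_{\rev}$ has levelwise weak equivalences as vertical maps, $\bfL\frakC^{\prism}_{\dag}(\widetilde s)$ and $\bfL\frakC^{\prism}_{\dag}(s)$ agree up to composition with isomorphisms in the homotopy category. The problem therefore reduces to showing that $\bfL\frakC^{\prism}_{\dag}$ inverts the Segal maps $\Sp[n]_{\rev}\to\Delta^n_{\rev}$ and the completeness map $\Delta^0_{\rev}\to E_{\rev}$.

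This is exactly the computation carried out in the proof of \cref{jt.cor.local}:
\begin{itemize}
  \item For a Segal map, naturality of \cref{jt.lem.transport-realization} identifies $\bfL\frakC^{\prism}_{\dag}(\sigma_n)$ with $\frakC_{\dag}(s_n)$. Here $s_n$ is a dagger Joyal trivial cofibration, by saturation from \cref{jt.lem.free-inner-trivial} applied to the inner anodyne spine inclusion.
  \item For the completeness map, \cref{jt.thm.interval} shows that the collapse $c$ is inverted. Since $ce=\id$, two-out-of-three shows that $e$ is inverted as well.
\end{itemize}
Alternatively, one can argue on the right-adjoint side using \cref{jt.cor.local}. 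There, $\N^{\prism}_{\dag}$ sends fibrant objects of $\sCat^{\dag}_{\Bergner}$ to projectively fibrant objects that are $S_{\rev}$-local in the derived sense, i.e.\ to fibrant objects of the localization by \cref{h1.lem.rev-models} (2). One then applies the criterion that a Quillen adjunction descends to the localization once the right adjoint preserves fibrant objects: local fibrant objects together with the unchanged cofibrations suffice, by the Dugger/Hirschhorn lemma on fibrations between fibrant objects in a left Bousfield localization.

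The main obstacle is making the identification $\bfL\frakC^{\prism}_{\dag}(\sigma_n)\cong\frakC_{\dag}(s_n)$ honest. \Cref{jt.lem.transport-realization} provides a zigzag that is natural in $X\in\sSet^{\dag}$, and the Segal and completeness maps are of the form $\delta(f)$ for maps $f$ of dagger simplicial sets. So naturality of $\beta$ (\cref{jt.lem.density}) in $X$ gives a commuting square relating $\frakC^{\prism}_{\dag}B_{\proj}(\delta f)$ and $\frakC_{\dag}f$. I would spell this square out explicitly, check that $B_{\proj}(\delta f)$ represents the derived image of $\delta f$, and then conclude by two-out-of-three.
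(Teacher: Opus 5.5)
Your proposal is correct and follows essentially the paper's own argument. The paper also applies \cite[Proposition~3.3.18 (1)]{Hir02} to the unlocalized Quillen pair of \cref{jt.lem.quillen}, and uses the projectively cofibrant approximations $\widetilde s\to s$ to reduce to the computation in the proof of \cref{jt.cor.local}: transport along $\beta$ for the Segal maps, and \cref{jt.thm.interval} together with $ce=\id$ for the completeness map. Your explicit naturality square for $\beta$ at $\delta f$ and your alternative right-adjoint route are both sound, but they are refinements of the same approach rather than a different one.
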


\begin{proof}
  By \cref{jt.lem.quillen}, $\frakC^{\prism}_{\dag}\dashv\N^{\prism}_{\dag}$ is a Quillen adjunction before localization.
  Every $\widetilde s\in\widetilde S_{\rev}$ is a projectively cofibrant approximation to an element $s\in S_{\rev}$, and the proof of \cref{jt.cor.local} shows that $\frakC^{\prism}_{\dag}(\widetilde s)$ is a dagger DK-equivalence.
  Therefore \cite[Proposition~3.3.18 (1)]{Hir02} gives the asserted Quillen adjunction.
\end{proof}

\begin{corollary}\label{jt.cor.transport-local}
  For every cofibrant $X\in\sSet^{\dag}_{\Joyal}$ and every fibrant $\calC\in\sCat^{\dag}_{\Bergner}$, there exists a natural weak equivalence
  \begin{align}
    \RMap_{L_{S_{\rev}}\sSpace^{\dag}_{\rev,\inj}}
    (\delta X,\N^{\prism}_{\dag}(\calC))
    \simeq
    \RMap_{\sSet^{\dag}_{\Joyal}}(X,\N_{\dag}(\calC)).
  \end{align}
\end{corollary}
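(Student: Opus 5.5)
The equivalence will be assembled from three comparisons: invariance of homotopy function complexes under changing the projective structure to the injective one, passage to the localization, and \cref{jt.cor.transport-mapping}.

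\emph{Step 1: reduce to the unlocalized projective structure.} Let $Z$ be an injectively fibrant replacement of $\N^{\prism}_{\dag}(\calC)$ along a levelwise weak equivalence $r:\N^{\prism}_{\dag}(\calC)\to Z$. By \cref{jt.cor.local}, $Z$ is fibrant in $L_{S_{\rev}}\sSpace^{\dag}_{\rev,\inj}$. A left Bousfield localization has the same cofibrations, and its homotopy function complexes into local fibrant objects agree with those of the underlying structure (\cite[Theorem~3.3.20]{Hir02} or \cite[\S17.4]{Hir02}). Hence
\begin{align}
  \RMap_{L_{S_{\rev}}\sSpace^{\dag}_{\rev,\inj}}(\delta X,\N^{\prism}_{\dag}(\calC))
  \simeq
  \underline{\Map}(\delta X,Z)
  \simeq
  \RMap_{\sSpace^{\dag}_{\rev,\inj}}(\delta X,\N^{\prism}_{\dag}(\calC)),
\end{align}
where the first equivalence uses \cref{h1.lem.localized-simplicial} (2).

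\emph{Step 2: switch from injective to projective.} The identity is a Quillen equivalence $\sSpace^{\dag}_{\rev,\proj}\rightleftarrows\sSpace^{\dag}_{\rev,\inj}$, and the two structures have the same weak equivalences. Their homotopy function complexes therefore agree naturally (\cite[Proposition~17.4.16]{Hir02}). Concretely, one computes both using the projective cofibrant replacement $B_{\proj}(\delta X)\to\delta X$ of \cref{jt.lem.density} and the fibrant target $Z$; $Z$ is also projectively fibrant because injective fibrations are projective fibrations. Thus
\begin{align}
  \RMap_{\sSpace^{\dag}_{\rev,\inj}}(\delta X,\N^{\prism}_{\dag}(\calC))
  \simeq
  \RMap_{\sSpace^{\dag}_{\rev,\proj}}(\delta X,\N^{\prism}_{\dag}(\calC)).
\end{align}

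\emph{Step 3: transport.} Apply \cref{jt.cor.transport-mapping}; this uses that $\calC$ is fibrant, so that $\N^{\prism}_{\dag}(\calC)$ is projectively fibrant by \cref{jt.lem.quillen}. It identifies the right-hand side of Step 2 with $\RMap_{\sSet^{\dag}_{\Joyal}}(X,\N_{\dag}(\calC))$. Composing the three equivalences gives the claim.

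\emph{Naturality.} The main point to check is that every equivalence above is natural in $X$ and $\calC$. For this I would fix functorial choices throughout: the functorial injective fibrant replacement, the bar resolution $B_{\proj}$ (natural by \cref{jt.lem.density}), and the natural comparison $\beta$. Then the zigzag consists of natural maps. The only nontrivial input is that locality of $Z$ holds for every fibrant $\calC$, and that is exactly \cref{jt.cor.local}.
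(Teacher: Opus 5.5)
Your proposal is correct and follows essentially the same route as the paper. The paper also uses \cref{jt.cor.local} to see that an injectively fibrant replacement of $\N^{\prism}_{\dag}(\calC)$ is local fibrant, notes that mapping into a local fibrant object is unchanged by localization, identifies the projective and injective derived mapping spaces by the identity comparison, and concludes with \cref{jt.cor.transport-mapping}. The only difference is that you make the projective/injective comparison before localization, whereas the paper cites the localized identity Quillen equivalence of \cref{h1.lem.rev-models}; this does not affect the argument.
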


\begin{proof}
  By \cref{jt.cor.local}, an injectively fibrant replacement of $\N^{\prism}_{\dag}(\calC)$ is $S_{\rev}$-local.
  Mapping into this local object is unchanged by localization.
  The identity Quillen equivalence of \cref{h1.lem.rev-models} identifies the resulting projective and injective derived mapping spaces.
  The assertion now follows from \cref{jt.cor.transport-mapping}.
\end{proof}

\section{The dagger Joyal--Tierney theorem}\label{jt.section}

In this section, we prove that the constant--vertex adjunction induces a Quillen equivalence $\delta: \sSet^{\dag}_{\Joyal} \rightleftarrows L_{S_{\rev}}\sSpace^{\dag}_{\rev,\inj} :i_{0}^{*}$ (\cref{jt.thm.main}).
We first use the frame realization of \cref{jt.frames-section} to strictify local objects to frame nerves (\cref{jt.thm.strict}).
The final derived-unit argument then reduces to the dagger Joyal--Bergner equivalence proved above (\cref{jt.lem.unit}).
Combined with the change-of-index Quillen equivalence, this completes the chain of Quillen equivalences between $\sSet^{\dag}_{\Joyal}$ and $L_{u_{!}(\widetilde S_{\rev})}\sSpace^{\dag}_{\proj}$ (\cref{jt.cor.goal}).

\subsection{Strictification of local objects}

We show that every $S_{\rev}$-local object is connected by local equivalences to the frame nerve of a fibrant dagger simplicial category (\cref{jt.thm.strict}).

\begin{notation}
  We fix the following notational conventions:
  \begin{itemize}
    \item for an injectively fibrant object $Z\in\sSpace^{\dag}_{\rev}$ and vertices $x,y\in Z_0$, put
    \begin{align}
      \UEquiv_Z(x,y)
      :=
      \hofib_{(x,y)}(\underline{\Map}(E_{\rev},Z) \to Z_0\times Z_0).
    \end{align}
    If $Z$ is moreover $S_{\rev}$-local, we call $x$ and $y$ \emph{coherently unitarily equivalent} if $\UEquiv_Z(x,y)$ is nonempty;
    \item for $n>0$, let $\partial\yo_{\rev}[n]\subseteq\yo_{\rev}[n]$ denote the boundary;
    The reversal $\rho_n$ induces a $C_2$-action on this pair, and hence, by precomposition, on the associated mapping spaces;
    \item for an injectively fibrant $Z\in\sSpace^{\dag}_{\rev}$ and a $C_2$-fixed map $a:\partial\yo_{\rev}[n]\to Z$, put
    \begin{align}
      \Fill_Z(a)
      :=
      \fib_a(\underline{\Map}(\yo_{\rev}[n],Z)\to\underline{\Map}(\partial\yo_{\rev}[n],Z)).
    \end{align}
    Since $a$ is fixed, the space $\Fill_Z(a)$ inherits a $C_2$-action.
  \end{itemize}
\end{notation}

\begin{lemma}\label{jt.lem.fixed-cell}
  The filler space $\Fill_Z(a)$ is fibrant in $\Fun(BC_{2},\sSet)_{\inj}$.
  Consequently, its strict fixed points compute its homotopy fixed points.
\end{lemma}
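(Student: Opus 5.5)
Fibrancy in $\Fun(BC_{2},\sSet)_{\inj}$ means that $\Fill_Z(a)\to *$ has the right lifting property against all $C_2$-equivariant monomorphisms $K\to L$ of $C_2$-simplicial sets. It is convenient to generate these by the $C_2$-equivariant cells $C_2/H\times\partial\Delta^k\to C_2/H\times\Delta^k$ for $H=1,C_2$. The plan is to transport such lifting problems into lifting problems in $\sSpace^{\dag}_{\rev,\inj}$ against $Z$, where injective fibrancy of $Z$ solves them.

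First rewrite $\Fill_Z(a)$ as a strict mapping space relative to $a$. Let $P:=\yo_{\rev}[n]$ and $\partial P:=\partial\yo_{\rev}[n]$. A $k$-simplex of $\Fill_Z(a)$ is a map $P\times\Delta^k\to Z$ restricting to $a\circ\mathrm{pr}$ on $\partial P\times\Delta^k$. The $C_2$-action is precomposition with $\rho_n^*\times\id$, which is well defined because $a$ is fixed.

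An equivariant lifting problem of $K\to L$ against $\Fill_Z(a)$, with $C_2$ acting on $K$ and $L$, is then the same as a lifting problem in $\sSpace^{\dag}_{\rev}$. We would extend a map from
\begin{align}
  (P\times_{C_2}K)\amalg_{\partial P\times_{C_2}K}(\partial P\times_{C_2}L)
\end{align}
to $P\times_{C_2}L$, where $C_2$ acts diagonally and $\times_{C_2}$ denotes the levelwise strict orbit object, and the map is prescribed on the $\partial P$-part by $a$.

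Second, this comparison map is a monomorphism, since it is the pushout--product of monomorphisms followed by taking orbits. We check this levelwise. At level $[m]$, an element of $P([m])\times L_k$ not in the source has a first coordinate outside $\partial P([m])$, and such a coordinate is surjective onto $[n]$, hence not fixed by $\rho_n$ when $n>0$ (compare \cref{h1.lem.fix}). Thus $C_2$ acts freely on the new elements, and the orbit map is injective there, exactly as in the last paragraph of the proof of \cref{h1.lem.borel}.

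Since $Z$ is injectively fibrant and the map is a levelwise weak equivalence, a lift exists. In fact we only need a monomorphism, so the lifting property holds against trivial cofibrations, that is, whenever $K\to L$ is a weak equivalence. Hence $\Fill_Z(a)$ lifts against equivariant trivial cofibrations, which is injective fibrancy.

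The main obstacle is the second step. One must verify that the orbit quotient of the pushout--product is still a monomorphism, and that it is a levelwise weak equivalence when $K\to L$ is an anodyne equivariant monomorphism. The freeness of the action off $\partial P$ makes the quotient levelwise a coproduct of copies of $(\text{free orbit})\times(K\to L)$, which settles both points.

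The consequence then follows from the standard fact that for injectively fibrant $C_2$-spaces the strict fixed points $\Map^{C_2}(*,F)$ compute $\Map^{C_2}(EC_2,F)$. This holds because $EC_2\to *$ is an equivariant weak equivalence between cofibrant objects (all objects being cofibrant), and the derived mapping space is invariant under it.
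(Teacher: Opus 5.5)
Your proposal is correct and follows essentially the same route as the paper. You adjoint the equivariant lifting problem to one against the orbit pushout--product in $\sSpace^{\dag}_{\rev}$, use the free $C_2$-action on the surjective (non-boundary) elements of $\yo_{\rev}[n]$ to write that map levelwise as an identity plus a coproduct of copies of $K\to L$, and conclude from injective fibrancy of $Z$; your fixed-point step via $EC_2\to *$ is equivalent to the paper's observation that $(-)^{C_2}$ is right Quillen. One small correction: your first sentence should require lifting only against \emph{trivial} equivariant monomorphisms (lifting against all monomorphisms would characterize trivial fibrations), which is what your argument actually establishes.
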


\begin{proof}
  Let $i:A\to B$ be a trivial cofibration in $\Fun(BC_{2},\sSet)_{\inj}$.
  An equivariant lifting problem for $i$ against $\Fill_Z(a)\to *$ is adjoint to a lifting problem in $\sSpace^{\dag}_{\rev}$ against
  \begin{align}
    (
      (\yo_{\rev}[n]\times A)
      \amalg_{\partial\yo_{\rev}[n]\times A}
      (\partial\yo_{\rev}[n]\times B)
    )/C_2
    \to
    (\yo_{\rev}[n]\times B)/C_2.
  \end{align}
  The $C_2$-action on $\yo_{\rev}[n]\setminus\partial\yo_{\rev}[n]$ is free.
  Indeed, its simplices are represented by surjective morphisms $\alpha:[m]\to[n]$ in $\prism_{\rev}$.
  The equality $\rho_n\alpha=\alpha$ and surjectivity would imply $\rho_n=\id_{[n]}$, which is impossible for $n>0$.
  At each level, choose one representative from every free $C_2$-orbit in $\yo_{\rev}[n]\setminus\partial\yo_{\rev}[n]$.
  On the orbit generated by such a representative, the displayed orbit pushout--product is canonically isomorphic to the underlying map $A\to B$.
  Consequently, for every $[m]\in\prism_{\rev}$, after these choices the $[m]$-component of the displayed map is isomorphic to
  \begin{align}
    \id_{((\partial\yo_{\rev}[n])([m])\times B)/C_2}
    \amalg
    \coprod_{
      ((\yo_{\rev}[n])([m])
      \setminus
      (\partial\yo_{\rev}[n])([m]))/C_2
    }
    (A\to B).
  \end{align}
  It is a levelwise trivial monomorphism, so injective fibrancy of $Z$ supplies the required lift.
  Hence $\Fill_Z(a)$ is injectively fibrant as a $C_2$-simplicial set.

  The trivial-action functor $\triv : \sSet\to\Fun(BC_{2},\sSet)$ preserves cofibrations and trivial cofibrations for the injective structure.
  Its right adjoint, the fixed-point functor $(-)^{C_{2}} : \Fun(BC_{2},\sSet) \to \sSet$ , is therefore right Quillen.
  Since $\Fill_Z(a)$ is injectively fibrant, the strict fixed points of $\Fill_Z(a)$ compute homotopy fixed points.
\end{proof}

\begin{lemma}\label{jt.lem.uequiv-invariance}
  Let $f:Z\to Z'$ be a map between injectively fibrant $S_{\rev}$-local objects.
  If $\Map_Z(x,y)\to\Map_{Z'}(fx,fy)$ is a weak equivalence for every $x,y$, then $\UEquiv_Z(x,y)\to\UEquiv_{Z'}(fx,fy)$ is a weak equivalence for every $x,y$.
\end{lemma}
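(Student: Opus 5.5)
The plan is to identify $\UEquiv_Z(x,y)$ with a union of path components of $\Map_Z(x,y)$, cut out by a condition that only involves the homotopy category of $Z$. The hypothesis on $f$ then controls both the components and the condition.

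\emph{Step 1: a component-inclusion.} Let $e_u:\Delta^1_{\rev}\to E_{\rev}$ classify the generating unitary $u$ of $\bbI_{\dag}$, that is, the $1$-simplex $u$ of $\N_{\dag}(\bbI_{\dag})$. Restriction along $e_u$ gives a map
\begin{align}
  r_Z:\underline{\Map}(E_{\rev},Z)\to\underline{\Map}(\Delta^1_{\rev},Z)=Z_1
\end{align}
over $Z_0\times Z_0$. I will show that $r_Z$ is a homotopy monomorphism, i.e.\ a weak equivalence onto a union of path components of $Z_1$. Following Rezk's treatment of $Z_{\mathrm{hoequiv}}$, I would filter $\N_{\dag}(\bbI_{\dag})$ by its skeleta. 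Its nondegenerate simplices are alternating words in $u$ and $u^{\dag}$, and $\FdagSSet$ already accounts for the reversal. At each stage the cells are attached along free dagger spines or outer-horn-type subobjects, in the style of \cref{h1.lem.outer}; the palindromic cells are handled by the orbit argument of \cref{h1.lem.borel}. Using the Segal locality of $Z$ and \cref{h1.lem.local-objects}, each restriction map between filler spaces is then either a weak equivalence or an inclusion of the components where a $\pi_0$-condition holds.

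\emph{Step 2: describing the image.} The condition from Step~1 should be this: a point $g\in\Map_Z(x,y)$ lies in the image of $r_Z$ if and only if, in the homotopy category $\Ho Z$ built from the Segal structure, we have $[g^{\dag}]\circ[g]=[\id_x]$ and $[g]\circ[g^{\dag}]=[\id_y]$. Here $g^{\dag}$ is obtained from $g$ by the reversal $\rho_1$. Taking homotopy fibres over $(x,y)$, and using that fibres of $Z_1\to Z_0\times Z_0$ are homotopy fibres by \cref{h1.lem.local-objects}~(1), this gives
\begin{align}
  \UEquiv_Z(x,y)\simeq\Map_Z(x,y)^{\mathrm{unit}}\subseteq\Map_Z(x,y),
\end{align}
naturally in $Z$. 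The identification is natural because $r_Z$ is defined by restriction along the fixed map $e_u$.

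\emph{Step 3: invariance under $f$.} The map $f$ commutes with $\rho_1$, with Segal composition and with degeneracies. Since it induces weak equivalences $\Map_Z(x,y)\to\Map_{Z'}(fx,fy)$ for all $x,y$, including the maps $\Map_Z(y,x)$ and $\Map_Z(x,x)$, it induces a fully faithful dagger functor $\Ho Z\to\Ho Z'$. The unitarity condition of Step~2 is an equation in $\pi_0$, so it holds for $[g]$ if and only if it holds for $[fg]$, by faithfulness on $\Hom(x,x)$ and $\Hom(y,y)$. Hence the equivalence $\Map_Z(x,y)\to\Map_{Z'}(fx,fy)$ restricts to an equivalence between the unitary components. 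By naturality of Step~2, this restriction is the map $\UEquiv_Z(x,y)\to\UEquiv_{Z'}(fx,fy)$.

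The main obstacle is Step~1, namely the cell-by-cell analysis of $\N_{\dag}(\bbI_{\dag})$. The dagger forces extra palindromic cells compared with the groupoid nerve used by Rezk. I would first try to reduce to Rezk's result: apply $\res$ and compare $\N_{\dag}(\bbI_{\dag})$ with the nerve of the free-living isomorphism, using the fact that $u^{\dag}$ is the homotopy inverse of $u$. If that reduction fails, I would fall back on the direct orbit-by-orbit filtration argument of \cref{h1.prop.classifying}.
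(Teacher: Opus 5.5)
\textbf{Steps 1 and 2 are false in the dagger setting, and this is the crux of the lemma rather than a technicality.} Unlike Rezk's $E$, the restriction $r_Z\colon\underline{\Map}(E_{\rev},Z)\to Z_1$ is in general not a homotopy monomorphism. Nor is its image cut out by a condition in $\Ho Z$: coherent unitarity is \emph{structure} on $g$, not a property of $[g]$.

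You can see this in the skeletal filtration you propose. Since $E^{(1)}_{\rev}=\Delta^1_{\rev}$, the first cells attached are the two self-dual $2$-cells $(0,1,0)$ and $(1,0,1)$, glued along $\partial Q^2\to Q^2$. So the fibre over $g\colon x\to y$ of the stage-two restriction is $\Fill_Z(a_{010})^{C_2}\times\Fill_Z(a_{101})^{C_2}$. Here $\Fill_Z(a_{010})\simeq\Path_{\Map_Z(x,x)}(g^{\dag}g,\id_x)$, with the involution induced by $\rho_2$ (pointwise dagger). By \cref{jt.lem.fixed-cell} these strict fixed points are homotopy fixed points, and homotopy fixed points are not governed by $\pi_0$. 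They can be empty when the underlying space is nonempty, and they can have several components when it is connected. The later free odd cells and self-dual even cells do not repair this.

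For the same reason your proposed reduction to Rezk via $\res$ cannot work. A map out of $E_{\rev}$ is a $C_2$-fixed point of $\Map(E,\res Z)$ for the involution induced by reversal. Homotopy fixed points of this involution on $(\res Z)_{\mathrm{hoequiv}}$ (roughly $g\mapsto(g^{\dag})^{-1}$) need not map injectively to the underlying space.

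Two examples:
\begin{itemize}
\item Take the dagger groupoid, after singular complexes, with objects $x,y$, all hom-spaces $U(1)$ and composition by multiplication. Set $h^{\dag}=\bar h$ on endomorphisms and $g^{\dag}=-\bar g$ on morphisms between $x$ and $y$. Every $g\colon x\to y$ satisfies your criterion $[g^{\dag}g]=[\id_x]$, $[gg^{\dag}]=[\id_y]$. But $g^{\dag}g=-1$, and conjugation acts freely on $\pi_0\Path_{U(1)}(-1,1)\cong\mathbb{Z}$. So the stage-two fibre, and hence $\UEquiv(x,y)$, is empty.
\item Take one object whose endomorphisms form a simplicial $\mathbb{F}_2$-vector space $G\simeq K(\mathbb{Z}/2,1)$, with $\dag=\id$. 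Every $g$ is strictly unitary. Yet each self-dual $2$-cell contributes $\Map(BC_2,\Omega G)\cong\mathbb{Z}/2$, the $3$-cell cuts these four choices down to two, and the higher cells are contractible. So $\UEquiv(x,x)\to\Map(x,x)$ is a double cover.
\end{itemize}
Via \cref{jt.cor.local} these give injectively fibrant $S_{\rev}$-local objects. So Steps 1 and 2 fail within the hypotheses of the lemma, and Step 3, which only transports $\pi_0$-information through $\Ho Z\to\Ho Z'$, never reaches $\UEquiv$.

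The missing idea is to keep track of the $C_2$-equivariant homotopy type of the filler spaces instead of a $\pi_0$-condition. This is what the paper does. It uses exactly your filtration $E^{(n)}_{\rev}$, but compares the fixed-endpoint towers $P_n^Z(x,y)\to P_n^{Z'}(fx,fy)$ stage by stage. The fibres of $P_n\to P_{n-1}$ are products of ordinary filler spaces (free orbits) and strict fixed points $\Fill_Z(a_\tau)^{C_2}$ (self-dual cells).
\begin{itemize}
\item Ordinary fillers are compared using the Segal equivalences, your hypothesis on $\Map$, and a Reedy limit over proper faces.
\item For self-dual cells, $f$ is a map of $\prism_{\rev}$-presheaves. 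So $\Fill_Z(a_\tau)\to\Fill_{Z'}(fa_\tau)$ is $C_2$-equivariant and an underlying weak equivalence, hence an equivalence on homotopy fixed points. These equal the strict fixed points by \cref{jt.lem.fixed-cell}.
\end{itemize}
Induction on $n$ and the limit of the Reedy-fibrant tower then finish the proof.
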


\begin{proof}
  Put $E_{\rev}^{(0)} =\Delta^0_{\rev}\amalg\Delta^0_{\rev}$ and $E_{\rev}^{(n)} := \delta(\sk_nE_{\dag})$.
  Then $E_{\rev}=\colim_nE_{\rev}^{(n)}$ is the filtration by dagger skeleta.
  For every $n>0$, the two nondegenerate $n$-simplices of $E_{\dag}=\N_{\dag}(\bbI_{\dag})$ are
  \begin{align}
    \sigma_n^0:=(0,1,0,1,\ldots)
    \quad \text{or} \quad
    \sigma_n^1:=(1,0,1,0,\ldots)
  \end{align}
  Reversal gives
  \begin{align}
    (\sigma_n^i)^{\dag}
    =
    \begin{cases}
      \sigma_n^{1-i},&n\text{ odd},\\
      \sigma_n^i,&n\text{ even}.
    \end{cases}
  \end{align}
  Let $F_n$ be a set of representatives of the free orbits and let $H_n$ be the set of fixed cells.
  Thus $F_n$ has one element and $H_n$ is empty when $n$ is odd, while $F_n$ is empty and $H_n$ has two elements when $n$ is positive and even.

  Put $Q^n:=\yo_{\rev}[n]/C_2$ and $\partial Q^n:=\partial\yo_{\rev}[n]/C_2$, where $C_2$ acts by the reversal $\rho_n$.
  The characteristic map of a fixed cell is $C_2$-invariant and therefore factors through $Q^n$.
  For every $n>0$, the following square is a pushout square:
  \begin{align*}
    &\begin{tikzpicture}[auto]
      \node (cell-boundaries) at (0,1.5) {$\displaystyle
        \coprod_{\sigma\in F_n}\partial\yo_{\rev}[n]
        \amalg
        \coprod_{\tau\in H_n}\partial Q^n$};
      \node (previous-skeleton) at (4,1.5) {$E_{\rev}^{(n-1)}$};
      \node (cells) at (0,0) {$\displaystyle
        \coprod_{\sigma\in F_n}\yo_{\rev}[n]
        \amalg
        \coprod_{\tau\in H_n}Q^n$};
      \node (skeleton) at (4,0) {$E_{\rev}^{(n)}$};
      \draw[->]
        (cell-boundaries) -- (previous-skeleton);
      \draw[->]
        (cell-boundaries) -- (cells);
      \draw[->]
        (previous-skeleton) -- (skeleton);
      \draw[->]
        (cells) -- (skeleton);
    \end{tikzpicture}
  \end{align*}

  For vertices $x,y\in Z_0$, put
  \begin{align}
    P_n^Z(x,y)
    &:=
    \fib_{(x,y)}
    (\underline{\Map}(E_{\rev}^{(n)},Z)
    \to
    \underline{\Map}(E_{\rev}^{(0)},Z)),
  \end{align}
  and define $P_n^{Z'}(fx,fy)$ similarly.
  Since the skeletal inclusions are monomorphisms, injective fibrancy and \cref{h1.lem.local-objects}~(1) show that
  \begin{align}
    P_n^Z(x,y)
    \to P_{n-1}^Z(x,y)
    \quad \text{and} \quad
    P_n^{Z'}(fx,fy)
    \to P_{n-1}^{Z'}(fx,fy)
  \end{align}
  are Kan fibrations.
  Moreover, we have
  \begin{align}
    P_0^Z(x,y)\cong P_0^{Z'}(fx,fy)\cong *.
  \end{align}

  We first compare the ordinary fixed-vertex filler spaces.
  For a vertex string $\bfz=(z_0,\ldots,z_n)$, put
  \begin{align}
    Z_n(\bfz)
    :=
    \fib_{\bfz}
    (Z_n\to Z_0^{n+1})
    \quad \text{and} \quad
    B_n^Z(\bfz)
    :=
    \fib_{\bfz}
    (\underline{\Map}(\partial\yo_{\rev}[n],Z)
      \to Z_0^{n+1}).
  \end{align}
  The Segal equivalences and the hypothesis give
  \begin{align}
    Z_n(\bfz)
    \simeq
    \prod_{j=1}^{n}\Map_Z(z_{j-1},z_j)
    \xrightarrow{\simeq}
    \prod_{j=1}^{n}\Map_{Z'}(fz_{j-1},fz_j)
    \simeq
    Z'_n(f\bfz),
  \end{align}
  where $f\bfz=(fz_0,\ldots,fz_n)$.

  Let $\calP_n$ be the category of nonempty proper subsets $S\subsetneq\{0,\ldots,n\}$, with a morphism $S\to T$ when $T\subseteq S$.
  Restriction to proper faces gives
  \begin{align}
    B_n^Z(\bfz)
    \cong
    \lim_{S\in\calP_n}
    Z_{|S|-1}(\bfz|_S),
  \end{align}
  and similarly for $Z'$.
  These face diagrams are Reedy fibrant, since their matching maps are fixed-vertex pullbacks of the Kan fibrations
  \begin{align}
    \underline{\Map}(\yo_{\rev}[|S|-1],Z)
    \to
    \underline{\Map}(\partial\yo_{\rev}[|S|-1],Z),
  \end{align}
  and similarly for $Z'$.
  The Segal equivalences and the hypothesis give an objectwise weak equivalence between these diagrams.
  Since $\calP_n$ is inverse Reedy, it has cofibrant constants and its limit functor is right Quillen by \cite[Proposition~15.10.2 (1) and Theorem~15.10.8 (1)]{Hir02}.
  Therefore
  \begin{align}
    B_n^Z(\bfz)
    \xrightarrow{\simeq}
    B_n^{Z'}(f\bfz).
  \end{align}
  The resulting square of boundary-restriction fibrations
  \begin{align*}
    &\begin{tikzpicture}[auto]
      \node (simplices) at (0,1.5) {$Z_n(\bfz)$};
      \node (boundaries) at (4,1.5) {$B_n^Z(\bfz)$};
      \node (target-simplices) at (0,0) {$Z'_n(f\bfz)$};
      \node (target-boundaries) at (4,0) {$B_n^{Z'}(f\bfz)$};
      \draw[->]
        (simplices) -- (boundaries);
      \draw[->]
        (simplices) -- (target-simplices);
      \draw[->]
        (boundaries) -- (target-boundaries);
      \draw[->]
        (target-simplices) -- (target-boundaries);
    \end{tikzpicture}
  \end{align*}
  has weak equivalences as its vertical maps.
  For a boundary value $a\in B_n^Z(\bfz)$, put
  \begin{align}
    \Fillop^{\rmord}_Z(a)
    &:=
    \fib_a
    (Z_n(\bfz)\to B_n^Z(\bfz)),
  \end{align}
  and define $\Fillop^{\rmord}_{Z'}(fa)$ analogously.
  Homotopy invariance of fibers gives a weak equivalence
  \begin{align}
    \Fillop^{\rmord}_Z(a)
    \xrightarrow{\simeq}
    \Fillop^{\rmord}_{Z'}(fa).
  \end{align}
  This is the fixed-endpoint analogue of the ordinary fiberwise argument in \cite[the proof of Theorem~6.2 in \S11]{Rez01}.

  We prove by induction on $n$ that $P_n^Z(x,y) \to P_n^{Z'}(fx,fy)$ is a weak equivalence.
  The assertion holds for $n=0$ because both sides are points.
  Fix $u\in P_{n-1}^Z(x,y)$ and denote the induced boundary values on the free and fixed cells by $a_{\sigma}$ and $a_{\tau}$.
  Applying $\underline{\Map}(-,Z)$ to the skeletal pushout gives
  \begin{align}
    \fib_u(P_n^Z(x,y)\to P_{n-1}^Z(x,y))
    \cong
    \prod_{\sigma\in F_n} \Fillop^{\rmord}_Z(a_{\sigma})
    \times
    \prod_{\tau\in H_n}\Fill_Z(a_{\tau})^{C_2}.
  \end{align}
  Indeed, on a free orbit a filler on one cell uniquely determines the filler on its dagger conjugate.
  On a fixed cell, the boundary value $a_{\tau}$ is $C_2$-fixed, and an extension through $Q^n$ is precisely a fixed point of $\Fill_Z(a_{\tau})$.

  The ordinary filler comparison gives weak equivalences on the factors indexed by $F_n$.
  For $\tau\in H_n$, the boundary value determines its vertex string, so there are canonical identifications
  \begin{align}
    \Fill_Z(a_{\tau})
    \cong
    \Fillop^{\rmord}_Z(a_{\tau})
    \quad \text{and} \quad
    \Fill_{Z'}(fa_{\tau})
    \cong
    \Fillop^{\rmord}_{Z'}(fa_{\tau}).
  \end{align}
  Hence the induced map $\Fill_Z(a_{\tau}) \to \Fill_{Z'}(fa_{\tau})$ is a $C_2$-equivariant underlying weak equivalence.
  Both sides are fibrant in $\Fun(BC_{2},\sSet)_{\inj}$ by \cref{jt.lem.fixed-cell}.
  Since strict fixed points compute homotopy fixed points, the induced map $\Fill_Z(a_{\tau})^{C_2} \to \Fill_{Z'}(fa_{\tau})^{C_2}$ is a weak equivalence.
  Since finite products preserve weak equivalences, every fiber map of the skeletal restriction is a weak equivalence.

  Consider the commutative square of Kan fibrations
  \begin{align*}
    &\begin{tikzpicture}[auto]
      \node (stage) at (0,1.5) {$P_n^Z(x,y)$};
      \node (previous-stage) at (4,1.5) {$P_{n-1}^Z(x,y)$};
      \node (target-stage) at (0,0) {$P_n^{Z'}(fx,fy)$};
      \node (target-previous-stage) at (4,0)
        {$P_{n-1}^{Z'}(fx,fy)$};
      \draw[->]
        (stage) -- (previous-stage);
      \draw[->]
        (stage) -- (target-stage);
      \draw[->]
        (previous-stage) -- (target-previous-stage);
      \draw[->]
        (target-stage) -- (target-previous-stage);
    \end{tikzpicture}
  \end{align*}
  The right vertical map is a weak equivalence by induction, and all the fiber maps are weak equivalences by the preceding calculation.
  Then the left vertical map is also a weak equivalence.

  Finally, the endpoint restrictions from $E_{\rev}$ to $E_{\rev}^{(0)}$ are Kan fibrations, so their strict fibers represent $\UEquiv$.
  Mapping spaces carry $E_{\rev}=\colim_nE_{\rev}^{(n)}$ to an inverse limit, and strict fibers commute with limits.
  Hence
  \begin{align}
    \UEquiv_Z(x,y)
    \cong
    \lim_nP_n^Z(x,y)
    \quad \text{and} \quad
    \UEquiv_{Z'}(fx,fy)
    \cong
    \lim_nP_n^{Z'}(fx,fy).
  \end{align}
  The two towers are Reedy fibrant because their matching maps are the displayed restriction Kan fibrations.
  The map between the two towers is a levelwise weak equivalence by the skeletal induction.
  Since $\bbN^{\myop}$ is inverse Reedy, its limit functor is right Quillen by \cite[Proposition~15.10.2 (1) and Theorem~15.10.8 (1)]{Hir02}.
  Taking inverse limits therefore gives
  \begin{align}
    \UEquiv_Z(x,y)
    \xrightarrow{\simeq}
    \UEquiv_{Z'}(fx,fy).
  \end{align}
\end{proof}

\begin{lemma}\label{jt.lem.uequiv-paths}
  If $Z$ is injectively fibrant and $S_{\rev}$-local, then for every $x,y\in Z_0$ there are natural weak equivalences 
  \begin{align}
    \UEquiv_Z(x,y) \simeq \Path_{Z_0}(x,y).
  \end{align}
\end{lemma}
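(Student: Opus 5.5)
The plan is to use unitary completeness directly: the completeness map identifies $\underline{\Map}(E_{\rev},Z)$ with $Z_0$, and under this identification the endpoint map becomes the diagonal of $Z_0$, whose homotopy fibers are path spaces.

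First I will record the two endpoint inclusions $e_0,e_1:\Delta^0_{\rev}\to E_{\rev}$, classifying the vertices $0$ and $1$ of $E_{\dag}=\N_{\dag}(\bbI_{\dag})$. Here $e_0$ is the completeness map $\delta(\epsilon_0)$. The endpoint map $\underline{\Map}(E_{\rev},Z)\to Z_0\times Z_0$ is $(e_0^*,e_1^*)$. By \cref{h1.lem.local-objects} (1) it is a Kan fibration, being restriction along the monomorphism $E^{(0)}_{\rev}\to E_{\rev}$. Hence $\UEquiv_Z(x,y)$ is computed by the strict fiber.

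Second, I will show that $e_1^*$ is also a weak equivalence. Unitary completeness, which holds by \cref{h1.lem.local-objects} (3), says $e_0^*$ is a weak equivalence. The object $\bbI_{\dag}$ has an automorphism $\theta$ swapping $0$ and $1$, sending $u\mapsto u^{\dag}$; it is a dagger functor since $\bbI_{\dag}$ is a groupoid with $u^{\dag}=u^{-1}$. Then $\delta\N_{\dag}(\theta)$ is an automorphism of $E_{\rev}$ with $\delta\N_{\dag}(\theta)\circ e_0=e_1$. So $e_1^*=e_0^*\circ(\delta\N_{\dag}\theta)^*$ is a weak equivalence, since the second factor is an isomorphism. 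Alternatively, I could use the collapse $c:E_{\rev}\to\Delta^0_{\rev}$: because $ce_0=\id$, the map $c^*$ is a homotopy inverse of $e_0^*$, and then $e_1^*c^*=\id$ forces $e_1^*\simeq e_0^*$ up to homotopy.

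Third, I will conclude with the collapse. Since $c e_0=c e_1=\id$, the map $c^*:Z_0\to\underline{\Map}(E_{\rev},Z)$ is a weak equivalence over $Z_0\times Z_0$: composing it with the endpoint map gives the diagonal $\Delta:Z_0\to Z_0\times Z_0$. Homotopy fibers are invariant under weak equivalences over a fixed base, so
\begin{align}
  \UEquiv_Z(x,y)\simeq\hofib_{(x,y)}(\Delta)\simeq\Path_{Z_0}(x,y),
\end{align}
where the last step is the standard identification of homotopy fibers of the diagonal of a Kan complex, for example via $Z_0^{\Delta^1}\to Z_0\times Z_0$. Naturality in $Z$ and in $(x,y)$ follows because every map involved is induced by the maps $c,e_0,e_1$ of representing objects.

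The main obstacle is making the fiber comparison strictly natural rather than only a zigzag. I would handle it by factoring $c^*$ through the path-object fibration $Z_0^{\Delta^1}\to Z_0\times Z_0$. Concretely, I would use a map $\underline{\Map}(E_{\rev},Z)\to Z_0^{\Delta^1}$ obtained by lifting, since the endpoint map is a fibration and $Z_0\to Z_0^{\Delta^1}$ is a trivial cofibration. This lift is a weak equivalence between fibrations over $Z_0\times Z_0$, so it induces weak equivalences on all fibers, and $\Path_{Z_0}(x,y)$ is exactly the fiber of $Z_0^{\Delta^1}\to Z_0\times Z_0$ over $(x,y)$.
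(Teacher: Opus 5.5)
Your argument is correct and is essentially the paper's. The paper simply invokes Rezk's proof of \cite[Proposition~6.4]{Rez01} with $\UEquiv$ in place of the homotopy-equivalence space, and that is exactly your comparison of the endpoint fibration $\underline{\Map}(E_{\rev},Z)\to Z_0\times Z_0$ with the diagonal of $Z_0$ via $c^*$, which is a weak equivalence by unitary completeness because $e_0^*c^*=\id$. Your second step on $e_1^*$ is not needed.

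There is one small slip in your last paragraph. The lifting data you cite, the endpoint fibration against the trivial cofibration $Z_0\to Z_0^{\Delta^1}$, produce a map $Z_0^{\Delta^1}\to\underline{\Map}(E_{\rev},Z)$, not the reverse. To get your direction, lift $c^*$ (a split mono, hence a trivial cofibration) against $Z_0^{\Delta^1}\to Z_0\times Z_0$. Either way, a lift chosen like this is not natural in $Z$. The naturality used later in \cref{jt.lem.rezk-criterion} should therefore rest on your homotopy-fiber zigzag.
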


\begin{proof}
  Evaluation at the first endpoint gives a Kan fibration $s_Z: \underline{\Map}(E_{\rev},Z) \to Z_0$, which is a weak equivalence by unitary completeness.
  The proof of \cite[Proposition~6.4]{Rez01}, with $\UEquiv$ in place of the ordinary homotopy-equivalence space, consequently gives the desired natural weak equivalences.
\end{proof}

\begin{proposition}\label{jt.lem.rezk-criterion}
  Let $f:Z\to Z'$ be a map between injectively fibrant $S_{\rev}$-local objects.
  Then $f$ is a weak equivalence in $L_{S_{\rev}}\sSpace^{\dag}_{\rev,\inj}$ if and only if:
  \begin{enumerate}
    \item $\Map_Z(x,y)\to\Map_{Z'}(fx,fy)$ is a weak equivalence for every $x,y$;
    \item every component of $Z'_0$ contains a vertex coherently unitarily equivalent to one in the image of $f$.
  \end{enumerate}
\end{proposition}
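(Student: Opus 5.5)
Since $Z$ and $Z'$ are fibrant in $L_{S_{\rev}}\sSpace^{\dag}_{\rev,\inj}$, \cite[Theorem~3.2.13 (1)]{Hir02} reduces the claim to showing that $f$ is a levelwise weak equivalence if and only if (1) and (2) hold. I would treat it as the dagger analogue of Rezk's Dwyer--Kan criterion \cite[Theorem~7.7]{Rez01}, using \cref{jt.lem.uequiv-invariance} and \cref{jt.lem.uequiv-paths} in place of the ordinary homotopy-equivalence spaces.

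\emph{Necessity.} If $f$ is levelwise, then $f_1$ and $f_0\times f_0$ are weak equivalences between Kan complexes. Homotopy fibers of $W_1\to W_0\times W_0$ are therefore preserved, which gives (1). Since $f_0$ is a weak equivalence, it is surjective on $\pi_0$, so every component of $Z'_0$ contains an image vertex, which gives (2) trivially.

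\emph{Sufficiency.} By the Segal condition (\cref{h1.lem.local-objects} (2)), $Z_n\simeq Z_1\times^h_{Z_0}\cdots\times^h_{Z_0}Z_1$, and the same holds for $Z'$. It therefore suffices to show that $f_0$ and $f_1$ are weak equivalences. Note that weak equivalences are detected after restricting along $j:\prism\to\prism_{\rev}$, since levels are the same objects.

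\begin{enumerate}
\item Since $Z_1\to Z_0\times Z_0$ is a Kan fibration, condition (1) together with fiberwise comparison reduces the question for $f_1$ to the one for $f_0$.
\item For $f_0$, I would use \cref{jt.lem.uequiv-invariance}: condition (1) gives weak equivalences $\UEquiv_Z(x,y)\to\UEquiv_{Z'}(fx,fy)$.
\item Combining this with the natural equivalences of \cref{jt.lem.uequiv-paths}, the induced maps $\Path_{Z_0}(x,y)\to\Path_{Z'_0}(fx,fy)$ are weak equivalences for all $x,y$. Naturality of the equivalence of \cref{jt.lem.uequiv-paths} in $Z$ is needed here; it should follow since Rezk's argument is functorial in the fibration $s_Z$.
\item Taking $x=y$, this shows that $f_0$ induces isomorphisms on all higher homotopy groups at every base point.
\item Taking $x\neq y$, nonemptiness is reflected, so $\pi_0 f_0$ is injective.
\item For surjectivity on $\pi_0$, let $z'$ be a vertex of $Z'_0$. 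By (2), some vertex in the component of $z'$ is coherently unitarily equivalent to $f(x)$, so $\UEquiv_{Z'}$ is nonempty between them. By \cref{jt.lem.uequiv-paths} they lie in the same path component, so $z'$ is in the component of $f(x)$.
\end{enumerate}

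Hence $f_0$ is a weak equivalence, then $f_1$ is by step~1, and all $f_n$ are by the Segal condition.

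The main obstacle is step~3: checking that the identification of \cref{jt.lem.uequiv-paths} is natural with respect to $f$ and compatible with the endpoint maps, so that the comparison of $\UEquiv$ spaces really transports to the comparison of path spaces. I would handle this by writing both sides as homotopy fibers over $Z_0\times Z_0$ of the maps $\underline{\Map}(E_{\rev},Z)\to Z_0\times Z_0$ and $Z_0^{\Delta^1}\to Z_0\times Z_0$. These are connected through the degeneracy $Z_0\to\underline{\Map}(E_{\rev},Z)$, which is induced by the collapse $E_{\rev}\to\Delta^0_{\rev}$ and is natural in $Z$. Because $s_Z$ is a trivial fibration, this degeneracy is a weak equivalence over the diagonal.
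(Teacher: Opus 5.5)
Your proposal is correct and follows essentially the same route as the paper: you reduce via \cite[Theorem~3.2.13 (1)]{Hir02} to levelwise equivalences, obtain $f_0$ from \cref{jt.lem.uequiv-invariance,jt.lem.uequiv-paths} (loop spaces, $\pi_0$-injectivity, and $\pi_0$-surjectivity from condition (2)), then get $f_1$ by the fiberwise comparison over $Z_0\times Z_0$ and $f_n$ from the Segal condition, with necessity coming from homotopy invariance of the endpoint fibers. Where the paper simply asserts the naturality of the path-space identification, you justify it explicitly: the degeneracy $Z_0\to\underline{\Map}(E_{\rev},Z)$ induced by the collapse is natural in $Z$ and is a weak equivalence over $Z_0\times Z_0$ from the diagonal, because $s_Z$ is a trivial fibration.
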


\begin{proof}
  For every $m\geq0$, write $f_m:=f_{[m]}:Z_m\to Z'_m$.

  Assume first that conditions (1) and (2) hold.
  We adapt the fiberwise argument in \cite[the proof of Proposition~7.6]{Rez01}.
  By \cref{jt.lem.uequiv-paths}, condition~(2) makes $\pi_0(f_0)$ surjective.
  By condition~(1) and \cref{jt.lem.uequiv-invariance,jt.lem.uequiv-paths}, there are natural weak equivalences
  \begin{align}
    \Path_{Z_0}(x,y)
    \simeq
    \UEquiv_Z(x,y)
    \xrightarrow{\simeq}
    \UEquiv_{Z'}(fx,fy)
    \simeq
    \Path_{Z'_0}(fx,fy).
  \end{align}
  By naturality, the displayed zigzag represents the map induced by $f_0$ on path spaces.
  It gives injectivity of $\pi_0(f_0)$ and, by taking $y=x$, weak equivalences on all based loop spaces.
  Since $Z_0$ and $Z'_0$ are Kan complexes, $f_0$ is a weak equivalence.

  The endpoint maps are Kan fibrations by \cref{h1.lem.local-objects}~(1).
  In the factorization
  \begin{align}
    Z_1
    \to
    (Z_0\times Z_0)\times_{Z'_0\times Z'_0}Z'_1
    \to
    Z'_1,
  \end{align}
  the first map is a weak equivalence by condition~(1) and the fiberwise criterion for Kan fibrations.
  The second map is the pullback of $f_0\times f_0$ along the right-hand endpoint fibration, and hence is a weak equivalence by right properness of $\sSet$.
  Thus $f_1$ is a weak equivalence.
  The Segal equivalences and \cref{h1.lem.local-objects}~(2) show that $f_m$ is a weak equivalence for every $m\geq2$.
  Hence $f$ is an underlying injective weak equivalence, and therefore a weak equivalence in $L_{S_{\rev}}\sSpace^{\dag}_{\rev,\inj}$.

  Conversely, suppose that $f$ is a weak equivalence in $L_{S_{\rev}}\sSpace^{\dag}_{\rev,\inj}$.
  Since $Z$ and $Z'$ are local fibrant objects, \cite[Theorem~3.2.13 (1)]{Hir02} implies that $f$ is an underlying injective weak equivalence.
  Homotopy invariance of the fibers of the endpoint fibrations gives condition~(1).
  Moreover, $f_0$ is surjective on components.
  Thus every vertex $z'\in(Z'_0)_0$ lies in the same component as some $fx$, and
  \cref{jt.lem.uequiv-paths} shows that $z'$ is coherently unitarily equivalent to $fx$.
  This proves condition~(2).
\end{proof}

\begin{notation}\label{jt.not.lambda-mate}
  Let $t_{\dag}^{!} : \sSet \to \sSpace$ denote the right adjoint of $t_{\dag,!}$, and let $\Lambda^{\flat}: t_{\dag}^{!}\N_{\dag} \to \N^{\prism}_{\dag}$ be the right mate of $\Lambda$.
  Forgetting the dagger gives a natural comparison
  \begin{align}\label{jt.eq.tdag-right-underlying}
    \vartheta_X:
    \res(t_{\dag}^{!}X)
    \to
    t^!(\UdagSSet X).
  \end{align}
\end{notation}

\begin{lemma}\label{jt.lem.tdag-ordered-fibers}
  For every $X\in\sSet^{\dag}$ and every ordered pair of vertices $(x,y)$, the map
  $\vartheta_X$ induces a strict isomorphism on the fibers over the corresponding constant endpoint simplices.
\end{lemma}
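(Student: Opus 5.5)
We unwind both sides as explicit sets of maps and compare them degreewise. The right adjoint of $t_{\dag,!}$ is given by
\begin{align}
  (t_{\dag}^{!}X)_{m,k}
  =
  \Hom_{\sSet^{\dag}}(t_{\dag,!}(\yo_{\rev}[m]\times\Delta^k),X),
\end{align}
and $t^!$ is given by $(t^!S)_{m,k}=\Hom_{\sSet}(\Delta^m\times J^k,S)$. The comparison $\vartheta_X$ sends a dagger map to its underlying map, precomposed with the canonical map $\Delta^m\times J^k\to\UdagSSet t_{\dag,!}(\yo_{\rev}[m]\times\Delta^k)$. This canonical map is the unit of $\FdagSSet\dashv\UdagSSet$ in the first variable. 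By co-Yoneda and \cite[Lemma~2.11]{JT06}, as in \eqref{jt.eq.Xi-underlying}, the dagger simplicial set $t_{\dag,!}(\yo_{\rev}[m]\times\Delta^k)$ is identified with $\FdagSSet(\Delta^m)\times J^k$. Its dagger is given by \eqref{jt.eq.tdag-formula}: reversal on $\FdagSSet(\Delta^m)$ together with $\xi\mapsto\xi^\vee$ on $J^k$. The first step is to make this identification explicit as a dagger simplicial set.

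The key step is to show that $\FdagSSet(\Delta^m)\times J^k$ is the free dagger simplicial set on $\Delta^m\times J^k$ relative to the fixed part. Concretely, a dagger map $\FdagSSet(\Delta^m)\times J^k\to X$ is determined by its restriction along $\Delta^m\times J^k\hookrightarrow\FdagSSet(\Delta^m)\times J^k$. Such a restriction extends precisely when the values on the overlap $\sk_0\Delta^m\times J^k$ satisfy a compatibility: the value on $\xi^\vee$ must be the dagger of the value on $\xi$. This is the only constraint, because $\FdagSSet(\Delta^m)=\Delta^m\amalg_{\sk_0\Delta^m}(\Delta^m)^{\myop}$. Hence $\vartheta_X$ is injective degreewise, and its image is cut out exactly by this constraint on vertex-constant rows.

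Now specialize to degree $m=1$ over the endpoints and compare fibers. The fiber of $(t^{!}_\dag X)_1\to(t^!_\dag X)_0\times(t^!_\dag X)_0$ over the constant endpoint simplices $(x,y)$ consists of dagger maps whose restriction to $\sk_0\Delta^1\times J^k$ is constant at $x$ on $\{0\}\times J^k$ and at $y$ on $\{1\}\times J^k$. A constant map to a vertex is automatically dagger-compatible, since vertices are fixed and $\xi^\vee$ is again sent to the same degenerate simplex. So the constraint of the previous paragraph holds vacuously on the fiber. Every map $\Delta^1\times J^k\to\UdagSSet X$ that is constant at $x,y$ on the endpoints, which is precisely an element of $\Map_E(\UdagSSet X;x,y)$ by \cref{jt.not.ordered-mapping}, therefore extends uniquely to a dagger map. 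Compatibility with the simplicial operators in $k$ is clear from naturality. The result is a strict isomorphism of fibers, and by \cref{jt.not.ordered-mapping} the target is the strict fiber of $t^!(\UdagSSet X)$.

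I expect the main obstacle to be the explicit identification in the key step. One must check that the coend defining $t_{\dag,!}$, with the order-reversing part of $\prism_{\rev}$ included, really yields $\FdagSSet(\Delta^m)\times J^k$ with the product dagger. In particular, the reversal relation must glue $(\Delta^m)^{\myop}\times J^k$ to $\Delta^m\times J^k$ only along the vertex locus, twisted by $\xi\mapsto\xi^\vee$, and must not impose extra identifications. I would verify this first on simplices, using the representative formula $[\alpha,\xi,w]$ and the coend relations listed in the construction of $t_{\dag,!}$.
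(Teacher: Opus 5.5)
Your proposal is correct and follows the paper's proof. Both identify $\UdagSSet t_{\dag,!}(\yo_{\rev}[1]\times\Delta^k)$ with two copies of $\Delta^1\times J^k$ glued along $\partial\Delta^1\times J^k$, the dagger exchanging the copies and acting by $\xi\mapsto\xi^\vee$, and extend a fixed-endpoint map $f$ uniquely to the reverse copy by $z\mapsto f(z^{\dag})^{\dag}$; the overlap condition holds automatically because the constant endpoint simplices are dagger-fixed, and the paper also writes out the routine checks that this extension is simplicial and dagger-equivariant, which you leave implicit.
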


\begin{proof}
  In bidegree $(m,k)$, the two adjunctions identify
  \begin{align}
    (\res t_{\dag}^{!}X)_{m,k}
    &\cong
    \Hom_{\sSet^{\dag}}
    (t_{\dag,!}(\yo_{\rev}[m]\times\Delta^k),X),\\
    (t^!\UdagSSet X)_{m,k}
    &\cong
    \Hom_{\sSet}(\Delta^m\times J^k,\UdagSSet X).
  \end{align}
  Under these identifications, the component of
  \eqref{jt.eq.tdag-right-underlying} is
  \begin{align}
    \Hom_{\sSet^{\dag}}
    (t_{\dag,!}(\yo_{\rev}[m]\times\Delta^k),X)
    \to
    \Hom_{\sSet}(\Delta^m\times J^k,\UdagSSet X): 
    F\mapsto\UdagSSet(F)\circ j_+^k,
  \end{align}
  where
  \begin{align}
    j_+^k:
    \Delta^m\times J^k
    \to
    \UdagSSet t_{\dag,!}(\yo_{\rev}[m]\times\Delta^k)
  \end{align}
  is the canonical forward coprojection.
  These maps are compatible with the two adjunction bijections by construction.

  Since
  $\res(\yo_{\rev}[m])\cong\UdagSSet\FdagSSet(\Delta^m)$,
  \cite[Lemma~2.11]{JT06} and the formula for free dagger completion give
  \begin{align}
    \UdagSSet t_{\dag,!}
    (\yo_{\rev}[m]\times\Delta^k)
    \cong
    \UdagSSet\FdagSSet(\Delta^m)\times J^k
    \quad \text{and} \quad
    \UdagSSet\FdagSSet(\Delta^m)
    \cong
    \Delta^m
    \amalg_{\sk_0\Delta^m}
    (\Delta^m)^{\myop}.
  \end{align}
  Since $\sk_0\Delta^1=\partial\Delta^1$, we obtain
  \begin{align}\label{jt.eq.tdag-two-copies}
    \UdagSSet t_{\dag,!}(\yo_{\rev}[1]\times\Delta^k)
    \cong
    (\Delta^1\times J^k)_+
    \amalg_{\partial\Delta^1\times J^k}
    ((\Delta^1)^{\myop}\times J^k)_-.
  \end{align}

  For a simplicial set $S$ and a vertex $x\in S_0$, let
  $c_x^k:J^k\to S$ be the constant map.
  Thus, for $\xi\in J_n^k$, we have $(c_x^k)_n(\xi)=s^{(n)}x$, where $s^{(n)}x$ denotes the unique totally degenerate $n$-simplex at $x$.
  When $S=\UdagSSet X$, the dagger fixes vertices and hence also fixes every $s^{(n)}x$.

  Write
  \begin{align}
    P_{x,y}(X)
    :=
    \fib_{(x,y)}
    ((\res t_{\dag}^{!}X)_1
      \to(\res t_{\dag}^{!}X)_0\times
      (\res t_{\dag}^{!}X)_0).
  \end{align}
  The identification $\UdagSSet t_{\dag,!}(\yo_{\rev}[0]\times\Delta^k) \cong J^k$ shows that the $k$-simplices of this strict fiber are
  \begin{align}
    P_{x,y}(X)_k
    =
    \left\{
      F:t_{\dag,!}(\yo_{\rev}[1]\times\Delta^k)\to X
      \ \middle|\ 
      \begin{array}{l}
        F\text{ is a dagger simplicial map},\\
        \UdagSSet(F)|_{\{0\}\times J^k}=c_x^k,\\
        \UdagSSet(F)|_{\{1\}\times J^k}=c_y^k
      \end{array}
    \right\}.
  \end{align}

  Put
  \begin{align}
    Q_{x,y}(\UdagSSet X)
    :=
    \fib_{(x,y)}
    ((t^!\UdagSSet X)_1
      \to(t^!\UdagSSet X)_0\times
      (t^!\UdagSSet X)_0).
  \end{align}
  The formula for $t^!$ in \cite[\S2]{JT06} and the fixed-boundary identification of \cref{jt.not.ordered-mapping} give a strict natural isomorphism
  \begin{align}
    Q_{x,y}(\UdagSSet X)
    \cong
    \Map_E(\UdagSSet X;x,y).
  \end{align}
  Consequently,
  \begin{align}
    Q_{x,y}(\UdagSSet X)_k
    =
    \{f:\Delta^1\times J^k\to\UdagSSet X
      ~|~
      f|_{\{0\}\times J^k}=c_x^k, f|_{\{1\}\times J^k}=c_y^k\}.
  \end{align}

  The map induced by $\vartheta_X$ on the two fibers is the forward restriction
  \begin{align}
    \Theta_{x,y,k}:
    P_{x,y}(X)_k
    \to
    Q_{x,y}(\UdagSSet X)_k
    :
    F\mapsto\UdagSSet(F)\circ j_+^k.
  \end{align}
  We construct its inverse explicitly.
  In simplicial degree $n$, write the forward and reverse coprojections as
  \begin{align}
    j_{k,n}^+(\alpha,\xi)
    :=[\alpha,\xi,\id_{[1]}]
    \quad \text{and} \quad 
    j_{k,n}^-(\beta,\xi)
    :=[\beta,\xi,\rho_1],
  \end{align}
  where $\alpha,\beta:[n]\to[1]$ are order-preserving and $\xi\in J_n^k$.
  Formula \eqref{jt.eq.tdag-formula} becomes
  \begin{align}\label{jt.eq.tdag-copy-dagger}
    j_{k,n}^+(\alpha,\xi)^{\dag}
    =
    j_{k,n}^-(\rho_1\alpha\rho_n,\xi^{\vee})
    \quad \text{and} \quad 
    j_{k,n}^-(\beta,\xi)^{\dag}
    =
    j_{k,n}^+(\rho_1\beta\rho_n,\xi^{\vee}).
  \end{align}
  For the constant map $\underline i:[n]\to[1]$ with value $i$, the overlap relation in \eqref{jt.eq.tdag-two-copies} is
  \begin{align}\label{jt.eq.tdag-copy-overlap}
    j_{k,n}^+(\underline i,\xi)
    =
    j_{k,n}^-(\underline{1-i},\xi).
  \end{align}

  Let $f\in Q_{x,y}(\UdagSSet X)_k$.
  Define $\widetilde f: \UdagSSet t_{\dag,!}(\yo_{\rev}[1]\times\Delta^k) \to \UdagSSet X$ on the two copies by
  \begin{align}\label{jt.eq.tdag-extension}
    \widetilde f_n
    (j_{k,n}^+(\alpha,\xi))
    :=f_n(\alpha,\xi)
    \quad \text{and} \quad
    \widetilde f_n
    (j_{k,n}^-(\beta,\xi))
    :=
    [f_n(\rho_1\beta\rho_n,\xi^{\vee})]^{\dag}.
  \end{align}
  We verify that these formulas agree on the overlap.
  Put $x_0:=x$ and $x_1:=y$.
  By \eqref{jt.eq.tdag-copy-overlap} and the endpoint conditions on $f$, we have
  \begin{align}
    \widetilde f_n
    (j_{k,n}^-(\underline{1-i},\xi))
    =
    [f_n(\underline i,\xi^{\vee})]^{\dag}
    =
    (s^{(n)}x_i)^{\dag}
    =
    s^{(n)}x_i
    =
    f_n(\underline i,\xi)
    =
    \widetilde f_n
    (j_{k,n}^+(\underline i,\xi)).
  \end{align}
  The forward formula is simplicial because $f$ is simplicial.
  On the reverse copy, the formula is the composite of the domain dagger, $f$, and the codomain dagger.
  Hence it is also simplicial, and \eqref{jt.eq.tdag-extension} defines a simplicial map on the pushout
  \eqref{jt.eq.tdag-two-copies}.

  We show that it is dagger-equivariant.
  On a forward simplex, \eqref{jt.eq.tdag-copy-dagger} and \eqref{jt.eq.tdag-extension} give
  \begin{align}
    \widetilde f_n
    (j_{k,n}^+(\alpha,\xi)^{\dag})
    =
    \widetilde f_n
    (j_{k,n}^-(\rho_1\alpha\rho_n,\xi^{\vee}))
    =
    [f_n(\rho_1(\rho_1\alpha\rho_n)\rho_n,(\xi^{\vee})^{\vee})]^{\dag}
    =
    f_n(\alpha,\xi)^{\dag}
    =
    \widetilde f_n(j_{k,n}^+(\alpha,\xi))^{\dag}.
  \end{align}
  The reverse-copy identity follows by involutivity.
  Thus $\widetilde f$ is the underlying map of a dagger simplicial map $\Psi_{x,y,k}(f): t_{\dag,!}(\yo_{\rev}[1]\times\Delta^k) \to X$.

  This extension is unique.
  Indeed, if $F:t_{\dag,!}(\yo_{\rev}[1]\times\Delta^k) \to X$ is a dagger map with forward restriction $f$, then for every reverse simplex $z$ we have
  \begin{align}
    \UdagSSet(F)(z)
    =
    \UdagSSet(F)(z^{\dag})^{\dag}
    =
    f(z^{\dag})^{\dag}.
  \end{align}
  Therefore $\Theta_{x,y,k}\Psi_{x,y,k} = \id$ and $\Psi_{x,y,k}\Theta_{x,y,k} = \id$.

  For every $\theta:[\ell]\to[k]$, the induced functor of indiscrete groupoids commutes with inversion, so $J(\theta)(\xi^{\vee}) = J(\theta)(\xi)^{\vee}$.
  Hence the restriction maps $\Theta_{x,y,k}$ and the inverse formulas \eqref{jt.eq.tdag-extension} commute with all simplicial operators in $k$.
  They therefore assemble to a strict natural isomorphism
  \begin{align}\label{jt.eq.tdag-fixed-fiber}
    \fib_{(x,y)}
    ((\res t_{\dag}^{!}X)_1
      \to(\res t_{\dag}^{!}X)_0\times
      (\res t_{\dag}^{!}X)_0)
    &\cong
    \fib_{(x,y)}
    ((t^!\UdagSSet X)_1
      \to(t^!\UdagSSet X)_0\times
      (t^!\UdagSSet X)_0)\\
    &\cong
    \Map_E(\UdagSSet X;x,y).
  \end{align}
  The second isomorphism is the cited fixed-boundary identification of \cref{jt.not.ordered-mapping}.
\end{proof}

\begin{notation}\label{jt.not.relative-endpoint-data}
  Let $\calC$ be fibrant in $\sCat^{\dag}_{\Bergner}$ and fix objects $c,d\in\calC$.
  \begin{itemize}
    \item put $\bbB_{\dag}:=\bfone_{\dag}\amalg\bfone_{\dag}$, and regard $\calC$ as the object $(c,d):\bbB_{\dag}\to\calC$ of the ordered endpoint undercategory;
    \item let $\iota:\bbB_{\dag}\to\bbJ_{\dag}$ be the ordered endpoint inclusion;
    \item put $\calE^{\bullet}_{\dag}:=\FdagSCat\frakC(C_E^{\bullet})$, retaining the two endpoint objects, and write
    \begin{align}
      \calH_m(\calC)
      &:=[k\mapsto \Hom_{\sCat^{\dag}} (\Xi^k_{\dag}[m],\calC)],
      \\
      \calE_{\calC;c,d}
      &:=[k\mapsto \Hom_{\bbB_{\dag}\mathbin{\downarrow}\sCat^{\dag}}(\bbB_{\dag}\to\calE^k_{\dag},\bbB_{\dag}\xrightarrow{(c,d)}\calC)].
    \end{align}
    \item define augmented cosimplicial arrows
    \begin{align}
      G^{\bullet}
      :=(\Gamma^{\bullet}_{\dag}[0]\amalg\Gamma^{\bullet}_{\dag}[0]
        \to\Gamma^{\bullet}_{\dag}[1])
      \quad \text{and} \quad 
      X^{\bullet}
      :=(\Xi^{\bullet}_{\dag}[0]\amalg\Xi^{\bullet}_{\dag}[0]
        \to\Xi^{\bullet}_{\dag}[1]).
    \end{align}
    Then the natural transformation $\lambda^{\bullet}$ induces an augmentation-compatible morphism $\lambda^{\bullet}:G^{\bullet}\to X^{\bullet}$;
  \end{itemize}
\end{notation}

\begin{lemma}\label{jt.lem.relative-endpoint-frame}
  Let $\calC$ be fibrant in $\sCat^{\dag}_{\Bergner}$, and let $c,d\in\calC$.
  Then there exists a natural isomorphism
  \begin{align}
    \fib_{(c,d)}(\calH_1(\calC)\to\calH_0(\calC)^2)
    \cong
    \calE_{\calC;c,d}.
  \end{align}
\end{lemma}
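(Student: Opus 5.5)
The plan is to compute both sides levelwise in the vertical degree $k$ and match them by a strict pushout identification of dagger simplicial categories. The object $\calH_1(\calC)_k = \Hom_{\sCat^{\dag}}(\Xi^k_{\dag}[1],\calC)$ and the endpoint map to $\calH_0(\calC)^2$ is restriction along the two maps $\Xi^k_{\dag}[0]\to\Xi^k_{\dag}[1]$ induced by the vertex inclusions $[0]\to[1]$. A $k$-simplex of the fiber over $(c,d)$ is therefore a dagger functor $\Xi^k_{\dag}[1]\to\calC$ whose restrictions to the two copies of $\Xi^k_{\dag}[0]$ are the constant functors at $c$ and $d$. These constant functors are the composites $\Xi^k_{\dag}[0]\to\bfone_{\dag}\xrightarrow{c}\calC$ and $\Xi^k_{\dag}[0]\to\bfone_{\dag}\xrightarrow{d}\calC$.

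Hence the fiber is corepresented, in the undercategory $\bbB_{\dag}\mathbin{\downarrow}\sCat^{\dag}$, by the pushout
\begin{align}
  P^k := \Xi^k_{\dag}[1]\amalg_{\Xi^k_{\dag}[0]\amalg\Xi^k_{\dag}[0]}\bbB_{\dag}.
\end{align}
The main step is to identify $P^k\cong\calE^k_{\dag}=\FdagSCat\frakC(C_E^k)$ under $\bbB_{\dag}$, naturally in $[k]$. By definition $\Xi^k_{\dag}[m]=\frakC_{\dag}t_{\dag,!}(\yo_{\rev}[m]\times\Delta^k)$. Since $\frakC_{\dag}$ is a left adjoint, and $\bbB_{\dag}=\frakC_{\dag}(\Delta^0_{\dag}\amalg\Delta^0_{\dag})$, it suffices to identify the corresponding pushout of dagger simplicial sets,
\begin{align}
  t_{\dag,!}(\yo_{\rev}[1]\times\Delta^k)\amalg_{t_{\dag,!}(\yo_{\rev}[0]\times\Delta^k)^{\amalg2}}(\Delta^0_{\dag}\amalg\Delta^0_{\dag}),
\end{align}
with $\FdagSSet(C_E^k)$. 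We then apply $\frakC_{\dag}\FdagSSet\cong\FdagSCat\frakC$.

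To make this identification, use \eqref{jt.eq.tdag-two-copies}: the underlying simplicial set of $t_{\dag,!}(\yo_{\rev}[1]\times\Delta^k)$ is a forward copy $(\Delta^1\times J^k)_+$ glued to a reverse copy $((\Delta^1)^{\myop}\times J^k)_-$ along $\partial\Delta^1\times J^k$, with the dagger exchanging them. Moreover $\UdagSSet t_{\dag,!}(\yo_{\rev}[0]\times\Delta^k)\cong J^k$, and the dagger acts on it by $\xi\mapsto\xi^\vee$. Collapsing the two copies of $J^k$ to points gives the forward copy $C_E^k$ glued to the reverse copy $(C_E^k)^{\myop}$ along $\partial\Delta^1=\sk_0C_E^k$. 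This is exactly $\UdagSSet\FdagSSet(C_E^k)=C_E^k\amalg_{\sk_0}(C_E^k)^{\myop}$, with the exchange involution. Here we use that $\sk_0C_E^k=\partial\Delta^1$: after the collapse, the only vertices are the two endpoints.

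One must check that the involution on the collapsed object is the free one. After the collapse, all simplices coming from $\partial\Delta^1\times J^k$ become degenerate at the endpoints, so the involution acts freely on the remaining nondegenerate simplices. The explicit extension formula \eqref{jt.eq.tdag-extension} in the proof of \cref{jt.lem.tdag-ordered-fibers} is essentially this universal property: dagger maps out of the glued object are the same as ordinary maps out of the forward copy. The isomorphism is natural in $[k]$ because inversion in $\bbG[-]$ is natural.

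I expect the main obstacle to be verifying that colimits in $\sCat^{\dag}$ and in $\sSet^{\dag}$ are computed compatibly with the forgetful functors, so that this pushout identification is valid strictly and not just up to homotopy. For $\sCat^{\dag}$ this follows from the recalled fact that $\UdagSCat$ creates colimits. For $\sSet^{\dag}$ it follows because $\UdagSSet$ is restriction along $\prism\to\prism_{\rev}$.

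Fibrancy of $\calC$ is not needed for the strict isomorphism itself; it only ensures the fibers are homotopically meaningful. Naturality in $\calC$ and in $(c,d)$ is immediate from the corepresentability.
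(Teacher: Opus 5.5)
Your argument is correct, but it runs Yoneda-dual to the paper's proof.

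The paper never forms the pushout $P^k$. Instead it:
\begin{enumerate}
  \item transposes $\calH_m(\calC)_k$ across $\frakC_{\dag}\dashv\N_{\dag}$ to $(\res t_{\dag}^{!}\N_{\dag}\calC)_{m,k}$;
  \item applies \cref{jt.lem.tdag-ordered-fibers}, whose explicit extension formula shows that dagger maps out of $t_{\dag,!}(\yo_{\rev}[1]\times\Delta^k)$ with constant endpoints are the same as ordinary maps $\Delta^1\times J^k\to\UdagSSet X$ with constant endpoints, landing in $\Map_E(N\UdagSCat\calC;c,d)_k$;
  \item transposes back along $\frakC\dashv N$ and $\FdagSCat\dashv\UdagSCat$.
\end{enumerate}
You instead identify the corepresenting object, $P^k\cong\calE^k_{\dag}$ under $\bbB_{\dag}$. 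You do this by a pushout computation in $\sSet^{\dag}$ followed by $\frakC_{\dag}\FdagSSet\cong\FdagSCat\frakC$.

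What each route buys:
\begin{itemize}
  \item Your route proves outright the strict isomorphism of \cref{jt.lem.xi-endpoint-collapse}, which the paper only asserts by appeal to ``the fixed-boundary calculation''. It also never passes through nerves.
  \item The paper's route reuses \cref{jt.lem.tdag-ordered-fibers}, which it needs anyway for arbitrary $X$ in \cref{jt.lem.strict-ordered-frame-mate}. It also exhibits the fiber as the ordinary ordered mapping space $\Map_E(N\UdagSCat\calC;c,d)$ along the way.
\end{itemize}

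One step should be justified differently. Freeness of the involution on the remaining nondegenerate simplices does not by itself show that the collapsed object is $\FdagSSet(C_E^k)$. A dagger simplicial set with free involution need not have the form $A\amalg_{\sk_0A}A^{\myop}$. For example, take a $2$-simplex with faces $e$ and $e^{\dag}$, together with its distinct dagger.

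The right check is that the dagger map from $\FdagSSet(C_E^k)$ to the collapsed pushout in $\sSet^{\dag}$, adjoint to the inclusion of the collapsed forward copy, is bijective:
\begin{itemize}
  \item it is surjective because \eqref{jt.eq.tdag-copy-dagger} shows the dagger carries the forward copy onto the reverse copy;
  \item it is injective because the forward copy embeds and the two copies meet exactly in $\partial\Delta^1=\sk_0C_E^k$.
\end{itemize}
Your description already contains both facts, so this is a matter of citing the right check rather than a gap.
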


\begin{proof}
  For every $m,k\geq0$, the defining adjunctions give natural bijections
  \begin{align}
    \calH_m(\calC)_k
    &=
    \Hom_{\sCat^{\dag}}(\frakC_{\dag}t_{\dag,!} (\yo_{\rev}[m]\times\Delta^k), \calC)
    \\
    &\cong
    \Hom_{\sSet^{\dag}}(t_{\dag,!}(\yo_{\rev}[m]\times\Delta^k),\N_{\dag}\calC)
    \\
    &=
    (\res t_{\dag}^{!}\N_{\dag}\calC)_{m,k}.
  \end{align}
  These bijections identify the two endpoint maps of $\calH_1(\calC)\to\calH_0(\calC)^2$ with those of
  \begin{align}
    (\res t_{\dag}^{!}\N_{\dag}\calC)_1
    \to
    (\res t_{\dag}^{!}\N_{\dag}\calC)_0 \times (\res t_{\dag}^{!}\N_{\dag}\calC)_0,
  \end{align}
  and carry $(c,d)$ to the corresponding pair of constant endpoint simplices.
  Hence \cref{jt.lem.tdag-ordered-fibers} and $\UdagSSet \N_{\dag}=N\UdagSCat$ give
  \begin{align}
    [\fib_{(c,d)}(\calH_1(\calC)\to\calH_0(\calC)^2)]_k
    &\cong
    \Map_E(N\UdagSCat\calC;c,d)_k\\
    &=
    \Hom_{\partial\Delta^1\mathbin{\downarrow}\sSet}(\partial\Delta^1\to C_E^k,\partial\Delta^1\xrightarrow{(c,d)}N\UdagSCat\calC) \\
    &\cong
    \Hom_{\frakC(\partial\Delta^1)\mathbin{\downarrow}\sCat}(\frakC(\partial\Delta^1)\to\frakC(C_E^k),\frakC(\partial\Delta^1)\xrightarrow{(c,d)}\UdagSCat\calC)\\
    &\cong
    \Hom_{\bbB_{\dag}\mathbin{\downarrow}\sCat^{\dag}}(\bbB_{\dag} \to \FdagSCat\frakC(C_E^k),\bbB_{\dag}\xrightarrow{(c,d)}\calC)\\
    &=
    (\calE_{\calC;c,d})_k.
  \end{align}
  The last two isomorphisms are induced by the adjunctions $\frakC\dashv N$ and $\FdagSCat\dashv\UdagSCat$.
  All these bijections commute with the cosimplicial operators in $k$.
\end{proof}

\begin{lemma}\label{jt.lem.relative-endpoint-cosimplicial-frame}
  The augmented cosimplicial object $(\bbB_{\dag}\to\calE^{\bullet}_{\dag})\to c(\iota)$ is a cosimplicial frame on $\iota$ in $\bbB_{\dag}\mathbin{\downarrow}\sCat^{\dag}$.
\end{lemma}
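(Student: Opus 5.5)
The plan is to obtain the frame by transporting the ordinary Dugger--Spivak resolution $C_E^\bullet$ through $\frakC$ and then $\FdagSCat$. Throughout, the model structure on $\bbB_{\dag}\mathbin{\downarrow}\sCat^{\dag}$ is the undercategory structure induced by $\sCat^{\dag}_{\Bergner}$. Three properties must be checked:
\begin{enumerate}
  \item in degree zero, $\calE^0_{\dag}$ is $\iota$;
  \item the augmentation $\calE^k_{\dag}\to\bbJ_{\dag}$ is a dagger DK-equivalence for every $k$;
  \item all latching maps are cofibrations under $\bbB_{\dag}$.
\end{enumerate}
For the first, note that $C_E^0=(\Delta^1\times J^0)\amalg_{\partial\Delta^1\times J^0}\partial\Delta^1=\Delta^1$. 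Hence $\calE^0_{\dag}=\FdagSCat\frakC(\Delta^1)=\bbJ_{\dag}$, and the structure map from $\bbB_{\dag}$ is exactly the endpoint inclusion $\iota$.

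For the ordinary input I would use \cite[Proposition~4.5 (d)]{DS11}. It says that $\partial\Delta^1\to C_E^\bullet$ is a cosimplicial resolution of $\partial\Delta^1\to\Delta^1$ in $\partial\Delta^1\mathbin{\downarrow}\sSet_{\Joyal}$. Concretely, the relative latching maps $\partial\Delta^1\amalg_{\partial\Delta^1}L^kC_E\to C_E^k$ are monomorphisms, and the augmentations $C_E^k\to\Delta^1$ are Joyal equivalences. A key observation is that every $C_E^k$ has exactly the two vertices of $\partial\Delta^1$, because the whole of $\partial\Delta^1\times J^k$ is collapsed. It follows that all the relevant maps are bijective on vertices.

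Applying $\frakC$, which is left Quillen and induces a left Quillen functor on undercategories, gives object-bijective Bergner cofibrations and object-bijective DK-equivalences between cofibrant objects. Next I apply $\FdagSCat$. It preserves colimits, so the latching object in $\bbB_{\dag}\mathbin{\downarrow}\sCat^{\dag}$ is $\FdagSCat\frakC$ of the ordinary relative latching object. Being left Quillen, $\FdagSCat$ also sends the latching cofibrations to dagger Bergner cofibrations, which gives Reedy cofibrancy.

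For the augmentation, I would avoid Ken Brown and argue as follows. The vertex $[0]\to[k]$ gives a section $\Delta^1=C_E^0\to C_E^k$ of the degeneracy. This section is a monomorphism, relative to $\partial\Delta^1$, and a Joyal equivalence by two-out-of-three. It is therefore an object-bijective Joyal trivial cofibration, and $\frakC$ sends it to an object-bijective Bergner trivial cofibration. By \cite[Lemma~3.2.7 (2)]{Aka26}, $\FdagSCat$ then sends it to a dagger Bergner trivial cofibration $\bbJ_{\dag}\to\calE^k_{\dag}$. Since this map composes with the augmentation to the identity of $\bbJ_{\dag}$, two-out-of-three shows the augmentation is a dagger DK-equivalence.

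I expect the main obstacle to be the bookkeeping for latching objects in the undercategory. They are colimits computed relative to $\bbB_{\dag}$, meaning pushouts amalgamating copies of $\bbB_{\dag}$, so one must check that $\FdagSCat\frakC$ commutes with these relative colimits. This reduces to the identities $\FdagSCat\frakC(\partial\Delta^1)=\bbB_{\dag}$ and $\frakC_{\dag}\FdagSSet\cong\FdagSCat\frakC$.
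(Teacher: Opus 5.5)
Your proof follows the paper's argument. You use the Dugger--Spivak resolution $C_E^{\bullet}$ and transport its relative latching maps through the colimit-preserving functor $\FdagSCat\frakC$. For the augmentation, you take the vertex section $\Delta^1\to C_E^k$, push it through \cite[Lemma~3.2.7 (2)]{Aka26}, and apply two-out-of-three against the retraction.

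One justification is false and needs replacing: $\FdagSCat$ is \emph{not} left Quillen $\sCat_{\Bergner}\to\sCat^{\dag}_{\Bergner}$. Take a cofibrant interval $\{0\}\to\calH$, which is a Bergner trivial cofibration that is not object-bijective. The underlying simplicial category of $\FdagSCat\calH$ is the homotopy pushout $\calH\amalg_{\{0,1\}}\calH^{\myop}$, whose endomorphism space at $0$ is equivalent to $\Omega S^1$. So $\bfone_{\dag}\to\FdagSCat\calH$ is not a dagger DK-equivalence. This is exactly why Lemma~3.2.7\,(2), which you invoke one paragraph later, is restricted to object-bijective maps, so your text is inconsistent at that point.

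For the latching maps you only need that $\FdagSCat$ preserves cofibrations. That is \cite[Lemma~3.2.7 (1)]{Aka26}, which is what the paper cites. It can also be checked on generators, since the left adjoint $\FdagSCat$ sends $\varnothing\to[0]$ and $\Sigma\partial\Delta^n\to\Sigma\Delta^n$ to $\varnothing\to\bfone_{\dag}$ and $\bbA_{\dag}(\partial\Delta^n)\to\bbA_{\dag}(\Delta^n)$. With that substitution your argument is complete.
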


\begin{proof}
  By \cite[Proposition~4.5 (b)--(d)]{DS11},
  the augmentation $C_E^{\bullet}\to c\Delta^1$ is a cosimplicial resolution in $\partial\Delta^1\mathbin{\downarrow}\sSet_{\Joyal}$.
  In particular, each relative latching map $\ell_k:L^kC_E^{\bullet}\to C_E^k$ is a Joyal cofibration.

  Since $\frakC$ and $\FdagSCat$ preserve colimits, we have 
  \begin{align}
    \FdagSCat\frakC(\ell_k):
    L^k\calE^{\bullet}_{\dag} 
    = 
    \FdagSCat\frakC(L^kC_E^{\bullet})
    \to
    \FdagSCat\frakC(C_E^k)
    = 
    \calE^{\bullet}_{\dag}.
  \end{align}
  The functor $\frakC$ is left Quillen, so $\frakC(\ell_k)$ is a Bergner cofibration.
  \Cite[Lemma~3.2.7 (1)]{Aka26} then shows that $\FdagSCat\frakC(\ell_k)$ is a dagger Bergner cofibration.
  Hence $\calE^{\bullet}_{\dag}$ is Reedy cofibrant in the undercategory.

  Write $\epsilon_k:C_E^k\to\Delta^1$ for the augmentation.
  Choosing an object $j\in\{0,\ldots,k\}$ of the indiscrete groupoid defining $J^k$ gives a monomorphic section $s_{k,j}: \Delta^1 \to C_E^k$ with $\epsilon_ks_{k,j} = \id_{\Delta^1}$.
  Since $\epsilon_k$ is a Joyal equivalence, two-out-of-three shows that $s_{k,j}$ is a Joyal trivial cofibration.
  Both $\Delta^1$ and $C_E^k$ have vertex set $\{0,1\}$.
  Therefore $\frakC(s_{k,j})$ is an object-bijective Bergner trivial cofibration, \cite[Lemma~3.2.7 (2)]{Aka26} shows that $\FdagSCat\frakC(s_{k,j}):\bbJ_{\dag}\to\calE^k_{\dag}$ is a dagger Bergner trivial cofibration.
  Its retraction is the augmentation $\FdagSCat\frakC(\epsilon_k): \calE^k_{\dag}\to\bbJ_{\dag}$.
  Hence the augmentation is a dagger DK-equivalence by two-out-of-three.

  Finally, $C_E^0=\Delta^1$ and $\epsilon_0=\id_{\Delta^1}$.
  Thus the degree-zero object is $\iota:\bbB_{\dag}\to\bbJ_{\dag}$, and the assertion follows.
\end{proof}

\begin{lemma}\label{jt.lem.lambda-relative-arrows}
  The morphism
  $\lambda^{\bullet}:G^{\bullet}\to X^{\bullet}$
  admits an augmentation-compatible relative Reedy cofibrant replacement fixed in degree zero.
  For such a replacement, write
  $Q\lambda^{\bullet}:QG^{\bullet}\to QX^{\bullet}$.
  Write the source-to-target arrows of $QG^{\bullet}$ and $QX^{\bullet}$ as $QG_s^{\bullet}\to QG_t^{\bullet}$ and $QX_s^{\bullet}\to QX_t^{\bullet}$.
  Moreover, we put
  \begin{align}
    \overline G^{\bullet}
    :=c\bbB_{\dag} \amalg_{QG_s^{\bullet}} QG_t^{\bullet}
    \quad \text{and} \quad
    \overline X^{\bullet}
    := 
    c\bbB_{\dag} \amalg_{QX_s^{\bullet}} QX_t^{\bullet}.
  \end{align}
  Then these endpoint collapses are cosimplicial frames on $\iota$, and the induced comparison $\overline\lambda^{\bullet}: \overline G^{\bullet} \to \overline X^{\bullet}$ is an augmentation-compatible Reedy weak equivalence.
\end{lemma}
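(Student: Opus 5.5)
We work in the arrow category $\Fun([1],\sCat^{\dag}_{\Bergner})$ with its Reedy (equivalently projective) model structure. There, cofibrant objects are arrows $P\to R$ with $P$ cofibrant and $P\to R$ a cofibration. The plan has three steps: build the replacement, collapse endpoints, and compare the two collapsed objects.

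\textbf{Step 1: existence of the replacement.} We regard $\lambda^{\bullet}:G^{\bullet}\to X^{\bullet}$ as a morphism of cosimplicial objects in this arrow model category. Its degree-zero components are
\begin{align}
  A_0\amalg A_0\to A_1,
\end{align}
which is the dagger rigidification of the endpoint inclusion $\FdagSSet(\partial\Delta^1)\to\FdagSSet(\Delta^1)$. Since $\frakC_{\dag}$ is left Quillen and the inclusion is a free cofibration, this arrow is cofibrant, and $\lambda^0=\id$.

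We then apply the functorial relative Reedy replacement of \cite[Proposition~16.6.8 (1)]{Hir02}, exactly as in \cref{jt.not.frames}, to the arrow $\lambda^{\bullet}$ viewed as a diagram indexed by $[1]$. This yields a commuting square
\begin{align}
  Q\lambda^{\bullet}:QG^{\bullet}\to QX^{\bullet}
\end{align}
of Reedy cofibrant objects. Its vertical maps $QG^{\bullet}\to G^{\bullet}$ and $QX^{\bullet}\to X^{\bullet}$ are Reedy trivial fibrations, and it is an isomorphism in degree zero. Augmentation-compatibility is inherited from $G^{\bullet}\to X^{\bullet}\to c(A_0\amalg A_0\to A_1)$.

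Because $\lambda$ is degreewise a dagger DK-equivalence (\cref{jt.not.frames}), two-out-of-three makes $Q\lambda^{\bullet}$ a Reedy weak equivalence in the arrow category. It is a weak equivalence on both the source and target components.

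\textbf{Step 2: the endpoint collapses.} Reedy cofibrancy in the arrow category gives two facts:
\begin{itemize}
  \item $QG^{\bullet}_s\to QG^{\bullet}_t$ is a Reedy cofibration with Reedy cofibrant source;
  \item $QG^{\bullet}_s$ is a cosimplicial frame on $A_0\amalg A_0=\bbB_{\dag}$.
\end{itemize}
Since $A_0=\bfone_{\dag}$ has discrete hom-spaces, we have the degree-zero identification
\begin{align}
  QG^0_s=\bbB_{\dag}.
\end{align}
The counit $QG^{\bullet}_s\to c\bbB_{\dag}$ is a Reedy weak equivalence. The pushout $\overline G^{\bullet}$ is formed along the Reedy cofibration $QG^{\bullet}_s\to QG^{\bullet}_t$. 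Left properness of $\sCat^{\dag}_{\Bergner}$ and of its Reedy structure (equivalently, the cube lemma in the Reedy structure, with all objects cofibrant) then give a Reedy weak equivalence
\begin{align}
  QG^{\bullet}_t\to\overline G^{\bullet},
\end{align}
and $c\bbB_{\dag}\to\overline G^{\bullet}$ is a Reedy cofibration. Hence $\overline G^{\bullet}$ is Reedy cofibrant in $\bbB_{\dag}\mathbin{\downarrow}\sCat^{\dag}$.

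Its degree-zero term is $A_1=\FdagSCat\frakC(\Delta^1)=\bbJ_{\dag}$ with the endpoint inclusion $\iota$. Its augmentation to $c\bbJ_{\dag}$ is degreewise a weak equivalence by two-out-of-three, composing through $QG^{\bullet}_t\to\Xi^{\bullet}_{\dag}[1]\to cA_1$. So $\overline G^{\bullet}$ is a frame on $\iota$, and the same argument applies verbatim to $\overline X^{\bullet}$.

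\textbf{Step 3: the comparison.} The map $\overline\lambda^{\bullet}$ is induced on pushouts by a map of spans:
\begin{itemize}
  \item identity on $c\bbB_{\dag}$;
  \item weak equivalences on the $s$ and $t$ terms;
  \item with one leg of each span a Reedy cofibration between cofibrant objects.
\end{itemize}
The cube lemma in the Reedy model structure on cosimplicial dagger simplicial categories shows that $\overline\lambda^{\bullet}$ is a Reedy weak equivalence. It is compatible with augmentations because $Q\lambda$ is.

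\textbf{Main obstacle.} The main obstacle is the pushout step: identifying $QG^0_s$ with $\bbB_{\dag}$ so that the collapse is fixed in degree zero, and ensuring that the collapse map is a weak equivalence. This is delicate because $c\bbB_{\dag}$ is not itself Reedy cofibrant in general. I would handle it with a latching computation: a constant cosimplicial object on a cofibrant object whose latching maps are identities or codiagonals. If that fails, I would instead argue degreewise using left properness, which needs only that the legs be cofibrations in each degree; Reedy cofibrations do give this.
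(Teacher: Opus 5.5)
Your proposal is correct and follows essentially the same route as the paper: a relative Reedy replacement fixed in degree zero in the projective arrow category $\Fun([1],\sCat^{\dag}_{\Bergner})$, Reedy cofibrancy of $QG_s^{\bullet}\to QG_t^{\bullet}$, degreewise left properness to make $QG_t^{\bullet}\to\overline G^{\bullet}$ a weak equivalence, and two-out-of-three for the augmentation to $c\bbJ_{\dag}$. (The identification $QG^0_s=\bbB_{\dag}$ comes from the replacement being fixed in degree zero, not from discreteness of hom-spaces.)

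The one divergence is Step~3. The paper obtains that $\overline\lambda^{\bullet}$ is a Reedy weak equivalence directly by two-out-of-three over $c\bbJ_{\dag}$, and you already have the needed augmentations from Step~2. Your cube-lemma version only works through the degreewise left-proper gluing lemma you give as a fallback, because $c\bbB_{\dag}$ is not Reedy cofibrant. Its degree-$1$ latching map is the fold $\bbB_{\dag}\amalg\bbB_{\dag}\to\bbB_{\dag}$, which is not injective on objects and hence not a cofibration, so your suggested latching-computation fix would fail.
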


\begin{proof}
  Equip $\Fun([1],\sCat^{\dag}_{\Bergner})$ with its projective model structure.
  The object $\bbB_{\dag}$ is cofibrant, and the ordered endpoint inclusion $\iota:\bbB_{\dag}\to\bbJ_{\dag}$ is a dagger Bergner cofibration.
  Thus $\iota$ is a cofibrant arrow in $\Fun([1],\sCat^{\dag}_{\Bergner})$.
  Moreover, $\lambda^{\bullet}:G^{\bullet}\to X^{\bullet}$ is an augmented Reedy weak equivalence with degree-zero object $\iota$.

  Applying the functorial relative Reedy replacement of \cite[Proposition~16.6.8 (1)]{Hir02} in $\Fun([1],\sCat^{\dag}_{\Bergner})$ gives
  \begin{align}\label{jt.eq.relative-arrow-replacement}
    &\begin{tikzpicture}[auto]
      \node (replacement-source) at (0,1.5) {$QG^{\bullet}$};
      \node (replacement-target) at (4,1.5) {$QX^{\bullet}$};
      \node (source) at (0,0) {$G^{\bullet}$};
      \node (target) at (4,0) {$X^{\bullet}$};
      \draw[->]
        (replacement-source) --
        node[above] {$Q\lambda^{\bullet}$}
        (replacement-target);
      \draw[->]
        (replacement-source) --
        node[left] {$\sim$}
        (source);
      \draw[->]
        (replacement-target) --
        node[right] {$\sim$}
        (target);
      \draw[->]
        (source) --
        node {$\lambda^{\bullet}$}
        (target);
    \end{tikzpicture}
  \end{align}
  fixed in degree zero.
  Here $QG^{\bullet}$ and $QX^{\bullet}$ are Reedy cofibrant, and $Q\lambda^{\bullet}$ is a Reedy weak equivalence by two-out-of-three.

  It remains to check the endpoint collapses.
  By \cite[Theorem~15.5.2]{Hir02}, the maps $QG_s^{\bullet}\to QG_t^{\bullet}$ and $QX_s^{\bullet}\to QX_t^{\bullet}$ are Reedy cofibrations.
  Hence their cobase changes $c\bbB_{\dag}\to\overline G^{\bullet}$ and $c\bbB_{\dag}\to\overline X^{\bullet}$ are Reedy cofibrations.
  They are degreewise cofibrations by \cite[Proposition~15.3.11 (1)]{Hir02}.
  The source augmentations are degreewise weak equivalences, so left properness of $\sCat^{\dag}_{\Bergner}$ gives degreewise weak equivalences $QG_t^{\bullet}\xrightarrow{\sim}\overline G^{\bullet}$ and $QX_t^{\bullet}\xrightarrow{\sim}\overline X^{\bullet}$.
  The target augmentations are also degreewise weak equivalences to $c\bbJ_{\dag}$.
  Thus the induced augmentations $\overline G^{\bullet}\to c\bbJ_{\dag}$ and $\overline X^{\bullet}\to c\bbJ_{\dag}$ are degreewise weak equivalences by two-out-of-three.
  Since the replacement is fixed in degree zero, we have
  \begin{align}
    \overline G^0
    \cong 
    \overline X^0
    \cong
    \bbB_{\dag}
    \amalg_{\bbB_{\dag}}
    \bbJ_{\dag}
    \cong
    \bbJ_{\dag},
  \end{align}
  and the degree-zero structure map is $\iota$.
  They are therefore cosimplicial frames on $\iota$ in $\bbB_{\dag}\mathbin{\downarrow} \sCat^{\dag}_{\Bergner}$.

  Functoriality of the cobase change gives the augmentation-compatible map $\overline\lambda^{\bullet}:\overline G^{\bullet}\to\overline X^{\bullet}$.
  It is a Reedy weak equivalence by two-out-of-three.
  The choice-independence used below is \cite[Theorem~16.6.18 (1)]{Hir02}, applied in $\bbB_{\dag}\mathbin{\downarrow}\sCat^{\dag}_{\Bergner}$;
  equality of the induced total derived morphisms follows from \cite[Lemmas~5.5.1 and~5.5.2]{Hov99}.
\end{proof}

\begin{lemma}\label{jt.lem.xi-endpoint-collapse}
  There is a strict isomorphism
  \begin{align}\label{jt.eq.raw-Xi-collapse}
    c\bbB_{\dag}
    \amalg_{
      \Xi^{\bullet}_{\dag}[0]
      \amalg\Xi^{\bullet}_{\dag}[0]}
      \Xi^{\bullet}_{\dag}[1]
    \cong
    \calE^{\bullet}_{\dag}.
  \end{align}
  The induced morphism from $\overline X^{\bullet}$ to the left-hand side is a Reedy weak equivalence.
\end{lemma}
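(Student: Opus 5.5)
The strict isomorphism \eqref{jt.eq.raw-Xi-collapse} can be checked degreewise in $k$, using the definition $\Xi^k_{\dag}[m]=\frakC_{\dag}t_{\dag,!}(\yo_{\rev}[m]\times\Delta^k)$. Since $\frakC_{\dag}$ and $t_{\dag,!}$ are left adjoints, they commute with the pushout. It therefore suffices to identify, in $\sSet^{\dag}$, the pushout
\begin{align}
  \Delta^0_{\dag}\amalg\Delta^0_{\dag}
  \amalg_{t_{\dag,!}(\yo_{\rev}[0]\times\Delta^k)^{\amalg 2}}
  t_{\dag,!}(\yo_{\rev}[1]\times\Delta^k)
\end{align}
with $\FdagSSet(C_E^k)$, naturally in $k$. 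Here $\frakC_{\dag}(\Delta^0_{\dag}\amalg\Delta^0_{\dag})=\bbB_{\dag}$, and $\frakC_{\dag}\FdagSSet\cong\FdagSCat\frakC$ gives $\calE^k_{\dag}$. The underlying description \eqref{jt.eq.tdag-two-copies} writes $\UdagSSet t_{\dag,!}(\yo_{\rev}[1]\times\Delta^k)$ as two copies $(\Delta^1\times J^k)_{\pm}$ glued along $\partial\Delta^1\times J^k$, and $\UdagSSet t_{\dag,!}(\yo_{\rev}[0]\times\Delta^k)\cong J^k$. Collapsing the two endpoint copies of $J^k$ turns each copy into $C_E^k$. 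The two collapsed copies are then glued only along $\partial\Delta^1$, so the underlying simplicial set is $C_E^k\amalg_{\partial\Delta^1}(C_E^k)^{\myop}=\UdagSSet\FdagSSet(C_E^k)$, because $\sk_0C_E^k=\partial\Delta^1$.

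For the dagger, formula \eqref{jt.eq.tdag-copy-dagger} exchanges the forward and reverse copies via $\alpha\mapsto\rho_1\alpha\rho_n$ and $\xi\mapsto\xi^\vee$. I would verify that, under the identification of the reverse copy with $(C_E^k)^{\myop}$ that uses $\xi^\vee$, this is exactly the swap involution of $\FdagSSet$. This is the same bookkeeping as the explicit inverse $\Psi_{x,y,k}$ in the proof of \cref{jt.lem.tdag-ordered-fibers}, and in fact that lemma's universal property (dagger maps out of the collapsed object correspond to forward maps out of $C_E^k$ with fixed endpoints) gives the isomorphism by Yoneda in the category of dagger simplicial sets under $\Delta^0_{\dag}\amalg\Delta^0_{\dag}$. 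Naturality in the cosimplicial variable $k$ follows because inversion commutes with $J(\theta)$.

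For the second assertion, note that $\overline X^{\bullet}=c\bbB_{\dag}\amalg_{QX_s^{\bullet}}QX_t^{\bullet}$ maps to the raw collapse through the replacement map $QX^{\bullet}\to X^{\bullet}$, which is a degreewise weak equivalence of arrows. I would show the target is also a cosimplicial frame on $\iota$, using \cref{jt.lem.relative-endpoint-cosimplicial-frame} and the isomorphism just proved. Then both sides have augmentations that are degreewise dagger DK-equivalences to $c\bbJ_{\dag}$, compatibly, so two-out-of-three makes the comparison a degreewise, hence Reedy, weak equivalence.

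The main obstacle is the first step, matching the dagger structure on the collapsed pushout with the free one. The point needing care is that after collapsing, no nondegenerate positive simplex is self-conjugate, since the two copies meet only in the vertices.
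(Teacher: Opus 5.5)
Your proposal is correct and follows the paper's proof. The paper's one-line ``fixed-boundary calculation'' is made explicit by your Yoneda argument in $(\Delta^0_{\dag}\amalg\Delta^0_{\dag})\mathbin{\downarrow}\sSet^{\dag}$ via \cref{jt.lem.tdag-ordered-fibers}, which holds for all $X$ and so needs no fibrancy, followed by $\frakC_{\dag}\FdagSSet\cong\FdagSCat\frakC$. The Reedy weak equivalence then comes, exactly as in the paper, from two-out-of-three applied to the compatible degreewise-equivalence augmentations to $c\bbJ_{\dag}$ supplied by \cref{jt.lem.relative-endpoint-cosimplicial-frame,jt.lem.lambda-relative-arrows}.
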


\begin{proof}
  The fixed-boundary calculation gives an isomorphism \eqref{jt.eq.raw-Xi-collapse}.
  The right vertical map of \eqref{jt.eq.relative-arrow-replacement} induces a morphism from $\overline X^{\bullet}$ to the left-hand side of \eqref{jt.eq.raw-Xi-collapse}.
  By \cref{jt.lem.relative-endpoint-cosimplicial-frame,jt.lem.lambda-relative-arrows}, both objects augment by degreewise weak equivalences to $c\bbJ_{\dag}$, so this morphism is a Reedy weak equivalence by two-out-of-three.
\end{proof}

\begin{proposition}\label{jt.lem.ordered-lambda-mate}
  If $\calC$ is fibrant in $\sCat^{\dag}_{\Bergner}$, then, for every $c,d\in\Ob(\calC)$, the map $\Lambda^{\flat}_{\calC}$ induces an isomorphism on the derived ordered fixed-endpoint fibers over $(c,d)$.
\end{proposition}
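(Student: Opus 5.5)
We identify both ordered fixed-endpoint fibers with mapping spaces out of cosimplicial frames on $\iota$ in $\bbB_{\dag}\mathbin{\downarrow}\sCat^{\dag}_{\Bergner}$, and then use frame-independence of homotopy function complexes. First, unwind the definition of $\Lambda^{\flat}_{\calC}:t_{\dag}^{!}\N_{\dag}\calC\to\N^{\prism}_{\dag}\calC$. In bidegree $(m,k)$ it is the map
\begin{align}
  \Hom_{\sCat^{\dag}}(\Xi^k_{\dag}[m],\calC)\to\Hom_{\sCat^{\dag}}(\Gamma^k_{\dag}[m],\calC)
\end{align}
given by precomposition with $\lambda^k_m$. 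This follows from the adjunction $\frakC_{\dag}\dashv\N_{\dag}$ together with the description of $\Lambda$ in \cref{jt.lem.frame-comparison} as the coend of $\lambda$. Since $\lambda^0_m=\id$, the map is the identity on the endpoint spaces $\calH_0(\calC)$. Consequently it restricts to strict fibers over $(c,d)$, and the resulting fiber map is precomposition by $\lambda^{\bullet}:G^{\bullet}\to X^{\bullet}$ in the undercategory $\bbB_{\dag}\mathbin{\downarrow}\sCat^{\dag}$.

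Second, compute the target fiber. By \cref{jt.lem.relative-endpoint-frame} and the strict isomorphism of \cref{jt.lem.xi-endpoint-collapse}, the strict fiber of $\calH_1(\calC)\to\calH_0(\calC)^2$ is $\Hom_{\bbB_{\dag}\downarrow}(\calE^{\bullet}_{\dag},\calC)$. By \cref{jt.lem.relative-endpoint-cosimplicial-frame}, $\calE^{\bullet}_{\dag}$ is a cosimplicial frame on $\iota$. The object $\calC$, viewed under $\bbB_{\dag}$, is fibrant. Hence this fiber is a Kan complex computing $\RMap_{\bbB_{\dag}\downarrow}(\bbJ_{\dag},\calC)$ by \cite[Corollary~5.4.4]{Hov99}. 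Analogously, the source fiber is $\Hom_{\bbB_{\dag}\downarrow}(c\bbB_{\dag}\amalg_{G_s}G_t,\calC)$. This must be compared with $\overline G^{\bullet}$, and doing so requires an argument, since $G^{\bullet}$ itself need not be relatively Reedy cofibrant.

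Third, pass to the replacement of \cref{jt.lem.lambda-relative-arrows}. Mapping the square \eqref{jt.eq.relative-arrow-replacement} into $\calC$ (after collapsing endpoints) gives a commutative square of mapping spaces. Its vertical maps are induced by the Reedy weak equivalences $\overline G^{\bullet}\to c\bbB_{\dag}\amalg G$ and $\overline X^{\bullet}\to\calE^{\bullet}_{\dag}$ (\cref{jt.lem.xi-endpoint-collapse}), and its bottom map is induced by $\overline\lambda^{\bullet}$. Between Reedy cofibrant frames on $\iota$ into a fibrant object, any augmentation-compatible Reedy weak equivalence induces a weak equivalence of $\Hom(-,\calC)$ by \cite[Corollary~16.5.5/Theorem~16.6.18]{Hir02}, so $\overline\lambda^{\bullet *}$ is an equivalence. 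The same applies to $\overline X^{\bullet}\to\calE^{\bullet}_{\dag}$. We then conclude by two-out-of-three, provided the derived source fiber is identified with $\Hom(\overline G^{\bullet},\calC)$.

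The main obstacle is this identification. The strict source fiber uses $\Gamma^{\bullet}_{\dag}[1]$ collapsed along the non-replaced endpoints, and the collapsed object may fail to be Reedy cofibrant. I would handle it by noting that $\Gamma^{\bullet}_{\dag}[0]\amalg\Gamma^{\bullet}_{\dag}[0]\to\Gamma^{\bullet}_{\dag}[1]$ is already a Reedy cofibration. The reason is that $\Gamma$ is the Reedy replacement of the whole $\prism_{\rev}$-diagram, whose latching maps include the vertex inclusions. Failing that, I would choose $QG^{\bullet}=G^{\bullet}$ in the relative replacement, since it is fixed in degree zero. Once this holds, "derived fiber" can be read as the strict fiber: the endpoint map from $\N^{\prism}_{\dag}\calC$ is a Kan fibration by \cite[Corollary~16.5.5]{Hir02}, being restriction along a Reedy cofibration into fibrant $\calC$. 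For the $t_{\dag}^!$ side, the same fibration property follows from \cref{jt.lem.tdag-ordered-fibers} and the Dugger--Spivak fact that $\Map_E$ of a quasi-category is the fiber of a fibration. With both fibers identified, the isomorphism on derived fibers follows.
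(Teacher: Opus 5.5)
Your overall route is the paper's. You identify $\Lambda^{\flat}_{\calC}$ on fixed-endpoint fibers with precomposition by $\lambda^{\bullet}:G^{\bullet}\to X^{\bullet}$, pass to the relative Reedy replacement and endpoint collapse of \cref{jt.lem.lambda-relative-arrows}, and apply \cite[Corollary~16.5.5]{Hir02} to $\overline\lambda^{\bullet}$.

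The gap is in the step you flag yourself. Write $G_s^{\bullet}=\Gamma^{\bullet}_{\dag}[0]\amalg\Gamma^{\bullet}_{\dag}[0]$ and $G_t^{\bullet}=\Gamma^{\bullet}_{\dag}[1]$. Both of your fixes need $G_s^{\bullet}\to G_t^{\bullet}$ to be a Reedy cofibration, and nothing provides this.
\begin{itemize}
\item In \cref{jt.not.frames} the replacement of \cite[Proposition~16.6.8 (1)]{Hir02} is applied \emph{objectwise} in $[m]$, separately to each cosimplicial object $\Xi^{\bullet}_{\dag}[m]$. So there is no latching in the $\prism_{\rev}$-direction; in any case $\prism_{\rev}$, having the automorphisms $\rho_m$, is not a Reedy category. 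A functorial factorization need not turn the two vertex maps into a map whose relative latching maps are cofibrations.
\item ``Choose $QG^{\bullet}=G^{\bullet}$'' is circular. It is legitimate only if $G^{\bullet}$ is already cofibrant in the projective arrow category, which is exactly the claim at issue.
\item For the same reason, the map $\overline G^{\bullet}\to c\bbB_{\dag}\amalg_{G_s}G_t$ in your third step is not known to be a Reedy weak equivalence, since left properness needs a cofibration to push out along. Even if it were, maps out of a non-Reedy-cofibrant object would not compute the derived fiber.
\item Your appeal to \cref{jt.lem.tdag-ordered-fibers} for the fibration property on the $t^{!}_{\dag}$ side has a similar defect: that lemma identifies fibers, not the endpoint maps themselves.
\end{itemize}

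The missing idea is that you never need the strict fiber of the unreplaced arrow to be correct.
\begin{itemize}
\item $G_s^{\bullet}$ and $G_t^{\bullet}$ are each Reedy cofibrant, and $QG^{\bullet}\to G^{\bullet}$ is a Reedy weak equivalence that is the identity in degree zero. Hence $\Hom(G_t^{\bullet},\calC)\to\Hom(QG_t^{\bullet},\calC)$ and $\Hom(G_s^{\bullet},\calC)\to\Hom(QG_s^{\bullet},\calC)$ are weak equivalences of Kan complexes.
\item These maps are compatible with restriction and with the base vertex $(c,d)$, so they induce an equivalence of homotopy fibers over $(c,d)$.
\item $QG_s^{\bullet}\to QG_t^{\bullet}$ is a Reedy cofibration. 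So restriction along it is a Kan fibration, and its strict fiber over $(c,d)$ is $\Hom_{\bbB_{\dag}\mathbin{\downarrow}\sCat^{\dag}}(\overline G^{\bullet},\calC)$.
\item The same argument applies to $X^{\bullet}$. Naturality of the functorial replacement in $\lambda^{\bullet}$ then identifies the derived fiber map of $\Lambda^{\flat}_{\calC}$ with $(\overline\lambda^{\bullet})^{*}$. This is exactly the identification the paper makes, tersely, when it says the replacement ``carries this morphism to $\overline\lambda^{\bullet}$.''
\end{itemize}

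A minor point: $\lambda^0_m=\id$ makes $\Lambda^{\flat}_{\calC}$ the identity only on \emph{vertices} of the level-$0$ spaces, not on all of $\calH_0(\calC)$. That is all you need for the base points to match.
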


\begin{proof}
  Regard $\calC_{c,d}:=(\bbB_{\dag}\xrightarrow{(c,d)}\calC)$ as a fibrant object of $\bbB_{\dag}\mathbin{\downarrow}\sCat^{\dag}_{\Bergner}$.
  By \cref{jt.lem.relative-endpoint-frame,jt.lem.relative-endpoint-cosimplicial-frame,jt.lem.lambda-relative-arrows,jt.lem.xi-endpoint-collapse},
  $\overline G^{\bullet}$ and $\overline X^{\bullet}$ are cosimplicial frames on $\iota$ in this undercategory, and $\overline\lambda^{\bullet}:\overline G^{\bullet}\to\overline X^{\bullet}$ is an augmentation-compatible Reedy weak equivalence.
  Hence \cite[Corollary~16.5.5 (1)]{Hir02} gives a weak equivalence
  \begin{align}
    (\overline\lambda^{\bullet})^{*}:
    [k\mapsto
      \Hom_{\bbB_{\dag}\mathbin{\downarrow}\sCat^{\dag}}
      (\overline X^k,\calC_{c,d})]
    \xrightarrow{\simeq}
    [k\mapsto
      \Hom_{\bbB_{\dag}\mathbin{\downarrow}\sCat^{\dag}}
      (\overline G^k,\calC_{c,d})].
  \end{align}

  It remains only to identify this derived morphism.
  Under the adjunction identifications, the component of the right mate in bidegree $(m,k)$ is
  \begin{align}
    (\Lambda^{\flat}_{\calC})_{m,k}:
    \Hom_{\sCat^{\dag}}(\Xi^k_{\dag}[m],\calC)
    \to
    \Hom_{\sCat^{\dag}}(\Gamma^k_{\dag}[m],\calC) : 
    \widehat F
    \mapsto
    \widehat F\circ\lambda_m^k.
  \end{align}
  Thus, for $m=1$ and the fixed ordered endpoints $(c,d)$, $\Lambda^{\flat}_{\calC}$ is precomposition with the augmented morphism of cosimplicial arrows $\lambda^{\bullet}:G^{\bullet}\to X^{\bullet}$.
  The functorial relative Reedy replacement of \cite[Proposition~16.6.8 (1)]{Hir02}, followed by the functorial endpoint cobase change, carries this morphism to $\overline\lambda^{\bullet}$.
  Consequently, the total derived ordered fixed-endpoint morphism induced by $\Lambda^{\flat}_{\calC}$ is represented by $(\overline\lambda^{\bullet})^{*}$.
  It is therefore an isomorphism in $\Ho(\sSet)$.
\end{proof}

\begin{notation}\label{jt.not.ordered-frame-data}
  Let $W$ be injectively fibrant and $S_{\rev}$-local:
  \begin{itemize}
    \item use the functorial projective trivial fibration $p:Q_{\proj}W\to W$ with projectively cofibrant source and write the frame unit
    \begin{align}
      \varphi:
      Q_{\proj}W
      \to
      \N^{\prism}_{\dag}
      (R_{\rmob}\frakC^{\prism}_{\dag}(Q_{\proj}W));
    \end{align}
    \item let $t_{\dag,!}(Q_{\proj}W) \xrightarrow{g_W} T_W \to *$ be the functorial free-dagger factorization;
    \item let $r_W: \frakC_{\dag}(T_W) \to R_{\rmob}\frakC_{\dag}(T_W)$ be the chosen object-preserving fibrant-replacement map, and put 
    \begin{align}
      a_W:
      \frakC_{\dag}t_{\dag,!}(Q_{\proj}W)
      \xrightarrow{\frakC_{\dag}(g_W)}
      \frakC_{\dag}(T_W)
      \xrightarrow{r_W}
      R_{\rmob}\frakC_{\dag}(T_W);
    \end{align}
    \item under the rigidification identity, the underlying functor of $r_W$ is an object-preserving functor $\overline{\eta}_W: \frakC(\UdagSSet T_W) \to \UdagSCat(R_{\rmob}\frakC_{\dag}(T_W))$;
    \item let $\eta_W: \UdagSSet T_W \to N\UdagSCat(R_{\rmob}\frakC_{\dag}(T_W))$ be its adjoint.
    Since $r_W$ is the identity on objects, it is the identity on vertices;
    \item let $\widetilde g_W: \res(Q_{\proj}W) \to t^!(\UdagSSet T_W)$ be the actual adjoint of the underlying map $t_!(\res(Q_{\proj}W))\to\UdagSSet T_W$, and write $a_W^{\flat}: Q_{\proj}W \to t_{\dag}^{!}\N_{\dag}(R_{\rmob}\frakC_{\dag}(T_W))$ for the adjoint of $a_W$.
  \end{itemize}
\end{notation}

\begin{lemma}\label{jt.lem.ordered-rigidification-unit}
  In the situation of \cref{jt.not.ordered-frame-data}, for every ordered pair of vertices $x,y\in(\UdagSSet T_W)_0$, the map $\eta_W$ induces an isomorphism on derived ordered mapping spaces over $(x,y)$.
\end{lemma}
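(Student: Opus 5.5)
The plan is to forget the dagger entirely and reduce to the ordinary Joyal--Bergner Quillen equivalence. The statement concerns only the underlying map $\eta_W:\UdagSSet T_W\to N\UdagSCat(R_{\rmob}\frakC_{\dag}(T_W))$, and ordered mapping spaces are invariants of the underlying simplicial set.

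\emph{Step 1: identify $\eta_W$ with the ordinary derived unit.} By the rigidification identity $\UdagSCat\frakC_{\dag}=\frakC\UdagSSet$, the underlying functor $\overline\eta_W$ of $r_W$ is a functor $\frakC(\UdagSSet T_W)\to\UdagSCat R_{\rmob}\frakC_{\dag}(T_W)$. Since $R_{\rmob}$ applies $\Ex^{\infty}$ homwise, and $\Ex^\infty$ does not interact with the dagger structure on objects, we have $\UdagSCat R_{\rmob}\calD=\Ex^{\infty}_{\mathrm{hom}}\UdagSCat\calD$. Hence $\overline\eta_W$ is the ordinary object-preserving fibrant replacement
\begin{align}
  \frakC(K)\to\Ex^{\infty}_{\mathrm{hom}}\frakC(K),
  \qquad K:=\UdagSSet T_W,
\end{align}
which is a local weak equivalence with locally Kan, hence Bergner fibrant, target. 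Its adjoint $\eta_W$ is therefore exactly the derived unit $K\to N R\frakC(K)$ of the Quillen equivalence $\frakC\dashv N$, evaluated at the cofibrant object $K$. By the standard characterization of Quillen equivalences, this derived unit is a Joyal equivalence.

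\emph{Step 2: pass to ordered mapping spaces.} By the preceding remark, $K$ is a quasi-category, because $T_W\to *$ lifts against free dagger inner horns. The target $N$ of a locally Kan simplicial category is also a quasi-category. A Joyal equivalence between quasi-categories is fully faithful: it induces weak equivalences on ordered fixed-endpoint mapping spaces computed by $\Map_E(-;x,y)$, by \cite[Corollary~4.8]{DS11} together with the invariance of $\Map_E$ under Joyal equivalences of fibrant objects in $\partial\Delta^1\mathbin{\downarrow}\sSet_{\Joyal}$, i.e.\ \cite[Corollary~16.5.5]{Hir02} applied to the cosimplicial resolution $C_E^\bullet$. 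Since $r_W$ is the identity on objects, $\eta_W$ is the identity on vertices. So for each ordered pair $(x,y)$ the induced map is $\Map_E(K;x,y)\to\Map_E(N R\frakC K;x,y)$ over the same endpoints, and it is a weak equivalence.

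\emph{Step 3 (alternative, if Step 2 needs reinforcing).} One can instead use the natural equivalence $\Map_K(x,y)\simeq\frakC(K)(x,y)$ of Dugger--Spivak, followed by the homwise equivalence $\frakC(K)(x,y)\to\Ex^\infty\frakC(K)(x,y)$.

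The main obstacle I expect is bookkeeping in Step 1. One must check that the underlying functor of $R_{\rmob}$ really agrees with an ordinary Bergner fibrant replacement. One must also check that the adjunction $\frakC\dashv N$ applied to $\overline\eta_W$ produces precisely the map whose naturality in the endpoint undercategory is needed, so that the derived ordered fiber is taken over the same pair $(x,y)$ on both sides. I would handle this by writing $\eta_W$ as the composite of the unit $K\to N\frakC K$ with $N$ of the homwise inclusion into $\Ex^\infty$, both of which are vertex-preserving.
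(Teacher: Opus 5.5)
Your proof is correct, but its key step differs from the paper's. The paper never identifies $\eta_W$ with the derived unit of $\frakC\dashv N$; it argues directly on mapping spaces. For a quasi-category $S$ it uses the natural Dugger--Spivak isomorphism $d_{S;u,v}\colon\Map_E(S;u,v)\to\frakC(S)(u,v)$ in $\Ho(\sSet)$, applied to both $\UdagSSet T_W$ and the nerve $N\UdagSCat(R_{\rmob}\frakC_{\dag}(T_W))$. It combines this with the triangle identity $\varepsilon\circ\frakC(\eta_W)=\overline\eta_W$, with the fact that the counit $\frakC N\calD\to\calD$ is a homwise equivalence for fibrant $\calD$ (Dugger--Spivak, Proposition~5.9), and with the homwise weak equivalence $(r_W)_{x,y}$. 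Two-out-of-three in the resulting commutative diagram then makes $(\eta_W)_*$ invertible.

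This is your Step~3 carried to completion. As you wrote Step~3, it only handles the source side. You would still need the target-side identification of $\Map_E(N\UdagSCat R_{\rmob}\frakC_{\dag}(T_W);x,y)$ with $\UdagSCat R_{\rmob}\frakC_{\dag}(T_W)(x,y)$, via $d$ and the counit, together with the naturality square relating it to $(\eta_W)_*$.

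Your Steps~1--2 instead package everything into the ordinary Joyal--Bergner Quillen equivalence, and they are complete. The map $\overline\eta_W$ is an object-preserving local weak equivalence onto a locally Kan, hence Bergner fibrant, simplicial category, and $K=\UdagSSet T_W$ is Joyal cofibrant. So $\eta_W=N(\overline\eta_W)\circ(\text{unit}_K)$ represents the derived unit and is a Joyal equivalence between quasi-categories that fixes $x$ and $y$. Invariance of $\Map_E(-;x,y)$ under weak equivalences between fibrant objects of $\partial\Delta^1\mathbin{\downarrow}\sSet_{\Joyal}$, computed with the Reedy cofibrant resolution $C_E^\bullet$, then finishes the argument.

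Your route is more formal and gives a stronger conclusion: $\eta_W$ is itself a Joyal equivalence, not merely fully faithful. The cost is invoking the full Quillen equivalence. The paper's route uses only the local mapping-space comparisons of Dugger--Spivak. It also keeps the identification explicitly at the level of $\frakC$-mapping spaces, which matches how the surrounding arguments compute derived ordered endpoint fibers.
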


\begin{proof}
  The simplicial set $\UdagSSet T_W$ is a quasi-category, and $\UdagSCat(R_{\rmob}\frakC_{\dag}(T_W))$ is Bergner fibrant.
  Hence $N\UdagSCat(R_{\rmob}\frakC_{\dag}(T_W))$ is also a quasi-category.
  By \cref{jt.not.ordered-mapping}, the derived ordered mapping spaces are computed by $\Map_E(\UdagSSet T_W;x,y)$ and $\Map_E(N\UdagSCat(R_{\rmob}\frakC_{\dag}(T_W));x,y)$.
  
  For a quasi-category $S$ and vertices $u,v$, let $d_{S;u,v}$ denote the natural isomorphism from
  $\Map_E(S;u,v)$ to $\frakC(S)(u,v)$ in $\Ho(\sSet)$ supplied by \cite[Corollary~5.3]{DS11}.

  Naturality of this comparison and the adjunction identity $\varepsilon_{\UdagSCat(R_{\rmob}\frakC_{\dag}(T_W))} \circ \frakC(\eta_W) = \overline\eta_W$ give the following commutative diagram in $\Ho(\sSet)$:
  \begin{align}
    &\begin{tikzpicture}[auto]
        \node (source-mapping) at (0,3) {$
          \Map_E(\UdagSSet T_W;x,y)$};
        \node (target-mapping) at (7,3) {$
          \Map_E
          (N\UdagSCat(R_{\rmob}\frakC_{\dag}(T_W));x,y)$};
        \node (source-rigidification) at (0,1.5) {$
          \frakC(\UdagSSet T_W)(x,y)$};
        \node (target-rigidification) at (7,1.5) {$
          \frakC
          (N\UdagSCat(R_{\rmob}\frakC_{\dag}(T_W)))(x,y)$};
        \node (target-category) at (7,0) {$
          \UdagSCat
          (R_{\rmob}\frakC_{\dag}(T_W))(x,y)$};
        \draw[->]
          (source-mapping) --
          node[above] {$(\eta_W)_*$}
          (target-mapping);
        \draw[->]
          (source-mapping) --
          node[left] {$d_{\UdagSSet T_W;x,y}$}
          (source-rigidification);
        \draw[->]
          (target-mapping) --
          node[right] {$d_{N\UdagSCat(R_{\rmob}\frakC_{\dag}(T_W));x,y}$}
          (target-rigidification);
        \draw[->]
          (source-rigidification) --
          node[above] {$\frakC(\eta_W)_{x,y}$}
          (target-rigidification);
        \draw[->]
          (source-rigidification) --
          node[below] {$(r_W)_{x,y}$}
          (target-category);
        \draw[->]
          (target-rigidification) --
          node[right] {$
            (\varepsilon_{\UdagSCat(R_{\rmob}\frakC_{\dag}(T_W))})_{x,y}$}
          (target-category);
      \end{tikzpicture}%
  \end{align}
  The comparison morphisms and the counit morphism are isomorphisms by \cite[Corollary~5.3 and Proposition~5.9]{DS11}.
  The morphism $(r_W)_{x,y}$ is represented by a weak equivalence.
  Hence $(\eta_W)_*$ is an isomorphism.
\end{proof}

\begin{lemma}\label{jt.lem.strict-ordered-frame-mate}
  In the situation of \cref{jt.not.ordered-frame-data}, for every $x,y\in((Q_{\proj}W)_0)_0$, the maps
  \begin{align}
    a_W^{\flat}
    :
    Q_{\proj}W \to t_{\dag}^{!}\N_{\dag}(R_{\rmob}\frakC_{\dag}(T_W))
    \quad \text{and} \quad
    \Lambda^{\flat}_{R_{\rmob}\frakC_{\dag}(T_W)}
    :
    t_{\dag}^{!}\N_{\dag}
    (R_{\rmob}\frakC_{\dag}(T_W)) \to \N^{\prism}_{\dag}(R_{\rmob}\frakC_{\dag}(T_W))
  \end{align}
  induce isomorphisms on the derived ordered endpoint fibers over $(x,y)$.

  Moreover, there exists a commutative diagram
  \begin{align}\label{jt.eq.ordered-frame-mate}
    &\begin{tikzpicture}[auto]
        \node (source) at (0,1.5) {$Q_{\proj}W$};
        \node (intermediate) at (6,1.5) {$
          t_{\dag}^{!}\N_{\dag}
          (R_{\rmob}\frakC_{\dag}(T_W))$};
        \node (source-nerve) at (0,0) {$
          \N^{\prism}_{\dag}
          \frakC^{\prism}_{\dag}(Q_{\proj}W)$};
        \node (target-nerve) at (6,0) {$
          \N^{\prism}_{\dag}
          (R_{\rmob}\frakC_{\dag}(T_W))$};
        \draw[->]
          (source) --
          node[above] {$a_W^{\flat}$}
          (intermediate);
        \draw[->]
          (source) --
          node[left] {$\eta^{\prism}_{Q_{\proj}W}$}
          (source-nerve);
        \draw[->]
          (intermediate) --
          node[right] {$\Lambda^{\flat}_{R_{\rmob}\frakC_{\dag}(T_W)}$}
          (target-nerve);
        \draw[->]
          (source-nerve) --
          node {$\N^{\prism}_{\dag}(a_W\Lambda_{Q_{\proj}W})$}
          (target-nerve);
      \end{tikzpicture}
  \end{align}
  This diagram preserves the specified ordered pair of vertices $(x,y)$.
\end{lemma}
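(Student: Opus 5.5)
The lemma has three parts: the commutative square, the ordered-fiber statement for $\Lambda^{\flat}$, and the ordered-fiber statement for $a_W^{\flat}$. I would prove them in that order, handling the last by comparison with the ordinary Joyal--Tierney picture.

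\emph{Commutativity of \eqref{jt.eq.ordered-frame-mate}.} I would check it by adjunction. Under $\frakC^{\prism}_{\dag}\dashv\N^{\prism}_{\dag}$, the lower composite $\N^{\prism}_{\dag}(a_W\Lambda_{Q_{\proj}W})\circ\eta^{\prism}_{Q_{\proj}W}$ is adjoint to $a_W\circ\Lambda_{Q_{\proj}W}$. Since $\Lambda^{\flat}$ is the right mate of $\Lambda$ (\cref{jt.not.lambda-mate}), the upper composite $\Lambda^{\flat}\circ a_W^{\flat}$ is adjoint to the same map $a_W\circ\Lambda_{Q_{\proj}W}$. This is the standard mate identity $\Lambda^{\flat}_{\calD}\circ f^{\flat}=(f\circ\Lambda)^{\flat}$. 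On bidegree $(m,k)$ components the identity just says that precomposing with $\lambda^k_m$ commutes with the adjunction bijections. Vertex preservation follows because $\lambda^0=\id$ on $\Gamma^0_{\dag}[0]=A_0$, and because $g_W$ and $r_W$ are vertex- and object-preserving.

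\emph{The $\Lambda^{\flat}$ statement.} The target $\calC:=R_{\rmob}\frakC_{\dag}(T_W)$ is dagger Bergner fibrant, so this is exactly \cref{jt.lem.ordered-lambda-mate} applied at the pair $(x,y)$.

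\emph{The $a_W^{\flat}$ statement.} First I would identify the strict ordered fiber of $t^{!}_{\dag}\N_{\dag}\calC$ over $(x,y)$ with $\Map_E(N\UdagSCat\calC;x,y)$, using \cref{jt.lem.tdag-ordered-fibers} and $\UdagSSet\N_{\dag}=N\UdagSCat$. Since $\N_{\dag}\calC$ is fibrant, this strict fiber is a derived fiber. Next, after forgetting the dagger and passing through $\vartheta$, the map $a_W^{\flat}$ factors on ordered fibers as
\[
Q_{\proj}W\xrightarrow{\widetilde g_W} t^{!}(\UdagSSet T_W)\xrightarrow{t^{!}(\eta_W)} t^{!}N\UdagSCat\calC .
\]
The second map induces an isomorphism on derived ordered fibers by \cref{jt.lem.ordered-rigidification-unit}, again via \cref{jt.not.ordered-mapping}.

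For the first map I would argue as follows. Its underlying adjoint $t_!(\res Q_{\proj}W)\to\UdagSSet T_W$ is inner anodyne (\cref{jt.lem.free-inner-underlying}) and vertex-preserving. The object $\res Q_{\proj}W$ is Reedy cofibrant. Its injective fibrant replacement $\res W$ is a complete Segal space in the ordinary sense: the Segal maps are the restrictions of $\delta\FdagSSet$ of the spine, and ordinary completeness should follow from unitary completeness via \cref{jt.lem.uequiv-paths}. I therefore expect the ordinary Joyal--Tierney theorem \cite[Theorem~4.12]{JT06} to show that the derived unit $\res W\simeq\res Q_{\proj}W\to t^{!}(\text{fibrant }t_!\res Q_{\proj}W)$ is a Rezk equivalence between complete Segal spaces. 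Such an equivalence induces equivalences on the ordered mapping spaces $\Map_W(x,y)$, because these are homotopy fibers of $W_1\to W_0\times W_0$. Here $\UdagSSet T_W$ is a quasi-category Joyal-equivalent to $t_!\res Q_{\proj}W$ by a vertex-preserving map, so it serves as that fibrant replacement. Composing the two steps gives the claim for $a_W^{\flat}$.

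\emph{Main obstacle.} I expect the hardest step to be the last one: justifying that $\res W$ is an ordinary complete Segal space, and that the derived mapping spaces computed through $Q_{\proj}W$ and $\widetilde g_W$ match the Joyal--Tierney derived unit on ordered fibers. Ordinary completeness may not follow directly from unitary completeness. If it does not, I would avoid it: the ordered mapping-space comparison in \cite{JT06} uses only the Segal condition together with \cite[Corollary~4.8]{DS11}. So I would prove directly that for a Reedy fibrant Segal space $V$, the map $\Map_V(x,y)\to\Map_E(t_!V^{\mathrm{fib}};x,y)$ is an equivalence. This reduces, via the Segal-space-to-quasi-category comparison, to the known identification of mapping spaces.
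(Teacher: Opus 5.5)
Your proposal is correct and follows the paper's proof. The square commutes by the mate identity, the $\Lambda^{\flat}$ claim is \cref{jt.lem.ordered-lambda-mate}, and the $a_W^{\flat}$ claim comes from the same identity $\vartheta\circ\res(a_W^{\flat})=t^{!}(\eta_W)\circ\widetilde g_W$, combined with \cref{jt.lem.tdag-ordered-fibers}, \cref{jt.lem.ordered-rigidification-unit} and the Joyal--Tierney derived unit. Your doubt about ordinary completeness is justified, since non-unitary isomorphisms generally make $\res W$ incomplete, so use your fallback instead: $\res W$ is a Reedy fibrant Segal space, $t^{!}(\UdagSSet T_W)$ is a complete Segal space, and a Rezk equivalence between Segal spaces is a Dwyer--Kan equivalence \cite[Theorem~7.7]{Rez01}; this is exactly what the paper tacitly uses when it asserts that $\calF_{x,y}$ is invariant under endpoint-preserving Rezk equivalences.
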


\begin{proof}
  Since $\UdagSSet t_{\dag,!}=t_!\res$, the underlying ordinary adjoint of $a_W$ is the composite
  \begin{align}
    \res(Q_{\proj}W)
    \xrightarrow{\widetilde g_W}
    t^!(\UdagSSet T_W)
    \xrightarrow{t^!(\eta_W)}
    t^!N\UdagSCat
    (R_{\rmob}\frakC_{\dag}(T_W)).
  \end{align}
  For $Y \in \sSpace$, define its derived ordered endpoint fiber by
  \begin{align}
    \calF_{x,y}(Y)
    :=
    \RFib_{(x,y)}(
      Y_1
      \to
      Y_0\times Y_0)
    \in
    \Ho(\sSet).
  \end{align}
  It is therefore invariant under Rezk weak equivalences which preserve the specified endpoints.

  By construction and \cref{jt.lem.free-inner-underlying}, the adjoint map $ t_!(\res(Q_{\proj}W)) \to \UdagSSet T_W$ is a vertex-preserving Joyal fibrant replacement.
  Hence $\widetilde g_W$ represents the derived unit of the Joyal--Tierney Quillen equivalence \cite[Theorem~4.12]{JT06}, and is a weak equivalence in $\sSpace_{\CSS}$.
  Since it preserves $x$ and $y$, homotopy invariance in the target variable of the ordered-endpoint homotopy function complex shows that $\widetilde g_W$ induces an isomorphism on the derived ordered endpoint fiber over $(x,y)$.
  By \cref{jt.not.ordered-mapping,jt.lem.ordered-rigidification-unit}, the same is true of $t^!(\eta_W)$.

  Compatibility of \eqref{jt.eq.tdag-right-underlying} with the two adjunction bijections, together with $\UdagSSet t_{\dag,!}=t_!\res$ and $\UdagSSet \N_{\dag}=N\UdagSCat$, gives 
  \begin{align}\label{jt.eq.underlying-adjoint-compatibility}
    \vartheta_{\N_{\dag}(R_{\rmob}\frakC_{\dag}(T_W))}
    \circ
    \res(a_W^{\flat})
    =
    t^!(\eta_W)
    \circ
    \widetilde g_W.
  \end{align}

  Applying $\calF_{x,y}$ to \eqref{jt.eq.underlying-adjoint-compatibility} gives the following commutative diagram in $\Ho(\sSet)$:
  \begin{align}
    &\begin{tikzpicture}[auto]
        \node (source) at (0,1.5) {$
          \calF_{x,y}(\res(Q_{\proj}W))$};
        \node (dagger-target) at (7,1.5) {$
          \calF_{x,y}
          (\res(t_{\dag}^{!}\N_{\dag}
          (R_{\rmob}\frakC_{\dag}(T_W))))$};
        \node (ordinary-source) at (0,0) {$
          \calF_{x,y}(t^!(\UdagSSet T_W))$};
        \node (ordinary-target) at (7,0) {$
          \calF_{x,y}
          (t^!N\UdagSCat
          (R_{\rmob}\frakC_{\dag}(T_W)))$};
        \draw[->]
          (source) --
          node[above] {$(\res(a_W^{\flat}))_*$}
          (dagger-target);
        \draw[->]
          (source) --
          node[left] {$(\widetilde g_W)_*$}
          (ordinary-source);
        \draw[->]
          (dagger-target) --
          node[right] {$
            (\vartheta_{\N_{\dag}(R_{\rmob}\frakC_{\dag}(T_W))})_*$}
          (ordinary-target);
        \draw[->]
          (ordinary-source) --
          node {$(t^!(\eta_W))_*$}
          (ordinary-target);
      \end{tikzpicture}
  \end{align}
  The morphisms labeled $(\widetilde g_W)_*$ and $(t^!(\eta_W))_*$ are isomorphisms by the preceding paragraph.
  By \cref{jt.lem.tdag-ordered-fibers}, $(\vartheta_{\N_{\dag}(R_{\rmob}\frakC_{\dag}(T_W))})_*$ is also an isomorphism;
  it is represented by a strict isomorphism of fixed-endpoint fibers.
  Therefore $(\res(a_W^{\flat}))_*$ is an isomorphism.
  Since the ordered endpoint fibers of a dagger object are computed after applying $\res$, this says precisely that $a_W^{\flat}$ induces an isomorphism on the derived ordered endpoint fibers over $(x,y)$.

  The simplicial category $R_{\rmob}\frakC_{\dag}(T_W)$ is dagger Bergner fibrant.
  Hence \cref{jt.lem.ordered-lambda-mate} implies that $\Lambda^{\flat}_{R_{\rmob}\frakC_{\dag}(T_W)}$ induces an isomorphism on the derived ordered fixed-endpoint fibers over $(x,y)$.
  The defining mate identity shows that the diagram \eqref{jt.eq.ordered-frame-mate} is commutative.
\end{proof}

\begin{theorem}\label{jt.thm.strict}
  Every $S_{\rev}$-local object $W$ is connected to a frame nerve by a zigzag of local equivalences.
  If $W$ is injectively fibrant, then there exists the zigzag of local equivalences:
  \begin{align}
    W
    \xleftarrow[\sim]{p}
    Q_{\proj}W
    \xrightarrow[\sim]{\varphi}
    \N^{\prism}_{\dag}
    (R_{\rmob}\frakC^{\prism}_{\dag}(Q_{\proj}W))
  \end{align}
\end{theorem}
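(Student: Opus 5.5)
First I would reduce to the injectively fibrant case. If $W$ is merely $S_{\rev}$-local, an injectively fibrant replacement $W\to W'$ along a levelwise weak equivalence is still local by \cref{h1.lem.local-objects} (3), so it suffices to build the displayed zigzag for $W'$. From now on $W$ is injectively fibrant and $S_{\rev}$-local. The map $p$ is a projective trivial fibration, hence a levelwise weak equivalence and in particular a local equivalence. The whole task is therefore to show that the frame unit $\varphi$ is a local equivalence.

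For $\varphi$, I would first observe that its target $\N^{\prism}_{\dag}(R_{\rmob}\frakC^{\prism}_{\dag}(Q_{\proj}W))$ is the frame nerve of a fibrant dagger simplicial category, so it is $S_{\rev}$-local in the derived sense by \cref{jt.cor.local}. The source $Q_{\proj}W$ is levelwise equivalent to $W$, hence also local. After injectively fibrant replacement of both sides, which changes nothing up to levelwise equivalence, I would apply the Rezk-type recognition criterion \cref{jt.lem.rezk-criterion}. This leaves two conditions to check: (1) an equivalence on ordered mapping spaces $\Map(x,y)$, and (2) coherent-unitary essential surjectivity.

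For (1), I would factor $\varphi$ through the commutative square \eqref{jt.eq.ordered-frame-mate} of \cref{jt.lem.strict-ordered-frame-mate}. Since $\Lambda_{Q_{\proj}W}$ is a dagger DK-equivalence by \cref{jt.lem.frame-comparison} and $a_W$ is a composite of DK-equivalences, the functor $a_W\Lambda_{Q_{\proj}W}$ factors through $R_{\rmob}\frakC^{\prism}_{\dag}(Q_{\proj}W)$ up to the object-preserving replacement. Consequently the bottom map $\N^{\prism}_{\dag}(a_W\Lambda)$, after composing with $\varphi$, compares $\N^{\prism}_{\dag}$ of two DK-equivalent fibrant targets. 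Since $\N^{\prism}_{\dag}$ is right Quillen (\cref{jt.lem.quillen}), it sends weak equivalences between fibrant objects to levelwise equivalences, so $\varphi$ and the unit $\eta^{\prism}$ agree on ordered endpoint fibers up to isomorphism. The top path $\Lambda^\flat\circ a_W^\flat$ induces isomorphisms on derived ordered endpoint fibers by \cref{jt.lem.strict-ordered-frame-mate}. Two-out-of-three in $\Ho(\sSet)$ then yields condition (1).

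For (2), I would note that $\varphi$ is bijective on vertices: the composite $Q_{\proj}W\to\N^{\prism}_{\dag}(\cdots)$ on $(0,0)$-simplices is the identification $\Ob\frakC^{\prism}_{\dag}(Q_{\proj}W)\cong((Q_{\proj}W)_0)_0$, and $R_{\rmob}$ is object-preserving. Every vertex of the target is therefore in the image, so identities witness condition (2).

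The main obstacle is the bookkeeping in (1). The recognition criterion wants the homotopy fibers of $\varphi$ itself, while \cref{jt.lem.strict-ordered-frame-mate} controls the square with $a_W\Lambda$ rather than with $\varphi$. I expect to need a careful argument that $\N^{\prism}_{\dag}(a_W\Lambda_{Q_{\proj}W})$ composed with the relevant replacement is a levelwise equivalence between fibrant objects preserving the chosen endpoints. I would try first to compare both maps via the naturality of $R_{\rmob}$ applied to $a_W\Lambda$, which gives a DK-equivalence $R_{\rmob}\frakC^{\prism}_{\dag}(Q_{\proj}W)\to R_{\rmob}R_{\rmob}\frakC_{\dag}(T_W)$ compatible with the units.
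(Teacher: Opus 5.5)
Your plan has a genuine gap in condition (2), and step (1) is not correct as written. For (2) you claim $\varphi$ is bijective on vertices because $\Ob\frakC^{\prism}_{\dag}(Q_{\proj}W)\cong((Q_{\proj}W)_0)_0$, but this does not hold in general. Since $\Ob$ commutes with colimits, $\Ob\frakC^{\prism}_{\dag}(Q_{\proj}W)=\int^{([m],[k])}\Ob\Gamma^k_{\dag}[m]\times(Q_{\proj}W)_{m,k}$, and only $\Gamma^0_{\dag}[m]=A_m$ is pinned down. For $k\geq1$, $\Gamma^k_{\dag}[m]$ comes from a functorial factorization in $\sCat^{\dag}_{\Bergner}$, which freely adjoins objects. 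So $\lambda^k_m$ need not be injective on objects; this is why \cref{jt.lem.frame-comparison} proves only coherent-unitary essential surjectivity of $\Lambda_W$. Hence the vertex set of the target of $\varphi$ can be strictly larger than $((Q_{\proj}W)_0)_0$, and identities do not witness (2).

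The map that genuinely is the identity on vertices is $\psi_W:=\N^{\prism}_{\dag}(a_W\Lambda_{Q_{\proj}W})\circ\eta^{\prism}_{Q_{\proj}W}$. Its target $\N^{\prism}_{\dag}(R_{\rmob}\frakC_{\dag}(T_W))$ has vertex set $(T_W)_0=(t_{\dag,!}Q_{\proj}W)_0=((Q_{\proj}W)_0)_0$, because $g_W$ adds no vertices, $r_W$ preserves objects, and $\lambda^0_m=\id$. In (1), $\N^{\prism}_{\dag}(a_W\Lambda)$ cannot be composed with $\varphi$. Also $\N^{\prism}_{\dag}(j_W)$ need not be a levelwise equivalence, since $\frakC^{\prism}_{\dag}(Q_{\proj}W)$ is not fibrant. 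So nothing you wrote relates $\varphi$ to $\eta^{\prism}$ on endpoint fibers.

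The device in your last paragraph repairs both steps once you make it explicit. Naturality of $j\colon\id\to R_{\rmob}$ gives
\[
\N^{\prism}_{\dag}\bigl(R_{\rmob}(a_W\Lambda_{Q_{\proj}W})\bigr)\circ\varphi
=
\N^{\prism}_{\dag}\bigl(j_{R_{\rmob}\frakC_{\dag}(T_W)}\bigr)\circ\psi_W .
\]
Both outer maps are $\N^{\prism}_{\dag}$ of dagger DK-equivalences between fibrant objects (use two-out-of-three for $R_{\rmob}(a_W\Lambda)$). By Ken Brown they are levelwise weak equivalences, so $\varphi$ is a local equivalence if and only if $\psi_W$ is. Condition (1) then follows from $\psi_W=\Lambda^{\flat}\circ a_W^{\flat}$ and \cref{jt.lem.strict-ordered-frame-mate}. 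Condition (2) must be read as $\pi_0$-surjectivity on vertex spaces, transported from the vertex bijection of $\psi_W$, not as vertex bijectivity of $\varphi$. After a functorial injective fibrant replacement (the fiberwise criterion needs only one vertex pair per component), you can invoke \cref{jt.lem.rezk-criterion}.

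This square is the strict form of the paper's final identity $\bfR\N^{\prism}_{\dag}(h_W)\circ[\varphi]=[\psi_W]$. The paper instead runs the criterion on $\psi_W$ through an actual map $f\colon W\to Z$ representing $[q][\psi_W][p]^{-1}$. That forces it to move endpoints along a simplicial homotopy using coherent unitaries and Rezk's Corollary~11.5. Your functorial-replacement version, once repaired, avoids that bookkeeping.
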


\begin{proof}
  First suppose that $W$ is injectively fibrant.
  Use the maps fixed in \cref{jt.not.ordered-frame-data}, and put
  \begin{align}
    \psi_W
    :=
    \N^{\prism}_{\dag}
    (a_W\Lambda_{Q_{\proj}W})
    \circ
    \eta^{\prism}_{Q_{\proj}W}:
    Q_{\proj}W
    \to
    \N^{\prism}_{\dag}
    (R_{\rmob}\frakC_{\dag}(T_W)).
  \end{align}
  The mate identity of \cref{jt.lem.strict-ordered-frame-mate} gives
  \begin{align}\label{jt.eq.psi-mate}
    \psi_W
    =
    \Lambda^{\flat}_{R_{\rmob}\frakC_{\dag}(T_W)}
    \circ
    a_W^{\flat}.
  \end{align}
  Hence, for every $x,y\in((Q_{\proj}W)_0)_0$, the map induced by $\psi_W$ on the derived ordered fixed-endpoint fibers over $(x,y)$ is an isomorphism in $\Ho(\sSet)$.

  The map $g_W:t_{\dag,!}(Q_{\proj}W)\to T_W$ preserves vertices, and the map $r_W: \frakC_{\dag}(T_W) \to R_{\rmob}\frakC_{\dag}(T_W)$ preserves objects.
  Together with $\lambda_m^0=\id_{A_m}$, this gives
  \begin{align}
    \Ob(R_{\rmob}\frakC_{\dag}(T_W))
    =
    (\UdagSSet T_W)_0
    =
    ((Q_{\proj}W)_0)_0,
  \end{align}
  and $\psi_W$ is the identity under this identification of vertices.

  Choose an injective fibrant replacement $q : \N^{\prism}_{\dag}(R_{\rmob}\frakC_{\dag}(T_W)) \to Z$ along a rowwise weak equivalence.
  By \cref{jt.cor.local}, $Z$ is local fibrant.
  The map $p$ is a rowwise weak equivalence, hence an isomorphism in $\Ho (L_{S_{\rev}}\sSpace^{\dag}_{\rev,\inj})$, and all objects of $L_{S_{\rev}}\sSpace^{\dag}_{\rev,\inj}$ are cofibrant.
  Therefore the morphism $[q][\psi_W][p]^{-1} : [W] \to [Z]$ is represented by an actual map $f:W\to Z$ between cofibrant--fibrant objects.
  The equality $[f][p] = [q][\psi_W]$ is represented by a simplicial homotopy from $Q_{\proj}W$ to $Z$ by \cref{h1.lem.localized-simplicial} and \cite[Proposition~9.5.24 (2)]{Hir02}.

  For every $x,y\in((Q_{\proj}W)_0)_0$, homotopy invariance of derived endpoint fibers, together with \eqref{jt.eq.psi-mate}, gives a zigzag
  \begin{align}
    \Map_W(px,py)
    &\simeq
    \RFib_{(x,y)}
    ((Q_{\proj}W)_1
      \to
      (Q_{\proj}W)_0\times(Q_{\proj}W)_0)\notag\\
    &\xrightarrow{\simeq}
    \RFib_{(\psi_Wx,\psi_Wy)}
    ((\N^{\prism}_{\dag}
        (R_{\rmob}\frakC_{\dag}(T_W)))_1
      \to
      (\N^{\prism}_{\dag}
        (R_{\rmob}\frakC_{\dag}(T_W)))_0^2)\notag\\
    &\simeq
    \Map_Z(q\psi_Wx,q\psi_Wy).
  \end{align}

  The preceding simplicial homotopy gives paths from $f(px)$ to $q\psi_Wx$ and from $f(py)$ to $q\psi_Wy$.
  By \cref{jt.lem.uequiv-paths}, these paths are represented by coherent unitaries;
  composition with them induces weak equivalences on mapping spaces by \cite[Corollary~11.5]{Rez01}.

  Naturality of the endpoint-fiber comparisons therefore shows that $\Map_W(px,py) \to \Map_Z(f(px),f(py))$ is a weak equivalence.
  The projective cofibrant replacement $p:Q_{\proj}W\to W$ is a projective trivial fibration.
  Hence $p_{0,0}:((Q_{\proj}W)_0)_0\to(W_0)_0$ is surjective, and the preceding argument proves condition~(1) of \cref{jt.lem.rezk-criterion} for every pair of vertices of $W$.

  To verify condition (2), let $\zeta\in(Z_0)_0$.
  Since $\pi_0(q_0)$ is surjective, there are a vertex $d$ in the source of $q_0$ and a path $\zeta\simeq qd$.
  The identification of vertex sets gives $d=\psi_Wx$ for some $x\in((Q_{\proj}W)_0)_0$, while the preceding simplicial homotopy gives a path $q\psi_Wx\simeq f(px)$.
  Thus
  \begin{align}
    \zeta
    \simeq
    qd
    =
    q\psi_Wx
    \simeq
    f(px).
  \end{align}
  Hence $\zeta$ and $f(px)$ lie in the same path component of $Z_0$.
  By \cref{jt.lem.uequiv-paths}, they are coherently unitarily equivalent, which verifies condition~(2) of \cref{jt.lem.rezk-criterion}.
  It follows from \cref{jt.lem.rezk-criterion} that $f$ is a weak equivalence in $L_{S_{\rev}}\sSpace^{\dag}_{\rev,\inj}$.
  Since $W$ and $Z$ are local fibrant, \cite[Theorem~3.2.13 (1)]{Hir02} then implies that $f$ is a weak equivalence in $\sSpace^{\dag}_{\rev,\inj}$, and hence a rowwise weak equivalence.
  Thus $[q][\psi_W][p]^{-1}$ is an isomorphism in $\Ho (L_{S_{\rev}}\sSpace^{\dag}_{\rev,\inj})$.
  Since $p$ and $q$ are local equivalences, $[\psi_W]$ is an isomorphism;
  thus $\psi_W$ is a local equivalence.

  Let $j_W : \frakC^{\prism}_{\dag}(Q_{\proj}W) \to R_{\rmob}\frakC^{\prism}_{\dag}(Q_{\proj}W)$ be the object-preserving fibrant-replacement map whose adjoint is $\varphi$.
  The map $j_W$ is a dagger DK-equivalence by construction.
  The map
  \begin{align}
    a_W\Lambda_{Q_{\proj}W}:
    \frakC^{\prism}_{\dag}(Q_{\proj}W)
    \to
    R_{\rmob}\frakC_{\dag}(T_W)
  \end{align}
  is a dagger DK-equivalence by \cref{jt.lem.frame-comparison,jt.lem.free-inner-trivial}.
  Hence
  \begin{align}
    h_W
    :=
    [a_W\Lambda_{Q_{\proj}W}][j_W]^{-1}:
    R_{\rmob}
    \frakC^{\prism}_{\dag}(Q_{\proj}W)
    \xrightarrow{\cong}
    R_{\rmob}\frakC_{\dag}(T_W)
  \end{align}
  is an isomorphism in $\Ho(\sCat^{\dag}_{\Bergner})$.
  The morphisms $[\varphi]$ and $[\psi_W]$ are the derived adjoints of $[j_W]$ and
  $[a_W\Lambda_{Q_{\proj}W}]$, respectively.
  Under the derived identity equivalence of \cref{h1.lem.rev-models}, naturality of the derived adjunction therefore gives
  \begin{align}
    \bfR\N^{\prism}_{\dag}(h_W)
    \circ
    [\varphi]
    =
    [\psi_W].
  \end{align}
  The first factor and the right-hand side are isomorphisms.
  It follows that $[\varphi]$ is an isomorphism, so $\varphi$ is a local equivalence.

  For an arbitrary $S_{\rev}$-local $W$, first choose an injectively fibrant replacement along a local equivalence and apply the preceding construction to its fibrant replacement object.
\end{proof}

\begin{corollary}\label{jt.cor.frame-detection}
  Let $u:X\to Y$ be a morphism in $\sSpace^{\dag}_{\rev}$.
  Then $u$ is an $S_{\rev}$-local equivalence if and only if, for every fibrant $\calC\in\sCat^{\dag}_{\Bergner}$, the induced map
  \begin{align}
    \RMap(Y,\N^{\prism}_{\dag}(\calC))
    \to 
    \RMap(X,\N^{\prism}_{\dag}(\calC))
  \end{align}
  is a weak equivalence, where the derived mapping spaces are computed in
  $L_{S_{\rev}}\sSpace^{\dag}_{\rev,\inj}$.
\end{corollary}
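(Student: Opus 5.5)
The forward implication is formal, so most of the work is in the converse. We argue inside the simplicial model category $L_{S_{\rev}}\sSpace^{\dag}_{\rev,\inj}$ of \cref{h1.lem.localized-simplicial}, where every object is cofibrant.

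For the forward direction, let $u$ be an $S_{\rev}$-local equivalence and let $\calC$ be fibrant. By \cref{jt.cor.local}, an injectively fibrant replacement $Z_{\calC}$ of $\N^{\prism}_{\dag}(\calC)$ is local fibrant. The derived mapping space into $\N^{\prism}_{\dag}(\calC)$ is then computed as $\underline{\Map}(-,Z_{\calC})$. Since $u$ is a weak equivalence between cofibrant objects, $u^{*}:\underline{\Map}(Y,Z_{\calC})\to\underline{\Map}(X,Z_{\calC})$ is a weak equivalence by the standard property of homotopy function complexes \cite[Theorem~17.7.7]{Hir02}.

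For the converse, I would use the characterization of local equivalences in a left Bousfield localization \cite[Theorem~3.2.13]{Hir02}. A map $u$ is a weak equivalence of $L_{S_{\rev}}\sSpace^{\dag}_{\rev,\inj}$ if and only if $u^{*}:\RMap(Y,Z)\to\RMap(X,Z)$ is a weak equivalence for every local fibrant $Z$. Equivalently, $[u]^{*}$ is a bijection on $\Ho$-morphism sets into every local $Z$; either form suffices. So it is enough to show that the local fibrant objects tested by the hypothesis, the $Z_{\calC}$, already detect everything. Given an arbitrary injectively fibrant $S_{\rev}$-local $W$, \cref{jt.thm.strict} supplies a zigzag of local equivalences
\begin{align}
  W\xleftarrow{\sim}Q_{\proj}W\xrightarrow{\sim}\N^{\prism}_{\dag}(\calC_W),
  \qquad
  \calC_W:=R_{\rmob}\frakC^{\prism}_{\dag}(Q_{\proj}W).
\end{align}
Here $\calC_W$ is dagger Bergner fibrant, since $R_{\rmob}$ is a fibrant replacement. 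Passing to the injectively fibrant replacement $Z_{\calC_W}$ gives local equivalences between local fibrant objects, which by \cite[Theorem~3.2.13 (1)]{Hir02} are levelwise equivalences. Hence $W\simeq Z_{\calC_W}$ in $\Ho(L_{S_{\rev}}\sSpace^{\dag}_{\rev,\inj})$.

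Homotopy function complexes are invariant under weak equivalences in the target variable \cite[Theorem~17.7.7]{Hir02}. Therefore $\RMap(u,W)$ is identified, up to natural weak equivalence in $\Ho(\sSet)$, with $\RMap(u,\N^{\prism}_{\dag}(\calC_W))$, and the latter is a weak equivalence by hypothesis. The identification must be compatible with $u^{*}$, and this holds because the comparison maps are given by postcomposition, which commutes with precomposition by $u$. This proves locality against all local $W$, so $u$ is a local equivalence.

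The main obstacle is the bookkeeping in the converse: the zigzag must be realized as actual maps of fibrant targets, so that naturality in the source variable is evident. I would handle this by working entirely with $\underline{\Map}(-,Z)$ for local fibrant $Z$, and by replacing each zigzag leg with an injectively fibrant replacement. Every object is cofibrant, so the legs then become honest maps between local fibrant objects. The substantive input is entirely \cref{jt.thm.strict}; the remaining steps are formal consequences of the simplicial model structure.
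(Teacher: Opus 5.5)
Your proposal is correct and follows the paper's proof. The forward direction uses \cref{jt.cor.local} to see that an injectively fibrant replacement of $\N^{\prism}_{\dag}(\calC)$ is a local fibrant test object, and the converse uses \cref{jt.thm.strict} to connect an arbitrary local fibrant $W$ to a frame nerve by local equivalences, then applies homotopy invariance of $\RMap$ in the target variable, naturally in the source; two small citation and wording slips do not affect the argument, namely that testing against all local objects is the definition of a local equivalence rather than \cite[Theorem~3.2.13]{Hir02}, and that $Q_{\proj}W$ is not itself local fibrant.
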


\begin{proof}
  Suppose first that $u$ is an $S_{\rev}$-local equivalence.
  Choose an injectively fibrant replacement $\N^{\prism}_{\dag}(\calC) \xrightarrow{\sim} Z_{\calC}$ along a rowwise weak equivalence.
  By \cref{jt.cor.local}, $Z_{\calC}$ is $S_{\rev}$-local fibrant.
  Thus the displayed map is a weak equivalence.

  Conversely, suppose that the displayed map is a weak equivalence for every fibrant $\calC\in\sCat^{\dag}_{\Bergner}$.
  Let $Z$ be arbitrary fibrant of $L_{S_{\rev}}\sSpace^{\dag}_{\rev,\inj}$.
  By \cref{jt.thm.strict}, there are a fibrant $\calC\in\sCat^{\dag}_{\Bergner}$ and a zigzag of local equivalences connecting $Z$ to $\N^{\prism}_{\dag}(\calC)$.
  Homotopy invariance in the target variable identifies the map induced by $u$ into $Z$ with the map induced by $u$ into $\N^{\prism}_{\dag}(\calC)$ in $\Ho(\sSet)$.
  The latter is a weak equivalence by hypothesis.
  Since $Z$ was arbitrary, $u$ is an $S_{\rev}$-local equivalence.
\end{proof}

\subsection{The main theorem}

We now return to the constant--vertex adjunction (\cref{jt.lem.star}).
The rows formula gives reflection by $i_0^{*}$ (\cref{jt.cor.i0-reflects}), and the derived unit is identified with that of the dagger Joyal--Bergner adjunction (\cref{jt.lem.unit}).
These ingredients yield the main Quillen equivalence (\cref{jt.thm.main}).

\begin{lemma}\label{jt.lem.delta-local}
  For every weak equivalence $f:X\to Y$ between cofibrant objects of $\sSet^{\dag}_{\Joyal}$, the map $\delta(f)$ is a weak equivalence in $L_{S_{\rev}}\sSpace^{\dag}_{\rev,\inj}$.
\end{lemma}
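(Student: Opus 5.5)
The plan is to detect local equivalences by mapping into frame nerves, using \cref{jt.cor.frame-detection}. By that corollary, $\delta(f)$ is an $S_{\rev}$-local equivalence exactly when, for every fibrant $\calC\in\sCat^{\dag}_{\Bergner}$, the restriction map
\begin{align}
  \RMap_{L_{S_{\rev}}\sSpace^{\dag}_{\rev,\inj}}(\delta Y,\N^{\prism}_{\dag}(\calC))
  \to
  \RMap_{L_{S_{\rev}}\sSpace^{\dag}_{\rev,\inj}}(\delta X,\N^{\prism}_{\dag}(\calC))
\end{align}
is a weak equivalence. The weak equivalences of $L_{S_{\rev}}\sSpace^{\dag}_{\rev,\inj}$ are precisely the $S_{\rev}$-local equivalences by \cref{h1.lem.rev-models}. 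So it suffices to verify this condition.

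To check it, I would apply \cref{jt.cor.transport-local} to the cofibrant objects $X$ and $Y$. This identifies both sides with
\begin{align}
  \RMap_{\sSet^{\dag}_{\Joyal}}(Y,\N_{\dag}\calC)
  \quad\text{and}\quad
  \RMap_{\sSet^{\dag}_{\Joyal}}(X,\N_{\dag}\calC).
\end{align}
The map $f$ is a dagger Joyal equivalence and $\N_{\dag}\calC$ is dagger Joyal fibrant, since $\N_{\dag}$ is right Quillen. Homotopy invariance of derived mapping spaces in the source variable then makes $f^{*}$ a weak equivalence on the Joyal side. Since $X$ and $Y$ are cofibrant, $f$ can be used directly without replacement.

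The main obstacle is naturality: the equivalence of \cref{jt.cor.transport-local} must be natural in $X$ along $f$, so that it identifies $\delta(f)^{*}$ with $f^{*}$ in $\Ho(\sSet)$. This equivalence is assembled from three pieces:
\begin{itemize}
  \item the identity Quillen equivalence of \cref{h1.lem.rev-models};
  \item the derived adjunction of \cref{jt.lem.quillen}, combined with the comparison $\beta_X$ of \cref{jt.lem.transport-realization}, which \cref{jt.lem.density} shows is natural in $X$ (the bar resolution $B_{\proj}(\delta X)$ is functorial);
  \item the derived adjunction $\frakC_{\dag}\dashv\N_{\dag}$.
\end{itemize}
I would check that each piece is natural in morphisms of cofibrant dagger simplicial sets, and then compose the resulting commutative squares.

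An alternative avoids mapping spaces. Since $B_{\proj}(\delta -)$ is functorial, $\frakC^{\prism}_{\dag}B_{\proj}(\delta f)$ is identified via $\beta$ with $\frakC_{\dag}(f)$, which is a dagger DK-equivalence. One would then argue that $B_{\proj}(\delta f)$ is a local equivalence by \cref{jt.cor.frame-detection}, using the derived adjunction of \cref{jt.cor.localized-adjunction}. Finally, two-out-of-three with the levelwise equivalences $\varepsilon_X$ and $\varepsilon_Y$ gives the claim for $\delta(f)$.
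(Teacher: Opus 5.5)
Your proposal is correct and follows essentially the same route as the paper: reduce via \cref{jt.cor.frame-detection} to derived mapping spaces into frame nerves $\N^{\prism}_{\dag}(\calC)$, identify $(\delta f)^{*}$ with $f^{*}$ through the natural equivalence of \cref{jt.cor.transport-local}, and conclude by homotopy invariance because $f$ is a weak equivalence between cofibrant objects and $\N_{\dag}(\calC)$ is fibrant. The paper takes the naturality you flag as given by the word ``natural'' in \cref{jt.cor.transport-local}. Your alternative via the natural comparison $\beta$ and the bar resolution $B_{\proj}(\delta X)$ is a valid repackaging of the same argument.
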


\begin{proof}
  By \cref{jt.cor.frame-detection}, it suffices to show that, for every fibrant object $\calC\in\sCat^{\dag}_{\Bergner}$, the induced map
  \begin{align}
    (\delta f)^{*} : 
    \RMap(\delta Y,\N^{\prism}_{\dag}(\calC))
    \to 
    \RMap(\delta X,\N^{\prism}_{\dag}(\calC))
  \end{align}
  is a weak equivalence.
  By \cref{jt.cor.transport-local}, this map is identified with
  \begin{align}
    f^{*} : 
    \RMap(Y,\N_{\dag}(\calC))
    \to 
    \RMap(X,\N_{\dag}(\calC)).
  \end{align}
  The latter is a weak equivalence because $f:X\to Y$ is a weak equivalence between cofibrant objects and $\N_{\dag}(\calC)$ is fibrant.
\end{proof}

\begin{lemma}\label{jt.lem.star}
  The adjunction $\delta \dashv i_{0}^{*}$ of \cref{jt.lem.c-delta} induces a Quillen adjunction
  \begin{align}
    \delta:
    \sSet^{\dag}_{\Joyal}
    \rightleftarrows
    L_{S_{\rev}}\sSpace^{\dag}_{\rev,\inj}
    :i_{0}^{*}.
  \end{align}
\end{lemma}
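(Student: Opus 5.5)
To show that $\delta$ is left Quillen, I will check two things: that it preserves cofibrations, and that it sends trivial cofibrations to trivial cofibrations.

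\emph{Cofibrations.} The cofibrations of $\sSet^{\dag}_{\Joyal}$ are the free cofibrations, and in particular they are monomorphisms of dagger simplicial sets. Under $\sSet^{\dag}\simeq\Fun(\prism_{\rev}^{\myop},\Set)$, a monomorphism is a levelwise injection. The functor $\delta$ postcomposes with the discrete embedding $\Set\hookrightarrow\sSet$, so $\delta$ of a monomorphism is a levelwise monomorphism of simplicial sets. Levelwise monomorphisms are exactly the cofibrations of $\sSpace^{\dag}_{\rev,\inj}$ (\cref{h1.lem.proj-rev}). Left Bousfield localization does not change cofibrations (\cref{h1.lem.rev-models}), so $\delta$ preserves cofibrations.

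\emph{Trivial cofibrations.} Let $f:X\to Y$ be a trivial cofibration of $\sSet^{\dag}_{\Joyal}$. By the previous paragraph, $\delta(f)$ is a cofibration. It remains to see that $\delta(f)$ is a weak equivalence in $L_{S_{\rev}}\sSpace^{\dag}_{\rev,\inj}$. \Cref{jt.lem.delta-local} gives this only for weak equivalences between cofibrant objects, so I first reduce to that case.

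Choose a cofibrant replacement $q_X:\widetilde X\to X$, namely a trivial fibration with $\widetilde X$ cofibrant. Then factor the composite $\widetilde X\to X\to Y$ as a cofibration $\widetilde f:\widetilde X\to\widetilde Y$ followed by a trivial fibration $q_Y:\widetilde Y\to Y$. By two-out-of-three, $\widetilde f$ is a weak equivalence between cofibrant objects, so \cref{jt.lem.delta-local} shows that $\delta(\widetilde f)$ is a local equivalence.

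We then get a commutative square
\[
q_Y\circ\widetilde f \;=\; f\circ q_X .
\]
By two-out-of-three, it suffices to show that $\delta(q_X)$ and $\delta(q_Y)$ are local equivalences. This is the only real obstacle: $q_X$ and $q_Y$ are dagger Joyal equivalences whose source or target need not be cofibrant.

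To handle this, I use that $\delta(f)$ is already known to be a cofibration. Hence $\delta(f)$ is a trivial cofibration as soon as it has the left lifting property against every fibration between local fibrant objects; by Hirschhorn's criterion for left Bousfield localizations it is enough to test against these.

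By adjunction, lifting $\delta(f)$ against such a fibration $p$ is the same as lifting $f$ against $i_0^{*}(p)$. So it suffices to check that $i_0^{*}$ carries fibrations between local fibrant objects to dagger Joyal fibrations. For this, I check that $i_0^{*}$ has the right lifting property against the generating trivial cofibrations of $\sSet^{\dag}_{\Joyal}$. Those generators can be chosen with cofibrant domains and codomains, because every object of the form $\FdagSSet(-)$ is cofibrant. Therefore \cref{jt.lem.delta-local} applies to them: their images under $\delta$ are cofibrations that are local equivalences, hence trivial cofibrations of $L_{S_{\rev}}\sSpace^{\dag}_{\rev,\inj}$. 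These images lift against fibrations between local fibrant objects, and by adjunction the generators lift against $i_0^{*}$ of such fibrations.

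The same adjunction argument shows that $i_0^{*}$ preserves trivial fibrations. Those are levelwise trivial Kan fibrations, and $\delta$ of any cofibration is a cofibration, so the lifting transfers directly.

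With these two facts, $\delta$ satisfies Dugger's lemma, namely \cite[Corollary~A.2]{Dug01R} in the form "a left adjoint that preserves cofibrations, and whose right adjoint preserves fibrations between fibrant objects, is left Quillen." This concludes that $\delta\dashv i_0^{*}$ is a Quillen adjunction.
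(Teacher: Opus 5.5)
Your proof has the same skeleton as the paper's. First, $\delta$ sends free cofibrations to levelwise monomorphisms. Second, $i_0^{*}$ carries fibrations between local fibrant objects to dagger Joyal fibrations, because $\delta$ sends a generating set $J$ of trivial cofibrations between cofibrant objects to cofibrations that are local equivalences by \cref{jt.lem.delta-local}. The paper then concludes at once by \cite[Proposition~7.15]{JT06}, which is the form of Dugger's lemma you quote at the end.

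Your middle step is a re-proof of that criterion: a cofibration is trivial as soon as it lifts against all fibrations between fibrant objects, combined with adjunction. That fact holds in any model category; it is not specific to Bousfield localizations. Once you have shown that $\delta$ preserves trivial cofibrations this way, the final appeal to Dugger's lemma is redundant. The cofibrant-replacement detour through $q_X$, $q_Y$ and the remark on trivial fibrations are never used and can be deleted.

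The step that is not correctly justified is the one everything hinges on: that $J$ can be chosen with cofibrant domains and codomains, which is what makes \cref{jt.lem.delta-local} applicable. Knowing that objects of the form $\FdagSSet(-)$ are cofibrant says nothing about this. The structure $\sSet^{\dag}_{\Joyal}$ is not transferred along $\FdagSSet\dashv\UdagSSet$, and its generating trivial cofibrations are not given as $\FdagSSet$-images. In fact $\FdagSSet$ of a Joyal trivial cofibration need not even be a dagger Joyal equivalence. For $\Delta^0\to J^1$, the rigidification $\FdagSCat\frakC(J^1)$ is two contractible groupoids glued along their objects, so its endomorphism space at $0$ is not contractible.

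The correct justification, used in the paper, is tractability. The model category $\sSet^{\dag}_{\Joyal}$ is combinatorial, and its generating cofibrations $I^{\sSet}_{\dag}$ have cofibrant domains $\FdagSSet(\partial\Delta^n)$. Hence \cite[Corollary~1.12]{Bar07} lets you choose $J$ with cofibrant domains, and the codomains are then cofibrant because the members of $J$ are cofibrations. With this in place of your sentence, your argument is complete.
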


\begin{proof}
  By \cite[Proposition~7.15]{JT06}, it is enough to prove that $\delta$ preserves cofibrations and that $i_{0}^{*}$ preserves fibrations between fibrant objects.

  The functor $\delta$ sends every free cofibration to a levelwise injection, hence to a cofibration of $L_{S_{\rev}}\sSpace^{\dag}_{\rev,\inj}$.

  The generating cofibrations $I^{\sSet}_{\dag}$ of $\sSet^{\dag}_{\Joyal}$ have cofibrant domains.
  Since $\sSet^{\dag}_{\Joyal}$ is combinatorial, \cite[Corollary~1.12]{Bar07} implies that it is tractable.
  
  Choose a set $J$ of generating trivial cofibrations with cofibrant domains.
  The codomain of every member of $J$ is also cofibrant, because every member of $J$ is a cofibration.

  For every $j:A\to B$ in $J$, \cref{jt.lem.delta-local} shows that $\delta(j)$ is a local equivalence, because $j$ is a weak equivalence between cofibrant objects.
  It is also a levelwise injection, hence a cofibration.
  Thus $\delta(j)$ is a trivial cofibration of $L_{S_{\rev}}\sSpace^{\dag}_{\rev,\inj}$.
  
  Let $q:Z\to Z'$ be a fibration between fibrant objects of $L_{S_{\rev}}\sSpace^{\dag}_{\rev,\inj}$.
  We show that $i_{0}^{*}(q)$ has the right lifting property with respect to every $j \in J$.
  By adjunction, it suffices to show that $q$ has the right lifting property with respect to $\delta(j)$.
  This follows from the preceding argument.
  Therefore $i_{0}^{*}(q)$ is a fibration in $\sSet^{\dag}_{\Joyal}$.
\end{proof}

\begin{lemma}\label{jt.lem.rows}
  For a fibrant $Z\in L_{S_{\rev}}\sSpace^{\dag}_{\rev,\inj}$, there exists a natural weak equivalence
  \begin{align}
    Z_m
    \simeq \RMap_{\sSet^{\dag}_{\Joyal}}(\FdagSSet(\Delta^m),\,i_{0}^{*}Z).
  \end{align}
\end{lemma}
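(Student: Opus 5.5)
The plan is to factor the equivalence through the derived adjunction $\delta\dashv i_0^{*}$ of \cref{jt.lem.star}. The object $\FdagSSet(\Delta^m)$ is cofibrant in $\sSet^{\dag}_{\Joyal}$, and $Z$ is fibrant in $L_{S_{\rev}}\sSpace^{\dag}_{\rev,\inj}$. The derived adjunction for homotopy function complexes \cite[Proposition~17.4.16]{Hir02} therefore gives a natural weak equivalence
\begin{align}
  \RMap_{\sSet^{\dag}_{\Joyal}}(\FdagSSet(\Delta^m),i_0^{*}Z)
  \simeq
  \RMap_{L_{S_{\rev}}\sSpace^{\dag}_{\rev,\inj}}(\delta\FdagSSet(\Delta^m),Z).
\end{align}
No derived functor correction is needed on the left-hand side of the adjunction, because $\FdagSSet(\Delta^m)$ is already cofibrant. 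The right adjoint is applied to the fibrant object $Z$, so $i_0^{*}Z$ is fibrant and represents $\bfR i_0^{*}Z$.

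Next I compute the right-hand side inside the simplicial model category $L_{S_{\rev}}\sSpace^{\dag}_{\rev,\inj}$. By \cref{h1.lem.localized-simplicial}~(2), every object is cofibrant there, and for the localized fibrant object $Z$ the Kan complex $\underline{\Map}(X,Z)$ computes $\RMap(X,Z)$. Taking $X=\delta\FdagSSet(\Delta^m)=\yo_{\rev}[m]=\Delta^m_{\rev}$ and applying \cref{h1.lem.local-objects}~(2) gives $\underline{\Map}(\Delta^m_{\rev},Z)\cong Z_m$ (Yoneda, levelwise in $k$). Composing the two identifications yields the claim.

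Naturality in $m$, including the order-reversing maps of $\prism_{\rev}$, holds because every identification is natural in the representable $\yo_{\rev}[m]$. Naturality in $Z$ holds for the same reason.

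The only delicate step is the compatibility of the two models of $\RMap$. The derived adjunction produces a zigzag between homotopy function complexes computed by chosen resolutions, and this zigzag has to be matched with the strict simplicial mapping space $\underline{\Map}$. To do this, I would use simplicial frames in $\sSet^{\dag}_{\Joyal}$ on the source side. The identification $\underline{\Map}=\RMap$ from \cref{h1.lem.localized-simplicial} then handles the target side. Since both sides only need to agree up to natural weak equivalence, the standard invariance results \cite[\S17.4]{Hir02} should suffice, and I expect no genuine obstruction there.
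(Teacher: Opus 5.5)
Your proposal is correct and takes the same approach as the paper's proof. Like the paper, you identify $Z_m=\underline{\Map}(\yo_{\rev}[m],Z)$ with $\RMap(\delta\FdagSSet(\Delta^m),Z)$ via \cref{h1.lem.localized-simplicial}, and then pass through the derived adjunction of \cref{jt.lem.star}, using that $\FdagSSet(\Delta^m)$ is cofibrant and $i_0^{*}Z$ is fibrant. The extra compatibility step in your last paragraph is unnecessary: homotopy function complexes are only defined up to natural weak equivalence, so the cited invariance results already cover the choice of model.
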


\begin{proof}
  By \cref{h1.lem.localized-simplicial}, $Z_m=\underline{\Map}(\yo_{\rev}[m],Z)$ computes $\RMap_{L_{S_{\rev}}\sSpace^{\dag}_{\rev,\inj}}(\delta \FdagSSet\Delta^m,Z)$:
  every object of $L_{S_{\rev}}\sSpace^{\dag}_{\rev,\inj}$ is cofibrant, $Z$ is fibrant, and $\yo_{\rev}[m]=\delta \FdagSSet\Delta^m$.
  The derived adjunction of \cref{jt.lem.star} identifies this with $\RMap(\FdagSSet\Delta^m,i_{0}^{*}Z)$;
  here $\FdagSSet(\Delta^m)$ is cofibrant and $i_0^*Z$ is fibrant because $i_0^*$ is right Quillen.
  Therefore we have 
  \begin{align}
    Z_{m}
    \simeq \RMap_{L_{S_{\rev}}\sSpace^{\dag}_{\rev,\inj}}(\delta \FdagSSet\Delta^m,Z) 
    \simeq \RMap_{\sSet^{\dag}_{\Joyal}}(\FdagSSet\Delta^m,i_{0}^{*}Z).
  \end{align}
\end{proof}

\begin{proposition}\label{jt.cor.i0-reflects}
  The right adjoint $i_{0}^{*}$ of \cref{jt.lem.star} reflects weak equivalences in $L_{S_{\rev}}\sSpace^{\dag}_{\rev,\inj}$ between fibrant objects.
\end{proposition}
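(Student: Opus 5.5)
The plan is to use the rows formula of \cref{jt.lem.rows} to reduce levelwise weak equivalences of $Z$ to weak equivalences of derived mapping spaces in $\sSet^{\dag}_{\Joyal}$. We then use the fact that local equivalences between local fibrant objects are exactly levelwise weak equivalences.

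Let $q:Z\to Z'$ be a map between fibrant objects of $L_{S_{\rev}}\sSpace^{\dag}_{\rev,\inj}$ such that $i_0^{*}q$ is a dagger Joyal equivalence. Since $i_0^{*}$ is right Quillen by \cref{jt.lem.star}, $i_0^{*}Z$ and $i_0^{*}Z'$ are fibrant. Hence $i_0^{*}q$ is a weak equivalence between fibrant objects.

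For every $[m]$, the object $\FdagSSet(\Delta^m)$ is cofibrant. Homotopy invariance of homotopy function complexes in the target variable \cite[\S17]{Hir02} therefore shows that
\begin{align}
  \RMap_{\sSet^{\dag}_{\Joyal}}(\FdagSSet\Delta^m,i_0^{*}Z)\to\RMap_{\sSet^{\dag}_{\Joyal}}(\FdagSSet\Delta^m,i_0^{*}Z')
\end{align}
is a weak equivalence.

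The key step is to check that the equivalence of \cref{jt.lem.rows} is natural in $Z$. It is a composite of the identification $Z_m=\underline{\Map}(\yo_{\rev}[m],Z)$ with the localized homotopy function complex (\cref{h1.lem.localized-simplicial}) and the derived adjunction equivalence for $\delta\dashv i_0^{*}$. Both are natural in fibrant $Z$, since every object is cofibrant and the adjunction equivalence may be realized by cosimplicial resolutions of $\FdagSSet\Delta^m$ mapped through $\delta$. This gives a commutative square in $\Ho(\sSet)$ identifying $q_m:Z_m\to Z'_m$ with the displayed map. Consequently each $q_m$ is a weak equivalence, so $q$ is a levelwise weak equivalence. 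Every levelwise weak equivalence is a local equivalence, so $q$ is a weak equivalence of $L_{S_{\rev}}\sSpace^{\dag}_{\rev,\inj}$.

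I expect the main obstacle to be this naturality and compatibility. One must make sure that the map induced by $q$ on $Z_m$ corresponds, under the zigzag of \cref{jt.lem.rows}, to postcomposition with $i_0^{*}q$ on derived mapping spaces; in particular, $i_0^{*}q$ must be the derived image of $q$. I would first handle this with \cite[Proposition~17.4.16]{Hir02}, which gives the derived adjunction weak equivalence naturally in both variables for a Quillen pair, applied with the cofibrant source $\FdagSSet\Delta^m$. Only the $\Ho(\sSet)$-level commutativity is needed, since two-out-of-three then finishes the argument.
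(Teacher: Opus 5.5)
Your proposal is correct and follows essentially the same route as the paper: both use the natural rows identification $Z_m\simeq\RMap_{\sSet^{\dag}_{\Joyal}}(\FdagSSet(\Delta^m),i_0^{*}Z)$ of \cref{jt.lem.rows} to turn the dagger Joyal equivalence $i_0^{*}q$ into levelwise weak equivalences $q_m$. Both then conclude because levelwise weak equivalences are $S_{\rev}$-local equivalences. Your extra care about naturality, via the derived adjunction of \cite[Proposition~17.4.16]{Hir02}, just spells out what the paper invokes as ``naturality of the weak equivalences of \cref{jt.lem.rows}.''
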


\begin{proof}
  Let $f:Z\to Z'$ be a map between fibrant objects and suppose that
  $i_{0}^{*}f$ is a weak equivalence in $\sSet^{\dag}_{\Joyal}$.
  For every $m$, it induces a weak equivalence
  \begin{align}
    \RMap_{\sSet^{\dag}_{\Joyal}}
    (\FdagSSet(\Delta^m),i_{0}^{*}Z)
    \to
    \RMap_{\sSet^{\dag}_{\Joyal}}
    (\FdagSSet(\Delta^m),i_{0}^{*}Z').
  \end{align}
  By naturality of the weak equivalences of \cref{jt.lem.rows}, the preceding map corresponds to $f_m:Z_m\to Z'_m$.
  Hence $f_m$ is a weak equivalence for every $m$.
  Thus $f$ is a rowwise weak equivalence, hence an injective weak equivalence.
  Every injective weak equivalence is an $S_{\rev}$-local equivalence, so $f$ is a weak equivalence in $L_{S_{\rev}}\sSpace^{\dag}_{\rev,\inj}$.
\end{proof}

\begin{notation}
  We recall the Quillen adjunctions which we have obtained.
  We identify $\Ho(L_{\widetilde S_{\rev}}\sSpace^{\dag}_{\rev,\proj})$ with $\Ho(L_{S_{\rev}}\sSpace^{\dag}_{\rev,\inj})$ by the derived identity Quillen equivalence of \cref{h1.lem.rev-models}.
  The relevant derived adjunctions are
  \begin{align}
    \bfL\delta:
    \Ho(\sSet^{\dag}_{\Joyal})
    &\rightleftarrows
    \Ho(L_{S_{\rev}}\sSpace^{\dag}_{\rev,\inj})
    :\bfR i_0^*,\\
    \bfL\frakC^{\prism}_{\dag}:
    \Ho(L_{\widetilde S_{\rev}}\sSpace^{\dag}_{\rev,\proj})
    &\rightleftarrows
    \Ho(\sCat^{\dag}_{\Bergner})
    :\bfR\N^{\prism}_{\dag},\\
    \bfL\frakC_{\dag}:
    \Ho(\sSet^{\dag}_{\Joyal})
    &\rightleftarrows
    \Ho(\sCat^{\dag}_{\Bergner})
    :\bfR \N_{\dag}.
  \end{align}
  The first two give the composite derived adjunction
  \begin{align}
    \bfL\frakC^{\prism}_{\dag} \circ\bfL\delta
    : \Ho(\sSet^{\dag}_{\Joyal})
    \rightleftarrows
    \Ho(\sCat^{\dag}_{\Bergner}) : 
    \bfR i_0^* \circ\bfR\N^{\prism}_{\dag}.
  \end{align}
  We let 
  \begin{itemize}
    \item $\overline\Theta_X:\bfL\frakC^{\prism}_{\dag}(\delta X) \xrightarrow{\simeq} \bfL\frakC_{\dag}(X)$ denote the derived transport comparison;
    \item $\overline\vartheta_{\calC}: \bfR i_0^*\bfR\N^{\prism}_{\dag}(\calC) \xrightarrow{\simeq} \bfR \N_{\dag}(\calC)$ denote the inverse of the derived right mate of $\overline\Theta$.
  \end{itemize}
\end{notation}

\begin{lemma}\label{jt.lem.unit-triangle}
  Let $X$ be cofibrant in $\sSet^{\dag}_{\Joyal}$ and let $\calC$ be fibrant in $\sCat^{\dag}_{\Bergner}$.
  For a morphism $a:\bfL\frakC_{\dag}(X)\to\calC$ in $\Ho(\sCat^{\dag}_{\Bergner})$, let $\widetilde a:\delta X\to\bfR\N^{\prism}_{\dag}(\calC)$ be the derived frame adjoint of $a\circ\overline\Theta_X$.
  Then we have 
  \begin{align}
    \overline\vartheta_{\calC}\circ\bfR i_0^*(\widetilde a)\circ\eta_X^{\delta} = a^{\sharp}
  \end{align}
  in $\Ho(\sSet^{\dag}_{\Joyal})$.
  Here $\eta_X^{\delta}$ is the derived unit of $\delta\dashv i_0^*$, and $a^{\sharp}$ is the adjoint of $a$ under $\frakC_{\dag}\dashv \N_{\dag}$.
\end{lemma}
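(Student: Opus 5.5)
This identity is a formal compatibility of adjunctions and mates; I would prove it at the level of derived functors and homotopy categories. The strategy is to show that both sides are adjoint to the same morphism $a$ under the composite adjunction $\bfL\frakC^{\prism}_{\dag}\circ\bfL\delta\dashv\bfR i_0^*\circ\bfR\N^{\prism}_{\dag}$, after transporting along $\overline\Theta_X$ and $\overline\vartheta_{\calC}$.

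First, I would write $\widetilde a$ as its composite adjoint. By the triangle identities for the derived adjunction $\bfL\delta\dashv\bfR i_0^*$, the morphism $\bfR i_0^*(\widetilde a)\circ\eta_X^{\delta}:X\to\bfR i_0^*\bfR\N^{\prism}_{\dag}(\calC)$ is the adjoint of $\widetilde a$ under $\delta\dashv i_0^*$. Since $\widetilde a$ is itself the $\frakC^{\prism}_{\dag}\dashv\N^{\prism}_{\dag}$ adjoint of $a\circ\overline\Theta_X$, this composite is the adjoint of $a\circ\overline\Theta_X$ under the composite derived adjunction. Here I use that derived functors of composable Quillen adjunctions compose: $\bfL\delta$ lands in $\Ho(L_{S_{\rev}}\sSpace^{\dag}_{\rev,\inj})$, which is identified with the projective localization via \cref{h1.lem.rev-models}. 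This step is a bookkeeping check that the identity Quillen equivalence intertwines the units and counits.

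Second, I would invoke the defining property of the mate. The natural isomorphism $\overline\vartheta^{-1}$ is the right mate of $\overline\Theta$, so for every $b:\bfL\frakC_{\dag}X\to\calC$ the adjoint of $b\circ\overline\Theta_X$ under the composite adjunction equals $\overline\vartheta_{\calC}^{-1}\circ b^{\sharp}$. This is the general mate correspondence: conjugate natural transformations between left adjoints and right adjoints correspond under the adjunction bijections. It comes from the explicit formula $\overline\vartheta^{-1}=\bfR i_0^*\bfR\N^{\prism}_{\dag}(\varepsilon)\circ\bfR i_0^*\bfR\N^{\prism}_{\dag}(\overline\Theta)\circ\eta^{\mathrm{comp}}\bfR\N_{\dag}$ combined with naturality and the triangle identities. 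Applying this with $b=a$ and composing with $\overline\vartheta_{\calC}$ yields the claim.

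The main obstacle is to make sure that $\overline\Theta$ and $\overline\vartheta$ really are mates at the derived level. This means verifying that the strict mate pair of \cref{jt.lem.transport} descends to the derived comparison of \cref{jt.lem.density}. I would handle this by deriving the strict identity of \cref{jt.lem.transport} on a cofibrant $X$ and a fibrant $\calC$. Since $\delta X$ need not be projectively cofibrant, I would use \cref{jt.lem.density}, which shows that $\beta_X=\Theta_X\circ\frakC^{\prism}_{\dag}(\varepsilon_X)$ represents $\overline\Theta_X$. Then I would check that the strict mate square, precomposed with the bar augmentation $\varepsilon_X$, represents the derived mate square. If that becomes unwieldy, I would fall back on taking the definition of $\overline\vartheta$ as the inverse derived right mate literally. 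In that case the lemma is exactly the Yoneda-level reformulation of the mate identity, and only the first step requires an argument.
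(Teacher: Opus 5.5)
Your proposal is correct and follows essentially the same route as the paper. You identify $\bfR i_0^*(\widetilde a)\circ\eta_X^{\delta}$ as the adjoint of $a\circ\overline\Theta_X$ under the composite derived adjunction, apply the mate correspondence (adjunction bijections intertwining $-\circ\overline\Theta_X$ with $\overline\vartheta_{\calC}^{-1}\circ-$), and compose with $\overline\vartheta_{\calC}$. The paper likewise grounds the mate relation in $\beta_X=\Theta_X\circ\frakC^{\prism}_{\dag}(\varepsilon_X)$ from \cref{jt.lem.density} and the strict mate pair of \cref{jt.lem.transport}, a step that, as you observe, is essentially tautological given how $\overline\vartheta$ is defined.
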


\begin{proof}
  For the bar augmentation $\varepsilon_X: B_{\proj}(\delta X) \to \delta X$, \cref{jt.lem.density} gives
  \begin{align}
    \beta_X = \Theta_X \circ \frakC^{\prism}_{\dag}(\varepsilon_X).
  \end{align}
  The equality of \cref{jt.lem.density} identifies $\overline\Theta$ with the total left derived natural transformation represented by the strict comparison $\Theta$.
  The strict right-mate statement is the assertion of \cref{jt.lem.transport}.
  It identifies $\vartheta^{-1}:\N_{\dag}\to i_0^*\N^{\prism}_{\dag}$ as the strict right mate of $\Theta$.
  Consequently, the derived right mate has components $\overline\vartheta_{\calC}^{-1}: \bfR \N_{\dag}(\calC) \to \bfR i_0^*\bfR\N^{\prism}_{\dag}(\calC)$.

  Therefore the following square of adjunction bijections commutes:
  \begin{align}
    &\begin{tikzpicture}[auto]
        \node (rigidification-hom) at (0,1.5) {$
          \Hom_{\Ho(\sCat^{\dag}_{\Bergner})}
          (\bfL\frakC_{\dag}(X),\calC)$};
        \node (frame-rigidification-hom) at (7,1.5) {$
          \Hom_{\Ho(\sCat^{\dag}_{\Bergner})}
          (\bfL\frakC^{\prism}_{\dag}(\bfL\delta(X)),\calC)$};
        \node (nerve-hom) at (0,0) {$
          \Hom_{\Ho(\sSet^{\dag}_{\Joyal})}
          (X,\bfR \N_{\dag}(\calC))$};
        \node (frame-nerve-hom) at (7,0) {$
          \Hom_{\Ho(\sSet^{\dag}_{\Joyal})}
          (X,\bfR i_0^*\bfR\N^{\prism}_{\dag}(\calC))$};
        \draw[->]
          (rigidification-hom) --
          node[above] {$-\circ\overline\Theta_X$}
          (frame-rigidification-hom);
        \draw[->]
          (rigidification-hom) --
          node[left] {$\simeq$}
          (nerve-hom);
        \draw[->]
          (frame-rigidification-hom) --
          node[right] {$\simeq$}
          (frame-nerve-hom);
        \draw[->]
          (nerve-hom) --
          node {$\overline\vartheta_{\calC}^{-1}\circ-$}
          (frame-nerve-hom);
      \end{tikzpicture}%
  \end{align}

  Let $\eta^{\prism}$ denote the derived unit of $\bfL\frakC^{\prism}_{\dag}\dashv\bfR\N^{\prism}_{\dag}$.
  Since $\widetilde a$ is the derived frame adjoint of $a\circ\overline\Theta_X$, we have
  \begin{align}
    \widetilde a
    =
    \bfR\N^{\prism}_{\dag}
    (a\circ\overline\Theta_X)
    \circ
    \eta^{\prism}_{\bfL\delta(X)}.
  \end{align}
  Under the convention identifying $\delta X$ with the chosen representative of $\bfL\delta(X)$, this is the morphism denoted by $\widetilde a$ in the statement.
  Its adjoint under the composite derived adjunction is therefore
  \begin{align}
    X
    \xrightarrow{\eta_X^{\delta}}
    \bfR i_0^*\bfL\delta(X)
    \xrightarrow{\bfR i_0^*(\widetilde a)}
    \bfR i_0^*\bfR\N^{\prism}_{\dag}(\calC).
  \end{align}

  Apply the preceding commutative square to $a:\bfL\frakC_{\dag}(X) \to \calC$.
  The left vertical adjunction sends $a$ to $a^{\sharp}$, while the right vertical adjunction sends $a\circ\overline\Theta_X$ to $\bfR i_0^*(\widetilde a)\circ\eta_X^{\delta}$.
  Hence we have
  \begin{align}
    \overline\vartheta_{\calC}^{-1}
    \circ
    a^{\sharp}
    =
    \bfR i_0^*(\widetilde a)
    \circ
    \eta_X^{\delta}.
  \end{align}
  Composing with $\overline\vartheta_{\calC}$ gives
  \begin{align}
    \overline\vartheta_{\calC}
    \circ
    \bfR i_0^*(\widetilde a)
    \circ
    \eta_X^{\delta}
    =
    \overline\vartheta_{\calC}
    \circ
    \overline\vartheta_{\calC}^{-1}
    \circ
    a^{\sharp}
    =
    a^{\sharp}.
  \end{align}
\end{proof}

\begin{proposition}\label{jt.lem.unit}
  For every cofibrant $X\in\sSet^{\dag}_{\Joyal}$, the derived unit $\eta_X^{\delta} : X\to i_{0}^{*}(R\delta X)$ is a weak equivalence of $\sSet^{\dag}_{\Joyal}$.
  Here $R$ denotes a fixed functorial fibrant replacement in $L_{S_{\rev}}\sSpace^{\dag}_{\rev,\inj}$.
\end{proposition}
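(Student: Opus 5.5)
We want to show that the derived unit $\eta^{\delta}_X$ is an isomorphism in $\Ho(\sSet^{\dag}_{\Joyal})$. The strategy is a Yoneda argument against objects of the form $\bfR\N_{\dag}(\calC)$, with \cref{jt.lem.unit-triangle} as the main tool. By the dagger Joyal--Bergner equivalence, every fibrant object $Y$ of $\sSet^{\dag}_{\Joyal}$ is isomorphic in $\Ho(\sSet^{\dag}_{\Joyal})$ to $\bfR\N_{\dag}(\calC)$ for some fibrant $\calC\in\sCat^{\dag}_{\Bergner}$. One can take $\calC=R_{\rmob}\frakC_{\dag}Y$ and use the last item of the recalled theorem. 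So it suffices to show that, for every fibrant $\calC$, precomposition
\begin{align}
  (\eta_X^{\delta})^{*}:
  \Hom_{\Ho}(\bfR i_0^*\bfL\delta X,\bfR\N_{\dag}\calC)
  \to
  \Hom_{\Ho}(X,\bfR\N_{\dag}\calC)
\end{align}
is a bijection. Since $\Ho(\sSet^{\dag}_{\Joyal})$ is a homotopy category, a morphism inducing bijections on $\Hom(-,Y)$ for all $Y$ is an isomorphism by Yoneda.

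\textbf{Surjectivity.} Take $a^{\sharp}:X\to\bfR\N_{\dag}\calC$, with adjoint $a:\bfL\frakC_{\dag}X\to\calC$. \Cref{jt.lem.unit-triangle} writes
\[
a^{\sharp}=\overline\vartheta_{\calC}\circ\bfR i_0^*(\widetilde a)\circ\eta^{\delta}_X,
\]
so $a^{\sharp}$ factors through $\eta^{\delta}_X$.

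\textbf{Injectivity.} This is the main point. I would identify
\[
\Hom(\bfR i_0^*\bfL\delta X,\bfR\N_{\dag}\calC)\;\cong\;\Hom(\bfR i_0^*\bfL\delta X,\bfR i_0^*\bfR\N^{\prism}_{\dag}\calC)
\]
via $\overline\vartheta_{\calC}$. Next I would show that the map $\bfR i_0^*$ from $\Hom(\bfL\delta X,\bfR\N^{\prism}_{\dag}\calC)$ to this set is surjective, so that every morphism out of $\bfR i_0^*\bfL\delta X$ is of the form $\overline\vartheta\circ\bfR i_0^*(b)$.

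To get this surjectivity, I would use \cref{jt.lem.rows} and \cref{jt.cor.i0-reflects} as follows. On local fibrant objects, \cref{jt.lem.rows} says that $Z_m$ is recovered from $i_0^*Z$ by derived mapping spaces. Together with \cref{jt.thm.strict}, this should show that $\bfR i_0^*$ is fully faithful on the essential image of frame nerves. Every local fibrant object is a frame nerve up to equivalence, and both $R\delta X$ and $\bfR\N^{\prism}_{\dag}\calC$ are local fibrant.

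Concretely, I would compare derived mapping spaces. By \cref{jt.cor.transport-local},
\[
\RMap(\delta X,\N^{\prism}_{\dag}\calC)\simeq\RMap(X,\N_{\dag}\calC).
\]
The composite of that equivalence with precomposition by $\eta^{\delta}_X$ is exactly the map appearing in \cref{jt.lem.unit-triangle}. Hence $(\eta^{\delta}_X)^*$ is identified with a composite of a bijection and the map $\bfR i_0^*$ restricted to maps $\bfL\delta X\to\bfR\N^{\prism}_{\dag}\calC$.

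Once $(\eta^{\delta}_X)^*$ is a bijection for all $\calC$, Yoneda gives that $\eta^{\delta}_X$ is an isomorphism in $\Ho(\sSet^{\dag}_{\Joyal})$, hence a weak equivalence.

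\textbf{Expected obstacle.} The hard step is full faithfulness of $\bfR i_0^*$ on maps between local objects. I would first try to avoid it by a cleaner route. By \cref{jt.cor.frame-detection} and \cref{jt.cor.transport-local}, the derived counit of $\bfL\delta\dashv\bfR i_0^*$ at $\N^{\prism}_{\dag}\calC$ is an equivalence. The reason is that \cref{jt.lem.transport} gives $\bfR i_0^*\bfR\N^{\prism}_{\dag}\calC\simeq\bfR\N_{\dag}\calC$, and then $\delta$ applied to it maps to $\N^{\prism}_{\dag}\calC$ inducing equivalences on $\RMap(-,\N^{\prism}_{\dag}\calC')$ for all $\calC'$.

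By \cref{jt.thm.strict}, this derived counit is then an equivalence at every local fibrant object. The triangle identity $\bfR i_0^*(\epsilon)\circ\eta_{\bfR i_0^*}=\id$ shows that $\eta^{\delta}$ is invertible at objects of the form $\bfR i_0^*Z$. Every fibrant $Y\simeq\bfR\N_{\dag}\calC\simeq\bfR i_0^*\bfR\N^{\prism}_{\dag}\calC$ is of this form. Every $X$ is weakly equivalent to a fibrant object, and $\eta^{\delta}$ is natural, so the claim follows.
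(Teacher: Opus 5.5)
Your reduction is sound as far as it goes. Surjectivity of $(\eta^{\delta}_X)^*$ does follow from \cref{jt.lem.unit-triangle}. The closing bookkeeping of your ``cleaner route'' is also correct: counit invertible at frame nerves gives unit invertible at $\bfR i_0^*\bfR\N^{\prism}_{\dag}\calC\cong\bfR\N_{\dag}\calC$ by a triangle identity, hence everywhere by naturality and the dagger Joyal--Bergner equivalence.

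The gap is that the one step with real content is asserted, not proved. In your first route, \cref{jt.lem.rows} recovers each level $Z_m$ from $i_0^*Z$. That yields conservativity of $\bfR i_0^*$, which is exactly its use in \cref{jt.cor.i0-reflects}. It says nothing about fullness, i.e.\ that every morphism $\bfR i_0^*\bfL\delta X\to\bfR\N_{\dag}\calC$ has the form $\overline\vartheta_{\calC}\circ\bfR i_0^*(b)$. In your second route, \cref{jt.cor.frame-detection} reduces invertibility of the counit $\epsilon$ at $\bfR\N^{\prism}_{\dag}\calC$ to showing that $\epsilon^*:\RMap(\N^{\prism}_{\dag}\calC,\N^{\prism}_{\dag}\calC')\to\RMap(\delta Y,\N^{\prism}_{\dag}\calC')$ is a weak equivalence, with $Y$ a cofibrant model of $\bfR\N_{\dag}\calC$. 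But \cref{jt.cor.transport-local} only computes the target. Under the derived adjunction, $\epsilon^*$ is precisely the action of $\bfR i_0^*$ on mapping spaces, so you have restated the full-faithfulness claim rather than proved it. Computing the source would need invertibility of the frame counit $\bfL\frakC^{\prism}_{\dag}\bfR\N^{\prism}_{\dag}\calC\to\calC$, and none of the results you invoke provides that.

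The missing idea is to apply \cref{jt.thm.strict} to $R\delta X$ itself. The goal is to show that the morphism $\widetilde a:\delta X\to\bfR\N^{\prism}_{\dag}\calD$ of \cref{jt.lem.unit-triangle} is an isomorphism, where $a:\frakC_{\dag}X\to\calD$ is a fibrant replacement.
\begin{itemize}
\item Strictification makes the frame unit $\eta^{\prism}_{R\delta X}=[\varphi_R][p_R]^{-1}$ invertible.
\item $\bfL\frakC^{\prism}_{\dag}(r_X)$ is invertible by \cref{jt.cor.localized-adjunction}, and $\overline\Theta_X$ and $a$ are invertible.
\item Hence $\gamma_X:=\bfR\N^{\prism}_{\dag}(a\circ\overline\Theta_X\circ\bfL\frakC^{\prism}_{\dag}(r_X)^{-1})\circ\eta^{\prism}_{R\delta X}$ is an isomorphism, and naturality of $\eta^{\prism}$ gives $\gamma_X\circ[r_X]=\widetilde a$.
\end{itemize}
Now look at $a^{\sharp}=\overline\vartheta_{\calD}\circ\bfR i_0^*(\widetilde a)\circ\eta^{\delta}_X$. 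The left side is the Joyal--Bergner derived unit, and $\overline\vartheta_{\calD}$ and $\bfR i_0^*(\widetilde a)$ are isomorphisms. So $\eta^{\delta}_X$ is an isomorphism by two-out-of-three, with no Yoneda or fullness argument needed.

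Your counit route can also be repaired, but you would have to add the argument. Strictification at $\bfR\N^{\prism}_{\dag}\calC$ plus a triangle identity makes $\bfR\N^{\prism}_{\dag}(\epsilon^{\prism}_{\calC})$ invertible. Moreover, $\bfR\N^{\prism}_{\dag}$ is conservative, because $\bfR i_0^*\bfR\N^{\prism}_{\dag}\cong\bfR\N_{\dag}$ via $\overline\vartheta$ and $\bfR\N_{\dag}$ is an equivalence. Hence $\bfR\N^{\prism}_{\dag}$ is an equivalence, and $\bfL\delta$ is then one via $\overline\Theta$.
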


\begin{proof}
  Let $r_X: \delta X \to R\delta X$ be the fibrant-replacement map.
  By \cref{h1.lem.rev-models}, it is also a weak equivalence in $L_{S_{\rev}}\sSpace^{\dag}_{\rev,\inj}$.
  Hence \cref{jt.cor.localized-adjunction,jt.lem.transport-realization} give isomorphisms
  \begin{align}
    \bfL\frakC^{\prism}_{\dag}(R\delta X)
    &\xleftarrow[\simeq]{\bfL\frakC^{\prism}_{\dag}(r_X)}
    \bfL\frakC^{\prism}_{\dag}(\delta X)
    \xrightarrow[\simeq]{\overline\Theta_X}
    \bfL\frakC_{\dag}(X).
  \end{align}

  Choose a fibrant replacement $a : \frakC_{\dag}X \to \calD$ in $\sCat^{\dag}_{\Bergner}$.
  Since $X$ is cofibrant, $a$ represents an isomorphism $a : \bfL\frakC_{\dag}(X) \xrightarrow{\simeq} \calD$ in $\Ho(\sCat^{\dag}_{\Bergner})$.
  Consequently, the composite
  \begin{align}\label{jt.eq.unit-realization-comparison}
    \bfL\frakC^{\prism}_{\dag}(R\delta X)
    &\xrightarrow{
      (\bfL\frakC^{\prism}_{\dag}(r_X))^{-1}
    }
    \bfL\frakC^{\prism}_{\dag}(\delta X)
    \xrightarrow{\overline\Theta_X}
    \bfL\frakC_{\dag}(X)
    \xrightarrow{a}
    \calD
  \end{align}
  is an isomorphism.
  Apply \cref{jt.thm.strict} to $R\delta X$, and denote the resulting projective replacement and strictification map by
  \begin{align}
    R\delta X
    \xleftarrow[\sim]{p_R}
    Q_{\proj}(R\delta X)
    \xrightarrow{\varphi_R}
    \N^{\prism}_{\dag}(R_{\rmob}\frakC^{\prism}_{\dag}(Q_{\proj}(R\delta X))).
  \end{align}
  After identifying the displayed target with the chosen representative of the derived frame nerve, the derived unit is represented in $\Ho(L_{S_{\rev}}\sSpace^{\dag}_{\rev,\inj})$ by $\eta^{\prism}_{R\delta X} = [\varphi_R][p_R]^{-1}$.
  In particular, $\eta^{\prism}_{R\delta X}$ is an isomorphism.

  Applying $\bfR\N^{\prism}_{\dag}$ to \eqref{jt.eq.unit-realization-comparison} and composing with this unit therefore gives an isomorphism
  \begin{align}\label{jt.eq.unit-nerve-comparison}
    R\delta X
    \xrightarrow{\eta^{\prism}_{R\delta X}}
    \bfR\N^{\prism}_{\dag}
    \bfL\frakC^{\prism}_{\dag}(R\delta X) 
    \xrightarrow{\bfR\N^{\prism}_{\dag}(a \circ \overline\Theta_X \circ (\bfL\frakC^{\prism}_{\dag}(r_X))^{-1})}
    \bfR\N^{\prism}_{\dag}(\calD).
  \end{align}
  Let $\widetilde a: \delta X \to \bfR\N^{\prism}_{\dag}(\calD)$ be the derived frame adjoint of $a\circ\overline\Theta_X$.
  Naturality of $\eta^{\prism}$ gives
  \begin{align}
    &\bfR\N^{\prism}_{\dag}(a \circ \overline\Theta_X \circ (\bfL\frakC^{\prism}_{\dag}(r_X))^{-1}) \circ \eta^{\prism}_{R\delta X} \circ r_X \\
    &=
    \bfR\N^{\prism}_{\dag}(a \circ \overline\Theta_X \circ (\bfL\frakC^{\prism}_{\dag}(r_X))^{-1}) \circ \bfR\N^{\prism}_{\dag} (\bfL\frakC^{\prism}_{\dag}(r_X)) \circ \eta^{\prism}_{\delta X} \\
    &=
    \bfR\N^{\prism}_{\dag}(a \circ \overline\Theta_X) \circ \eta^{\prism}_{\delta X} \\
    &= 
    \widetilde a.
  \end{align}

  Let $\gamma_X$ denote the isomorphism in \eqref{jt.eq.unit-nerve-comparison}.
  The preceding naturality calculation is $\gamma_X \circ [r_X] = \widetilde a$.
  Both $\gamma_X$ and $[r_X]$ are isomorphisms, so $\widetilde a$ is an isomorphism, and hence so is $\bfR i_0^*(\widetilde a)$.
  The morphism $\overline\vartheta_{\calD}$ is an isomorphism, while $a^{\sharp}$ is the derived unit of the dagger Joyal--Bergner adjunction at $X$ and is an isomorphism by the Quillen equivalence of \cite[Theorem~3.5.6]{Aka26}.

  The identity of \cref{jt.lem.unit-triangle} therefore implies by two-out-of-three that $\eta_X^{\delta}$ is an isomorphism in $\Ho(\sSet^{\dag}_{\Joyal})$.
  Equivalently, $\eta_X^{\delta}$ is represented by a dagger Joyal weak equivalence.
\end{proof}

\begin{theorem}[The dagger Joyal--Tierney equivalence]\label{jt.thm.main}
  The Quillen adjunction
  \begin{align}
    \delta:
    \sSet^{\dag}_{\Joyal}
    \rightleftarrows
    L_{S_{\rev}}\sSpace^{\dag}_{\rev,\inj}
    :i_{0}^{*}
  \end{align}
  is a Quillen equivalence.
\end{theorem}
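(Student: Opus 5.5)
The plan is to use the same recognition criterion as in the proof of \cref{h1.thm.main}, namely \cite[Corollary~1.3.16]{Hov99}. For a Quillen adjunction $\delta\dashv i_0^*$ it suffices to check two things: that $i_0^*$ reflects weak equivalences between fibrant objects, and that for every cofibrant $X$ the composite $X\to i_0^*(R\delta X)$ of the unit with a fibrant replacement is a weak equivalence.

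First, \cref{jt.lem.star} already shows that $\delta\dashv i_0^*$ is a Quillen adjunction between $\sSet^{\dag}_{\Joyal}$ and $L_{S_{\rev}}\sSpace^{\dag}_{\rev,\inj}$, so the criterion applies. Second, the reflection condition is exactly \cref{jt.cor.i0-reflects}. Its proof uses \cref{jt.lem.rows} to identify each row $Z_m$ of a local fibrant object with $\RMap(\FdagSSet(\Delta^m),i_0^*Z)$. A map $f$ of fibrant objects with $i_0^*f$ a dagger Joyal equivalence is therefore a rowwise weak equivalence, and hence a local equivalence.

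Third, the derived unit condition is \cref{jt.lem.unit}. There $R$ is the fixed functorial fibrant replacement in $L_{S_{\rev}}\sSpace^{\dag}_{\rev,\inj}$, and the unit $X\to i_0^*(R\delta X)$ is shown to be a dagger Joyal weak equivalence for every cofibrant $X$. One point needs checking: the unit used in Hovey's criterion is $X\to i_0^*R\delta X$ with $\delta X$ taken as is. This is legitimate because every object of the injective localization is cofibrant, so $\delta X$ needs no cofibrant replacement, and because $\delta$ preserves cofibrations. The map in \cref{jt.lem.unit} is therefore literally the derived unit required.

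The main obstacle has already been absorbed into earlier results. The derived unit argument rests on the strictification \cref{jt.thm.strict} and on the unit triangle \cref{jt.lem.unit-triangle}, which reduce it to the dagger Joyal--Bergner Quillen equivalence. The remaining step is the bookkeeping that matches the hypotheses of \cite[Corollary~1.3.16]{Hov99}, namely reflection between fibrant objects and the unit at cofibrant objects, with those two statements. After that, the theorem follows by combining \cref{jt.cor.i0-reflects} and \cref{jt.lem.unit}.
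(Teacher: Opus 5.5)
Your proposal is correct and follows the paper's proof: you invoke \cite[Corollary~1.3.16]{Hov99}, citing \cref{jt.lem.star} for the Quillen adjunction, \cref{jt.cor.i0-reflects} for reflection between fibrant objects, and \cref{jt.lem.unit} for the derived unit at cofibrant objects. Your remark that $\delta X$ needs no cofibrant replacement is harmless but not needed, since Hovey's unit condition only uses a fibrant replacement of $\delta X$.
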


\begin{proof}
  It is a Quillen adjunction by \cref{jt.lem.star}.
  By \cite[Corollary 1.3.16]{Hov99}, it is a Quillen equivalence, since $i_{0}^{*}$ reflects weak equivalences between fibrant objects (by \cref{jt.cor.i0-reflects}) and the derived unit is a weak equivalence at every cofibrant object (by \cref{jt.lem.unit}).
\end{proof}

\begin{corollary}\label{jt.cor.goal}
  There exists a chain of Quillen equivalences
  \begin{align}
    \sCat^{\dag}_{\Bergner}
    \underset{\frakC_{\dag}}{\overset{\N_{\dag}}{\rightleftarrows}}
    \sSet^{\dag}_{\Joyal}
    \underset{i_0^*}{\overset{\delta}{\rightleftarrows}}
    L_{S_{\rev}}\sSpace^{\dag}_{\rev,\inj} 
    \underset{\id}{\overset{\id}{\rightleftarrows}}
    L_{\widetilde S_{\rev}}\sSpace^{\dag}_{\rev,\proj}
    \underset{u^*}{\overset{u_!}{\rightleftarrows}}
    L_{u_!(\widetilde S_{\rev})}\sSpace^{\dag}_{\proj}.
  \end{align}
\end{corollary}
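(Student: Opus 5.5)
The corollary is an assembly of four Quillen equivalences already proved in the paper, so the plan is to cite each one, check that the directions and left/right adjoints match the displayed chain, and verify that consecutive links share the same model structure on their common object.

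\emph{First link.} This is the dagger Joyal--Bergner equivalence $\frakC_{\dag}\dashv\N_{\dag}$ recalled in \cref{review.section} from \cite[Theorem~3.5.6]{Aka26}. Its left adjoint $\frakC_{\dag}$ goes from $\sSet^{\dag}_{\Joyal}$ to $\sCat^{\dag}_{\Bergner}$. In the display the link is drawn with $\N_{\dag}$ on top pointing left, which is only a typographical orientation and does not change which functor is the left adjoint.

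\emph{Second link.} This is \cref{jt.thm.main}, the Quillen equivalence $\delta\dashv i_0^{*}$ between $\sSet^{\dag}_{\Joyal}$ and $L_{S_{\rev}}\sSpace^{\dag}_{\rev,\inj}$.

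\emph{Third link.} This is \cref{h1.lem.rev-models}~(1). There the identity from $L_{\widetilde S_{\rev}}\sSpace^{\dag}_{\rev,\proj}$ to $L_{S_{\rev}}\sSpace^{\dag}_{\rev,\inj}$ is the left Quillen functor. Since the chain lists the injective model first, this link is read in the reverse direction. That is harmless for an undirected chain of Quillen equivalences: the identity from the projective side is left Quillen, and it is a Quillen equivalence.

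\emph{Fourth link.} This is \cref{h1.thm.main}, the Quillen equivalence $u_{!}\dashv u^{*}$ between $L_{\widetilde S_{\rev}}\sSpace^{\dag}_{\rev,\proj}$ and $L_{u_{!}(\widetilde S_{\rev})}\sSpace^{\dag}_{\proj}$.

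\emph{Matching the endpoints.} The intermediate model categories must be literally the same at each junction. $\sSet^{\dag}_{\Joyal}$ is the same structure in the first and second links. $L_{S_{\rev}}\sSpace^{\dag}_{\rev,\inj}$ is the same structure in the second and third links. $L_{\widetilde S_{\rev}}\sSpace^{\dag}_{\rev,\proj}$ is the same structure in the third and fourth links, since it is built from the same resolution set $\widetilde S_{\rev}$ fixed in \cref{h1.not.S}.

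No real obstacle is expected. The only point needing care is the orientation convention in the display: it is a zigzag of Quillen equivalences, as the introduction notes, rather than a composable sequence of left adjoints. Consequently all five homotopy categories are equivalent via the derived functors.
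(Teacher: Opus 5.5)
Your proposal is correct and follows the paper's proof, which likewise just cites \cite[Theorem~3.5.6]{Aka26}, \cref{jt.thm.main} together with the identity Quillen equivalence of \cref{h1.lem.rev-models}, and \cref{h1.thm.main}. One small slip: in the display, $\N_{\dag}$ labels the top arrow of $\rightleftarrows$, which points right from $\sCat^{\dag}_{\Bergner}$ to $\sSet^{\dag}_{\Joyal}$, so that link is already drawn in the correct direction and needs no typographical caveat.
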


\begin{proof}
  The first is the dagger Joyal--Bergner equivalence of \cite[Theorem~3.5.6]{Aka26}, the second is \cref{jt.thm.main} together with the identity Quillen equivalence of \cref{h1.lem.rev-models}, and the third is \cref{h1.thm.main}.
\end{proof}

\section*{References}
\begingroup
\printbibliography[heading=none]
\endgroup

\end{document}